\documentclass[10pt]{article}
\RequirePackage[colorlinks,citecolor=blue,urlcolor=blue,linkcolor=blue]{hyperref}
\usepackage{url}
\usepackage{booktabs}
\usepackage{svg}
\usepackage{tcolorbox}
\usepackage{subcaption}
\usepackage{url,amscd,paralist,bbm,wrapfig}
\usepackage{bm}
\usepackage{algorithm}
\usepackage{algorithmic} 
\usepackage{todonotes}
\usepackage{amsmath}
\usepackage{amsthm}
\usepackage{amsopn}
\usepackage{amsfonts}
\usepackage{cleveref}
\usepackage[T1]{fontenc}

\usepackage{latexsym,amssymb,amsmath,amsthm,graphics,graphicx,float,psfrag, epsfig, color, authblk, enumerate}

\usepackage{url,amssymb,amsmath,amsthm,amscd,paralist,bbm,wrapfig}
\usepackage{bm}
\usepackage{algorithm}
\usepackage{algorithmic} 

\usepackage{mathtools}

\usepackage[numbers]{natbib}

\newtheorem{remark}{Remark}

\newcommand{\R}{\mathbb{R}}

\newcommand{\Tcal}{{\mathcal{T}}}

\DeclareMathOperator*{\argmin}{arg\,min}

\DeclarePairedDelimiterX{\infdivx}[2]{(}{)}{%
  #1\;\delimsize\|\;#2%
}

\newcommand*{\email}[1]{\texttt{#1}}

\newtheorem{theorem}{Theorem}
\newtheorem{proposition}[theorem]{Proposition}
\newtheorem{lemma}[theorem]{Lemma}
\newtheorem{corollary}[theorem]{Corollary}

\newtheorem{definition}[theorem]{Definition}

\title{The Geometry of Anisotropic Dilation for Optimal Regularization}

\author{
\begin{tabular}{cc}
Carson Newman\thanks{Department of Statistics and Data Science, University of California, Los Angeles (email: \email{carsonnewman@g.ucla.edu})}
\qquad
Oscar Leong\thanks{Institute of Natural Sciences and School of Mathematical Sciences, Shanghai Jiao Tong University (email: \email{oleong@sjtu.edu.cn})}
\end{tabular}
}

\date{\today}

\begin{document}

\maketitle

\begin{abstract}
    
    How can structured transformations of data improve regularizer design for inverse problems? We address this question through \textbf{anisotropic dilation}, a direction-preserving map that rescales each point along its Euclidean ray by a positive profile on the sphere. Our analysis builds on a characterization of optimal regularizers within a broad Gibbs class, where the regularizer best matched to a distribution $P$ is determined by a direction-dependent radial summary statistic $\rho_P$. We show that anisotropic dilation acts explicitly and transitively on the space of radial summary statistics: any prescribed optimal regularizer in this class can be obtained by transforming an admissible source distribution using a single explicit profile. We characterize the resulting distributional orbits and establish conditions under which this control of regularizer geometry yields a weakly convex denoising formulation even when the original objective is not weakly convex. We also derive in closed form the profile that optimally adapts a fixed base regularizer to data. Its improvement over isotropic rescaling with respect to KL divergence is governed by a Jensen gap, for which we establish strict positivity under explicit conditions for several natural families of distributions. For jointly learning the base regularizer and anisotropic profile from samples, we establish finite-sample generalization bounds, with explicit rates when the logarithm of the profile is parameterized by linear feature models or sparse ReLU networks. Experiments on a controlled two-dimensional family and on MNIST denoising demonstrate that learning the anisotropic profile can improve reconstruction performance for both handcrafted and learned base regularizers. 
\end{abstract}

\section{Introduction} \label{sec:intro}

Many applications in science and engineering concern solving inverse problems, which ask to reconstruct data from corrupted measurements. Such problems are often ill-posed in the sense of Hadamard, meaning that there are no solutions, infinitely many solutions, or the problem is
discontinuous in the data \cite{Arridgeetal19}. Variational regularization is a long-standing technique to solve such problems by exploiting a regularization functional to promote certain structures one believes the underlying data to obey. Popular forms of structure include smoothness \cite{knapik2011bayesian}, sparsity \cite{Tao2006, Donoho2006}, low-rankness \cite{FazelThesis, Rechtetal10}, compressibility in a suitable basis \cite{haupt2006signal}, and many others. Due to the ill-posedness of inverse problems, the selection of a suitable regularizer is crucial for obtaining high-quality and stable reconstructions.

A central difficulty is that the geometry of a good regularizer is usually specified indirectly. Classical regularizers, such as Tikhonov, total variation \cite{Rudinetal92}, sparsity-inducing norms \cite{StatLearnSparsityBook, Daubechiesetal04}, and atomic norms \cite{Chandrasekaranetal12a}, are often motivated by domain knowledge and by the desire to obtain computationally tractable optimization problems. While powerful, such hand-crafted regularizers are often limited in the types of structure they can promote. Hence, one may be interested in tailoring a given regularizer to a dataset of interest, whose geometry may not be easy to describe in a hand-crafted way, but may be better designed in a learned fashion. This has led to a suite of learnable, nonconvex regularizers that are substantially more flexible and have shown strong empirical performance in several contemporary inverse-problem pipelines \cite{Lunzetal18, Goujonetal24, venkatakrishnan2013pnp, romano2017RED, Albertietal21, alberti2026learning}. However, such regularizers generally come with fewer guarantees regarding global optimization, stability, and statistical behavior, and the exact geometry they exploit is less well-understood.

These considerations motivate a basic mathematical question: given a data source specified by a probability distribution $P$, what regularizer is best matched to the geometry of $P$? Recent work on optimal regularization \cite{leong2025optimal} takes a first step towards answering this question by considering the class of continuous and positively homogeneous functionals that are positive away from the origin on $\R^d$ -- equivalently, gauges of star bodies $\|\cdot\|_K$, a nonconvex generalization of convex bodies that contains norms, quasinorms, and many nonconvex sparsity-promoting penalties. The optimal regularizer in this class is the one whose induced Gibbs density $\propto \exp(-\|\cdot\|_K)$ is closest to $P$ in Kullback-Leibler divergence. The central finding of \cite{leong2025optimal} is that the solution is governed by a single function on the sphere: a \emph{radial summary statistic} $\rho_P : \mathbb{S}^{d-1} \rightarrow (0,\infty)$, given by a weighted radial moment of the density of $P$ along each direction: \begin{align}
    \rho_P(u) := \left(\int_0^{\infty} r^d p(ru)\mathrm{d}r\right)^{1/(d+1)},\ \quad u \in \mathbb{S}^{d-1}. \label{eq:radial-summary-statistic}
\end{align} The optimal regularizer is, up to normalization, the gauge of the star body with radial function $\rho_P$. Thus, for this family of regularizers, the relevant information in $P$ for optimal regularization can be compressed into a single direction-dependent quantity, which quantifies how much mass of the distribution lies in each direction and how far from the origin that mass typically sits. 

This characterization raises a new set of questions that motivate our work:
\begin{enumerate}
    \item If the optimal regularizer for $P$ is determined by $\rho_P$, how do transformations of the data act on $\rho_P$? Are there natural families of maps $\mathcal{T}$ for which $\rho_{\mathcal{T}_{\#}(P)}$ is explicit in terms of $\rho_P$?
    \item If so, can such a map be used to transform the data so that regularization becomes easier -- for instance, so that the induced variational problem is better behaved -- or, in the other direction, to adapt a given base regularizer to a dataset of interest?
    \item Can such a map increase the flexibility of the Gibbs family in a controlled way to learn better regularizers while also retaining closed-form characterizations?
\end{enumerate}

To address these questions, we introduce \emph{anisotropic dilation}: for a positive continuous function $g$ on the sphere, the map $\Tcal_g$ is defined as $$\mathcal{T}_g(x) := g\left( \frac{x}{\|x\|_2} \right) x,\ \quad x \neq 0.$$ This map rescales each point along its ray according to its direction. Despite its simplicity, we show that this family acts transitively on the space of radial summary statistics, and additionally illustrate its utility as a statistical model when applied to the Gibbs class discussed previously.

\subsection{Our Contributions}

\paragraph{Geometry and Orbit Structure of Anisotropic Dilation} In Section~\ref{sec:anisotropic-dilation}, we characterize the induced Abelian group action on probability distributions and show that this transformation preserves a distribution's directional marginal while rescaling its conditional mean radius. As a consequence, this transformation acts on the radial summary statistic by $$\rho_P \mapsto \rho_{(\Tcal_g)_{\#}P}=g^{1/(d+1)} \rho_P.$$ Consequently, any admissible target radial summary statistic can be reached from a given source by a unique profile $g$, even though the resulting distributional orbits retain scale-invariant radial information. We further show, in a denoising setting, that this flexibility can transform an objective that is not weakly convex near the origin into a weakly convex formulation for sufficiently strong regularization.

\paragraph{Population Estimation of the Optimal Scaling Profile} In Section~\ref{sec:learning-g},
    we demonstrate the utility of these orbits as statistical models for $P$ by deriving exact, closed-form population projections onto orbits of baseline Gibbs densities. Furthermore, we define the isotropy cost to quantify the advantage of anisotropic over isotropic approximations, the latter referring to when we do not learn an optimal anisotropic dilation map. This cost is governed by how much the following diagnostic ratio varies over the sphere: $$\alpha_P(u) = \frac{m_P(u)}{\rho_P(u)},\ \quad u \in \mathbb{S}^{d-1},$$ where $m_P(u) := \mathbb{E}_P[R|U=u]$ is the conditional mean radius of $P$ in a given direction $u$ (see Section \ref{sec:preliminaries} for more details). We evaluate this criterion for several distribution classes, showing, among other results, that centered Gaussian mixtures have zero cost if and only if all component covariance matrices are proportional. 
\paragraph{Extensions and a Transport Interpretation} Further in Section \ref{sec:learning-g}, we extend the previous projection theory to Gibbs energies of arbitrary positive homogeneity and to a higher-order conditional radial exponential family, obtaining explicit optimal scaling profiles and a moment-matching characterization of the residual radial shape. We also formulate anisotropic dilation as a raywise optimal transport problem: for distributions with common directional marginal, we identify $\Tcal_g$ as the optimal ray-preserving map when the conditional radial laws differ by scaling.

\paragraph{Learning, Computation, and Finite-Sample Guarantees} In Section \ref{sec:denoising-experiments}, we parameterize the base regularizer and anisotropic profile to directly learn them from samples from a data distribution. On a controlled two-dimensional problem and on MNIST denoising, learning $g$ improves both a learned base regularizer and a fixed hand-crafted one, namely total variation \cite{Rudinetal92}. We complement this computational framework in Section \ref{sec:statistical-learning} with sample complexity estimates and generalization bounds for learning such scaling profiles from data.

\subsection{Related Work}

\paragraph{Transforms of Data} Transformations of data have long played a central role in inverse problems and statistical estimation. Classical examples include Fourier, wavelet \cite{daubechies1992ten}, discrete cosine transforms \cite{strang1999discrete}, and scattering transforms \cite{mallat2012group}, where signals are assumed to be sparse or compressible after applying a fixed change of representation. This viewpoint underlies many standard regularizers, such as transformed $\ell_1$-penalties, total variation, and others \cite{TikReg, Rudinetal92, daubechies1992ten, mallat2012group}. A related data-adaptive approach is dictionary learning or sparse coding, where the representation itself is learned from examples and subsequently used as a regularizer by promoting sparse coefficients in the learned basis or frame \cite{OlshausenField96, Aharonetal2006, MairaletalSurvey14}. These methods can be interpreted as transforming data into a coordinate system in which a simple geometric prior becomes appropriate.

More recently, invertible transformations have become a central tool in generative modeling and inverse problems. Normalizing flows construct expressive probability distributions by pushing forward a simple base distribution through a sequence of bijective maps, while retaining tractable density evaluation through a change-of-variables \cite{papamakarios2021normalizing}. Such models have also been used as priors for inverse problems, where one either optimizes in a latent space or penalizes reconstructions according to the likelihood induced by the invertible model \cite{asim2020invertible, whang2021solving}. Related work in flow matching \cite{lipmanflow, liu2022flow, liu2022rectified} instead learns the velocity field mapping a simple base distribution to a more complicated data distribution, which induces an invertible flow map that can be used as a prior for inverse problems as well \cite{ben2024d, zhang2024flow, martin2025pnp}.

\paragraph{Transport Maps}

Transport maps provide a general framework for representing a complicated distribution as the pushforward of a simple reference distribution. In Bayesian computation, this idea has been used to construct maps from a prior or reference measure to a posterior measure, thereby replacing or accelerating sampling. For example, optimal-map approaches to Bayesian inference construct deterministic maps that push the prior to the posterior and then use these maps to compute posterior moments or generate independent posterior samples. Related transport-map MCMC methods learn triangular or Knothe-Rosenblatt-type maps to learn non-Gaussian proposals that better explore the target distribution \cite{el2012bayesian, parno2018transport, marzouk2017sampling}. 

A common theme in this literature is the tension between expressivity and tractability. Fully general transport maps are highly flexible, but difficult to estimate and interpret in high dimension. This has motivated structured families of maps, including triangular maps \cite{baptista2024representation}, low-dimensional or subspace-constrained transports \cite{muzellec2019subspace}, conditional transports \cite{chemseddine2025conditional}, and radial transports \cite{ghafourpour2026variational}. The aforementioned radial transport map is closest to our current work. In particular, \cite{ghafourpour2026variational} propose radVI, a variational inference method that optimizes over radial profiles in order to improve Gaussian, mean-field Gaussian, or Laplace approximations. Their motivation is that Gaussian variational families often fail to capture the radial profile of the target posterior, leading to poor coverage. They propose to optimize over the class of radially symmetric distributions and represent such distributions as pushforwards of a standard Gaussian by radial maps of the form $T_{\mathrm{rad}}(x) := \Psi(\|x\|_2) \cdot \frac{x}{\|x\|_2}.$ This map is complementary to our anisotropic dilation map. Note that $T_{\mathrm{rad}}$ is nonlinear in the radius but isotropic over directions as every direction is transformed by the same scalar function $\Psi(r)$. By contrast, our map is anisotropic across directions, while preserving the angular distribution. We finally note that the motivations for our maps are different, as we consider the question of mapping between optimal regularizers whereas radVI is focused on variational inference.

\paragraph{Learned Regularization}

A complementary line of work aims to learn regularizers directly from data. As mentioned previously, dictionary learning and sparse coding provide some of the earliest examples of this paradigm, where one learns a representation in which the data have low-complexity descriptions. With the advent of deep learning, there is also a growing body of work parameterizing the regularizer directly with a deep neural network. One prominent example is adversarial regularization \cite{Lunzetal18}, where a neural network is trained as a critic to distinguish between desirable reconstructions and undesirable ones, and the resulting critic is used as a penalty in a variational reconstruction problem. As such regularizers are generally highly nonconvex, related approaches have also imposed structural constraints such as convexity \cite{Mukherjeeetal2021, Goujonetal23}, weak convexity \cite{Goujonetal24, Shumaylovetal2024}, and difference-of-convex structure \cite{zhang2025learning}. See the survey article \cite{hertrich2025learning} for a deeper discussion of such methods along with empirical comparisons between them.

\paragraph{Optimal Regularization}

Our work is most directly motivated by recent aims to characterize which regularizer is optimal for a given signal model or data distribution. Some works have focused on learning optimal regularizers within a given restricted class of functionals, such as those given by Tikhonov regularizers \cite{Albertietal21} or nonsmooth transformed $\ell_1$-penalties \cite{alberti2026learning}, while others have focused on learning of the penalty parameter from data \cite{ chirinos2024learning, kunisch2013bilevel, calatroni2017bilevel, ehrhardt2023optimal}. Another line of work studies optimal convex regularization for low-dimensional recovery \cite{Traonmilinetal-21}. In this setting, one fixes a signal model and a class of linear inverse problems, and asks which convex regularizer promotes recovery according to a suitable compliance measure. Such results recover canonical examples, such as $\ell_1$-regularization for sparse recovery, and have been extended to understanding the optimal algorithms that promote such structure \cite{traonmilin2024towards}.

A closer line of work studies optimal regularization at the level of an entire data distribution \cite{leong2025optimal}, as discussed in Section \ref{sec:intro}. This theory characterizes for a given data distribution, what is the optimal positively homogeneous and continuous gauge associated to such a distribution. It has been extended to settings when the objective is given by adversarial regularization \cite{leong2024star} and also when one aims to incorporate distributional robustness and convexity constraints \cite{leong2025optimal2}. The present work builds on this viewpoint, but asks a different question. In particular, instead of asking what the optimal regularizer for a given distribution is, we ask how structured transformations of the data act on the statistic that determines the optimal regularizer. The anisotropic dilation map $\Tcal_g$ that we introduce is designed so that the action on this summary statistic is explicit, and we show that this map offers other benefits in fitting distributions while maintaining interpretability.

\section{Preliminaries}
\label{sec:preliminaries}
\paragraph{Star and Convex Geometry} Our work will use many concepts arising in star geometry and convex geometry. For a deeper overview of such topics, we refer the reader to \cite{hansen2020starshaped, gardner1995geometric} for star geometry and \cite{schneider2013convex} for convex geometry.

For $d \geq 1$, let $\mathbb{S}^{d-1}$ and $B^d$ denote the unit Euclidean sphere and closed ball in $\R^d$, respectively. For $x \in \R^d$ and $r > 0$, let $B(x,r) := \{ y \in\R^d : \|y-x\|_2 \leq r\}$ where $\|\cdot\|_2$ denotes the Euclidean norm. A set $K\subset \R^d$ is a convex body if it is compact, convex, with the origin in its interior. We say that $K \subset \R^d$ is a star body if it is compact, contains the origin in its interior, and for every $x \neq 0$, the ray $\{\lambda x : \lambda > 0\}$ intersects the boundary of $K$ exactly once. Equivalently, $K$ is a star body if and only if it is compact, star-shaped with respect to the origin, and its \emph{radial function},
$$
\rho_K(u) := \sup\{\lambda \geq 0 : \lambda u \in K\},
$$
is positive and continuous on $\mathbb{S}^{d-1}$. Here, star-shaped with respect to the origin means that $tx \in K$ for every $x \in K$ and $t \in [0,1]$. We represent any star body $K$ throughout this paper as
$$
K = \{ ru : u \in \mathbb S^{d-1}, \, 0 \le r \le \rho_K(u) \}.
$$
The set of all star bodies in $\R^d$ is denoted by $\mathcal{S}^d$. There is a one-to-one correspondence between star bodies in $\mathcal{S}^d$ and positive continuous functions on $\mathbb{S}^{d-1}$ via the radial function $\rho_K$.

 Another geometric function that is closely related to regularization is the gauge (or Minkowski functional) induced by a star body $K$, defined as $\|x\|_K := \inf\{ \lambda \geq 0 : x \in \lambda K\}.$ For $u \in \mathbb{S}^{d-1}$, the radial function and gauge satisfy the reciprocal relationship
 $\rho_K(u) = 1/\|u\|_K.$ Finally, let $\mathrm{vol}_d(\cdot)$ denote the usual $d$-dimensional volume functional.

\paragraph{Polar Decomposition of Distributions and Summary Statistics} Since it is natural to understand the anisotropic dilation map $\mathcal{T}_g$ through its action on the radial and directional components of a given input, we will use several notions concerning polar decompositions of random vectors. Let $P$ be a probability measure on $\mathbb{R}^d$ satisfying $P(\{0\})=0$. By the Disintegration Theorem \cite[Theorem 5.4]{kallenberg1997foundations}, the polar map $x \mapsto (\|x\|_2, x/\|x\|_2)$ induces a decomposition of $P$ into a directional marginal distribution $P_U$ on $\mathbb{S}^{d-1}$ and a family of conditional radial distributions $P_{R\mid U}(\cdot \mid u)$ on $(0,\infty)$, defined $P_U$-almost everywhere. Equivalently, any random vector $X\sim P$ admits a natural polar decomposition $X=R_PU_P$ almost surely, where $U_P=X/\|X\|_2\sim P_U$ and, conditionally on $U_P=u$, the radius $R_P=\|X\|_2$ has law $P_{R\mid U}(\cdot\mid u)$. When $P$ is clear from context, we drop the subscripts and write $X=RU$.

Assume now that $P$ admits a Lebesgue density $p$. Passing to polar coordinates via
 $\mathrm{d}x = r^{d-1}\mathrm{d}r\,\mathrm{d}\sigma(u)$ yields the joint density $p_{R,U}(r,u) = r^{d-1}p(ru)$ of $(R_P, U_P)$ with respect to $\mathrm{d}r\,\mathrm{d}\sigma(u)$, where $\sigma$ denotes the standard surface measure on $\mathbb{S}^{d-1}$. Integrating out the radial component defines the \emph{directional marginal density} of $U_P$: 
\begin{align}
    h_P(u) := \int_0^\infty r^{d-1} p(ru) \, \mathrm{d}r. \label{eq:directional-marginal}
\end{align}
In particular, $P_U(\mathrm du)=h_P(u)\,\mathrm d\sigma(u)$. The directional marginal $h_P$ and the radial summary statistic $\rho_P$ from \eqref{eq:radial-summary-statistic} are foundational to developing the theory in this paper. To ensure these quantities are pointwise well-defined and non-degenerate on the entire sphere, we impose the following standing assumption.

\begin{definition}[Polar Regular Distributions] \label{def:polar-regular} Let $G := C\bigl(\mathbb S^{d-1}, (0, \infty)\bigr)$
denote the space of continuous, strictly positive functions on the unit sphere, equipped with pointwise multiplication.
We define the class of polar regular distributions by 
$$
\mathcal{P} 
:=
\left\{
P \text{ a probability measure on } \R^d :
P \ll \mathcal{L}^d\ \text{with density}\ p\ \text{such that}\
\rho_P,h_P \in G
\right\},
$$
where $\mathcal{L}^d$ denotes Lebesgue measure on $\R^d$, $\rho_P$ is the radial summary statistic defined in \eqref{eq:radial-summary-statistic}, and $h_P$ is the directional marginal density defined in \eqref{eq:directional-marginal}.
\end{definition}
By the compactness of $\mathbb S^{d-1}$, every $P\in\mathcal P$
has $h_P$ and $\rho_P$ bounded above and bounded away from zero.
Furthermore, these assumptions are mild; $\mathcal P$ includes,
for example, continuous strictly positive densities with
sufficiently rapid uniform decay, as well as continuous,
compactly supported densities that are strictly positive on
a neighborhood of the origin.

For any $P \in \mathcal{P}$, the strict positivity of $h_P(u)$ guarantees the conditional radial density $p_{R\mid U}(r \mid u) = r^{d-1}p(ru)/h_P(u)$ is well-defined for every $u\in\mathbb{S}^{d-1}$. We define the \emph{conditional mean radius} in direction $u$ by
\begin{align}
    m_P(u) := \mathbb{E}[R_P \mid U_P = u]   \label{eq:conditional-radial}
\end{align}
This quantity represents the first moment of the conditional radial distribution $P_{R \mid U_P =u}.$ Expanding the definition of $m_P(u)$ and rearranging terms, we obtain the following fundamental decomposition of the radial summary statistic:
\begin{align}
    \rho_P(u)^{d+1} = h_P(u)m_P(u). \label{eq:rho-decomposition}
\end{align}
This identity demonstrates that the radial summary statistic governing the optimal Gibbs regularizer is determined by two interpretable quantities: the directional mass $h_P(u)$ and the conditional mean radius $m_P(u)$. This explicit decoupling plays a central role in characterizing the anisotropic dilation map in subsequent sections.

\section{Anisotropic Dilation}\label{sec:anisotropic-dilation}

\subsection{Definition and Fundamental Properties}

We first define the anisotropic dilation map and discuss several basic properties. 
\begin{definition}[Anisotropic Dilation] 
    For $g \in G$, the anisotropic dilation induced by $g$ is the map
    \begin{align*}
        \Tcal_g(x):= 
        \begin{cases}
            g \left( \dfrac{x}{\|x\|_2} \right)x, & x \neq 0, \\
            0, & x = 0.
        \end{cases}
    \end{align*}
    Throughout the text, we refer to $g$ as the positive and continuous scaling profile associated with $\mathcal T_g$.
\end{definition} 
\noindent

\paragraph{Basic Algebraic and Distributional Properties}
Letting $x = ru$, where $r = \|x\|_2 $ and $u = x/\|x\|_2$ for $x \neq 0$, the transformation takes the simple polar form
\begin{align*}
    \Tcal_g : (r,u) \mapsto ( g(u) r,u), \qquad r > 0, \quad u \in \mathbb S^{d-1}.
\end{align*} 
Thus, $\mathcal{T}_g$ preserves the direction of each point and rescales only its radial coordinate, with the scaling factor varying continuously across directions. The strict positivity of $g$ ensures that no ray is reversed or collapsed. In fact, the set $\{ \mathcal \mathcal{T}_g : g \in G\}$ forms an Abelian group of homeomorphisms of $\mathbb{R}^d$, $$\mathrm{Homeo}(\R^d) := \{f :\R^d\rightarrow \R^d : f\ \text{continuous bijection with}\ f^{-1}\ \text{continuous}\}.$$

\begin{proposition}[Abelian Group Structure of Anisotropic Dilations]
    The map $g \mapsto \mathcal{T}_g$ is a group isomorphism from $G$ onto an Abelian subgroup of $\operatorname{Homeo}(\mathbb{R}^d)$. Explicitly, for every $g, h \in G$, $\mathcal{T}_g \circ \mathcal{T}_h = \mathcal{T}_{gh}$, where $(gh)(u) := g(u) h(u)$. The identity element is $\mathcal{T}_e = \operatorname{id}_{\mathbb{R}^d}$ with $e(u) \equiv 1$, and the inverse is $(\mathcal{T}_g)^{-1} = \mathcal{T}_{1/g}$.
\end{proposition}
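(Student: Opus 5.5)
The argument is direct, working throughout in the polar form $\Tcal_g(ru)=(g(u)r,u)$. It has four steps, carried out in this order.

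\textbf{Step 1: the composition law.} For $x\neq 0$, write $x=ru$. Since $\Tcal_h(x)=h(u)ru$ with $h(u)r>0$, the direction of $\Tcal_h(x)$ is still $u$. Hence
\begin{align*}
\Tcal_g(\Tcal_h(x)) = g(u)h(u)ru = \Tcal_{gh}(x).
\end{align*}
At $x=0$ both sides vanish. Commutativity of pointwise multiplication in $G$ then gives $\Tcal_g\circ\Tcal_h=\Tcal_h\circ\Tcal_g$. It is also immediate that $\Tcal_e=\mathrm{id}$. Combining these with the composition law, $\Tcal_g\circ\Tcal_{1/g}=\Tcal_{1/g}\circ\Tcal_g=\mathrm{id}$. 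So each $\Tcal_g$ is a bijection with inverse $\Tcal_{1/g}$, and $1/g\in G$.

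\textbf{Step 2: continuity.} This is the only real obstacle. On $\R^d\setminus\{0\}$, $\Tcal_g$ is a composition of continuous maps, since $x\mapsto x/\|x\|_2$ is continuous there and $g$ is continuous. At the origin, compactness of $\mathbb S^{d-1}$ gives $M:=\max g<\infty$. Then
\begin{align*}
\|\Tcal_g(x)\|_2 = g(x/\|x\|_2)\,\|x\|_2 \le M\|x\|_2 \to 0.
\end{align*}
So $\Tcal_g$ is continuous everywhere. Applying the same argument to $1/g$, which also has a finite maximum because $g$ is bounded away from zero, shows that the inverse is continuous. Thus $\Tcal_g\in\mathrm{Homeo}(\R^d)$.

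\textbf{Step 3: homomorphism and image.} Step 1 shows that $\Phi:g\mapsto\Tcal_g$ is a group homomorphism $G\to\mathrm{Homeo}(\R^d)$. Its image is therefore a subgroup, and this subgroup is Abelian by commutativity.

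\textbf{Step 4: injectivity.} Suppose $\Tcal_g=\Tcal_h$. Evaluating at $u\in\mathbb S^{d-1}$ gives $g(u)u=h(u)u$, hence $g(u)=h(u)$ for every $u$, so $g=h$. Therefore $\Phi$ is an isomorphism onto its image.
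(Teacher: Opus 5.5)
Your proof is correct and follows essentially the same route as the paper. Both derive the composition law, commutativity and inverse $\mathcal{T}_{1/g}$ from the polar form, prove injectivity by evaluating at points of $\mathbb{S}^{d-1}$, and obtain continuity at the origin from the bound $\|\mathcal{T}_g(x)\|_2 \le (\max g)\,\|x\|_2$, applied also to $1/g$. Your Step 1 is slightly more explicit than the paper in noting that $\mathcal{T}_h$ preserves the direction $u$, which is exactly why $\mathcal{T}_g\circ\mathcal{T}_h=\mathcal{T}_{gh}$ holds.
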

\begin{proof}
    The homomorphism property $\mathcal{T}_g \circ \mathcal{T}_h = \mathcal{T}_{gh}$, invertibility $\mathcal{T}_g \circ \mathcal{T}_{1/g} = \mathcal{T}_e$, and commutativity $\mathcal{T}_{gh} = \mathcal T_{hg}$ follow directly from the definition of $\mathcal{T}_g$ and pointwise multiplication. Injectivity is immediate: if $\mathcal{T}_g(x) = \mathcal{T}_h(x)$ for all $x$, evaluating at any $u \in \mathbb{S}^{d-1}$ yields $g(u)u = h(u)u$, forcing $g = h$. Thus, $g \mapsto \mathcal{T}_g$ is an isomorphism onto its image. 

    We verify that the image lies in $\operatorname{Homeo}(\mathbb R^d)$. Fix $g \in G$. The commutativity and invertibility imply that $\mathcal T_{1/g}$ is a two-sided inverse of $\mathcal \mathcal{T}_g$, making $\mathcal \mathcal{T}_g$ a bijection. Furthermore, $g \in G$ forces continuity of $\mathcal \mathcal{T}_g$ on $\mathbb R^d \setminus \{0\}$. It remains to check continuity at the origin. Because $\mathbb{S}^{d-1}$ is compact, $g$ attains its maximum $M_g := \max_{u\in\mathbb{S}^{d-1}}g(u) \in (0,\infty)$. For any $x \neq 0$, $\|\mathcal{T}_g(x)\|_2 = g(x/\|x\|_2)\|x\|_2 \leq M_g\|x\|_2$. As $x \to 0$, $\|\mathcal{T}_g(x)\|_2 \to 0 = \mathcal{T}_g(0)$, confirming continuity on all of $\mathbb{R}^d$. Since $g$ is strictly positive and continuous, $1/g \in G$, and thus an identical bounding argument with $\max_{u \in \mathbb{S}^{d-1}} 1/g(u)=1/\min_{u\in \mathbb{S}^{d-1}} g(u) \in (0,\infty)$ by compactness of $\mathbb{S}^{d-1}$ establishes the continuity of the inverse $\mathcal{T}_{1/g}$ at the origin, confirming that $\mathcal{T}_g \in \operatorname{Homeo}(\mathbb{R}^d)$.
\end{proof}
The pointwise action of $\mathcal{T}_g$ can be visualized through its effect on star bodies. In particular, suppose $\rho_K$ and $\rho_L$ are the radial functions of star bodies $K$ and $L$, respectively. Then, the scaling profile
\begin{align}
g_{K\to L}(u) = \frac{\rho_L(u)}{\rho_K(u)}, \qquad u\in\mathbb S^{d-1} \label{eq:starbodymatching}
\end{align}
belongs to $G$ and satisfies $\mathcal{T}_{g_{K \to L}}(K) = L$. To see this succinctly, note that the origin is fixed. For $x = ru \in K \setminus \{0\}$, one has $0 < r \leq \rho_K(u)$. Applying $\mathcal T_{g_{K \to L}}(x)$ yields $su \in \mathbb R^d \setminus \{0\}$ where $s := g_{K \to L}(u)r$ and $0 < s \leq \rho_L(u)$. Therefore, $\mathcal T_{g_{K \to L}}(K) \subseteq L$. Applying the same argument to the inverse profile $g_{L \to K} = 1/g_{K \to L}$ gives the reverse inclusion. Therefore, $\mathcal{T}_{g_{K\to L}}$ reshapes $K$ into $L$ via a continuous raywise deformation, as illustrated in Figure~\ref{fig:star-body-matching}.
\begin{figure}[t]
    \centering
    \includegraphics[width=0.8\linewidth]{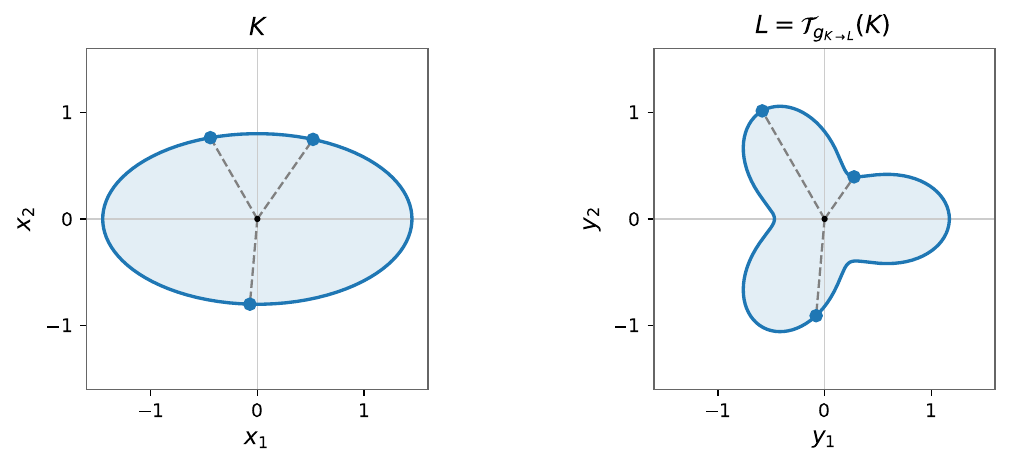}
    \caption{2D example of matching star bodies via \eqref{eq:starbodymatching}. Here $\rho_K(u) = \left( \cos^2(\theta)/1.45^2 + \sin^2(\theta)/0.8^2 \right)^{-1/2}$ and $\rho_L(u) = 0.82 + 0.35 \cos( 3 \theta)$ where $u = ( \cos \theta, \sin \theta)^\top$.}
    \label{fig:star-body-matching}
\end{figure}
 
Our primary analysis considers how the group of transformations $\{\mathcal{T}_g : g \in G\}$ acts on probability distributions. The pushforward operation defined on Borel sets as $((\mathcal{T}_g)_\# P)(A) = P(\mathcal{T}_g^{-1} (A))$ for $A \in \mathcal B(\mathbb R^d)$ defines a natural group action on the space of Borel probability measures $\mathcal{M}_1(\mathbb{R}^d)$. We first define the object containing all possible pushforwards of a base distribution $P$.
\begin{definition}[Anisotropic Radial Orbit]
Let $P \in \mathcal{P} \subset \mathcal M_1(\mathbb R^d)$ be a probability distribution. The anisotropic radial orbit of $P$ is the set of all pushforward distributions generated by the Abelian group of anisotropic dilations, denoted by
$$ 
[\,P\,]_{\mathcal{T}} := \{ (\mathcal{T}_g)_\# P \mid g \in G \}.  
$$
\end{definition}
Recalling the polar decomposition $X = R_P U_P \sim P$, we observe that the anisotropic dilation map acts on the distribution $P$ exactly as it does pointwise: it scales the radial distribution $P_{R \mid U}$ while leaving the directional distribution $P_U$ intact. Therefore, the anisotropic radial orbit of $P$ consists of distributions that share the directional structure of $P$, with radial information related via $g$. The following theorem formalizes this behavior.
\begin{theorem}[Structure of Anisotropic Radial Orbits] \label{thm:structure-of-orbit}
    Let $P \in \mathcal P$ have Lebesgue density $p$. Let $X \sim P$ be a random vector with polar decomposition $X = R_P U_P$. For some fixed $g \in G$, define $Y := \mathcal{T}_g(X) \sim Q := (\mathcal{T}_g)_\# P$. Then, the polar decomposition $Y = R_Q U_Q$ satisfies
    \begin{align*}
       U_Q = U_P \quad \text{and} \quad R_Q = g(U_P)R_P \quad \text{almost surely.} 
    \end{align*}
    Furthermore, $Q$ admits the Lebesgue density
    \begin{align}
    q(x) = p\left(\frac{x}{g(x /\|x\|_2)} \right) g(x /\|x\|_2)^{-d}, \qquad x \neq 0. \label{eq:pushforward-change-of-variables}
\end{align}
\end{theorem}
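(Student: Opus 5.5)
The plan has two parts: the pointwise polar identities, then the density by the change-of-variables formula applied to the inverse homeomorphism $\mathcal T_{1/g}$. Since $P \ll \mathcal L^d$, we have $P(\{0\})=0$, so $X\neq 0$ almost surely. On this event $Y = g(U_P)R_P U_P$ with $g(U_P)R_P>0$. Hence $\|Y\|_2 = g(U_P)R_P$ and $Y/\|Y\|_2 = U_P$. This gives $R_Q = g(U_P)R_P$ and $U_Q=U_P$ almost surely, and it also shows $Q(\{0\})=0$, so the polar decomposition of $Q$ is well defined.

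For the density, I would first rule out a pitfall. $\mathcal T_g$ need not be differentiable, because $g$ is only continuous, so a Jacobian argument in Cartesian coordinates is not available. Instead I would work directly in polar coordinates, where the map is $(r,u)\mapsto(g(u)r,u)$. Fix a nonnegative Borel function $\varphi$. Writing $\mathbb E[\varphi(Y)]$ in polar form,
\begin{align*}
\mathbb E[\varphi(Y)] = \int_{\mathbb S^{d-1}}\int_0^\infty \varphi(g(u)ru)\, r^{d-1}p(ru)\,\mathrm dr\,\mathrm d\sigma(u).
\end{align*}
For each fixed $u$, I would substitute $s=g(u)r$ in the inner one-dimensional integral; this is legitimate because $g(u)$ is just a positive constant there. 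The inner integral becomes
\begin{align*}
\int_0^\infty \varphi(su)\, s^{d-1} g(u)^{-d}\, p\bigl(su/g(u)\bigr)\,\mathrm ds.
\end{align*}
Converting back to Cartesian coordinates with $x=su$, so that $u=x/\|x\|_2$ and $su/g(u) = x/g(x/\|x\|_2)$, identifies $\mathbb E[\varphi(Y)]=\int \varphi(x)q(x)\,\mathrm dx$ with $q$ as in \eqref{eq:pushforward-change-of-variables}. Taking $\varphi$ to be indicators of Borel sets then gives $Q\ll\mathcal L^d$ with density $q$. The value of $q$ at the single point $x=0$ is irrelevant, since it is a Lebesgue null set.

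The only delicate points are measure-theoretic. I need measurability of $(r,u)\mapsto \varphi(g(u)ru)$, which holds because it is a composition of Borel maps with $g$ continuous. I need the use of Tonelli for nonnegative integrands, which justifies writing the expectation as an iterated integral. And I need the polar formula $\mathrm dx = r^{d-1}\mathrm dr\,\mathrm d\sigma(u)$, which was assumed in Section~\ref{sec:preliminaries}. With these, avoiding any Jacobian of $\mathcal T_g$ is the main point, and the remaining steps are routine.
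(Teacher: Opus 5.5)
Your proposal is correct and follows essentially the same route as the paper: you first read off $R_Q=g(U_P)R_P$ and $U_Q=U_P$ from $Y=g(U_P)R_PU_P$, then obtain the density by testing against a nonnegative Borel function in polar coordinates, substituting $s=g(u)r$ along each ray, and converting back to Cartesian coordinates. Your remark that this raywise substitution avoids needing any Jacobian of $\mathcal{T}_g$ (which may not be differentiable) is a sound justification for why the polar approach is the right one.
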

\begin{proof}
    Since $X \neq 0$ almost surely, the polar decomposition is well-defined. By definition of the pushforward, $Y = \mathcal{T}_g(X) = g(U_P) R_P U_P$ almost surely. Because $g$ is strictly positive and $\|U_P\|_2 = 1$, taking the Euclidean norm yields $R_Q = \|Y\|_2 = g(U_P) R_P$. Normalizing $Y$ then gives $U_Q = Y/\|Y\|_2 = U_P$.
    
    To establish the form of the density $q(x)$, fix an arbitrary profile $g \in G$ and consider any nonnegative Borel measurable function $\psi: \mathbb{R}^d \to [0, \infty]$. By the definition of the pushforward $Q = (\mathcal{T}_g)_{\#}(P)$, we have
    \begin{align}
        \int_{\R^d} \psi(x)\mathrm{d}Q(x) = \int_{\R^d}\psi(\Tcal_g(z))\mathrm{d}P(z) & = \int_{\R^d} \psi(\Tcal_g(z))p(z)\mathrm{d}z \nonumber \\
        & = \int_{\R^d}\psi(g(z/\|z\|_2)z)p(z)\mathrm{d}z  \label{eq:Tg-def-CoV}\\
        & = \int_{\mathbb{S}^{d-1}}\int_0^{\infty}\psi(g(u)ru)p(ru)r^{d-1}\mathrm{d}r\mathrm{d}\sigma(u) \label{eq:polar-coords}\\
        & = \int_{\mathbb{S}^{d-1}}\int_0^{\infty} \psi(su) p\left( s /g(u) \cdot u\right)g(u)^{-d}s^{d-1}\mathrm{d}s \mathrm{d}\sigma(u) \label{eq:change-of-vars}\\
        & = \int_{\R^d} \psi(x) q(x) \mathrm{d}x \nonumber
    \end{align} where we used the definition of $\mathcal{T}_g$ in \eqref{eq:Tg-def-CoV}, converted to polar coordinates in \eqref{eq:polar-coords}, applied the change-of-variables $s = g(u)r$ with $\mathrm{d}r= \mathrm{d}s/g(u)$ in \eqref{eq:change-of-vars}, and converted back to standard coordinates while recognizing the definition of $q(x)$ from \eqref{eq:pushforward-change-of-variables} in the final equality.
\end{proof}
\begin{figure}[b!]
    \centering
    \includegraphics[width=1\linewidth]{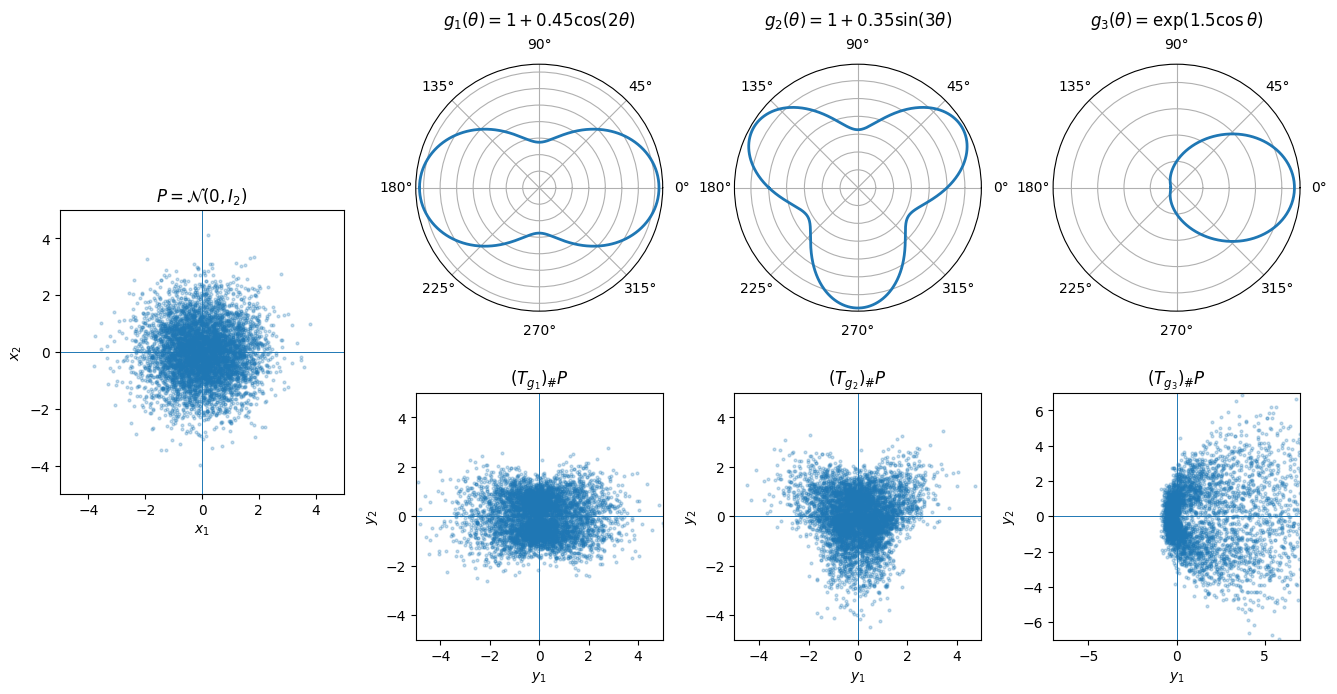}
    \caption{Given the base distribution $P = \mathcal{N}(0,I_2)$, we visualize the samples generated by $(\Tcal_{g_i})_{\#}P$ for different scaling profiles $g_1,g_2,g_3$ where $g_1(u):=1 + 0.45 \cos (2\theta)$, $g_2(u) := 1+0.35\sin(3\theta)$ and $g_3(u):= \exp(1.5\cos\theta)$ with $u = (\cos \theta, \sin \theta)^T$. In each column, we visualize how $g_i$ distorts the boundary of the unit sphere (top) and samples from the induced distribution $(\Tcal_{g_i})_{\#}P$ (bottom).
    }   
    \label{fig:examples-of-Tg-gaussian}
\end{figure}
 This theorem demonstrates that anisotropic dilations fix the directional distribution $P_U$ while flexibly rescaling the conditional radial distribution along each ray via $g$. As  illustrated in Figure~\ref{fig:examples-of-Tg-gaussian}, different scaling profiles $g$ can significantly reshape the radial geometry of an unstructured prior, such as an isotropic Gaussian.

\paragraph{Transformation of Polar Quantities} We next translate this raywise description into transformation laws for the polar quantities introduced in Section~\ref{sec:preliminaries}. We show that since the anisotropic dilation map $\mathcal{T}_g$ leaves the directional distribution $U\sim P_U$ unchanged, the directional marginal $h_P$ is invariant along the orbit. Moreover, the conditional mean radius scales in a direction-dependent fashion $m_{(\mathcal{T}_g)_{\#}P}(u)=g(u)m_P(u)$ since the conditional radius changes via $R \mapsto g(U) R$. One can combine these two identities to yield an explicit transformation law for the radial summary statistic $\rho_{(\mathcal{T}_g)_\# P}$. The following corollary records these relations and, in particular, shows that the class $\mathcal{P}$ is closed under anisotropic dilation.

\begin{corollary}[Transformation of Polar Quantities under Anisotropic Dilation] \label{cor:pushforward-rho}
    Let $P \in \mathcal{P}$ have directional marginal $h_P$, conditional mean radius $m_P$, and radial summary statistic $\rho_P$. Then for any $g \in G$, the pushforward $Q = (\mathcal{T}_g)_{\#}P \in [\, P\,]_{\mathcal T}$ satisfies
    \begin{align}
        h_Q(u) = h_P(u), \qquad m_Q(u) = g(u)m_P(u). \label{eq:pushforward-h-m}
    \end{align}
    Consequently, 
    the radial summary statistic of $Q$ satisfies 
    \begin{align}
         \rho_Q(u) = g(u)^{1/(d+1)}\rho_P(u). \label{eq:pushforward-rho} 
    \end{align}
    Thus, $Q \in \mathcal{P}$, meaning that pushforward by the Abelian group $\{ \mathcal{T}_g : g \in G \}$ defines a group action on $\mathcal{P}$.
\end{corollary}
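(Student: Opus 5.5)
The plan is to compute everything directly from the explicit density formula \eqref{eq:pushforward-change-of-variables} in Theorem~\ref{thm:structure-of-orbit}, working one ray at a time. Fix $u \in \mathbb{S}^{d-1}$. Since $x = ru$ has $x/\|x\|_2 = u$, the density of $Q$ on this ray is
\begin{align*}
q(ru) = p\bigl(r u/g(u)\bigr)\, g(u)^{-d}, \qquad r>0.
\end{align*}
I will substitute this into the definitions \eqref{eq:directional-marginal} and \eqref{eq:radial-summary-statistic}. On each ray the change of variables $s = r/g(u)$ is a one-dimensional linear substitution, so every integral reduces to the corresponding integral for $p$ times a power of $g(u)$.

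For the directional marginal,
\begin{align*}
h_Q(u) = \int_0^\infty r^{d-1} q(ru)\,\mathrm{d}r = g(u)^{-d}\int_0^\infty r^{d-1} p\bigl(ru/g(u)\bigr)\,\mathrm{d}r.
\end{align*}
Setting $r = g(u)s$ produces a factor $g(u)^{d-1}\cdot g(u) = g(u)^d$, which cancels, so $h_Q(u) = h_P(u)$. The same substitution in $\int_0^\infty r^d q(ru)\,\mathrm{d}r$ leaves one extra factor of $g(u)$. This gives $\rho_Q(u)^{d+1} = g(u)\rho_P(u)^{d+1}$, and taking $(d+1)$-th roots gives \eqref{eq:pushforward-rho}.

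The conditional mean radius then follows from the decomposition \eqref{eq:rho-decomposition}, applied once $h_Q>0$ is known: $m_Q = \rho_Q^{d+1}/h_Q = g\, m_P$. As a consistency check, the same identity follows probabilistically from Theorem~\ref{thm:structure-of-orbit}. There $U_Q = U_P$ and $R_Q = g(U_P)R_P$, so conditioning on $U_P = u$ gives $\mathbb{E}[R_Q\mid U_Q=u] = g(u)\,\mathbb{E}[R_P\mid U_P = u]$. I will use the density computation as the primary argument, because it holds for every $u$ rather than only $P_U$-almost everywhere.

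Finally, membership $Q \in \mathcal{P}$ follows from three facts. First, $Q \ll \mathcal{L}^d$ with density $q$, by Theorem~\ref{thm:structure-of-orbit}. Second, $h_Q = h_P \in G$. Third, $\rho_Q = g^{1/(d+1)}\rho_P$ is a product of continuous, strictly positive functions, hence lies in $G$. The group-action property comes from $(\mathcal{T}_g)_\#(\mathcal{T}_h)_\# = (\mathcal{T}_g\circ\mathcal{T}_h)_\# = (\mathcal{T}_{gh})_\#$ together with the earlier Abelian group proposition.

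I expect the only real subtlety to be measure-theoretic. The formula for $q$ identifies the density only Lebesgue-a.e., while $h_Q$ and $\rho_Q$ are required pointwise on the whole sphere. I will handle this by fixing the version of $q$ given by \eqref{eq:pushforward-change-of-variables}, as the statement implicitly does, and noting that the Tonelli computations are then valid for every $u$, with all integrals finite.
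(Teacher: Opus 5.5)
Your proposal is correct and follows the paper's proof essentially verbatim: both substitute the pushforward density along each ray with $s = r/g(u)$ to get $h_Q = h_P$. The only cosmetic difference is that the paper computes $m_Q = g\,m_P$ directly and then obtains $\rho_Q$ from $\rho^{d+1} = h\,m$, whereas you compute the first radial moment (hence $\rho_Q$) directly and recover $m_Q$ from the same identity, with $Q \in \mathcal P$ following in both from positivity and continuity of $g$, $h_P$, and $\rho_P$.
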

\begin{proof}
    Let $x = ru$ with $r = \|x \|_2$ and $u = x/\|x\|_2$. Using \eqref{eq:pushforward-change-of-variables}, we compute the directional marginal $h_Q(u)$ for any $u \in \mathbb{S}^{d-1}$:
    \begin{align}
        h_Q(u) = \int_0^\infty q(ru) r^{d-1} \mathrm{d}r &= \int_0^\infty p\left(\frac{r}{g(u)}u\right) g(u)^{-d} r^{d-1} \mathrm{d}r \nonumber \\
        &= \int_0^\infty p(su) g(u)^{-d} (g(u)s)^{d-1} g(u) \mathrm{d}s \label{eq:change-of-vars-2}\\
        & = \int_0^\infty p(su) s^{d-1} \mathrm{d}s  = h_P(u), \nonumber
    \end{align}
    where in \eqref{eq:change-of-vars-2} we substituted $s = r/g(u)$, hence $\mathrm{d}r = g(u)\mathrm{d}s$. The conditional radial mean $m_Q(u)$ uses the exact same change of variables, giving
    \begin{align*}
        m_Q(u) &= \frac{1}{h_Q(u)} \int_0^\infty r q(ru) r^{d-1} \mathrm{d}r = \frac{1}{h_P(u)} \int_0^\infty g(u)s \, p(su) s^{d-1} \mathrm{d}s =   g(u)m_P(u).
    \end{align*}
    Finally, applying the polar decomposition of radial summary statistics \eqref{eq:rho-decomposition}, we have
    \begin{align*}
        \rho_Q(u) = \left( h_Q(u) m_Q(u) \right)^{1/(d+1)} = \left( h_P(u) g(u) m_P(u) \right)^{1/(d+1)} = g(u)^{1/(d+1)} \rho_P(u).
    \end{align*}
    Because $P \in \mathcal{P}$ and $g \in G$, $h_P$ and $\rho_P$ are strictly positive and continuous, ensuring $h_Q$ and $\rho_Q$ are as well. Thus, $Q \in \mathcal{P}$, confirming the group action. 
\end{proof}
Corollary~\ref{cor:pushforward-rho} reveals a direct algebraic relationship between a chosen scaling profile $g \in G$ and the resulting summary statistic $\rho_Q \in G$ via $\rho_{(\mathcal{T}_g)_\#P}=g^{1/(d+1)}\rho_P$. A simple, but significant consequence of this result is that one can always choose the profile $g_*$ so that the distribution $(T_{g_*})_{\#}P$ has a desired radial summary statistic $\rho_*$. In particular, equation \eqref{eq:pushforward-rho} immediately gives the following Corollary:
\begin{corollary}[Matching Radial Summary Statistics] \label{cor:matching-radial-stat}
    Fix $P \in \mathcal{P}$. For any target $\rho_* \in G$, there is a unique $g_*\in G$ that satisfies $\rho_{(\mathcal{T}_{g_*})_\# P} = \rho_*$, which is given by \begin{align}
        g_*(u) = \left(\frac{\rho_*(u)}{\rho_P(u)}\right)^{d+1}. \label{eq:opt-g-star}
    \end{align}
\end{corollary}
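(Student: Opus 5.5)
The argument splits into existence and uniqueness, and both follow directly from the transformation law \eqref{eq:pushforward-rho} of Corollary~\ref{cor:pushforward-rho}. That corollary says that for every $g \in G$ the pushforward $(\mathcal{T}_g)_\# P$ lies in $\mathcal{P}$ and has radial summary statistic $g^{1/(d+1)}\rho_P$. So the condition $\rho_{(\mathcal{T}_g)_\# P} = \rho_*$ is equivalent to the pointwise equation
\begin{align*}
g(u)^{1/(d+1)}\rho_P(u) = \rho_*(u), \qquad u \in \mathbb{S}^{d-1},
\end{align*}
on the sphere. The whole statement then reduces to solving this scalar equation in the group $G$.

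\textbf{Existence.} I will define $g_*$ by \eqref{eq:opt-g-star} and first check that it belongs to $G$. Since $P \in \mathcal{P}$, we have $\rho_P \in G$, so $\rho_P$ is continuous and strictly positive on $\mathbb{S}^{d-1}$. The target $\rho_*$ is in $G$ by hypothesis. Hence the quotient $\rho_*/\rho_P$ is continuous and strictly positive, and raising it to the power $d+1$ preserves both properties. Thus $g_* \in G$. Substituting into \eqref{eq:pushforward-rho} gives $g_*^{1/(d+1)}\rho_P = (\rho_*/\rho_P)\rho_P = \rho_*$, since all quantities are positive and real positive roots are unambiguous.

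\textbf{Uniqueness.} Suppose some $g \in G$ also satisfies $\rho_{(\mathcal{T}_g)_\# P} = \rho_*$. By \eqref{eq:pushforward-rho}, $g(u)^{1/(d+1)}\rho_P(u) = \rho_*(u)$ for every $u$. Because $\rho_P(u) > 0$, we can divide to get $g(u)^{1/(d+1)} = \rho_*(u)/\rho_P(u)$. The map $t \mapsto t^{1/(d+1)}$ is a bijection of $(0,\infty)$, so this forces $g(u) = g_*(u)$ pointwise, i.e.\ $g = g_*$.

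There is no real obstacle here. The only point needing care is that equality of $\rho$'s holds everywhere on the sphere, not just almost everywhere. This is guaranteed because Corollary~\ref{cor:pushforward-rho} gives the identity pointwise for every $u$, as $\rho$ is defined pointwise via \eqref{eq:radial-summary-statistic} for members of $\mathcal{P}$. Consequently, uniqueness holds in $G$ as functions, and, via the injectivity of $g \mapsto \mathcal{T}_g$ in the Abelian group proposition, also at the level of the maps $\mathcal{T}_g$.
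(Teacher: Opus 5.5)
Your proposal is correct and follows essentially the same route as the paper, which presents this corollary as an immediate consequence of the transformation law $\rho_{(\mathcal{T}_g)_\#P}=g^{1/(d+1)}\rho_P$ from Corollary~\ref{cor:pushforward-rho}. Your explicit check that $g_*\in G$ and your uniqueness argument via the bijectivity of $t\mapsto t^{1/(d+1)}$ on $(0,\infty)$ just spell out that one-line deduction.
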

Corollary \ref{cor:matching-radial-stat} highlights a significant benefit of the anisotropic dilation map. In particular, we showed that the radial summary statistic identifies the optimal regularizer for a given data distribution. Through the action of the map $\mathcal{T}_g$ on $P$, one can design a scaling profile $g_*$ in order to map a given distribution $P$ to a new distribution $Q = (\mathcal{T}_{g_*})_\#P$ with a desired radial summary statistic and, hence, desired optimal regularizer.  

 \paragraph{Structural Rigidity of the Orbit} While the previous discussion highlighted how the anisotropic dilation map is flexible on the space of radial summary statistics, we show here that there is still rigidity of the map at the level of distributions. This can be seen by Theorem \ref{thm:structure-of-orbit}, which showed how certain properties (e.g., its directional distribution) remain fixed. A consequence of this is that a given orbit need not contain every distribution in $\mathcal{P}$. In particular, we demonstrate that the group $\{ \mathcal{T}_g : g \in G\}$ does not act transitively on $P$ by exhibiting two distributions that belong to distinct orbits. Indeed, conditional on each direction $u$, anisotropic dilation changes the radial distribution only through multiplication by the positive scalar $g(u)$. Consequently, scale-invariant features of these conditional distributions, including quantile ratios and standardized moments, remain fixed across the orbit \cite[Section 3.5]{casella2024statistical}. Tail exponents are likewise preserved under this scaling. The following example illustrates the latter point, which is that a distribution with a slower tail decay cannot be reached starting from a distribution with a faster tail decay.

\begin{proposition}[Non-transitivity on the Space of Distributions] \label{cor:normal-gibbs}
     Let $P = \mathcal{N}(0, I_d)$ be the standard multivariate normal distribution, and let $Q$ be a Gibbs distribution with density $q(x) \propto \exp(-\|x\|_K)$ characterized 
     by a star body $K$. Then $Q \notin [\,P\,]_{\mathcal{T}}$.
\end{proposition}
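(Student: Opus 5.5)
We argue by contradiction: suppose $Q = (\mathcal{T}_g)_\# P$ for some $g \in G$. The plan is to compare the conditional radial laws along a single fixed direction. By Theorem~\ref{thm:structure-of-orbit}, $U_Q = U_P$ and $R_Q = g(U_P) R_P$ almost surely. Equivalently, by \eqref{eq:pushforward-change-of-variables}, for every $u \in \mathbb{S}^{d-1}$ and $r>0$,
\begin{align*}
q(ru) = p\left(\frac{r}{g(u)}u\right) g(u)^{-d} = (2\pi)^{-d/2} g(u)^{-d} \exp\left(-\frac{r^2}{2g(u)^2}\right).
\end{align*}
Since both $q$ and the right-hand side are continuous on $\mathbb{R}^d\setminus\{0\}$ (the Gibbs density because $\|\cdot\|_K$ is continuous, the right side because $g\in G$), the almost-everywhere equality of densities upgrades to pointwise equality away from the origin. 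This step needs a brief justification, since pushforward densities are only determined up to null sets.

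On the other hand, positive homogeneity of the gauge gives $q(ru) = c_K^{-1}\exp(-r\|u\|_K) = c_K^{-1}\exp(-r/\rho_K(u))$, where $c_K = \int \exp(-\|x\|_K)\,\mathrm{d}x \in (0,\infty)$. Fix any direction $u$ and take logarithms. We get, for all $r>0$,
\begin{align*}
-\log c_K - \frac{r}{\rho_K(u)} = -\tfrac{d}{2}\log(2\pi) - d\log g(u) - \frac{r^2}{2g(u)^2}.
\end{align*}
The left side is affine in $r$, while the right side has the nonzero quadratic coefficient $-1/(2g(u)^2)$, since $g(u)\in(0,\infty)$. Two polynomials agreeing on $(0,\infty)$ must have equal coefficients, which is a contradiction. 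Alternatively, letting $r\to\infty$, the ratio $q(ru)/[\text{right side}]$ diverges, which captures the tail-exponent heuristic stated before the proposition.

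The only delicate point is the a.e.\ versus pointwise identification. If continuity is not used, one can instead integrate. The directional marginals must agree, $h_Q = h_P$, and the conditional radial laws must agree for $P_U$-a.e.\ $u$. For $P$ the conditional radial law is $\chi_d$ scaled by $g(u)$; for $Q$ it is $\mathrm{Gamma}(d,\rho_K(u))$. A scaled $\chi_d$ has Gaussian tails and a Gamma law does not, so these laws cannot coincide for any $u$, by comparing, e.g., $\mathbb{E}[e^{tR}]$ for large $t$ or the tail behavior. Either route gives $Q\notin[\,P\,]_{\mathcal{T}}$.
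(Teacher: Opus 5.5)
Your proposal is correct and follows essentially the same route as the paper. You assume $Q=(\mathcal{T}_g)_\# P$, use continuity of both densities on $\mathbb{R}^d\setminus\{0\}$ to get pointwise equality, and compare log-densities along a fixed ray; your coefficient comparison is equivalent to the paper's differentiating twice in $r$ to get $-1/g(u)^2=0$, and your alternative via the conditional radial laws (scaled $\chi_d$ versus $\mathrm{Gamma}(d,\rho_K(u))$) is also valid but not needed.
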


\begin{proof}
    Let $P = \mathcal{N}(0, I_d)$ with density $p(x) = (2\pi)^{-d/2} \exp(-\|x\|_2^2/2)$, and let $Q$ have the Gibbs density $q(x) = Z_K^{-1} \exp(-\|x\|_K)$. Suppose for contradiction that there exists a scaling profile $g \in G$ such that $Q = \widetilde{Q} := (\mathcal{T}_g)_\# P$. Let $\widetilde{q}$ denote the density of the pushforward $\widetilde{Q}$. Because $g \in G$ is strictly positive and continuous, \eqref{eq:pushforward-change-of-variables} implies that $\widetilde{q}$ is continuous on $\mathbb{R}^d \setminus \{0\}$. The target Gibbs density $q$ is also continuous on this domain. Since the induced probability measures are equal, their continuous densities must be identically equal everywhere on $\mathbb{R}^d \setminus \{0\}$. 
    
    Therefore, we can fix an arbitrary direction $u \in \mathbb{S}^{d-1}$ and evaluate the densities along the ray $x = ru$ for all $r > 0$. Applying \eqref{eq:pushforward-change-of-variables}, the log-density of the pushforward evaluated along this ray is 
    $$
        \log \widetilde{q}(ru) = \log p\left(\frac{ru}{g(u)}\right) - d \log g(u) = - \frac{r^2}{2g(u)^2} - d \log g(u) + C_1,
    $$
    where $C_1 = -\frac{d}{2}\log(2\pi)$. Conversely, the log-density of the target Gibbs distribution is
    $$
        \log q(ru) = -r\|u\|_K + C_2,
    $$
    where $C_2 = -\log Z_K$. For these distributions to coincide, their log-densities must be exactly equal for all $r > 0$. Equating the two expressions and differentiating twice with respect to $r$ yields $-1/(g(u)^2) = 0$. Since $g(u) > 0$ by definition, this equation has no solution, yielding a contradiction. Thus, no such profile $g$ exists, and $Q \notin [\,P\,]_{\mathcal{T}}$.
\end{proof}

\subsection{Optimization Benefits of Matching Radial Summary Statistics} \label{sec:canonicalization}

The previous section showed that one can exploit the change-of-variables formula to show that $\Tcal_g$ can map $P$ to a distribution $Q := (\Tcal_g)_{\#}(P)$ such that $Q$ has a desired radial summary statistic with a specific choice of $g_*$ given by \eqref{eq:opt-g-star} in Corollary \ref{cor:matching-radial-stat}. Here, we ask what the potential benefits of such a property may be. One potential benefit is that this gives a direct way to choose a profile $g$ to ensure $\rho_Q$ is much simpler than $\rho_P$. This may be desirable, for example, when the gauge or regularizer $\|\cdot\|_{K_P} \propto\rho_P(\cdot)^{-1}$ is highly nonconvex, but one would still like to use it as a regularizer in a variational formulation to solve inverse problems. A potential solution to this issue is to map $P \mapsto Q = (\Tcal_g)_{\#}(P)$ in such a way that the induced variational optimization problem with regularizer $\|\cdot\|_{K_Q}\propto \rho_Q^{-1}$ is better behaved than the original formulation. We show here that this can indeed be done in a simple setting.

Suppose we aim to solve a denoising problem $y = x + \eta$ where $x\sim P$ and $\eta \in \R^d$ denotes noise. A natural formulation to solve this using the regularizer $\|\cdot\|_{K_P}$ is to solve the following variational problem for a choice of $\lambda > 0$, \begin{align}
    \min_{x \in \R^d} F_{\mathrm{orig}}(x): = \frac{1}{2}\|y - x\|_2^2 + \lambda \|x \|_{K_P}. \label{eq:original-problem}
\end{align} An alternative formulation is to exploit the mapping $\Tcal_g$ and pose the problem as a regularized variational problem where we view $x \sim P$ as $x = (\Tcal_g)^{-1}(z)= \Tcal_{1/g}(z)$ with $z \sim Q = (\Tcal_g)_{\#}(P)$ and we regularize $z$ using $\|\cdot\|_{K_Q}$ instead: \begin{align}
    \min_{z \in \R^d} F_{\mathrm{inv}}(z) := \frac{1}{2}\|\Tcal_{1/g}(z) - y\|_2^2 + \lambda\|z\|_{K_Q}. \label{eq:transformed-problem}
\end{align} Formulations of this type arise in inverse problems with generative priors, where the unknown is parameterized by a generator and regularization is imposed on its latent variables \cite{Boraetal17, asim2020invertible, Duffetal2024regularising}. Related formulations have also been studied within nonlinear synthesis regularization \cite{Obmannetal2021synthesis}. Here, we exploit the explicit structure of anisotropic dilation to establish conditions under which formulation \eqref{eq:transformed-problem} is weakly convex, even though formulation \eqref{eq:original-problem} is not weakly convex on any neighborhood of the origin. Prior to the statement of the theorem, recall that a function $F$ is $\beta$-weakly convex if $F(\cdot) + \frac{\beta}{2}\|\cdot\|_2^2$ is convex for any $\beta \geq \beta_*$.
\begin{theorem} \label{thm:better-landscape}
Let $P$ be a distribution with density $p$ on $\R^d$ such that $\rho_P$ is the radial function of a nonconvex star body $K_P$. Then for any $\lambda > 0$, the objective \eqref{eq:original-problem} is never weakly convex on any neighborhood of the origin. In contrast, suppose we set $g(u):= \rho_P(u)^{-(d+1)}$ with $\rho_P$ $C^2$ on the sphere. Then $\|\cdot\|_{K_Q}=\|\cdot\|_2$ and there exists a constant $\beta_* \in (0,\infty)$ and a parameter $\lambda_* \geq 0$ such that whenever $\lambda \geq \lambda_*$, the objective \eqref{eq:transformed-problem} is $\beta_*$-weakly convex.  
\end{theorem}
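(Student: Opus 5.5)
The plan has two independent parts: a homogeneity and blow-up argument for the negative claim about \eqref{eq:original-problem}, and an explicit second-order computation, organized by degrees of homogeneity, for the positive claim about \eqref{eq:transformed-problem}.

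\emph{Part 1: \eqref{eq:original-problem} is not weakly convex near the origin.} The term $\frac12\|y-x\|_2^2$ is a quadratic. So $F_{\mathrm{orig}}$ is weakly convex on a ball $B(0,\delta)$ if and only if $f:=\lambda\|\cdot\|_{K_P}$ is. Suppose $f+\frac{\beta}{2}\|\cdot\|_2^2$ were convex on $B(0,\delta)$, and fix arbitrary $x,w\in\R^d$. For small $t>0$ the points $tx,tw$ lie in the ball, so midpoint convexity gives
\[
f\bigl(t\tfrac{x+w}{2}\bigr)\le \tfrac12 f(tx)+\tfrac12 f(tw)+\tfrac{\beta t^2}{8}\|x-w\|_2^2 .
\]
Since $f$ is positively $1$-homogeneous, dividing by $t$ and letting $t\downarrow 0$ yields midpoint convexity of $f$ on all of $\R^d$. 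Together with continuity, this makes $f$ convex. Its sublevel set $\{f\le \lambda\}=K_P$ would then be convex, contradicting the hypothesis.

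\emph{Part 2: \eqref{eq:transformed-problem} is weakly convex for large $\lambda$.}

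Step 1 (the new gauge). By Corollary~\ref{cor:pushforward-rho}, the choice $g=\rho_P^{-(d+1)}$ gives $\rho_Q=g^{1/(d+1)}\rho_P\equiv 1$. Hence $K_Q=B^d$ and $\|\cdot\|_{K_Q}=\|\cdot\|_2$.

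Step 2 (splitting by homogeneity). Write $\varphi:=\Tcal_{1/g}$, so $\varphi(z)=\rho_P(z/\|z\|_2)^{d+1}z$. This map is positively $1$-homogeneous and $C^2$ on $\R^d\setminus\{0\}$, because $\rho_P$ is $C^2$ on the sphere. Expand
\[
F_{\mathrm{inv}}(z)=\underbrace{\tfrac12\|\varphi(z)\|_2^2}_{A(z)}+\underbrace{\lambda\|z\|_2-\langle \varphi(z),y\rangle}_{B(z)}+\tfrac12\|y\|_2^2 .
\]

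Step 3 (the degree-$2$ term). $A$ is $2$-homogeneous and $C^2$ off the origin, so $\nabla^2 A$ is $0$-homogeneous. Let $\beta_*:=\sup_{u\in\mathbb S^{d-1}}\|\nabla^2A(u)\|_{\mathrm{op}}<\infty$. Then $\nabla^2A(z)+\beta_* I\succeq 0$ for all $z\ne0$.

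Step 4 (the degree-$1$ term). $B$ is $1$-homogeneous, so off the origin $\nabla^2B(z)=\|z\|_2^{-1}H(u)$ with $u=z/\|z\|_2$ and $H(u)u=0$. Thus only the tangent directions matter. There
\[
H(u)=\lambda(I-uu^\top)+M_y(u),\qquad \|M_y(u)\|_{\mathrm{op}}\le C_\rho\|y\|_2,
\]
where $C_\rho$ depends on the $C^2$ norm of $\rho_P^{d+1}$ on the sphere. Hence $H(u)\succeq0$ whenever $\lambda\ge C_\rho\|y\|_2$.

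Step 5 (across the origin). Off the origin, Steps 3 and 4 give $\nabla^2\bigl(F_{\mathrm{inv}}+\frac{\beta_*}{2}\|\cdot\|_2^2\bigr)\succeq0$. This gives convexity along every line avoiding $0$. On a line $t\mapsto tu$ through $0$, the function $B$ restricts to a piecewise linear function of $t$. Its right slope at $0$ is $\lambda-\langle\varphi(u),y\rangle$ and its left slope is $-\lambda+\langle\varphi(-u),y\rangle$. Convexity at the kink therefore holds provided $\lambda\ge \max_u\rho_P(u)^{d+1}\|y\|_2$. The term $A$ is $C^1$ at the origin (its gradient is $1$-homogeneous and continuous), so it causes no trouble there.

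Setting $\lambda_*:=\max\{C_\rho,\max\rho_P^{d+1}\}\,\|y\|_2$, the function is convex along lines through $0$ and along lines avoiding it. It is also continuous, and a continuous function that is convex along every line is convex. So $F_{\mathrm{inv}}+\frac{\beta_*}{2}\|\cdot\|_2^2$ is convex for all $\lambda\ge\lambda_*$.

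The main obstacle is Step 4 together with Step 5. The cross term $-\langle\varphi(z),y\rangle$ has curvature blowing up like $1/\|z\|_2$ near the origin, so no quadratic can absorb it. The point is that this singular curvature is $1$-homogeneous and purely tangential, exactly like that of $\lambda\|z\|_2$, so it can be dominated by $\lambda$. This explains why the threshold $\lambda_*$ scales with $\|y\|_2$, while $\beta_*$ depends only on $\rho_P$.
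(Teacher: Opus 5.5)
Your proposal is correct. Part~1 and your treatment of the degree-$2$ term $A$ (the paper's $F_1$) match the paper, including the splitting $F_{\mathrm{inv}}=F_1+F_2+\mathrm{const}$. Your $\varphi$ is the paper's $\Phi(z)=z\,\psi(z/\|z\|_2)$ with $\psi=\rho_P^{d+1}$.

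The genuine difference is how you prove that the $1$-homogeneous term $B=F_2=\lambda\|z\|_2-\langle y,\Phi(z)\rangle$ is convex. The paper argues globally and without Hessians. It uses positive homogeneity plus subadditivity, and gets subadditivity from Lemma~\ref{lem:technical-bound-convexity}, namely $\|\Phi(z_1)+\Phi(z_2)-\Phi(z_1+z_2)\|_2\le C_\psi(\|z_1\|_2+\|z_2\|_2-\|z_1+z_2\|_2)$. That lemma needs a two-case proof: a second-order Taylor expansion for nearby directions and a Lipschitz bound for far-apart ones. You argue infinitesimally instead. Euler's identity gives $\nabla^2B(z)=\|z\|_2^{-1}H(u)$ with $H(u)u=0$. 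So the singular curvature of $-\langle y,\Phi\rangle$ is purely tangential, and the tangential curvature $\lambda(I-uu^\top)/\|z\|_2$ of the norm dominates it. You then glue lines avoiding $0$ to lines through $0$.

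What your route buys: it is shorter, avoids the case analysis, and makes transparent why $\lambda_*$ scales with $\|y\|_2$. It also gives a smaller explicit threshold. With $C_\rho=L_{\psi,2}$, yours is $\max\{L_{\psi,2},M_\psi\}\|y\|_2$, versus the paper's $C_\psi\|y\|_2=\max\{16L,\tfrac{32}{15}L_{\psi,2}\}\|y\|_2$, in the notation of the Lemma's proof. What the paper's subadditivity route buys: it handles the origin for free, since antipodal pairs $z_2=-z_1$ are a special case of the inequality. It also yields a derivative-free estimate for $1$-homogeneous maps that can be reused elsewhere.

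One simplification of your Step~5. A nonconvex star body forces $d\ge 2$, so every line through $0$ is a pointwise limit of parallel lines avoiding $0$. Since $B$ and $F_{\mathrm{inv}}+\tfrac{\beta_*}{2}\|\cdot\|_2^2$ are continuous, convexity along lines through the origin already follows from your off-origin Hessian bounds. The kink condition, and with it the term $\max\psi\,\|y\|_2$ in your $\lambda_*$, is therefore redundant.
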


\begin{proof}
    For the first part of the result, assume for the sake of contradiction that $F_{\mathrm{orig}}$ is weakly convex for some $\lambda > 0$ on a ball $B(0,\epsilon)$. That is, for some $\lambda > 0$,  there exists a $\beta \geq 0$ such that $$H_{\beta}(x):= F_{\mathrm{orig}}(x) + \frac{\beta}{2}\|x\|_2^2$$ is convex on $B(0,\epsilon)$. Consider two arbitrary points $x,q \in \R^d$ and their linear combination $w(\theta):= \theta x + (1-\theta)q$ for $\theta \in [0,1]$. For some sufficiently small $t > 0$, $tx, tq,tw \in B(0,\epsilon)$. Convexity of $H_{\beta}$ implies that \begin{align*}
        H_{\beta}(tw(\theta)) \leq \theta H_{\beta}(tx) + (1-\theta) H_{\beta}(tq).
    \end{align*} Expanding and using positive homogeneity of the gauge, the above inequality is equivalent to \begin{align*}
        t\lambda \|w(\theta)\|_{K_P} & - t \langle y,w(\theta)\rangle + \frac{1+\beta}{2}t^2 \|w(\theta)\|_2^2 \leq \theta\left(\lambda t \|x\|_{K_P} - t\langle y,x\rangle + \frac{1+\beta}{2}t^2\|x\|_2^2\right) \\
        & \quad + (1-\theta) \left(\lambda t \|q\|_{K_P} - t\langle y,q\rangle + \frac{1+\beta}{2}t^2\|q\|_2^2\right).
    \end{align*} Then dividing by $t > 0$ and noting that $\langle y,w(\theta)\rangle = \theta \langle y, x\rangle + (1-\theta)\langle y, q\rangle,$ the linear terms cancel. Letting $t \downarrow 0$, we obtain $$\lambda \|w(\theta)\|_{K_P} \leq \lambda \theta\|x\|_{K_P} + \lambda (1-\theta)\|q\|_{K_P}.$$ Since $\lambda > 0$, this is equivalent to convexity of the gauge $x \mapsto \|x\|_{K_P}$, which is a contradiction.

    For the second part of the result, note that when $g(u)=\rho_P(u)^{-(d+1)}$, $\rho_Q(u)^{d+1} =1$ so that $\|\cdot\|_{K_Q} = \|\cdot\|_2$. Consider the map $\psi(u):= \rho_P(u)^{d+1}$. Since $\rho_P$ is $C^2$ on the sphere, $\psi$ is also $C^2$ on the sphere. Further, note that for $z = ru \neq 0$ with $r > 0$ and $u=z/\|z\|_2$, we have \begin{align*}
        F_{\mathrm{inv}}(ru) & = \frac{1}{2}r^2\psi(u)^2 + r(\lambda - \psi(u)\langle y,u\rangle) + \frac{1}{2}\|y\|^2_2.
    \end{align*} Let $F_1(z):= \frac{1}{2}\|z\|^2_2 \psi(z/\|z\|_2)^2$ and $F_2(z):= \|z\|_2(\lambda - \psi(z/\|z\|_2)\langle y, z/\|z\|_2\rangle).$ Then $F_{\mathrm{inv}}(z) = F_1(z) + F_2(z) + \mathrm{const}$ so it suffices to show $F_1 + F_2$ is weakly convex. We will show that $F_1$ is weakly convex and $F_2$ is convex for $\lambda > 0$ sufficiently large. For $F_1$, note that $F_1$ is $2$-homogeneous and $C^2$ away from the origin. Since $\psi \in C^2(\mathbb{S}^{d-1})$, the Hessian $\nabla^2F_1(z)$ is continuous on $\R^d \setminus\{0\}$ and zero-homogeneous, i.e., $\nabla^2 F_1(tz) = \nabla^2 F_1(z)$ for all $t > 0$ \cite[Lemma D.1]{ji2020directional}. Hence $\sup_{\|z\|_2=1} \|\nabla^2 F_1(z)\|_{\mathrm{op}} < \infty$, which means that if we choose $\beta_* := 1 + \sup_{\|z\|_2=1} \|\nabla^2 F_1(z)\|_{\mathrm{op}} \in (0,\infty)$, we have that the Hessian of the map $z\mapsto F_{1,\beta_*}(z):= F_1(z) + \frac{\beta_*}{2}\|z\|_2^2$ is positive semi-definite for any $z \neq 0$, meaning that it is convex away from $0$. Now, if one considers a line $\ell(t):= t v$ for some $v\neq0$ passing through the origin, we want to show that $t\mapsto F_{1,\beta_*}(\ell(t))$ is convex. By positive two-homogeneity, $F_{1,\beta_*}(\ell(t)) = t^2 F_{1,\beta_*}(v)$ if $t \geq 0$ and $t^2F_{1,\beta_*}(-v)$ otherwise. Moreover, $F_{1,\beta_*}(\pm v) \geq 0$ so that the map $t \mapsto F_{1,\beta_*}(\ell(t))$ has second derivative $2F_{1,\beta_*}(\pm v) \geq 0$ with first derivative $2tF_{1,\beta_*}(-v) \leq 0$ for $t < 0$ and $2tF_{1,\beta_*}(v) \geq 0$ for $t > 0$. Hence $F_{1,\beta_*}(\ell(t))$ is convex on $\R$. Since $F_{1,\beta_*}$ has $\nabla^2 F_{1,\beta_*}(z) \succeq 0$ for $z \neq 0$ and the restriction of $F_{1,\beta_*}$ to any line passing through the origin is convex, $F_{1,\beta_*}$ is convex, establishing that $F_1$ is $\beta_*$-weakly convex.

    It remains to show that $$z\mapsto F_2(z)=\lambda \|z\|_2 - \langle y,z\rangle \psi(z/\|z\|_2)$$ is convex for sufficiently large $\lambda > 0$. Since $F_2$ is positively homogeneous, it suffices to show that $F_2$ is sub-additive. For arbitrary $z_1,z_2 \in \R^d$, let $\delta := \|z_1\|_2 + \|z_2\|_2 - \|z_1 + z_2\|_2 \geq 0.$ We first require a technical estimate, which is proven in Lemma \ref{lem:technical-bound-convexity} in the Appendix. In particular, Lemma \ref{lem:technical-bound-convexity} shows that when the map $\Phi(z):= z\psi(z/\|z\|_2)$ has $\psi \in C^2(\mathbb{S}^{d-1})$, there is a constant $C_{\psi} < \infty$ such that \begin{align}
        \|\Phi(z_1) + \Phi(z_2) - \Phi(z_1+z_2)\|_2 \leq C_{\psi}(\|z_1\|_2 + \|z_2\|_2 - \|z_1+z_2\|_2),\ \quad \forall z_1,z_2\in\R^d
        \label{eq:lemma-helper-bound-Phi}
    \end{align} 
    Now, using the definition of $F_2$, note that 
    \begin{align}
         F_2(z_1 + z_2) - F_2(z_1) - F_2(z_2) & = \lambda (\|z_1+z_2\|_2 - \|z_1\|_2 - \|z_2\|_2) - \langle y, \Phi(z_1+z_2) - \Phi(z_1) - \Phi(z_2)\rangle \nonumber \\
        & = - \lambda \delta + \langle y, \Phi(z_1) + \Phi(z_2) - \Phi(z_1+z_2)\rangle. \label{eq:F_2-representation}
    \end{align}
    By \eqref{eq:lemma-helper-bound-Phi}, we have that $\langle y, \Phi(z_1)+\Phi(z_2) - \Phi(z_1+z_2)\rangle \leq \|y\|_2 C_{\psi} \delta$ for some constant $C_{\psi} > 0$. Combining this bound with \eqref{eq:F_2-representation}, we get that \begin{align*}
        F_2(z_1+z_2) - F_2(z_1) - F_2(z_2) \leq -(\lambda - \|y\|_2 C_{\psi})\delta.
    \end{align*} Thus, whenever $\lambda \geq \lambda_* := \|y\|_2 C_{\psi}$, we have that $F_2$ is subadditive, and hence convex.
\end{proof}
While this result focused on the case when the induced regularizer is the simple $\ell_2$-norm, other generalizations of this result are possible. The key idea is that by exploiting this invertible map, we can move the nonconvexity from the nonsmooth regularization term to the quadratic data-fidelity term while enjoying a well-behaved, convex regularizer.

\section{Learning Optimal Anisotropic Dilations}\label{sec:learning-g}

The preceding results characterize anisotropic dilation as an action on probability distributions: for any $P\in\mathcal P$ and $g\in G$, the pushforward $(\Tcal_g)_{\#}P$ changes radial information in a controlled way while preserving directions. This transformation offers two complementary perspectives. The first, as explored in Section \ref{sec:canonicalization}, is to apply a specific $\Tcal_g$ to the data distribution $P$ to obtain a simpler induced regularizer. In this section, we adopt a generative viewpoint: starting from a tractable base family, we seek to learn an anisotropic scaling profile $g$ such that the pushforward of a suitable base distribution closely approximates a target data distribution $P$. Specifically, we consider Gibbs densities $p_K(x) \propto \exp(-\|x\|_K)$ and define the base class
 \begin{align}
\mathcal D = \left\{ p_K : \mathbb R^d \to \mathbb R  :p_K(x)=Z_K^{-1}e^{-\|x\|_K}, \; Z_K = \int_{\mathbb R^d} e^{- \| x\|_K} \mathrm{d}x, \; \mathrm{vol}_d(K)=1, \; K \in \mathcal S^d \right\}. \label{eq:D-definition}
\end{align} 
Throughout this section, we identify each density $p_K$ with the probability measure it induces when using pushforward notation.
The unit-volume constraint fixes the global scale of the base star body $K$, removing a scaling ambiguity between $K$ and $g$. Indeed, for any $c>0$, dilating $K$ to $cK$ yields the distribution $p_{cK}=(\Tcal_c)_{\#}p_K$, where $\Tcal_c$ denotes anisotropic dilation with constant profile $g(u)\equiv c$. Thus, the global scale could otherwise be represented through either $K$ or $g$, creating a redundant parameterization.

\paragraph{Why Can Learning $g$ Help?}
The Gibbs family $\mathcal D$ is structurally rigid, as a single star body $K$ must simultaneously determine both the directional marginal and the conditional mean radius in each direction. For general data distributions, these two quantities need not be coupled in this way. Anisotropic dilation introduces the direction-preserving flexibility needed to separate them.

To see the source of this rigidity, recall from Section~\ref{sec:preliminaries} the decomposition $\rho_P^{d+1}=h_Pm_P$ in \eqref{eq:rho-decomposition}. This identity shows that the radial summary statistic $\rho_P$ depends both on the density of directional mass, as quantified by $h_P(u)$, and on how far that mass lies from the origin on average, as quantified by $m_P(u)$. For a Gibbs density $p_K\in\mathcal D$, both $h_{p_K}$ and $m_{p_K}$ are determined by the same radial function $\rho_K$. This dependence is made explicit in the following lemma.

\begin{lemma}
\label{lem:gibbs-polar-densities}
Let $p_K(x)=Z_K^{-1}\exp(-\|x\|_K)$ be the Gibbs density associated with a star body $K\in\mathcal S^d$. Then, for every $u\in\mathbb S^{d-1}$, the directional marginal density and conditional mean radius satisfy
$$
    h_{p_K}(u)=C_K\rho_K(u)^d,
    \qquad
    m_{p_K}(u)=d\rho_K(u),
$$
where $C_K:=Z_K^{-1}\Gamma(d)=1/(d\,\mathrm{vol}_d(K))$. In particular, if $p_K\in\mathcal D$, then $C_K=1/d$.
\end{lemma}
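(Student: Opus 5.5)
The plan is a direct computation in polar coordinates, using positive homogeneity of the gauge along each ray. Fix $u\in\mathbb S^{d-1}$. Since $\|ru\|_K = r\|u\|_K = r/\rho_K(u)$ for $r>0$, the definition \eqref{eq:directional-marginal} gives
\begin{align*}
h_{p_K}(u) = Z_K^{-1}\int_0^\infty r^{d-1} e^{-r/\rho_K(u)}\,\mathrm{d}r .
\end{align*}
Substituting $r=\rho_K(u)s$ turns this into $Z_K^{-1}\rho_K(u)^d\int_0^\infty s^{d-1}e^{-s}\,\mathrm{d}s = Z_K^{-1}\Gamma(d)\rho_K(u)^d$. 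The first moment is handled the same way:
\begin{align*}
\int_0^\infty r^{d} p_K(ru)\,\mathrm{d}r = Z_K^{-1}\Gamma(d+1)\rho_K(u)^{d+1}.
\end{align*}
Dividing by $h_{p_K}(u)$, as in the definition \eqref{eq:conditional-radial} of the conditional mean through the conditional radial density, yields
\begin{align*}
m_{p_K}(u)=\frac{\Gamma(d+1)}{\Gamma(d)}\rho_K(u)=d\rho_K(u).
\end{align*}
As a consistency check, this agrees with \eqref{eq:rho-decomposition}.

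It remains to identify $C_K=Z_K^{-1}\Gamma(d)$ with $1/(d\,\mathrm{vol}_d(K))$. We compute the partition function in polar coordinates with the same substitution:
\begin{align*}
Z_K=\int_{\mathbb S^{d-1}}\int_0^\infty r^{d-1}e^{-r/\rho_K(u)}\,\mathrm{d}r\,\mathrm{d}\sigma(u)=\Gamma(d)\int_{\mathbb S^{d-1}}\rho_K(u)^d\,\mathrm{d}\sigma(u).
\end{align*}
Next we use the polar volume formula
\begin{align*}
\mathrm{vol}_d(K)=\int_{\mathbb S^{d-1}}\int_0^{\rho_K(u)} r^{d-1}\,\mathrm{d}r\,\mathrm{d}\sigma(u)=\frac1d\int_{\mathbb S^{d-1}}\rho_K^d\,\mathrm{d}\sigma,
\end{align*}
which follows from the representation $K=\{ru: 0\le r\le\rho_K(u)\}$. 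Combining the two gives $Z_K=d\,\Gamma(d)\,\mathrm{vol}_d(K)$, and hence $C_K=1/(d\,\mathrm{vol}_d(K))$. The unit-volume normalization in $\mathcal D$ then gives $C_K=1/d$ immediately.

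There is no genuine obstacle here. The only points needing care are that the identity $\|u\|_K=1/\rho_K(u)$ and positive homogeneity hold for star bodies, which was recorded in Section~\ref{sec:preliminaries}, and that all integrals are finite. Finiteness holds because $\rho_K$ is continuous and positive on the compact sphere, so it is bounded above and below, which also confirms $p_K\in\mathcal P$.
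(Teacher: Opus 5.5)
Your proof is correct and follows the same route as the paper: the gamma-integral computations of $h_{p_K}(u)$ and $\int_0^\infty r^d p_K(ru)\,\mathrm{d}r$ along each ray, followed by division to get $m_{p_K}(u)=d\rho_K(u)$. The one difference is that the paper cites $Z_K=\Gamma(d+1)\mathrm{vol}_d(K)$ from prior work, whereas you derive it directly from polar coordinates and the polar volume formula, which makes your argument fully self-contained.
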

\begin{proof}
    Using the definition of the directional marginal and the positive homogeneity $\|ru\|_K=r\|u\|_K$ for $r>0$, we obtain
    \begin{align*}
    h_{p_K}(u) = \int_0^\infty r^{d-1}p_K(ru)\,\mathrm{d}r = Z_K^{-1}\int_0^\infty r^{d-1}e^{-r\|u\|_K}\,\mathrm{d}r = Z_K^{-1}\Gamma(d)\|u\|_K^{-d} = C_K\rho_K(u)^d,
\end{align*}
where we used the identity $\rho_K(u)=\|u\|_K^{-1}$. Furthermore,
$Z_K=\Gamma(d+1)\mathrm{vol}_d(K)$
\cite[Proposition 1]{leong2025optimal}, and hence $C_K = \Gamma(d)/(\Gamma(d+1)\mathrm{vol}_d(K)) =
1/(d\,\mathrm{vol}_d(K)).$

For the conditional mean radius $m_{p_K}(u)=\mathbb{E}[R\mid U=u]$, using the conditional radial density induced by polar coordinates, we obtain
\begin{align*}
    m_{p_K}(u) = \frac{1}{h_{p_K}(u)} \int_0^\infty r^d p_K(ru)\,\mathrm{d}r &= \frac{Z_K^{-1}}{C_K\rho_K(u)^d} \int_0^\infty r^d e^{-r\|u\|_K}\,\mathrm{d}r \\
    &= \frac{Z_K^{-1}\Gamma(d+1)\|u\|_K^{-(d+1)}} {Z_K^{-1}\Gamma(d)\rho_K(u)^d} \\
    &= \frac{d\Gamma(d)\rho_K(u)^{d+1}}{\Gamma(d)\rho_K(u)^d} \\
    &= d\rho_K(u).
\end{align*}
\end{proof}
The lemma provides closed-form expressions for $h_{p_K}$ and $m_{p_K}$ that will be used throughout the subsequent analysis. It also shows that the directional marginal and conditional mean radius cannot vary independently within $\mathcal D$, since both are governed by the same radial function $\rho_K$.

Anisotropic dilation breaks this dependence by introducing the scaling profile $g$ as an additional continuous degree of freedom. To see this, for $p_K\in\mathcal D$ and $g\in G$, let $Q_{K,g}:=(\Tcal_g)_{\#}p_K$ denote a distribution in the anisotropic radial orbit $[\,p_K\,]_{\mathcal T}$. By Theorem~\ref{thm:structure-of-orbit}, $\Tcal_g$ preserves the directional marginal while rescaling the conditional radius along each ray. Therefore, for every $u\in\mathbb S^{d-1}$,
\begin{align*}
h_{Q_{K,g}}(u)
&= h_{p_K}(u) = \frac{1}{d}\rho_K(u)^d, \\
m_{Q_{K,g}}(u)
&= g(u)m_{p_K}(u) = d\,g(u)\rho_K(u).
\end{align*}
Hence, $K$ alone determines the directional marginal, after which $g$ rescales the conditional mean radius. This enables the model to adjust its mean radial geometry without altering its directional mass, thereby accommodating distributions in which these two structures need not be coupled.

In the following subsection, we formalize this observation as a population Kullback--Leibler (KL) projection problem and characterize the optimal pair $(K^*,g^*)$ for which $Q_{K^*,g^*}$ best approximates a given target distribution $P$. Then, in Sections~\ref{sec:provable-benefits} and \ref{sec:examples-isotropy-cost}, we derive a diagnostic quantity determined entirely by $P$ that characterizes when the additional flexibility provided by $g$ strictly improves the approximation, and evaluate this diagnostic across several common classes of distributions.

\subsection{Population Projection onto Gibbs Orbits}
\label{sec:Gibbs-orbit-model}

We now formalize the preceding discussion. For $K\in\mathcal S^d$ and $g\in G$, write $Q_{K,g}:=(\mathcal T_g)_\#p_K$. For each $p_K\in\mathcal D$, let $[\,p_K\,]_{\mathcal T}:=\{Q_{K,g}:g\in G\}$ denote its orbit under anisotropic dilation. We consider the family of distributions obtained by taking the union of these Gibbs orbits:
\begin{align*}
\mathcal D_{\mathrm{orb}} :=
\bigcup_{p_K \in \mathcal D}[\,p_K\,]_{\mathcal T} = \left\{ Q_{K,g} \;\middle|\; p_K\in\mathcal D,\;g\in G \right\}.
\end{align*} 
Throughout Section~4, we assume that the target distribution
$P\in\mathcal P$, with density $p$, satisfies $\mathbb E_P[|\log p(X)|]<\infty$, where $X\sim P$. The distribution-fitting problem of interest is then given by
\begin{align}
\inf_{Q_{K,g} \in \mathcal D_{\mathrm{orb}}}
D_{\mathrm{KL}}(P \| Q_{K,g})
=
\inf_{\substack{K\in\mathcal S^d\\ \mathrm{vol}_d(K)=1}}
\left(
\inf_{g \in G}
D_{\mathrm{KL}}(P \| Q_{K,g})
\right).
\label{eq:inner-and-outer-problems}
\end{align}
We consider the KL divergence in this direction to maintain a direct connection with prior results in the optimal regularization literature \cite{leong2025optimal}. Problem \eqref{eq:inner-and-outer-problems} can be viewed as a forward KL projection of $P$ onto the family $\mathcal D_{\mathrm{orb}}$.
Moreover, it possesses a natural inner--outer structure: we first fix $K$ and optimize over $g$, and then optimize the resulting objective over $K$. Both optimizations admit exact population-level characterizations.
We begin by rewriting $D_{\mathrm{KL}}(P\|Q_{K,g})$ in a form that will be useful throughout the subsequent analysis.

\begin{lemma}
\label{lem:KL-objective}
Fix $P\in\mathcal P$ satisfying the standing integrability assumption. Then minimizing $D_{\mathrm{KL}}(P\|Q_{K,g})$ with respect to $K\in\mathcal S^d$ and $g\in G$ is equivalent to minimizing the functional
$$
    \mathcal J(K,g)
    :=
    \mathbb E_{P_U}[\phi_{U,K}(g(U))]
    +
    \log\mathrm{vol}_d(K)
$$
over the same parameters, where the continuous integrand $\phi_{u,K}:(0,\infty)\to\mathbb R$ is given by
$$
    \phi_{u,K}(t)
    :=
    d\log t+\frac{\|u\|_K m_P(u)}{t}.
$$
\end{lemma}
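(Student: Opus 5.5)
We expand $D_{\mathrm{KL}}(P\|Q_{K,g}) = \mathbb E_P[\log p(X)] - \mathbb E_P[\log q_{K,g}(X)]$. By the standing integrability assumption the first term is a finite constant independent of $(K,g)$, so minimizing the KL divergence is equivalent to minimizing the cross-entropy $-\mathbb E_P[\log q_{K,g}(X)]$. The second term may equal $+\infty$ for some parameters; such parameters are then excluded on both sides, so the equivalence persists in the extended sense. The plan is to compute this cross-entropy explicitly using the change-of-variables formula \eqref{eq:pushforward-change-of-variables} from Theorem~\ref{thm:structure-of-orbit} together with the polar decomposition $X=RU$.

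First, by \eqref{eq:pushforward-change-of-variables} applied to $p_K(x)=Z_K^{-1}e^{-\|x\|_K}$, for $x=ru\neq 0$,
$$
-\log q_{K,g}(ru) = \log Z_K + \frac{r\|u\|_K}{g(u)} + d\log g(u).
$$
Here we use the positive homogeneity of the gauge: $\|ru/g(u)\|_K = r\|u\|_K/g(u)$.

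Second, we take the expectation under $P$ by conditioning on $U$. Since $g$ and $\|\cdot\|_K$ are continuous and positive on the compact sphere, they are bounded above and below. The term $\mathbb E[R\|U\|_K/g(U)]$ is therefore finite precisely when $\mathbb E_P[R]<\infty$; otherwise it is $+\infty$ for every $(K,g)$, in which case both objectives are identically infinite and the equivalence is trivial. By the tower property, $\mathbb E[R\mid U=u]=m_P(u)$, which gives
$$
-\mathbb E_P[\log q_{K,g}(X)] = \log Z_K + \mathbb E_{P_U}\!\left[d\log g(U) + \frac{\|U\|_K m_P(U)}{g(U)}\right] = \log Z_K + \mathbb E_{P_U}[\phi_{U,K}(g(U))].
$$

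Third, we invoke $Z_K=\Gamma(d+1)\mathrm{vol}_d(K)$, which is \cite[Proposition 1]{leong2025optimal} and was already used in Lemma~\ref{lem:gibbs-polar-densities}. This gives $\log Z_K = \log\Gamma(d+1)+\log\mathrm{vol}_d(K)$. Hence
$$
D_{\mathrm{KL}}(P\|Q_{K,g}) = \mathcal J(K,g) + c_P,
\qquad c_P := \mathbb E_P[\log p] + \log\Gamma(d+1),
$$
where $c_P$ does not depend on $(K,g)$, which proves the equivalence. Continuity of $\phi_{u,K}$ on $(0,\infty)$ is immediate from its formula.

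The main obstacle is bookkeeping rather than anything deep: we must justify splitting the expectation of $-\log q_{K,g}$ into separate terms without encountering an $\infty-\infty$ situation. The $\log g$ and $\log Z_K$ terms are bounded constants or bounded functions, and the remaining term is nonnegative, so Tonelli's theorem applies to it. The $\log p$ term is finite by assumption. Together these make every manipulation legitimate.
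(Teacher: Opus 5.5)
Your proposal is correct and follows essentially the same route as the paper: obtain the pushforward density from Theorem~\ref{thm:structure-of-orbit}, reduce the KL divergence to the cross-entropy via the standing integrability assumption, condition on $U$ to produce $m_P$, and use $Z_K=\Gamma(d+1)\mathrm{vol}_d(K)$ to isolate $\log\mathrm{vol}_d(K)$. The one difference is that your case split on $\mathbb E_P[R]=\infty$ is vacuous. For $P\in\mathcal P$, the function $m_P=\rho_P^{d+1}/h_P$ is continuous and positive on the compact sphere, so $\mathbb E_P[R]=\mathbb E_{P_U}[m_P(U)]<\infty$, and the cross-entropy, hence the KL divergence, is finite for every $(K,g)$; the paper uses exactly this to avoid any extended-valued bookkeeping.
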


\begin{proof}
By Theorem~\ref{thm:structure-of-orbit}, the pushforward $Q_{K,g}$ has density
$$
    q_{K,g}(x)
    =
    Z_K^{-1}g(x/\|x\|_2)^{-d}
    \exp\left(
    -\frac{\|x\|_K}{g(x/\|x\|_2)}
    \right),
    \qquad x\neq0.
$$
Since $m_P$ is bounded, $\mathbb E_P[\|X\|_2]=\mathbb E_{P_U}[m_P(U)]<\infty$. Together with the bounds on $g$ and $\rho_K$, this gives $\mathbb E_P[|\log q_{K,g}(X)|]<\infty$. Considering the standing integrability assumption on $p$, this ensures that $D_{\mathrm{KL}}(P\|Q_{K,g})=\mathbb E_P[\log p(X)]-\mathbb E_P[\log q_{K,g}(X)]$ is finite. Because the first term is independent of $K$ and $g$, minimizing the divergence is equivalent to minimizing the cross-entropy
$$
    \mathbb E_P[-\log q_{K,g}(X)]
    =
    \log Z_K
    +
    \mathbb E_P\left[
    d\log g(X/\|X\|_2)
    +
    \frac{\|X\|_K}{g(X/\|X\|_2)}
    \right].
$$
Using the identity $Z_K=\Gamma(d+1)\mathrm{vol}_d(K)$ from Lemma~\ref{lem:gibbs-polar-densities}, we write $\log Z_K=\log\mathrm{vol}_d(K)+\log\Gamma(d+1)$ and discard the constant $\log\Gamma(d+1)$ from the optimization objective.

To evaluate the remaining expectation, decompose $X$ into its polar components $X=RU$, where $U=X/\|X\|_2$ and $R=\|X\|_2$. By the positive homogeneity of the Minkowski functional, $\|X\|_K=\|RU\|_K=R\|U\|_K$. Applying the law of total expectation conditional on $U$ gives
\begin{align*}
    \mathbb E_P\left[
    d\log g(U)+\frac{R\|U\|_K}{g(U)}
    \right]
    &=
    \mathbb E_{P_U}\left[
    \mathbb E_P\left[
    d\log g(U)+\frac{R\|U\|_K}{g(U)}
    \,\middle|\,U
    \right]
    \right]\\
    &=
    \mathbb E_{P_U}\left[
    d\log g(U)+\frac{\|U\|_K m_P(U)}{g(U)}
    \right]\\
    &=
    \mathbb E_{P_U}[\phi_{U,K}(g(U))].
\end{align*}
Conditional on $U=u$, $g(U)$ and $\|U\|_K$ are fixed, leaving the conditional mean radius $m_P(u)=\mathbb E_P[R\mid U=u]$. Combining this with the $\log\mathrm{vol}_d(K)$ term yields the functional $\mathcal J(K,g)$.
\end{proof}
\paragraph{Inner Optimization: Radial Matching}
We first characterize the optimal scaling profile for the inner optimization problem in \eqref{eq:inner-and-outer-problems}. For a fixed target distribution $P\in\mathcal P$ and star body $K\in\mathcal S^d$, define the \emph{radial discrepancy ratio of $K$ relative to $P$} by
\begin{equation}
    \alpha_K(u)
    :=
    \frac{m_P(u)}{\rho_K(u)},
    \qquad
    u\in\mathbb S^{d-1}.
    \label{eq:radial-discrepancy-ratio}
\end{equation}
This ratio compares the conditional mean radius of the target distribution with the radial geometry of the base star body $K$. By Lemma~\ref{lem:gibbs-polar-densities}, $m_{p_K}(u)=d\rho_K(u)$, so $\alpha_K(u)=d\,m_P(u)/m_{p_K}(u)$. Thus, $\alpha_K(u)/d$ is precisely the direction-dependent scaling required to match the conditional mean radius of $p_K$ to that of the target distribution $P$. The following theorem shows that this radial matching profile is also the unique solution to the inner KL optimization problem.

\begin{theorem}[Optimal Scaling Profile]
\label{thm:optimal-g-K}
Suppose $P\in\mathcal P$. For a fixed $K\in\mathcal S^d$, the unique minimizer $g_K^*$ of the inner optimization problem in \eqref{eq:inner-and-outer-problems} is given by
\begin{equation}
    g_K^*(u)
    =
    \frac{\alpha_K(u)}{d},
    \qquad u\in\mathbb S^{d-1}.
    \label{eq:optimal-g}
\end{equation}
\end{theorem}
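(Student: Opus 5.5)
The plan is to reduce the inner problem, via Lemma~\ref{lem:KL-objective}, to a pointwise minimization. For fixed $K$, the term $\log\mathrm{vol}_d(K)$ does not depend on $g$. Minimizing $D_{\mathrm{KL}}(P\|Q_{K,g})$ over $g\in G$ is therefore equivalent to minimizing
$$
\mathbb E_{P_U}\bigl[\phi_{U,K}(g(U))\bigr]=\int_{\mathbb S^{d-1}}\phi_{u,K}(g(u))\,h_P(u)\,\mathrm d\sigma(u).
$$
Write $a(u):=\|u\|_K m_P(u)=m_P(u)/\rho_K(u)=\alpha_K(u)$, which is strictly positive. For each fixed $u$ the scalar function $t\mapsto \phi_{u,K}(t)=d\log t+a(u)/t$ on $(0,\infty)$ satisfies:
\begin{itemize}
\item $\phi_{u,K}'(t)=d/t-a(u)/t^2$, which is negative for $t<a(u)/d$ and positive for $t>a(u)/d$;
\item $\phi_{u,K}(t)\to+\infty$ as $t\to0^+$ and as $t\to\infty$.
\end{itemize}
Hence $t^*(u)=a(u)/d=\alpha_K(u)/d$ is the unique global minimizer of $\phi_{u,K}$, and the minimum value is $d\log(\alpha_K(u)/d)+d$.

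Next I check that this pointwise minimizer is admissible, i.e.\ $g_K^*\in G$. We have $m_P=\rho_P^{d+1}/h_P$ by \eqref{eq:rho-decomposition}, with $\rho_P,h_P\in G$ because $P\in\mathcal P$, so $m_P$ is continuous and strictly positive. Also $\rho_K$ is continuous and positive because $K$ is a star body. Therefore $\alpha_K/d\in G$. It then follows that for any $g\in G$,
$$
\mathbb E_{P_U}[\phi_{U,K}(g(U))]\ge \mathbb E_{P_U}[\phi_{U,K}(g_K^*(U))],
$$
so $g_K^*$ minimizes the inner problem. The expectations are finite because all functions involved are continuous on the compact sphere.

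For uniqueness, suppose $g\in G$ also attains the minimum. Then the integrand
$$
\phi_{u,K}(g(u))-\phi_{u,K}(g_K^*(u))\ge0
$$
has zero $P_U$-integral, so it vanishes $P_U$-a.e. Since $P_U$ has density $h_P>0$ with respect to $\sigma$, it vanishes $\sigma$-a.e. Because the minimizer of $\phi_{u,K}$ is strict, this gives $g=g_K^*$ $\sigma$-a.e. Both functions are continuous, and a continuous function vanishing $\sigma$-a.e.\ on the sphere vanishes everywhere (nonempty open sets have positive measure), so $g=g_K^*$ everywhere.

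The step needing the most care is this uniqueness passage from a.e.\ to everywhere, which uses both strict positivity of $h_P$ and continuity. The only other point to verify is finiteness of the objective, which is already handled in Lemma~\ref{lem:KL-objective}.
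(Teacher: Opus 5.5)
Your proposal is correct and follows essentially the same route as the paper. Both arguments reduce the problem via Lemma~\ref{lem:KL-objective} to pointwise minimization of $\phi_{u,K}$, use the sign analysis of $\phi_{u,K}'$ to obtain $t_u^*=\alpha_K(u)/d$, check admissibility via $m_P=\rho_P^{d+1}/h_P$, and get uniqueness from strict positivity of $h_P$ together with continuity. The only difference is in phrasing uniqueness: you argue that a nonnegative integrand with zero integral vanishes a.e.\ and then use continuity, whereas the paper argues contrapositively that a discrepancy at one point persists on an open set of positive $P_U$-measure and strictly increases the objective.
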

\begin{proof}
    Fix a star body $K$. By Lemma~\ref{lem:KL-objective}, the term $\log\mathrm{vol}_d(K)$ is independent of $g$, so minimizing $\mathcal J(K,g)$ over $g\in G$ reduces to minimizing $\mathbb E_{P_U}[\phi_{U,K}(g(U))]$. We proceed by minimizing the integrand pointwise. For fixed $u\in\mathbb S^{d-1}$, differentiating $\phi_{u,K}(t)$ with respect to $t>0$ gives
    $$
    \phi'_{u,K}(t) = \frac{d}{t} - \frac{\|u\|_Km_P(u)}{t^2} = \frac{dt - \|u\|_Km_P(u)}{t^2}.
    $$
    Since $K$ is a star body and $m_P(u)>0$, we have $\|u\|_K m_P(u)>0$. Thus, $\phi'_{u,K}(t)<0$ for $0<t<\|u\|_K m_P(u)/d$ and $\phi'_{u,K}(t)>0$ for $t>\|u\|_K m_P(u)/d$. Hence, $\phi_{u,K}$ decreases strictly up to the critical point $t_u^*:=\|u\|_K m_P(u)/d$ and increases strictly thereafter, so $t_u^*$ is its unique global minimizer. Using the identity $\|u\|_K=\rho_K(u)^{-1}$, define $g_K^*(u):=t_u^* = m_P(u)/(d\rho_K(u))$ for $u\in\mathbb S^{d-1}$. For $P\in\mathcal P$, \eqref{eq:rho-decomposition} gives $m_P(u)=\rho_P(u)^{d+1}/h_P(u)$, which is positive and continuous as the ratio of positive continuous functions. Since $\rho_K$ is also positive and continuous, it follows that $g_K^*\in G$.

    The pointwise minimization gives $\phi_{u,K}(g(u))\geq\phi_{u,K}(g_K^*(u))$ for every $u\in\mathbb S^{d-1}$ and every $g\in G$. Integrating this inequality with respect to $P_U$ establishes optimality. To establish uniqueness, suppose another $g\in G$ differs from $g_K^*$ at some $u_0\in\mathbb S^{d-1}$. By continuity, the two functions differ on an open neighborhood of $u_0$. Since $h_P$ is strictly positive on the sphere, every nonempty open subset of $\mathbb S^{d-1}$ has positive $P_U$-measure. The pointwise inequality is therefore strict on a set of positive $P_U$-measure, yielding a strictly larger objective. Hence, $g_K^*$ is the unique minimizer in $G$.
\end{proof}
The radial matching interpretation also admits a geometric description. Since $m_P$ is positive and continuous for every $P\in\mathcal P$, it defines the radial function of a unique star body, $K_{m_P}:=\{ru:0\leq r\leq m_P(u),\;u\in\mathbb S^{d-1}\}$. Furthermore, since $m_{p_K}(u)=d\rho_K(u)$, we have $K_{m_{p_K}}=dK$. Therefore, $g_K^*(u)=\alpha_K(u)/d=m_P(u)/m_{p_K}(u)$ is exactly the star-body matching profile illustrated in Figure~\ref{fig:star-body-matching} that maps $dK$ to $K_{m_P}$. Equivalently, $\mathcal T_{g_K^*}$ rescales $p_K$ along each ray so that, for every $u\in\mathbb S^{d-1}$,
$$
m_{Q_{K,g_K^*}}(u) = g_K^*(u)m_{p_K}(u) = m_P(u).
$$

\paragraph{Outer Optimization: Directional Matching}
Having characterized the optimal scaling profile $g_K^*=\alpha_K/d$, we now turn to the star body that solves the outer optimization problem. Since the inner optimization exactly matches the conditional mean radius of the Gibbs model to that of the target distribution, the remaining optimization determines how the directional mass is represented by the base star body. For each fixed $K$, the invariance of KL divergence under invertible transformations \cite{polyanskiy_wu_info_theory} allows us to apply $\mathcal T_{g_K^*}^{-1}$ to both $P$ and $Q_{K,g_K^*}$. Thus, the outer optimization problem can be written as
\begin{equation}
    \inf_{\substack{K\in\mathcal S^d\\ \mathrm{vol}_d(K)=1}}
    D_{\mathrm{KL}}\left(
    \left(\mathcal T_{g_K^*}^{-1}\right)_\#P
    \,\middle\|\,
    p_K
    \right).
    \label{eq:outer-opt-problem}
\end{equation}
The following theorem shows that, up to an additive constant independent of $K$, this objective is a KL divergence between directional marginals, and explicitly characterizes the unique minimizing star body.

\begin{theorem}[Optimal Directional Star Body]
\label{thm:optimal-starbody}
Let $P\in\mathcal P$. Given the optimal scaling profile \eqref{eq:optimal-g}, the outer optimization problem \eqref{eq:outer-opt-problem} has the same minimizers as
\begin{equation}
    \inf_{\substack{K\in\mathcal S^d\\ \mathrm{vol}_d(K)=1}}
    D_{\mathrm{KL}}(h_P\|h_{p_K}).
    \label{eq:simplified-outer-opt-problem}
\end{equation}
Furthermore, the unique solution $K^*$ is defined by the radial function 
\begin{equation}
    \rho_{K^*}(u)
    =
    (d\,h_P(u))^{1/d},
    \qquad u\in\mathbb S^{d-1}.
    \label{eq:optimal-rho_K^*}
\end{equation}
\end{theorem}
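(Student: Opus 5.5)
The plan is to substitute the optimal profile $g_K^*=\alpha_K/d$ from Theorem~\ref{thm:optimal-g-K} into the functional $\mathcal J(K,g)$ of Lemma~\ref{lem:KL-objective}. This reduces the outer objective to an explicit functional of $\rho_K$, which we then recognize as a directional KL divergence. Since KL divergence is invariant under the homeomorphism $\mathcal T_{g_K^*}^{-1}$, the quantity in \eqref{eq:outer-opt-problem} equals $D_{\mathrm{KL}}(P\|Q_{K,g_K^*})$. By Lemma~\ref{lem:KL-objective}, this equals $\mathcal J(K,g_K^*)$ plus a constant independent of $K$. Evaluating the integrand at $t=g_K^*(u)=m_P(u)\|u\|_K/d$ gives $\phi_{u,K}(g_K^*(u))=d\log m_P(u)-d\log\rho_K(u)-d\log d+d$. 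On the constraint set $\mathrm{vol}_d(K)=1$ we have $\log\mathrm{vol}_d(K)=0$, so the outer objective is, up to constants independent of $K$,
\begin{align*}
-\,\mathbb E_{P_U}\bigl[d\log\rho_K(U)\bigr].
\end{align*}
The term $\mathbb E_{P_U}[d\log m_P(U)]$ is finite because $m_P$ is continuous and positive on the compact sphere.

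Next, we compare this with $D_{\mathrm{KL}}(h_P\|h_{p_K})$. By Lemma~\ref{lem:gibbs-polar-densities} with $\mathrm{vol}_d(K)=1$, we have $h_{p_K}=\rho_K^d/d$, which is a probability density on the sphere with respect to $\sigma$. Hence
\begin{align*}
D_{\mathrm{KL}}(h_P\|h_{p_K})=\int h_P\log h_P\,\mathrm d\sigma+\log d-\mathbb E_{P_U}\bigl[d\log\rho_K(U)\bigr].
\end{align*}
The first two terms do not depend on $K$, and all quantities are finite because $h_P,\rho_K\in G$. The two objectives therefore differ by a constant, which proves that \eqref{eq:outer-opt-problem} and \eqref{eq:simplified-outer-opt-problem} have the same minimizers.

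For the minimizer, we argue through Gibbs' inequality. Set $\rho_{K^*}=(d\,h_P)^{1/d}$. This function is positive and continuous, so it defines a star body. Its volume is $\mathrm{vol}_d(K^*)=\frac1d\int\rho_{K^*}^d\,\mathrm d\sigma=\int h_P\,\mathrm d\sigma=1$, using the polar volume formula $\mathrm{vol}_d(K)=\frac1d\int\rho_K^d\,\mathrm d\sigma$. So $K^*$ is feasible, and $h_{p_{K^*}}=h_P$, which means the objective attains its lower bound $0$. For uniqueness, $D_{\mathrm{KL}}(h_P\|h_{p_K})=0$ forces $h_{p_K}=h_P$ $\sigma$-almost everywhere. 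Both functions are continuous, so equality holds everywhere, giving $\rho_K=\rho_{K^*}$ and hence $K=K^*$ by the one-to-one correspondence between star bodies and radial functions.

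The step most likely to need care is the first: justifying that \eqref{eq:outer-opt-problem} is exactly $\mathcal J(K,g_K^*)$ up to a $K$-independent constant, including finiteness of every term. This rests on the invariance of KL under bijective measurable maps, together with the integrability bounds already established in the proof of Lemma~\ref{lem:KL-objective}. Everything after that is the short computation above.
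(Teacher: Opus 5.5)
Your proposal is correct and follows essentially the same route as the paper. Both substitute $g_K^*$ into $\mathcal J$ and use $\mathrm{vol}_d(K)=1$ to reduce the outer objective to $-d\,\mathbb E_{P_U}[\log\rho_K(U)]$ up to $K$-independent constants. Both identify this with $D_{\mathrm{KL}}(h_P\|h_{p_K})$ via $h_{p_K}=\rho_K^d/d$, and conclude existence and uniqueness of $K^*$ from the polar volume formula, the equality case of Gibbs' inequality, and continuity on the sphere.
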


\begin{proof}
Evaluating the integrand $\phi_{u,K}$ from Lemma~\ref{lem:KL-objective} at the optimal scale $g_K^*(u)=\alpha_K(u)/d$ gives
$$
\phi_{u,K}(g_K^*(u)) = d\log\left(\frac{\alpha_K(u)}{d}\right)+d = d\log\|u\|_K+C_P(u),
$$
where we used $\alpha_K(u)=m_P(u)/\rho_K(u)=\|u\|_K m_P(u)$ and defined $C_P(u):=d\log(m_P(u)/d)+d$, which is independent of $K$. Since $\mathrm{vol}_d(K)=1$ throughout the outer optimization, the term $\log\mathrm{vol}_d(K)$ in $\mathcal J(K,g_K^*)$ vanishes. After discarding terms independent of $K$, the outer problem therefore reduces to
\begin{align}
\inf_{\substack{K\in\mathcal S^d\\
                \mathrm{vol}_d(K)=1}}
\mathbb E_{P_U}[d\log\|U\|_K].
\label{eq:outer-obj}
\end{align}
By Lemma~\ref{lem:gibbs-polar-densities}, $h_{p_K}(u)=\|u\|_K^{-d}/d$ when $\mathrm{vol}_d(K)=1$. Taking logarithms gives $d\log\|u\|_K=-\log h_{p_K}(u)-\log d$. Consequently, minimizing \eqref{eq:outer-obj} is equivalent to minimizing the cross-entropy $-\mathbb E_{P_U}[\log h_{p_K}(U)]$. This differs from $D_{\mathrm{KL}}(h_P\|h_{p_K})$ by the finite term $-\mathbb E_{P_U}[\log h_P(U)]$, which is independent of $K$, establishing \eqref{eq:simplified-outer-opt-problem}.

Since $h_P$ is positive and continuous, the function $(d\,h_P)^{1/d}$ defines the radial function of a star body $K^*$. Moreover, the polar volume formula gives
$$
\mathrm{vol}_d(K^*) = \frac{1}{d} \int_{\mathbb S^{d-1}} \rho_{K^*}(u)^d\,\mathrm d\sigma(u) = \int_{\mathbb S^{d-1}} h_P(u)\,\mathrm d\sigma(u) = 1.
$$
Thus, $K^*$ is admissible and $h_{p_{K^*}}=\rho_{K^*}^d/(d\,\mathrm{vol}_d(K^*))=h_P$, attaining the minimum value zero in \eqref{eq:simplified-outer-opt-problem}.

Conversely, any minimizing star body $K$ must satisfy $h_{p_K}=h_P$ $\sigma$-almost everywhere, by the equality condition for KL divergence. Using $h_{p_K}(u)=\rho_K(u)^d/d$, we obtain $\rho_K(u)=(d\,h_P(u))^{1/d}$ $\sigma$-almost everywhere. Since both sides are continuous on $\mathbb S^{d-1}$ and every nonempty open subset has positive surface measure, the equality holds everywhere. Hence, $K=K^*$, proving uniqueness.
\end{proof}
The reduction of \eqref{eq:outer-opt-problem} to \eqref{eq:simplified-outer-opt-problem} makes the decoupling effect of anisotropic dilation explicit. Once the inner optimization matches the conditional mean radius via \eqref{eq:optimal-g}, the remaining dependence on $K$ is entirely through the directional marginal. Consequently, the optimal star body $K^*$ is determined solely by the directional density $h_P$ and corresponds to the unique Gibbs density in $\mathcal D$ whose directional marginal matches that of the target distribution $P$. Together with Theorem~\ref{thm:optimal-g-K}, this yields the unique optimal pair $(K^*,g^*)$, where $g^*:=g_{K^*}^*$, for \eqref{eq:inner-and-outer-problems}.

\subsection{Provable Benefits of Anisotropic Dilation} \label{sec:provable-benefits}
The previous section derived closed-form solutions to the inner and outer optimization problems in \eqref{eq:inner-and-outer-problems} for approximating a target distribution $P\in\mathcal P$. Theorem~\ref{thm:optimal-g-K} gives the optimal direction-dependent scaling profile $g_K^*$ for each fixed star body $K$, while Theorem~\ref{thm:optimal-starbody} identifies the optimal star body $K^*$. A foundational question remains: \emph{when and by how much does the directional flexibility of anisotropic dilation improve upon isotropic scaling?}

Here, isotropic scaling refers to restricting the profile to be constant, $g(u)\equiv c>0$. For a fixed star body $K$, let $c_K^*$ denote the optimal constant scaling factor, defined by the restricted inner optimization problem
\begin{align}
    c_K^* \in \argmin_{c > 0} D_{\mathrm{KL}}(P \| Q_{K,c}) \label{eq:optimal-c}
\end{align} 
where $Q_{K,c} := (\Tcal_c)_{\#} p_K$. Since the positive constant profiles form a proper subclass of $G=C(\mathbb S^{d-1},(0,\infty))$, restricting $g$ to be constant cannot yield a smaller KL divergence than the optimal anisotropic profile $g_K^*$. The resulting nonnegative divergence gap measures the approximation penalty incurred by imposing isotropic scaling. We first analyze this gap within a single orbit $[\,p_K\,]_{\mathcal T}$ associated with a fixed star body $K$.

\begin{definition}[Isotropy Cost of a Star Body]
\label{def:star-isotropy-cost} Fix a target distribution $P\in\mathcal P$ and a star body $K\in\mathcal S^d$.  Let $c_K^*$ denote the optimal isotropic scaling factor defined in \eqref{eq:optimal-c}. The isotropy cost of $K$ with respect to $P$ is defined by 
\begin{align} 
    \Delta_{\mathrm{star}}(K) := D_{\mathrm{KL}}(P\|Q_{K,c_K^*}) - D_{\mathrm{KL}}(P\|Q_{K,g_K^*}).
    \label{eq:isotropy-cost-of-K}
\end{align}
\end{definition}
Recall from \eqref{eq:radial-discrepancy-ratio} that $\alpha_K$ is the radial discrepancy ratio of $K$ relative to $P$, while Theorem~\ref{thm:optimal-g-K} gives
$g_K^*(u)=\alpha_K(u)/d$. Thus, as a function of $u\in \mathbb S^{d-1}$, $\alpha_K$ encodes the directional variation of the optimal scaling profile. In particular, the optimal anisotropic profile is constant precisely when $\alpha_K$ is constant on the sphere. The following theorem formalizes this observation and quantifies the cost when $\alpha_K$ is nonconstant.

\begin{theorem}[Isotropy Cost of a Fixed Star Body]
\label{thm:fixed-K-correction}
Fix a target distribution $P\in\mathcal P$ and a star body $K\in\mathcal S^d$. Let $c_K^*$ denote the optimal constant scaling factor defined in \eqref{eq:optimal-c}. The isotropy cost of $K$ with respect to $P$ is defined by
\begin{align}
\Delta_{\mathrm{star}}(K) = d \left[ \log \mathbb{E}_{P_U}[\alpha_K(U)] - \mathbb{E}_{P_U}[\log \alpha_K(U)] \right]. \label{eq:delta-star-jensen-gap}
\end{align}
Consequently, $\Delta_{\mathrm{star}}(K) \ge 0$, with equality if and only if $\alpha_K(U)$ is constant $P_U$-almost surely.
\end{theorem}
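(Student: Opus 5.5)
The plan is to work entirely with the functional $\mathcal J(K,g)$ from Lemma~\ref{lem:KL-objective}. For fixed $K$, the term $\log\mathrm{vol}_d(K)$ cancels in the difference defining $\Delta_{\mathrm{star}}(K)$. Since $D_{\mathrm{KL}}(P\|Q_{K,g})$ and $\mathcal J(K,g)$ differ by a constant independent of $g$ (namely $\mathbb E_P[\log p]+\log\Gamma(d+1)$), we get
\[
\Delta_{\mathrm{star}}(K)=\mathbb E_{P_U}[\phi_{U,K}(c_K^*)]-\mathbb E_{P_U}[\phi_{U,K}(g_K^*(U))].
\]
Throughout I will use $\|u\|_K m_P(u)=\alpha_K(u)$, so that $\phi_{u,K}(t)=d\log t+\alpha_K(u)/t$.

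\emph{Anisotropic term.} Theorem~\ref{thm:optimal-g-K} gives $g_K^*=\alpha_K/d$. Substituting yields $\phi_{u,K}(g_K^*(u))=d\log(\alpha_K(u)/d)+d$, so
\[
\mathbb E_{P_U}[\phi_{U,K}(g_K^*(U))]=d\,\mathbb E_{P_U}[\log\alpha_K(U)]-d\log d+d.
\]
This expectation is finite because $\alpha_K$ is positive and continuous on the compact sphere.

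\emph{Isotropic term.} For a constant $c>0$,
\[
\mathbb E_{P_U}[\phi_{U,K}(c)]=d\log c+\bar\alpha/c, \qquad \bar\alpha:=\mathbb E_{P_U}[\alpha_K(U)]\in(0,\infty).
\]
The same one-variable calculus as in the proof of Theorem~\ref{thm:optimal-g-K} shows this has the unique minimizer $c_K^*=\bar\alpha/d$. This also settles existence and uniqueness of the argmin in \eqref{eq:optimal-c}. The minimum value is $d\log(\bar\alpha/d)+d$.

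Subtracting the two terms, the $-d\log d+d$ pieces cancel, leaving exactly \eqref{eq:delta-star-jensen-gap}. Nonnegativity then follows from Jensen's inequality for the strictly concave function $\log$. The equality case uses the strict form of Jensen: $\log\mathbb E[\alpha_K]=\mathbb E[\log\alpha_K]$ if and only if $\alpha_K(U)$ is $P_U$-a.s. constant. This step is routine since $\alpha_K$ is bounded and bounded away from zero, so all integrability conditions hold.

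The only point requiring care is to make sure the $g$-independent constants dropped in Lemma~\ref{lem:KL-objective} are genuinely identical for the constant and the anisotropic profiles. They are, because constants are elements of $G$. One could optionally note that, since $h_P>0$ and $\alpha_K$ is continuous, a.s. constancy is equivalent to constancy everywhere on the sphere, but the statement only requires the a.s. version.
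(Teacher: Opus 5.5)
Your proposal is correct and follows essentially the same route as the paper. Both arguments reduce the gap to $\mathcal J(K,c_K^*)-\mathcal J(K,g_K^*)$, obtain $c_K^*=\mathbb E_{P_U}[\alpha_K(U)]/d$ by one-variable calculus, evaluate the two optimal values so that the common terms $-d\log d+d+\log\mathrm{vol}_d(K)$ cancel, and conclude with the strict Jensen inequality for $\log$.
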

\begin{proof}
    Substituting the constant profile $g(u)\equiv c$ into the variational objective from Lemma~\ref{lem:KL-objective}, and using $\|u\|_K m_P(u)=\alpha_K(u)$, gives
    $$
    \mathcal J(K,c) = d\log c + \frac{1}{c} \mathbb E_{P_U}[\alpha_K(U)] + \log\mathrm{vol}_d(K).
    $$
    We note that $\alpha_K$ is continuous and strictly positive on the compact sphere and thus $0 < \mathbb E_{P_U}[\alpha_K(U)] < \infty.$
    Differentiating with respect to $c$ and setting the derivative equal to zero yields the unique critical point
    \begin{align*}
        c_K^* = \frac{1}{d}\,\mathbb E_{P_U}\big[\alpha_K(U)\big] = \mathbb E_{P_U}[g_K^*(U)].
    \end{align*} 
    Since $\lim_{c\to0^+}\mathcal J(K,c)=+\infty$ and $\lim_{c\to\infty}\mathcal J(K,c)=+\infty$, while the objective decreases strictly before $c_K^*$ and increases strictly thereafter, this critical point is the unique global minimizer on $(0,\infty)$. Substituting $c_K^*$ into the objective yields the optimal isotropic value
    \begin{align}
    \mathcal J(K,c_K^*) &= d\log\mathbb E_{P_U}[\alpha_K(U)] - d\log d + d + \log\mathrm{vol}_d(K). \label{eq:minimal-c-objective}
    \end{align}
    On the other hand, substituting $g_K^*(u)=\alpha_K(u)/d$ into $\mathcal{J}(K, g)$ yields
    \begin{align}
    \mathcal J(K,g_K^*) &= d\,\mathbb E_{P_U}[\log\alpha_K(U)] - d\log d + d + \log\mathrm{vol}_d(K).
    \label{eq:minimal-g-objective}
    \end{align}
    Subtracting the optimal anisotropic value \eqref{eq:minimal-g-objective} from the optimal isotropic value \eqref{eq:minimal-c-objective}  yields the desired isotropy cost of $K$ \eqref{eq:delta-star-jensen-gap}. Since the logarithm is strictly concave, Jensen's inequality guarantees that $\Delta_{\mathrm{star}}(K)\geq0$, with equality if and only if $\alpha_K(U)$ is constant $P_U$-almost surely.
\end{proof}

Theorem~\ref{thm:fixed-K-correction} characterizes the benefit of anisotropic scaling when the underlying star body $K$ is held fixed. We now allow the star body itself to vary over the unit-volume star bodies in $\mathcal S^d$ and compare the best isotropically scaled Gibbs model with the best anisotropically scaled Gibbs model.

\begin{definition}[Isotropy Cost of a Distribution]
\label{def:distribution-isotropy-cost}
For a target distribution $P\in\mathcal P$, the isotropy cost of $P$ is defined by
\begin{align}
    \Delta_{\mathrm{dist}}(P) := \inf_{\substack{K\in\mathcal S^d\\ \mathrm{vol}_d(K)=1}} D_{\mathrm{KL}}(P\|Q_{K,c_K^*}) - \inf_{\substack{K\in\mathcal S^d\\ \mathrm{vol}_d(K)=1}}D_{\mathrm{KL}}(P\|Q_{K,g_K^*}). \label{eq:distribution-isotropy-cost}
\end{align}
\end{definition}
The quantity $\Delta_{\mathrm{dist}}(P)$ measures the approximation improvement provided by anisotropic scaling after the unit-volume star body is optimized separately within each model class. In particular, it is not simply the minimum of $\Delta_{\mathrm{star}}(K)$ over $K$, since the optimal star body for the isotropic family need not coincide with that of the anisotropic family.

To characterize this cost, we first identify the solution to the isotropic outer optimization problem. Let $K_P$ be the canonical data-driven star body defined by $\rho_P$, and let $\widehat K_P$ be its unit-volume dilation:
$$ 
K_P := \left\{ ru: 0\leq r\leq\rho_P(u),\; u\in\mathbb S^{d-1} \right\}, \qquad \widehat K_P := \mathrm{vol}_d(K_P)^{-1/d}K_P.
$$
In the following lemma, we prove that $\widehat K_P$ uniquely solves the isotropic objective, allowing us to compare its optimal value with the anisotropic optimum from Theorem~\ref{thm:optimal-starbody} to explicitly characterize $\Delta_{\mathrm{dist}}(P)$.
\begin{lemma}[Optimal Isotropic Star Body]
\label{lem:optimal-isotropic-starbody}
Let $P\in\mathcal P$. Then $\widehat K_P$ is the unique solution to the isotropic outer optimization problem:
\begin{align}
\widehat K_P
=
\argmin_{\substack{K\in\mathcal S^d\\ \mathrm{vol}_d(K)=1}}
D_{\mathrm{KL}}(P\|Q_{K,c_K^*}).
\label{eq:optimal-isotropic-body}
\end{align}
\end{lemma}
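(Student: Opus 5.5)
We will reduce the isotropic outer problem to an explicit functional of $\rho_K$ and solve it using Gibbs' inequality. Start from the optimal isotropic value \eqref{eq:minimal-c-objective} obtained in the proof of Theorem~\ref{thm:fixed-K-correction}. Under the constraint $\mathrm{vol}_d(K)=1$ the term $\log\mathrm{vol}_d(K)$ vanishes, and the constants $-d\log d+d$ do not depend on $K$. By Lemma~\ref{lem:KL-objective}, the isotropic outer problem therefore has the same minimizers as
\begin{align*}
\inf_{\substack{K\in\mathcal S^d\\ \mathrm{vol}_d(K)=1}} \mathbb E_{P_U}[\alpha_K(U)] .
\end{align*}
This holds because $\log$ is strictly increasing.

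Next we rewrite this objective as a surface integral. Using $\alpha_K=m_P/\rho_K$, $P_U(\mathrm du)=h_P\,\mathrm d\sigma$ and \eqref{eq:rho-decomposition}, we get
\begin{align*}
\mathbb E_{P_U}[\alpha_K(U)]=\int_{\mathbb S^{d-1}}\frac{\rho_P(u)^{d+1}}{\rho_K(u)}\,\mathrm d\sigma(u).
\end{align*}
The constraint reads $\frac1d\int\rho_K^d\,\mathrm d\sigma=1$. We minimize $\int \rho_P^{d+1}\rho_K^{-1}\,\mathrm d\sigma$ over positive continuous $\rho_K$ subject to this constraint. The tool is Hölder's inequality with exponents $(d+1)/d$ and $d+1$, applied to $\rho_P^{d}=(\rho_P^{d+1}\rho_K^{-1})^{d/(d+1)}\cdot\rho_K^{d/(d+1)}$:
\begin{align*}
\int\rho_P^d\,\mathrm d\sigma\le\Bigl(\int\rho_P^{d+1}\rho_K^{-1}\,\mathrm d\sigma\Bigr)^{d/(d+1)}\Bigl(\int\rho_K^d\,\mathrm d\sigma\Bigr)^{1/(d+1)}.
\end{align*}
The second factor on the right equals $d^{1/(d+1)}$ by the constraint. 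This gives a lower bound $\int\rho_P^{d+1}\rho_K^{-1}\ge d^{-1/d}\bigl(\int\rho_P^d\bigr)^{(d+1)/d}$ that is independent of $K$.

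Finally we identify the minimizer. Equality in Hölder holds iff $\rho_P^{d+1}\rho_K^{-1}$ and $\rho_K^d$ are proportional $\sigma$-a.e., that is, iff $\rho_K\propto\rho_P$ a.e. By continuity and full support of $\sigma$ on open sets, as in the proof of Theorem~\ref{thm:optimal-starbody}, this proportionality holds everywhere. The unit-volume normalization then pins the constant down, so $K=\mathrm{vol}_d(K_P)^{-1/d}K_P=\widehat K_P$. This body is admissible because $\rho_P\in G$ and the volume is rescaled correctly, so the bound is attained, which gives both existence and uniqueness.

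The main obstacle is choosing the right Hölder pairing so that the constraint enters exactly with the power needed. A Lagrange-multiplier computation ($\rho_P^{d+1}\rho_K^{-2}=\lambda\rho_K^{d-1}$) confirms $\rho_K\propto\rho_P$ and is a fallback heuristic. Beyond that, the only care needed is to justify that finiteness and positivity of the integrals follow from compactness and $P\in\mathcal P$.
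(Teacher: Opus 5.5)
Your proposal is correct and follows the paper's route. You reduce via \eqref{eq:minimal-c-objective} to minimizing $\int_{\mathbb S^{d-1}}\rho_P^{d+1}\rho_K^{-1}\,\mathrm d\sigma$ over unit-volume $K$, then apply an inequality whose equality case forces $K$ to be a dilation of $K_P$. The only difference is that the paper cites Lutwak's dual mixed-volume inequality for $\widetilde V_{-1}(K,K_P)$, while you derive it and its equality case directly from H\"older with exponents $(d+1)/d$ and $d+1$, which is the standard proof of that inequality. Your opening mention of Gibbs' inequality is a misnomer, since H\"older is the tool you actually use.
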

\begin{proof}
By \eqref{eq:minimal-c-objective}, under the unit-volume constraint $\mathrm{vol}_d(K)=1$, minimizing the isotropic objective is equivalent to minimizing $\mathbb E_{P_U}[\alpha_K(U)]$. This quantity satisfies
\begin{align*}
    \mathbb E_{P_U}[\alpha_K(U)]
    &= \int_{\mathbb S^{d-1}}
    \rho_K(u)^{-1}m_P(u)h_P(u)\,\mathrm d\sigma(u) \\
    &= \int_{\mathbb S^{d-1}}
    \rho_K(u)^{-1}\rho_P(u)^{d+1}\,\mathrm d\sigma(u)
    =: d\widetilde V_{-1}(K,K_P) \\
    &\geq
    d\,\mathrm{vol}_d(K)^{-1/d}
    \mathrm{vol}_d(K_P)^{(d+1)/d}.
\end{align*}
The first equality follows from $\alpha_K(u)=m_P(u)/\rho_K(u)$ and $P_U(\mathrm du)=h_P(u)\,\mathrm d\sigma(u)$. The second line uses \eqref{eq:rho-decomposition} and defines the dual mixed volume $\widetilde V_{-1}(K,K_P)$ in the convention of \cite{leong2025optimal}. The final inequality is the dual mixed-volume inequality \cite{Lutwak1975}, with equality if and only if $K$ is a positive dilation of $K_P$. Under the unit-volume constraint, the lower bound is independent of $K$, and equality uniquely determines $K=\widehat K_P=\mathrm{vol}_d(K_P)^{-1/d}K_P$. Thus, $\widehat K_P$ is the unique isotropic minimizer.
\end{proof}

\begin{theorem}[Isotropy Cost of a Distribution]
\label{thm:distribution-isotropy-cost}
Let $P\in\mathcal P$, and let $K_P$ be its canonical star body. Then
\begin{align}
\Delta_{\mathrm{dist}}(P) = (d+1) D_{\mathrm{KL}} (h_P\|h_{p_{K_P}}). \label{eq:distribution-cost-directional-kl}
\end{align}
\end{theorem}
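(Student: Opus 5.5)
The plan is to compute both infima in \eqref{eq:distribution-isotropy-cost} in closed form through the functional $\mathcal J$ of Lemma~\ref{lem:KL-objective}. By that lemma, $D_{\mathrm{KL}}(P\|Q_{K,g})$ equals $\mathcal J(K,g)$ plus a constant $\mathbb E_P[\log p(X)]+\log\Gamma(d+1)$ that depends on neither $K$ nor $g$. Hence $\Delta_{\mathrm{dist}}(P)$ is exactly the difference of the two optimal values of $\mathcal J$, and the constant cancels.

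\emph{Isotropic side.} By Lemma~\ref{lem:optimal-isotropic-starbody}, the minimizer is $\widehat K_P$. Write $V:=\mathrm{vol}_d(K_P)$, so that $\rho_{\widehat K_P}=V^{-1/d}\rho_P$. Plug this into \eqref{eq:minimal-c-objective} with $\mathrm{vol}_d=1$. Using $m_Ph_P=\rho_P^{d+1}$ from \eqref{eq:rho-decomposition} and the polar volume formula,
\[
\mathbb E_{P_U}[\alpha_{\widehat K_P}(U)]=V^{1/d}\int_{\mathbb S^{d-1}}\rho_P^{d}\,\mathrm d\sigma=d\,V^{(d+1)/d}.
\]
The isotropic optimum is therefore $\mathcal J=(d+1)\log V+d$.

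\emph{Anisotropic side.} By Theorem~\ref{thm:optimal-starbody}, the minimizer is $K^*$ with $\rho_{K^*}=(d h_P)^{1/d}$, and by Theorem~\ref{thm:optimal-g-K} the profile is $g^*_{K^*}$. Evaluate \eqref{eq:minimal-g-objective} at $K^*$, using $\alpha_{K^*}=m_P/\rho_{K^*}$ and $m_P=\rho_P^{d+1}/h_P$:
\[
d\,\mathbb E_{P_U}[\log\alpha_{K^*}]=d(d+1)\mathbb E_{P_U}[\log\rho_P]-(d+1)\mathbb E_{P_U}[\log h_P]-\log d .
\]
All these expectations are finite because $\rho_P$, $h_P$ and $m_P$ are continuous and bounded away from $0$ and $\infty$.

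\emph{Combining.} Subtracting the anisotropic value from the isotropic value should give
\[
\Delta_{\mathrm{dist}}(P)=(d+1)\Bigl[\log(dV)+\mathbb E_{P_U}[\log h_P(U)]-d\,\mathbb E_{P_U}[\log\rho_P(U)]\Bigr].
\]
By Lemma~\ref{lem:gibbs-polar-densities}, $h_{p_{K_P}}=C_{K_P}\rho_P^d=\rho_P^d/(dV)$. The bracket is therefore exactly $\mathbb E_{P_U}[\log(h_P/h_{p_{K_P}})]=D_{\mathrm{KL}}(h_P\|h_{p_{K_P}})$, which yields \eqref{eq:distribution-cost-directional-kl}.

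The main obstacle is bookkeeping rather than a conceptual step. The $-d\log d$ terms and the $\log d$ arising from $\rho_{K^*}$ must cancel against the $C_{K_P}=1/(dV)$ normalization. One must also remember that $K_P$ itself need not have unit volume, so the lemma is applied with general volume, not the $C_K=1/d$ specialization.
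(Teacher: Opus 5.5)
Your proposal is correct and follows the paper's proof essentially step for step. Like the paper, you evaluate the isotropic optimum at $\widehat K_P$ to get $(d+1)\log\mathrm{vol}_d(K_P)+d$, evaluate the anisotropic optimum at $(K^*,g^*_{K^*})$ via \eqref{eq:minimal-g-objective}, and identify the difference as $(d+1)$ times the directional KL using $h_{p_{K_P}}=\rho_P^d/(d\,\mathrm{vol}_d(K_P))$ for the non-unit-volume body $K_P$.
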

\begin{proof}
We first compute the optimal isotropic value. By Lemma~\ref{lem:optimal-isotropic-starbody}, the unique isotropic optimizer over unit-volume star bodies is $\widehat K_P=\mathrm{vol}_d(K_P)^{-1/d}K_P$, so $\rho_{\widehat K_P}(u)=\mathrm{vol}_d(K_P)^{-1/d}\rho_P(u)$. Consequently, $\alpha_{\widehat K_P}(u)=\mathrm{vol}_d(K_P)^{1/d}m_P(u)/\rho_P(u)$. Taking the expectation with respect to $P_U$ and using \eqref{eq:rho-decomposition}, we obtain
\begin{align*}
    \mathbb E_{P_U}[\alpha_{\widehat K_P}(U)]
    &=
    \mathrm{vol}_d(K_P)^{1/d}
    \int_{\mathbb S^{d-1}}
    \frac{m_P(u)}{\rho_P(u)}h_P(u)\,\mathrm d\sigma(u) \\
    &=
    \mathrm{vol}_d(K_P)^{1/d}
    \int_{\mathbb S^{d-1}}\rho_P(u)^d\,\mathrm d\sigma(u)
    =
    d\,\mathrm{vol}_d(K_P)^{(d+1)/d}.
\end{align*}
Since $\mathrm{vol}_d(\widehat K_P)=1$, substituting this expression into \eqref{eq:minimal-c-objective} gives the optimal isotropic value
\begin{equation}
    \mathcal J(\widehat K_P,c_{\widehat K_P}^*)
    =
    (d+1)\log\mathrm{vol}_d(K_P)+d.
    \label{eq:optimal-isotropic-value}
\end{equation}
We next compute the optimal anisotropic value. By Theorem~\ref{thm:optimal-starbody}, the optimal star body $K^*$ satisfies $\|u\|_{K^*}=(d\,h_P(u))^{-1/d}$. Since $K^*$ has unit volume, substituting this expression into \eqref{eq:minimal-g-objective} yields
\begin{align}
    \mathcal J(K^*,g_{K^*}^*)
    &=
    -\mathbb E_{P_U}[\log h_P(U)]
    +d\,\mathbb E_{P_U}[\log m_P(U)]
    -(d+1)\log d+d
    \notag\\
    &=
    (d+1)\mathbb E_{P_U}
    \left[
    \log\left(\frac{\rho_P(U)^d}{d\,h_P(U)}\right)
    \right]+d
    \notag\\
    &=
    (d+1)\log\mathrm{vol}_d(K_P)
    -(d+1)D_{\mathrm{KL}}(h_P\|h_{p_{K_P}})+d.
    \label{eq:optimal-anisotropic-value}
\end{align}
The second equality uses \eqref{eq:rho-decomposition}. For the third, the directional-density formula above gives $\rho_P(u)^d/(d\,h_P(u))=\mathrm{vol}_d(K_P)\,h_{p_{K_P}}(u)/h_P(u)$, whose logarithm separates into a volume term and a log-density ratio. Finally, subtracting the optimal anisotropic value \eqref{eq:optimal-anisotropic-value} from the optimal isotropic value \eqref{eq:optimal-isotropic-value} cancels the common terms and yields \eqref{eq:distribution-cost-directional-kl}.
\end{proof}
Theorem~\ref{thm:distribution-isotropy-cost} shows that, up to the dimension-dependent factor $d+1$, the benefit of anisotropic scaling is determined entirely by the target distribution $P$ through the directional discrepancy between $h_P$ and the canonical Gibbs marginal $h_{p_{K_P}}$. This motivates specializing the radial discrepancy ratio to the canonical choice $K=K_P$. The resulting quantity depends only on $P$ and provides an intrinsic diagnostic for determining when anisotropic scaling is beneficial.

\begin{definition}[Diagnostic Ratio of a Distribution]
\label{def:diagnostic-ratio}
For $P\in\mathcal P$, define the diagnostic ratio of $P$ by
\begin{align}
\alpha_P(u)
:=
\alpha_{K_P}(u)
=
\frac{m_P(u)}{\rho_P(u)},
\qquad
u\in\mathbb S^{d-1}.
\label{eq:diagnostic-ratio}
\end{align}
\end{definition}
The diagnostic ratio $\alpha_P$ compares the conditional mean radius of $P$ with the canonical radial geometry $\rho_P$ associated with its optimal Gibbs regularizer. We now show that its logarithmic Jensen gap under $P_U$ determines the isotropy cost of $P$.

\begin{corollary}[Relationship Between the Isotropy Costs]
\label{cor:nullcostequiv}
Let $K_P$ be the canonical star body associated with the radial summary statistic of $P\in\mathcal P$. Then
\begin{equation}
    \Delta_{\mathrm{dist}}(P)
    =
    \frac{d+1}{d}\Delta_{\mathrm{star}}(K_P).
    \label{eq:delta-dist-delta-star-equivalence}
\end{equation}
In particular, $\Delta_{\mathrm{dist}}(P)=0$ if and only if $\alpha_P$ is constant on $\mathbb S^{d-1}$.
\end{corollary}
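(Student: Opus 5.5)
The plan is to compute both sides explicitly using results already established and compare them. By Theorem~\ref{thm:fixed-K-correction} applied with $K=K_P$, and using $\alpha_{K_P}=\alpha_P$ from Definition~\ref{def:diagnostic-ratio},
\[
\Delta_{\mathrm{star}}(K_P)=d\left[\log\mathbb E_{P_U}[\alpha_P(U)]-\mathbb E_{P_U}[\log\alpha_P(U)]\right].
\]
By Theorem~\ref{thm:distribution-isotropy-cost}, $\Delta_{\mathrm{dist}}(P)=(d+1)D_{\mathrm{KL}}(h_P\|h_{p_{K_P}})$. It therefore suffices to show that $D_{\mathrm{KL}}(h_P\|h_{p_{K_P}})$ equals the bracketed Jensen gap.

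For this, I will use Lemma~\ref{lem:gibbs-polar-densities}, which gives $h_{p_{K_P}}(u)=\rho_P(u)^d/(d\,\mathrm{vol}_d(K_P))$. By \eqref{eq:rho-decomposition}, $h_P=\rho_P^{d+1}/m_P$, so
\[
\frac{h_P(u)}{h_{p_{K_P}}(u)}=\frac{d\,\mathrm{vol}_d(K_P)\,\rho_P(u)}{m_P(u)}=\frac{d\,\mathrm{vol}_d(K_P)}{\alpha_P(u)}.
\]
Taking the expectation of the logarithm under $P_U$ gives
\[
D_{\mathrm{KL}}(h_P\|h_{p_{K_P}})=\log\bigl(d\,\mathrm{vol}_d(K_P)\bigr)-\mathbb E_{P_U}[\log\alpha_P(U)].
\]
The computation in the proof of Theorem~\ref{thm:distribution-isotropy-cost} supplies the remaining identity. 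There, $\mathbb E_{P_U}[\alpha_{\widehat K_P}]=\mathrm{vol}_d(K_P)^{1/d}\,\mathbb E_{P_U}[\alpha_P]=d\,\mathrm{vol}_d(K_P)^{(d+1)/d}$, hence $\mathbb E_{P_U}[\alpha_P(U)]=\int\rho_P^d\,\mathrm d\sigma=d\,\mathrm{vol}_d(K_P)$. Substituting, the KL divergence equals $\log\mathbb E_{P_U}[\alpha_P]-\mathbb E_{P_U}[\log\alpha_P]=\Delta_{\mathrm{star}}(K_P)/d$. Multiplying by $d+1$ yields \eqref{eq:delta-dist-delta-star-equivalence}.

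For the ``in particular'' statement, Theorem~\ref{thm:fixed-K-correction} says $\Delta_{\mathrm{star}}(K_P)=0$ if and only if $\alpha_P$ is constant $P_U$-almost surely. To upgrade this to constancy everywhere on $\mathbb S^{d-1}$, I will argue as follows. The function $\alpha_P=m_P/\rho_P$ is continuous, because $m_P=\rho_P^{d+1}/h_P$ is continuous for $P\in\mathcal P$. Moreover, $P_U$ has a strictly positive density $h_P$, so every nonempty open set has positive mass. If $\alpha_P$ took two distinct values, continuity would give two disjoint open sets on which it is separated, both of positive measure, contradicting a.s.\ constancy.

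The only delicate point is keeping track of the volume normalization, that is, checking that $\mathbb E_{P_U}[\alpha_P]=d\,\mathrm{vol}_d(K_P)$ exactly cancels the constant in the log-density ratio. I expect no real obstacle beyond that bookkeeping.
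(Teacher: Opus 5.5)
Your proposal is correct and follows essentially the same route as the paper. Both arguments combine Theorem~\ref{thm:distribution-isotropy-cost} with Lemma~\ref{lem:gibbs-polar-densities} and the identity $\mathbb E_{P_U}[\alpha_P(U)]=d\,\mathrm{vol}_d(K_P)$ to turn $D_{\mathrm{KL}}(h_P\|h_{p_{K_P}})$ into the Jensen gap of $\alpha_P$, and then match it against Theorem~\ref{thm:fixed-K-correction}. Your continuity argument, which upgrades $P_U$-almost-sure constancy to constancy on all of $\mathbb S^{d-1}$, makes explicit a step the paper's proof leaves implicit.
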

\begin{proof}[Proof of Corollary~\ref{cor:nullcostequiv}]
By construction, $\rho_{K_P}(u)=\rho_P(u)$. Using \eqref{eq:rho-decomposition} and the polar volume formula, we obtain
\begin{align*}
    \alpha_P(u)
    =
    \frac{m_P(u)}{\rho_P(u)}
    =
    \frac{\rho_P(u)^d}{h_P(u)},\qquad 
    \mathbb E_{P_U}[\alpha_P(U)] =
    \int_{\mathbb S^{d-1}}\rho_P(u)^d\,\mathrm d\sigma(u)
    =
    d\,\mathrm{vol}_d(K_P).
    \notag
\end{align*}
Lemma~\ref{lem:gibbs-polar-densities} therefore gives $h_{p_{K_P}}(u)=\alpha_P(u)h_P(u)/\mathbb E_{P_U}[\alpha_P(U)]$. Substituting this expression into the characterization from Theorem~\ref{thm:distribution-isotropy-cost} yields
\begin{align*}
    \Delta_{\mathrm{dist}}(P)
    &=
    (d+1)\int_{\mathbb S^{d-1}}
    h_P(u)
    \log\left(
    \frac{\mathbb E_{P_U}[\alpha_P(U)]}{\alpha_P(u)}
    \right)\,\mathrm d\sigma(u) \\
    &=
    (d+1)\left(
    \log\mathbb E_{P_U}[\alpha_P(U)]
    -
    \mathbb E_{P_U}[\log\alpha_P(U)]
    \right) \\
    &=
    \frac{d+1}{d}\Delta_{\mathrm{star}}(K_P),
\end{align*}
where the final equality follows from \eqref{eq:delta-star-jensen-gap}. This proves \eqref{eq:delta-dist-delta-star-equivalence}, and the equivalence of the zero-cost conditions follows because the two costs differ by a positive scalar factor. By Theorem~\ref{thm:fixed-K-correction}, these conditions hold precisely when $\alpha_P$ is constant on the sphere.
\end{proof}
Corollary~\ref{cor:nullcostequiv} shows that the isotropy cost of $P$ and that of its canonical star body $K_P$ measure the same underlying directional discrepancy. Both are determined by the logarithmic Jensen gap of the intrinsic diagnostic ratio $\alpha_P$ under $P_U$, differing only by the dimension-dependent factor $(d+1)/d$. Consequently, anisotropic scaling provides a strict improvement precisely when $\alpha_P$ varies across directions. We explore this criterion for several concrete distribution classes in the following subsection. Before doing so, we complete our analysis with a quantitative lower bound on the isotropy cost in terms of the variability of $\alpha_P$, which provides a guarantee on the improvement afforded by anisotropic scaling.

\begin{corollary}[Quantitative Lower Bound on Isotropy Cost]
\label{cor:quant-lower-bound-iso-cost}
For $P\in\mathcal P$, the isotropy cost satisfies
\begin{equation}
    \Delta_{\mathrm{dist}}(P)
    \geq
    \frac{d+1}{2}
    \frac{\mathrm{Var}_{P_U}(\alpha_P(U))}
    {\lVert\alpha_P\rVert_\infty^2},
    \label{eq:isotropy-cost-lower-bound}
\end{equation}
where $\lVert\alpha_P\rVert_\infty:=\sup_{u\in\mathbb S^{d-1}}|\alpha_P(u)|$.
\end{corollary}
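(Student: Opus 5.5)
From Corollary~\ref{cor:nullcostequiv} and Theorem~\ref{thm:fixed-K-correction}, the isotropy cost is a logarithmic Jensen gap:
\[
\Delta_{\mathrm{dist}}(P)=(d+1)\bigl(\log \mathbb E_{P_U}[\alpha_P(U)]-\mathbb E_{P_U}[\log \alpha_P(U)]\bigr).
\]
The plan is therefore to prove a scalar inequality. For a random variable $A$ taking values in $(0,M]$ with mean $\mu$, we want
\[
\log\mu-\mathbb E[\log A]\ \geq\ \frac{\mathrm{Var}(A)}{2M^2}.
\]
We will apply it with $A=\alpha_P(U)$ and $M=\|\alpha_P\|_\infty$. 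Since $\alpha_P=\rho_P^d/h_P$ is positive and continuous on the compact sphere, $M<\infty$ and $\alpha_P$ is bounded away from zero, so all expectations are finite.

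The key step is a second-order Taylor expansion of $\log$ around $\mu$ with Lagrange remainder. For each $a\in(0,M]$ there is a $\xi$ between $a$ and $\mu$ with
\[
\log a=\log\mu+\frac{a-\mu}{\mu}-\frac{(a-\mu)^2}{2\xi^2}.
\]
Both $a$ and $\mu$ lie in $(0,M]$, so $\xi\le M$ and hence $1/(2\xi^2)\ge 1/(2M^2)$. This gives the pointwise bound
\[
\log a\le \log\mu+\frac{a-\mu}{\mu}-\frac{(a-\mu)^2}{2M^2}.
\]
Taking expectations under $P_U$ makes the linear term vanish, leaving $\mathbb E[\log A]\le\log\mu-\mathrm{Var}(A)/(2M^2)$. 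Multiplying by $d+1$ yields \eqref{eq:isotropy-cost-lower-bound}.

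The main point needing care is the remainder bound. It must hold uniformly, which requires $\xi\le M$ and hence $\mu\le M$; this is automatic because the mean of $A$ cannot exceed its supremum. The remaining details are routine. Measurability of $\xi(a)$ is not an issue, because we only use the resulting pointwise inequality in $a$, which can also be checked directly. Concavity alone would give only nonnegativity, so the quadratic remainder is what produces the variance term.
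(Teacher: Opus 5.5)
Your proof is correct and follows essentially the same route as the paper. The paper rescales to $Y=\alpha_P(U)/M\in(0,1]$ and cites a Jensen-gap bound for functions with $f''\ge 1$, whereas you prove that bound directly via the Lagrange remainder, using $1/\xi^2\ge 1/M^2$ on $(0,M]$; this is the same curvature argument without the rescaling and without the external citation.
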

\begin{proof}[Proof of Corollary~\ref{cor:quant-lower-bound-iso-cost}]
Let $M:=\lVert\alpha_P\rVert_\infty$ and $Y:=\alpha_P(U)/M$, where $U\sim P_U$. Since $\alpha_P$ is continuous and strictly positive on the sphere, $M$ is finite and positive, and $Y$ is bounded away from zero with $Y\leq1$. For $f(y):=-\log y$, we have $f''(y)=y^{-2}\geq1$ on $(0,1]$, so the standard Jensen-gap bound gives $\log\mathbb E[Y]-\mathbb E[\log Y]\geq\frac12\mathrm{Var}(Y)$; see \cite[Theorem 2.1]{rajba2011probabilistic}. Using \eqref{eq:delta-dist-delta-star-equivalence} and \eqref{eq:delta-star-jensen-gap}, together with the cancellation of $\log M$ in the Jensen gap, we obtain
\begin{align*}
    \Delta_{\mathrm{dist}}(P)
    =
    (d+1)\left(
    \log\mathbb E[Y]-\mathbb E[\log Y]
    \right) \geq
    \frac{d+1}{2}\mathrm{Var}(Y)
    =
    \frac{d+1}{2}
    \frac{\mathrm{Var}_{P_U}(\alpha_P(U))}{M^2}.
\end{align*}
\end{proof}
    
\subsection{Examples Analyzing the Isotropy Cost}\label{sec:examples-isotropy-cost}
We now apply the diagnostic framework developed in Section~\ref{sec:provable-benefits} to several common classes of distributions. Recall from Definition~\ref{def:diagnostic-ratio} that the diagnostic ratio of a target distribution $P$ is
$$
\alpha_P(u)
=
\frac{m_P(u)}{\rho_P(u)},
\qquad
u\in\mathbb S^{d-1},
$$
and Corollary~\ref{cor:nullcostequiv} shows that anisotropic scaling provides a strictly better approximation than isotropic scaling precisely when $\alpha_P$ varies across the sphere. Thus, determining whether anisotropic dilation is beneficial reduces to computing the radial statistics $m_P$ and $\rho_P$, forming their ratio, and analyzing its constancy.

The examples below illustrate this criterion across distributions of increasing geometric complexity. We begin with densities generated by the gauge of a single star body. Since their directional and mean radial structures are coupled through the same star body, we expect the diagnostic ratio to be constant, and thus anisotropic scaling offers no additional approximation benefit. We then consider non-centered elliptical distributions and centered Gaussian mixture models, where translations of the mean and multiple covariance geometries introduce additional radial structure, respectively. 

\paragraph{Densities Induced by Star Bodies} \label{ex: starbody-densities}
    Fix a star body $L\in\mathcal S^d$ and consider a distribution $P$ with density
    \begin{align}
        p(x) = c \cdot \varphi(\|x\|_{L}),
        \label{eq:starbody-densities}
    \end{align}
    where $c>0$ is the normalizing constant and $\varphi:[0,\infty)\to[0,\infty)$ is measurable. Such densities depend only on the gauge $\|x\|_L$ and are therefore constant on each dilation of the boundary of $L$. This class includes the standard centered elliptical families, such as multivariate Gaussian, Student's $t$, and elliptical Laplace distributions, for which $L$ is an ellipsoid associated with the Mahalanobis gauge $\|x\|_L=\sqrt{x^\top\Sigma^{-1}x}$ with $\Sigma\in\mathbb R^{d\times d}$ symmetric positive definite.
 
    Previous work \cite[Example 2]{leong2025optimal} shows that the canonical star body $K_P$ is a positive dilation of the generating body $L$. The following corollary shows that the conditional mean radius has the same directional dependence, resulting in a constant diagnostic ratio.
    
    \begin{corollary}[Constant Diagnostic Ratio of Star Body Densities]
    Let $P\in\mathcal P$ have density of the form \eqref{eq:starbody-densities}. Then
    \begin{align}
        \alpha_P(u)  = \frac{\displaystyle \int_0^\infty s^d\varphi(s)\,\mathrm ds}{\displaystyle\left(\int_0^\infty s^{d-1}\varphi(s)\,\mathrm ds\right) \left(c\int_0^\infty s^d\varphi(s)\,\mathrm ds\right)^{1/(d+1)}} \text{ is constant on } \mathbb S^{d-1}. \label{eq:starbody-diagnostic-ratio}
    \end{align}
    \end{corollary}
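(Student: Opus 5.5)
The plan is to compute $h_P$, $m_P$ and $\rho_P$ directly along each ray, using the homogeneity of the gauge and a single substitution, and then take the ratio $\alpha_P=m_P/\rho_P$. Fix $u\in\mathbb S^{d-1}$. Since $\|ru\|_L=r\|u\|_L=r/\rho_L(u)$ for $r>0$, the substitution $s=r\|u\|_L$ (so $r=s\rho_L(u)$ and $\mathrm dr=\rho_L(u)\,\mathrm ds$) in \eqref{eq:directional-marginal} gives
\begin{align*}
h_P(u)=c\,\rho_L(u)^d\int_0^\infty s^{d-1}\varphi(s)\,\mathrm ds.
\end{align*}
The same substitution in \eqref{eq:radial-summary-statistic} gives
\begin{align*}
\rho_P(u)^{d+1}=c\,\rho_L(u)^{d+1}\int_0^\infty s^{d}\varphi(s)\,\mathrm ds.
\end{align*}
In particular $\rho_P=\bigl(c\int_0^\infty s^d\varphi(s)\,\mathrm ds\bigr)^{1/(d+1)}\rho_L$. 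This recovers the fact cited from \cite{leong2025optimal} that $K_P$ is a dilation of $L$.

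Next I would obtain $m_P$ from the decomposition \eqref{eq:rho-decomposition}, written as $m_P=\rho_P^{d+1}/h_P$. The factor $c\,\rho_L(u)^d$ cancels, leaving
\begin{align*}
m_P(u)=\rho_L(u)\,\frac{\int_0^\infty s^d\varphi(s)\,\mathrm ds}{\int_0^\infty s^{d-1}\varphi(s)\,\mathrm ds}.
\end{align*}
Dividing this by the expression for $\rho_P(u)$ cancels $\rho_L(u)$. What remains is exactly the quotient in \eqref{eq:starbody-diagnostic-ratio}, which does not depend on $u$.

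The only point needing care is that the two moment integrals are finite and strictly positive, so that these divisions are legitimate. This follows from $P\in\mathcal P$: at any fixed $u$, the quantities $h_P(u)$ and $\rho_P(u)$ are finite and positive, and $c\rho_L(u)^d>0$. So I expect no real obstacle; the argument is a one-line change of variables along each ray.
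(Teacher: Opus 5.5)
Your proposal is correct and follows the paper's proof essentially line for line: the substitution $s=r\|u\|_L$ gives $h_P(u)=cM_{d-1}\rho_L(u)^d$ and $\rho_P(u)^{d+1}=cM_d\rho_L(u)^{d+1}$, where $M_k:=\int_0^\infty s^k\varphi(s)\,\mathrm ds$, and $\rho_L(u)$ then cancels in $m_P/\rho_P$. The one small difference is how you show $M_{d-1},M_d\in(0,\infty)$: you read this off directly from $h_P,\rho_P\in G$, whereas the paper uses normalization of $p$ and a finite-first-moment condition; your route is slightly cleaner because it uses only the hypothesis $P\in\mathcal P$.
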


    \begin{proof}
    By positive homogeneity of the gauge, $p(ru)=c\,\varphi(r\|u\|_L)$. For $k\geq0$, define $M_k:=\int_0^\infty s^k\varphi(s)\,\mathrm ds$. Since $p$ is a probability density, integrating in polar coordinates and applying the substitution $s=r\|u\|_L$ gives $1=cM_{d-1}\int_{\mathbb S^{d-1}}\|u\|_L^{-d}\,\mathrm d\sigma(u)$, so $M_{d-1}<\infty$. The finite first-moment assumption similarly guarantees $M_d<\infty$.
    
    Applying the same substitution to the directional marginal and unnormalized first radial moment gives
    \begin{align*}
        h_P(u) &= c \int_0^\infty r^{d-1} \varphi(r\|u\|_{L}) \,\mathrm{d}r = c M_{d-1} \|u\|_{L}^{-d}, \\
        \int_0^\infty r^d p(ru) \,\mathrm{d}r &= c \int_0^\infty r^d \varphi(r\|u\|_{L}) \,\mathrm{d}r = c M_d \|u\|_{L}^{-(d+1)}.
    \end{align*}
    The conditional mean radius is the ratio of these two quantities $m_P(u) = M_d(M_{d-1} \|u\|_{L})^{-1}$. The radial summary statistic is the $1/(d+1)$-th root of the unnormalized first radial moment: $\rho_P(u)=(cM_d)^{1/(d+1)}\|u\|_L^{-1}$. Taking their ratio, the common directional factor $\|u\|_L^{-1}$ cancels, so \eqref{eq:starbody-diagnostic-ratio} is constant on $\mathbb S^{d-1}$. Corollary~\ref{cor:nullcostequiv} therefore implies $\Delta_{\mathrm{dist}}(P)=0$.
    \end{proof}

\paragraph{Non-Centered Elliptical Distributions}
    The previous example \eqref{eq:starbody-densities} includes centered elliptical distributions, for which the radial and directional structures are governed by the same underlying ellipsoid. We now examine what happens when this geometry is translated away from the origin. Let $P$ be a non-centered elliptical distribution with density
    \begin{align}
    p(x) = c\,\psi\!\left( (x-\mu)^\top\Sigma^{-1}(x-\mu) \right), \qquad \mu\neq0, \label{eq:noncentered-elliptical-density}
\end{align}
where $\Sigma\in\mathbb R^{d\times d}$ is symmetric positive definite, $c>0$ is the normalizing constant, and
$\psi:[0,\infty)\to[0,\infty)$ is a measurable density generator.
Unlike in the centered case, the displacement $\mu$ introduces an additional direction-dependent term into the radial integrals.
\begin{corollary}[Diagnostic Ratio of Non-Centered Elliptical Distributions]
Let $P\in\mathcal P$ have density of the form \eqref{eq:noncentered-elliptical-density} and finite first moment. Define $\kappa:=\sqrt{\mu^\top\Sigma^{-1}\mu}$ and $z(u):=u^\top\Sigma^{-1}\mu/\sqrt{u^\top\Sigma^{-1}u}$ for $u\in\mathbb S^{d-1}$. For $t\in z(\mathbb S^{d-1})$, define
$$
    F_\psi(t):=
    \frac{\displaystyle
    \left(\int_0^\infty s^d
    \psi\!\left(s^2-2ts+\kappa^2\right)\,\mathrm ds
    \right)^{d/(d+1)}}
    {\displaystyle
    \int_0^\infty s^{d-1}
    \psi\!\left(s^2-2ts+\kappa^2\right)\,\mathrm ds},
$$
where the dependence on the fixed parameters $d$ and $\kappa$ is suppressed in the notation. Then the diagnostic ratio satisfies
\begin{align}
\alpha_P(u)=c^{-1/(d+1)}F_\psi(z(u)),
    \qquad u\in\mathbb S^{d-1}.
    \label{eq:elliptical-diagnostic-ratio}
\end{align}
Hence, all directional dependence of $\alpha_P$ is captured by the single scalar quantity $z(u)$.
\end{corollary}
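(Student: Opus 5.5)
The plan is to compute $h_P$ and the unnormalized first radial moment $\int_0^\infty r^d p(ru)\,\mathrm dr$ directly along each ray. We then form $\alpha_P = m_P/\rho_P$ and use the identity $\alpha_P(u)=\rho_P(u)^d/h_P(u)$ from the proof of Corollary~\ref{cor:nullcostequiv}. That identity follows from \eqref{eq:rho-decomposition}, since $m_P=\rho_P^{d+1}/h_P$.

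The first step is to reduce the quadratic form along a ray. Fix $u\in\mathbb S^{d-1}$ and set $a(u):=\sqrt{u^\top\Sigma^{-1}u}>0$. Expanding gives
\begin{align*}
(ru-\mu)^\top\Sigma^{-1}(ru-\mu) = r^2a(u)^2 - 2r\,u^\top\Sigma^{-1}\mu + \kappa^2.
\end{align*}
Substituting $s=r\,a(u)$, so that $\mathrm dr=\mathrm ds/a(u)$, turns this into $s^2-2z(u)s+\kappa^2$. This is exactly where $z(u)$ enters.

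The second step is to evaluate the two radial integrals with this substitution:
\begin{align*}
h_P(u) &= c\,a(u)^{-d}\int_0^\infty s^{d-1}\psi\left(s^2-2z(u)s+\kappa^2\right)\mathrm ds,\\
\int_0^\infty r^d p(ru)\,\mathrm dr &= c\,a(u)^{-(d+1)}\int_0^\infty s^{d}\psi\left(s^2-2z(u)s+\kappa^2\right)\mathrm ds.
\end{align*}
Hence $\rho_P(u)=c^{1/(d+1)}a(u)^{-1}\bigl(\int_0^\infty s^d\psi(\cdots)\,\mathrm ds\bigr)^{1/(d+1)}$.

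The third step is to form the ratio. We get
\begin{align*}
\rho_P(u)^d/h_P(u) = c^{d/(d+1)-1}\,a(u)^{-d+d}\,F_\psi(z(u)) = c^{-1/(d+1)}F_\psi(z(u)).
\end{align*}
The powers of $a(u)$ cancel exactly, which gives \eqref{eq:elliptical-diagnostic-ratio}. Every $u$-dependence now runs through $z(u)$ alone.

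The only delicate point is well-definedness and finiteness of the integrals defining $F_\psi$ for $t\in z(\mathbb S^{d-1})$. These integrals are finite and positive because $P\in\mathcal P$ forces $h_P,\rho_P\in G$, and the displays above express them as positive multiples of $h_P(u)$ and $\rho_P(u)^{d+1}$. The finite first moment assumption is consistent with this but is not otherwise needed. No other obstacle is expected, and the argument reduces to the change of variables.
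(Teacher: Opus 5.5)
Your proposal is correct and follows essentially the same route as the paper: the substitution $s=r\sqrt{u^\top\Sigma^{-1}u}$ along each ray expresses $h_P$ and the first radial moment through the two integrals in $F_\psi$, and the common powers of the quadratic form then cancel. The only difference is that you go through the equivalent identity $\alpha_P=\rho_P^d/h_P$ instead of computing $m_P$ and $\rho_P$ separately. You are also right that the finite-first-moment hypothesis is redundant for $P\in\mathcal P$, since $\mathbb E_P\|X\|_2=\int_{\mathbb S^{d-1}}\rho_P^{d+1}\,\mathrm d\sigma<\infty$.
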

\begin{proof}
    Let $a(u):=u^\top\Sigma^{-1}u>0$ and $b(u):=u^\top\Sigma^{-1}\mu$. Then $p(ru)=c\,\psi(r^2a(u)-2rb(u)+\kappa^2)$ for $r\geq0$ and $u\in\mathbb S^{d-1}$. For $k\in\{d-1,d\}$ and $t\in z(\mathbb S^{d-1})$, define $I_k(t):=\int_0^\infty s^k\psi(s^2-2ts+\kappa^2)\,\mathrm ds$. Applying the change of variables $s=r\sqrt{a(u)}$ to the directional marginal and unnormalized first radial moment gives

    \begin{align*}
    h_P(u)&=c\,a(u)^{-d/2}I_{d-1}(z(u)),\\
    \int_0^\infty r^d p(ru)\,\mathrm dr
    &=c\,a(u)^{-(d+1)/2}I_d(z(u)).
\end{align*}
Both quantities are finite and strictly positive for every $u$ by the defining properties of $P\in\mathcal P$. In particular, $F_\psi$ is well-defined on $z(\mathbb S^{d-1})$. It follows that $m_P(u)=a(u)^{-1/2}I_d(z(u))/I_{d-1}(z(u))$. The radial summary statistic is $\rho_P(u)=c^{1/(d+1)}a(u)^{-1/2}I_d(z(u))^{1/(d+1)}$. Taking their ratio cancels the common factor $a(u)^{-1/2}$ and yields \eqref{eq:elliptical-diagnostic-ratio}.
\end{proof}
This corollary reduces all directional dependence of the diagnostic ratio to the signed scalar projection $z(u)$ \cite{mahalanobis1936generalized}. The next lemma shows that, for $d\geq2$, $z$ is a continuous surjection from $\mathbb S^{d-1}$ onto $[-\kappa,\kappa]$. This range depends only on $\mu$ and $\Sigma$, independently of the generator $\psi$. Consequently, $\alpha_P$ is constant on $\mathbb S^{d-1}$ if and only if $F_\psi$ is constant on $[-\kappa,\kappa]$. We then illustrate this reduction with the Gaussian generator $\psi(t) = e^{-t/2}$, for which $F_\psi$ has a nonzero derivative at the origin. This establishes nonconstancy of $\alpha_P$ and hence a strictly positive isotropy cost for non-centered Gaussians.

\begin{lemma}[Range of the Mahalanobis Projection]
\label{lem:range-of-z}
Let $d\geq2$, let $\Sigma\in\mathbb R^{d\times d}$ be symmetric positive definite, and let $\mu\in\mathbb R^d\setminus\{0\}$. For the function $z$ defined above,
$$
z(\mathbb S^{d-1})=[-\kappa,\kappa].
$$
Consequently, for any function $F:[-\kappa,\kappa]\to\mathbb R$, the composition $u\mapsto F(z(u))$ is constant on $\mathbb S^{d-1}$ if and only if $F$ is constant on $[-\kappa,\kappa]$.
\end{lemma}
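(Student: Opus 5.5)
The plan is to reduce $z$ to a Euclidean inner product by a whitening change of variables, and then to combine a Cauchy--Schwarz bound with connectedness of the sphere.

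Set $v(u):=\Sigma^{-1/2}u/\|\Sigma^{-1/2}u\|_2$, which is a unit vector, and $w:=\Sigma^{-1/2}\mu$. Then $\|w\|_2=\kappa>0$, and
$$z(u)=\frac{\langle \Sigma^{-1/2}u,\Sigma^{-1/2}\mu\rangle}{\|\Sigma^{-1/2}u\|_2}=\langle v(u),w\rangle .$$

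\textbf{Inclusion $z(\mathbb S^{d-1})\subseteq[-\kappa,\kappa]$.} By Cauchy--Schwarz, $|z(u)|\le\|w\|_2=\kappa$ for every $u$.

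\textbf{Endpoints are attained.} The map $u\mapsto v(u)$ is a bijection of $\mathbb S^{d-1}$, with inverse $v\mapsto \Sigma^{1/2}v/\|\Sigma^{1/2}v\|_2$. Choose $u_\pm$ with $v(u_\pm)=\pm w/\kappa$; explicitly, $u_\pm=\pm\Sigma^{-1/2}\mu/\|\Sigma^{-1/2}\mu\|_2$ works up to the normalization. Then $z(u_\pm)=\pm\kappa$.

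\textbf{Every intermediate value is attained.} The function $z$ is continuous on $\mathbb S^{d-1}$, since $a(u)=u^\top\Sigma^{-1}u>0$. For $d\ge2$ the sphere is path-connected, so $z(\mathbb S^{d-1})$ is a connected subset of $\mathbb R$ containing $\pm\kappa$. Hence it contains all of $[-\kappa,\kappa]$. This is the only place $d\ge2$ is used: for $d=1$ the sphere is $\{\pm1\}$ and the image is just $\{\pm\kappa\}$. Together with the first inclusion, this gives $z(\mathbb S^{d-1})=[-\kappa,\kappa]$.

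\textbf{Consequence for compositions.} If $F$ is constant on $[-\kappa,\kappa]$, then $F\circ z$ is trivially constant. Conversely, suppose $F\circ z$ is constant. Surjectivity of $z$ onto $[-\kappa,\kappa]$ means every $t$ in the interval equals $z(u)$ for some $u$. Therefore $F(t)=F(z(u))$ takes the common value, and $F$ is constant on $[-\kappa,\kappa]$.

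I expect no real obstacle here. The only care needed is verifying that $v$ is a bijection of the sphere, so that the endpoints are attained, and stating path-connectedness for $d\ge2$.
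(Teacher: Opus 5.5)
Your proof is correct and follows the paper's argument closely. You use the same whitening $v(u)=\Sigma^{-1/2}u/\|\Sigma^{-1/2}u\|_2$ and $w=\Sigma^{-1/2}\mu$, the same Cauchy--Schwarz inclusion, and the same bijectivity of $v$ on the sphere.

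The only real difference is how the interval is filled. The paper is constructive: for each $t\in[-\kappa,\kappa]$ it writes down the unit vector $y_t=tw/\kappa^2+\sqrt{1-t^2/\kappa^2}\,e$, with $e\perp w$ a unit vector that exists because $d\ge2$. You instead obtain the endpoints and then use path-connectedness of $\mathbb S^{d-1}$ together with continuity of $z$. Both arguments use $d\ge2$ in equivalent ways; yours is softer, while the paper's exhibits the preimage explicitly.

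One slip to fix: your parenthetical explicit choice $u_\pm=\pm\Sigma^{-1/2}\mu/\|\Sigma^{-1/2}\mu\|_2$ is wrong in general. It gives $z(u_+)=\mu^\top\Sigma^{-3/2}\mu/\|\Sigma^{-1}\mu\|_2$, which is strictly less than $\kappa$ unless $\mu$ is an eigenvector of $\Sigma$. Applying your own inverse map $y\mapsto\Sigma^{1/2}y/\|\Sigma^{1/2}y\|_2$ to $\pm w/\kappa$ gives $u_\pm=\pm\mu/\|\mu\|_2$, and a direct computation confirms $z(\pm\mu/\|\mu\|_2)=\pm\kappa$. The existence of $u_\pm$ already follows from the bijection, so this error does not affect the argument.
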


\begin{proof}
    Let $\Sigma^{1/2}$ denote the symmetric positive definite square root of $\Sigma$, and define $v(u):=\Sigma^{-1/2}u/\|\Sigma^{-1/2}u\|_2$ and $w:=\Sigma^{-1/2}\mu$. Then $z(u)=\langle v(u),w\rangle$ and $\kappa = \|w\|_2>0$. The denominator in $v(u)$ is strictly positive on $\mathbb S^{d-1}$, so $v$ and $z$ are continuous.
    
    For any $y\in\mathbb S^{d-1}$, the choice $u=\Sigma^{1/2}y/\|\Sigma^{1/2}y\|_2\in\mathbb S^{d-1}$
    satisfies $v(u)=y$. Hence, $v$ maps $\mathbb S^{d-1}$ surjectively onto itself, and therefore
    $ z(\mathbb S^{d-1}) = \{\langle y,w\rangle:y\in\mathbb S^{d-1}\}.$
    By the Cauchy--Schwarz inequality, this set is contained in $[-\kappa,\kappa]$. Conversely, because $d\geq2$, choose a unit vector $e$ orthogonal to $w$. For any $t\in[-\kappa,\kappa]$, the unit vector
    $$
    y_t = \frac{tw}{\kappa^2} + \sqrt{1-\frac{t^2}{\kappa^2}}\,e
    $$
    satisfies $\langle y_t,w\rangle=t$. Thus every value in $[-\kappa,\kappa]$ is attained, completing the proof.
\end{proof}

\begin{proposition}[Nonzero Isotropy Cost for Non-Centered Gaussians] \label{prop:noncentered-gaussians}
Let $d\geq2$, let $\Sigma\in\mathbb R^{d\times d}$ be symmetric positive definite, and let $P=\mathcal N(\mu,\Sigma)$ with $\mu\neq0$. Then $\alpha_P$ is nonconstant on $\mathbb S^{d-1}$ and consequently $\Delta_{\mathrm{dist}}(P)>0$.

\end{proposition}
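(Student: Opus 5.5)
The argument goes through the reduction already in place. By the diagnostic-ratio corollary for non-centered elliptical distributions, with the Gaussian generator $\psi(t)=e^{-t/2}$ we have $\alpha_P(u)=c^{-1/(d+1)}F_\psi(z(u))$. By Lemma~\ref{lem:range-of-z}, $z(\mathbb S^{d-1})=[-\kappa,\kappa]$ with $\kappa>0$, so $\alpha_P$ is constant on the sphere if and only if $F_\psi$ is constant on $[-\kappa,\kappa]$. It therefore suffices to show $F_\psi'(0)\neq 0$; since $0$ is an interior point of $[-\kappa,\kappa]$, $F_\psi$ then cannot be constant. Corollary~\ref{cor:nullcostequiv} then gives $\Delta_{\mathrm{dist}}(P)>0$. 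The fact that $P\in\mathcal P$ is routine: the Gaussian density is smooth, positive and rapidly decaying, so $h_P$ and $\rho_P$ are positive and continuous.

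For the computation, write $\psi(s^2-2ts+\kappa^2)=e^{-\kappa^2/2}e^{-s^2/2}e^{ts}$. The constant $e^{-\kappa^2/2}$ contributes only a positive multiplicative factor to $F_\psi$, so set
\[
J_k(t):=\int_0^\infty s^k e^{-s^2/2}e^{ts}\,\mathrm ds .
\]
Then $F_\psi(t)$ is a positive constant times $J_d(t)^{d/(d+1)}/J_{d-1}(t)$. Dominated convergence justifies differentiating under the integral, giving $J_k'(t)=J_{k+1}(t)$. Writing $\mu_k:=J_k(0)=2^{(k-1)/2}\Gamma\!\bigl(\tfrac{k+1}{2}\bigr)$, logarithmic differentiation at $t=0$ yields
\[
\frac{F_\psi'(0)}{F_\psi(0)}=\frac{d}{d+1}\frac{\mu_{d+1}}{\mu_d}-\frac{\mu_d}{\mu_{d-1}} .
\]

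The main step is to show that this quantity is nonzero. Using $\mu_{d+1}=d\,\mu_{d-1}$, which follows from $\Gamma(x+1)=x\Gamma(x)$, the condition $F_\psi'(0)=0$ becomes
\[
\frac{d^2}{d+1}\,\mu_{d-1}^2=\mu_d^2,
\qquad\text{equivalently}\qquad
\frac{\Gamma\!\bigl(\tfrac{d+1}{2}\bigr)^2}{\Gamma\!\bigl(\tfrac d2\bigr)^2}=\frac{d^2}{2(d+1)} .
\]
I would rule this out with the strict Gautschi/log-convexity bound
\[
\frac{\Gamma(x+\tfrac12)}{\Gamma(x)}<\sqrt{x}\quad\text{for }x>0,
\]
which is strict because $\log\Gamma$ is strictly convex. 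Taking $x=d/2$ gives that the left side is strictly less than $d/2$. It remains to compare $d/2$ with $d^2/(2(d+1))$, and here the inequality points the wrong way: $d^2/(2(d+1))<d/2$ as well, so this bound alone does not decide the question.

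I would therefore use a sharper two-sided estimate, for example Kershaw's inequality, which gives $\Gamma(x+\tfrac12)/\Gamma(x)>\sqrt{x-\tfrac14}$. Squaring at $x=d/2$ gives a left side strictly greater than $d/2-1/4$. Since
\[
\frac d2-\frac14-\frac{d^2}{2(d+1)}=\frac{d-1}{4(d+1)}>0\quad\text{for }d\ge2,
\]
the left side strictly exceeds $d^2/(2(d+1))$, so equality is impossible. Hence $F_\psi'(0)<0$ up to the sign conventions above, and in particular $F_\psi'(0)\neq 0$.

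Verifying this gamma-ratio inequality carefully is the step I expect to be the main obstacle. As a fallback, I would check it directly for small $d$ and use the asymptotic expansion $\Gamma(x+\tfrac12)^2/\Gamma(x)^2=x-\tfrac14+O(1/x)$ for large $d$.
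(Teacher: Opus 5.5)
Your proposal is correct and follows essentially the same route as the paper: the same reduction via $z(u)$ and Lemma~\ref{lem:range-of-z}, the same functions $J_k$, the same logarithmic derivative at $t=0$, and the same identity $J_{d+1}(0)=dJ_{d-1}(0)$. The only difference is the gamma-ratio bound. The paper uses Wendel's strict lower bound $\Gamma(x+\tfrac12)/\Gamma(x)>x/\sqrt{x+\tfrac12}$ at $x=d/2$, which squares to exactly the threshold $d^2/(2(d+1))$, so its strictness (from strict log-convexity of $\Gamma$) is what closes the argument. Your Kershaw bound, i.e.\ its lower inequality $\Gamma(y+1)/\Gamma(y+\tfrac12)>\sqrt{y+\tfrac14}$ applied at $y=d/2-\tfrac12>0$, is sharper and gives the explicit margin $\frac{d-1}{4(d+1)}$, so your small-$d$/asymptotic fallback is unnecessary.
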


\begin{proof}
    The Gaussian density is continuous and strictly positive, and its Gaussian decay ensures that the radial integrals are finite and continuous. Thus $P\in\mathcal P$. Let $\kappa:=\sqrt{\mu^\top\Sigma^{-1}\mu}>0$ and $c:=(2\pi)^{-d/2}(\det\Sigma)^{-1/2}$. For the Gaussian generator $\psi(t):=e^{-t/2}$, define $J_k(t):=\int_0^\infty s^k e^{-s^2/2+ts}\,\mathrm ds$ for $k\geq0$ and $t\in\mathbb R$, and set $R_d(t):=J_d(t)^{d/(d+1)}/J_{d-1}(t)$. Factoring $e^{-\kappa^2/2}$ out of each radial integral defining $F_\psi$, we obtain
    $$
    F_\psi(t)=e^{\kappa^2/(2(d+1))}R_d(t),
    \qquad t\in[-\kappa,\kappa].
    $$ 
    Consequently, \eqref{eq:elliptical-diagnostic-ratio} gives $\alpha_P(u)=(c e^{-\kappa^2/2})^{-1/(d+1)}R_d(z(u))$. The prefactor is positive and independent of $u$, so it suffices to show that $R_d$ is nonconstant on $[-\kappa,\kappa]$.

    We first show that $R_d$ is nonconstant near the origin. Fix $M>0$. For $t\in[-M,M]$, the derivative of the integrand defining $J_k$ is bounded by the integrable function $s^{k+1}e^{-s^2/2+Ms}$. Dominated convergence therefore shows that $J_k$ is continuously differentiable, with $J_k'(t)=J_{k+1}(t)$. Since $R_d(t)>0$, its logarithmic derivative is
    \begin{align*}
    \frac{\mathrm d}{\mathrm dt}\log R_d(t)
    &=\frac{d}{d+1}\frac{J_d'(t)}{J_d(t)}
      -\frac{J_{d-1}'(t)}{J_{d-1}(t)}=\frac{d}{d+1}\frac{J_{d+1}(t)}{J_d(t)}
      -\frac{J_d(t)}{J_{d-1}(t)}.
\end{align*}
    At $t=0$, the gamma integral gives $J_k(0)=2^{(k-1)/2}\Gamma((k+1)/2)$, where $\Gamma$ denotes the gamma function. Applying the recurrence $\Gamma(x+1)=x\Gamma(x)$ at $x=d/2$ yields $J_{d+1}(0)=dJ_{d-1}(0)$. Writing $B_d:=J_d(0)/J_{d-1}(0)$, we obtain
    \begin{align}
    \left.\frac{\mathrm d}{\mathrm dt}\log R_d(t)\right|_{t=0}
    =\frac{d^2-(d+1)B_d^2}{(d+1)B_d}.
    \label{eq: proof-prop18}
\end{align}
    The strict lower bound in Wendel's inequality for gamma ratios \cite{Wendel1948NoteOT}, applied with parameters $x=d/2$ and $s=1/2$, gives $B_d=\sqrt{2}\,\Gamma((d+1)/2)/\Gamma(d/2)>d/\sqrt{d+1}$. Here strictness follows from strict log-convexity of the gamma function. Hence $(d+1)B_d^2>d^2$, so \eqref{eq: proof-prop18} is strictly negative.

    Since $\kappa>0$, the origin lies in the interior of $[-\kappa,\kappa]$. Therefore, $R_d$ is nonconstant on this interval, and by Lemma~\ref{lem:range-of-z}, $\alpha_P$ is nonconstant on $\mathbb S^{d-1}$. Moreover, $\alpha_P$ is continuous and $h_P$ is strictly positive, so $\alpha_P$ cannot be constant $P_U$-almost surely. Corollary~\ref{cor:nullcostequiv} therefore implies $\Delta_{\mathrm{dist}}(P)>0$.
\end{proof}

\paragraph{Centered Gaussian Mixture Models}
    The previous example examines how translation introduces additional directional dependence into the radial integrals forming the diagnostic ratio, suggesting a richer radial geometry relative to the origin. Proposition~\ref{prop:noncentered-gaussians} proves that, for non-centered Gaussians, this leads to a nonconstant diagnostic ratio. We now explore how mixing centered Gaussian components with different covariance matrices can introduce additional directional structure. Let $P=\sum_{j=1}^k w_j\mathcal N(0,\Sigma_j)$ be a $k$-component Gaussian mixture model in $\mathbb R^d$ with density
    \begin{align}
        p(x) = \sum_{j=1}^k w_j \phi (x \mid 0, \Sigma_j), \label{eq:GMM-density}
    \end{align}
    where $d,k\geq1$ are integers, $w_j>0$, $\sum_{j=1}^k w_j=1$, and each $\Sigma_j\in\mathbb R^{d\times d}$ is symmetric positive definite. Here $\phi(\,\cdot\mid 0,\Sigma_j)$ denotes the density of $\mathcal N(0,\Sigma_j)$.

    \begin{corollary}[Diagnostic Ratio of Centered Gaussian Mixtures]
    \label{cor:gmm-diagnostic}
    For $u\in\mathbb S^{d-1}$, let $a_j(u):=u^\top\Sigma_j^{-1}u$ and $|\Sigma_j|:=\det\Sigma_j$. Then $P\in\mathcal P$, and its diagnostic ratio is
    \begin{align}
    \alpha_P(u) = C_d\, \frac{\displaystyle \left(\sum_{j=1}^k w_j|\Sigma_j|^{-1/2} a_j(u)^{-(d+1)/2}\right)^{d/(d+1)}}{\displaystyle \sum_{j=1}^k w_j|\Sigma_j|^{-1/2}a_j(u)^{-d/2}},
    \label{eq: GMM-diagnostic-ratio}
\end{align}
where
$C_d:=2^{(d+2)/(2(d+1))}\pi^{d/(2(d+1))}
\Gamma((d+1)/2)^{d/(d+1)}/\Gamma(d/2)$
and $\Gamma$ denotes the gamma function.
\end{corollary}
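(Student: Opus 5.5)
The computation follows the template of the star-body example: we evaluate $h_P$ and the unnormalized first radial moment of each Gaussian component in closed form, sum with the weights, and then take the ratio using $\alpha_P=m_P/\rho_P=\rho_P^d/h_P$ from the proof of Corollary~\ref{cor:nullcostequiv}. For a single component, $\phi(ru\mid 0,\Sigma_j)=(2\pi)^{-d/2}|\Sigma_j|^{-1/2}\exp(-r^2a_j(u)/2)$. We substitute $s=r\sqrt{a_j(u)}$ and use the Gaussian moment $\int_0^\infty s^k e^{-s^2/2}\,\mathrm ds = 2^{(k-1)/2}\Gamma((k+1)/2)$, already recorded as $J_k(0)$ in the proof of Proposition~\ref{prop:noncentered-gaussians}. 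This gives
\begin{align*}
h_P(u) &= (2\pi)^{-d/2}\,2^{(d-2)/2}\Gamma(d/2)\sum_{j=1}^k w_j|\Sigma_j|^{-1/2}a_j(u)^{-d/2},\\
\rho_P(u)^{d+1} &= (2\pi)^{-d/2}\,2^{(d-1)/2}\Gamma((d+1)/2)\sum_{j=1}^k w_j|\Sigma_j|^{-1/2}a_j(u)^{-(d+1)/2},
\end{align*}
because the integrals are linear in the mixture density.

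Membership $P\in\mathcal P$ follows from these formulas. Each $a_j$ is continuous on the sphere and bounded between the extreme eigenvalues of $\Sigma_j^{-1}$, so it is strictly positive. Hence both sums are finite, strictly positive, and continuous, and therefore $h_P,\rho_P\in G$. The density $p$ is also Lebesgue, so $P\in\mathcal P$. If one also wants the standing integrability condition $\mathbb E_P|\log p|<\infty$, it follows since $\log p$ is bounded above and below by quadratics in $\|x\|_2$.

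Finally, we form $\alpha_P=\rho_P^d/h_P=(\rho_P^{d+1})^{d/(d+1)}/h_P$. This immediately yields the stated ratio of sums with exponents $d/(d+1)$ in the numerator. The prefactor is
\[
C_d=\frac{\bigl((2\pi)^{-d/2}2^{(d-1)/2}\Gamma((d+1)/2)\bigr)^{d/(d+1)}}{(2\pi)^{-d/2}2^{(d-2)/2}\Gamma(d/2)}.
\]
The only real obstacle is bookkeeping the constants, namely verifying that the powers of $2$ and $\pi$ collapse to $2^{(d+2)/(2(d+1))}\pi^{d/(2(d+1))}$. The power of $\pi$ is $\pi^{-d^2/(2(d+1))+d/2}=\pi^{d/(2(d+1))}$. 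For the power of $2$, the exponent is
\[
\tfrac{d}{d+1}\bigl(-\tfrac d2+\tfrac{d-1}{2}\bigr)-\bigl(-\tfrac d2+\tfrac{d-2}{2}\bigr)=-\tfrac{d}{2(d+1)}+1=\tfrac{d+2}{2(d+1)}.
\]
These agree with the stated $C_d$, which completes the argument.
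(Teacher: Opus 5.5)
Your proposal is correct and follows essentially the same route as the paper: you evaluate $h_P$ and $\int_0^\infty r^d p(ru)\,\mathrm dr$ componentwise via Gaussian moment integrals, sum by linearity, and form $\alpha_P=\rho_P^d/h_P$. Your constants $\Gamma(d/2)/(2\pi^{d/2})$ and $\Gamma((d+1)/2)/(\sqrt2\,\pi^{d/2})$, and the resulting $C_d$, match the paper's, and your exponent bookkeeping and your derivation of $P\in\mathcal P$ from the closed forms are, if anything, more explicit than the paper's argument.
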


\begin{proof}
The mixture density is continuous and strictly positive, and its Gaussian decay ensures that the radial integrals are finite and continuous. Thus $P\in\mathcal P$. By linearity of the integral, the directional marginal and unnormalized first radial moment are weighted sums of the corresponding quantities for the individual components. Standard gamma integral evaluations give
\begin{align*}
    h_P(u)
    &=\frac{\Gamma(d/2)}{2\pi^{d/2}}
    \sum_{j=1}^k w_j|\Sigma_j|^{-1/2}a_j(u)^{-d/2},\\
    \int_0^\infty r^d p(ru)\,\mathrm dr
    &=\frac{\Gamma((d+1)/2)}{\sqrt{2}\,\pi^{d/2}}
    \sum_{j=1}^k w_j|\Sigma_j|^{-1/2}
    a_j(u)^{-(d+1)/2}.
\end{align*}
The conditional mean $m_P(u)$ is the ratio of these two quantities, while $\rho_P(u)$ is the $1/(d+1)$-th power of the second. Taking their ratio yields \eqref{eq: GMM-diagnostic-ratio}.
\end{proof}
The following theorem characterizes exactly when the directional dependence in \eqref{eq: GMM-diagnostic-ratio} cancels.

\begin{theorem}[Constant Diagnostic Ratio of Centered Gaussian Mixtures]
 \label{thm:gaussianmixture}
Let $P=\sum_{j=1}^k w_j\mathcal N(0,\Sigma_j)$ satisfy the assumptions of Corollary~\ref{cor:gmm-diagnostic}. Then
$$
 \alpha_P\text{ is constant on }\mathbb S^{d-1}
    \quad\Longleftrightarrow\quad
    \Sigma_1,\ldots,\Sigma_k\text{ are all proportional}.
$$
Here proportionality means that there exist a symmetric positive definite matrix $\Sigma_0$ and scalars $c_1,\ldots,c_k>0$ such that $\Sigma_j=c_j\Sigma_0$ for every $j$.
\end{theorem}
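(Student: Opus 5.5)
The plan is to work directly with the explicit formula \eqref{eq: GMM-diagnostic-ratio} from Corollary~\ref{cor:gmm-diagnostic}. For the easy direction ($\Leftarrow$), suppose $\Sigma_j=c_j\Sigma_0$. Then $a_j(u)=c_j^{-1}a_0(u)$ with $a_0(u):=u^\top\Sigma_0^{-1}u$. The numerator of \eqref{eq: GMM-diagnostic-ratio} becomes a constant times $\bigl(a_0(u)^{-(d+1)/2}\bigr)^{d/(d+1)}=a_0(u)^{-d/2}$. The denominator is a constant times $a_0(u)^{-d/2}$. These cancel, so $\alpha_P$ is constant. Equivalently, $P$ has density $\varphi(\|x\|_L)$ with $L$ the ellipsoid of $\Sigma_0$, so the earlier corollary on star-body densities applies.

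For the hard direction ($\Rightarrow$), first group components with proportional covariances. This rewrites the mixture with pairwise non-proportional $\Sigma_j$ and merged weights. Each merged group is a radial mixture $\sum_i w_i\,\mathcal N(0,c_i\Sigma_0)$, so it contributes to both sums terms of the form $\beta\,a_0^{-(d+1)/2}$ and $\gamma\,a_0^{-d/2}$ in a common $a_0$. Write $\beta_j,\gamma_j>0$ for the merged coefficients. Constancy $\alpha_P\equiv C$ then reads
\[
\Bigl(\sum_j \beta_j a_j^{-(d+1)/2}\Bigr)^{d}= C'\Bigl(\sum_j \gamma_j a_j^{-d/2}\Bigr)^{d+1}\quad\text{on }\mathbb S^{d-1}.
\]
Both sides are homogeneous of degree $-d(d+1)$ in $u$, so the identity extends to all of $\R^d\setminus\{0\}$. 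The goal is to show that this forces a single group.

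The key step is a singularity/growth argument via complexification, or more elementarily via restriction to lines. Restrict to a two-dimensional slice $u=x+t v$ and view each $a_j(x+tv)$ as a positive quadratic polynomial in $t$. Its complex roots are a conjugate pair $\zeta_j,\bar\zeta_j$. Two quadratic forms are proportional if and only if, for generic $x,v$, their restrictions share roots. So for non-proportional groups, a generic slice gives quadratics with distinct root pairs.

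Now analytically continue both sides in $t$ near $\zeta_1$. On the left, $a_1^{-(d+1)/2}$ has a branch or pole of order $(d+1)/2$ there, while the other terms are analytic. The left side therefore behaves like $\beta_1^d a_1^{-d(d+1)/2}$ times an analytic nonvanishing factor, plus lower-order singular terms. The right side behaves like $\gamma_1^{d+1}a_1^{-d(d+1)/2}$ times a factor. Comparing leading coefficients at $\zeta_1$ gives one relation. Comparing the next-order terms, which are cross terms of the form $d\,\beta_1^{d-1}a_1^{-(d-1)(d+1)/2}\,\beta_2 a_2^{-(d+1)/2}$ against $(d+1)\gamma_1^d a_1^{-d^2/2}\,\gamma_2 a_2^{-d/2}$, the local orders in $(t-\zeta_1)$ differ: $(d-1)(d+1)/2$ versus $d^2/2$. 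This mismatch yields a contradiction unless the other groups are absent.

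I expect this exponent bookkeeping to be the main obstacle, in particular handling half-integer powers when $d$ is even or odd. If it becomes messy, I would first try an alternative: send $u$ toward directions where a ratio $a_1/a_2$ is extremal and compare asymptotic expansions, or exploit strict Hölder equality. Indeed, writing $\alpha_P$ via Hölder with weights $\gamma_j a_j^{-d/2}$ shows that the ratio in \eqref{eq: GMM-diagnostic-ratio} is bounded by a power mean of $a_j^{-1/2}$, with equality only when all $a_j(u)$ are equal up to the fixed constants. This Hölder route likely gives a cleaner contradiction.
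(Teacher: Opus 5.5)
Your main line uses the same mechanism as the paper's proof. You continue the identity $N^d=KD^{d+1}$ analytically from real points to a zero of one class's quadratic form that no other class shares, uniformize there by $s=\sqrt{a_1}$, and obtain an odd exponent mismatch between the first correction terms. Restricting to a complex line $x+tv$ from the start is a legitimate and more elementary implementation than the paper's. Genericity of the real plane $\mathrm{span}(x,v)$ replaces the irreducibility and Study's-lemma step. One-variable continuation in $\mathbb{C}$ minus finitely many points replaces the biholomorphism $\Upsilon$ and the $2^m$-sheeted cover $\Omega$. You do need $x,v$ linearly independent, so that $a_j(x+tv)>0$ for real $t$ and the roots are simple and non-real. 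Your genericity claim also needs its one-line proof: if $q_1$ and $q_\ell$ had proportional restrictions to every 2-plane, then $q_\ell/q_1$ would be constant on every plane, hence constant everywhere.

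The genuine gap is in the step where you compare next-order cross terms. You write the correction as $\beta_2 a_2^{-(d+1)/2}$, as if there were only two classes. With $m\ge 3$ classes, the corrections are
\[
A(t)=\sum_{\ell\ge2}\beta_\ell\, a_\ell(x+tv)^{-(d+1)/2}, \qquad B(t)=\sum_{\ell\ge2}\gamma_\ell\, a_\ell(x+tv)^{-d/2},
\]
with branches fixed by the continuation. These complex sums can vanish at $\zeta_1$, in which case the two terms you compare are zero and nothing follows.

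The repair is the paper's Step 5. Set $s=\sqrt{a_1}$, which is a uniformizer at the simple root $\zeta_1$, and multiply through by $s^{d(d+1)}$ to get
\[
(\beta_1+s^{d+1}A)^d=C'(\gamma_1+s^dB)^{d+1}.
\]
Here $A$ and $B$ are holomorphic functions of $s^2$ near $0$. Write $A=\mu s^{2p}+O(s^{2p+2})$ and $B=\nu s^{2q}+O(s^{2q+2})$ with $\mu,\nu\neq0$. The first corrections are then $d\beta_1^{d-1}\mu\, s^{d+1+2p}$ and $C'(d+1)\gamma_1^{d}\nu\, s^{d+2q}$. Their exponents differ by the odd integer $1+2(p-q)$, so the lower-order one cannot be cancelled.

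This argument requires $A,B\not\equiv0$ near $\zeta_1$, and you must say why. Both are positive on the real axis, so by uniqueness of analytic continuation their continuations cannot vanish identically. Done this way, the exponent bookkeeping is uniform in $d$, and no even/odd case split is needed.

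Finally, drop the H\"older fallback. A pointwise upper bound with an equality case tells you nothing about a constant $\alpha_P$, because constancy does not force $\alpha_P$ to attain the bound anywhere.
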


\begin{proof}
The characterization of constancy requires an analytic continuation argument, which is deferred to Appendix~\ref{proof:gaussianmixture}. Since $\alpha_P$ is continuous and $h_P$ is strictly positive, Corollary~\ref{cor:nullcostequiv} gives the stated criterion for positive isotropy cost.
\end{proof}
Theorem~\ref{thm:gaussianmixture} shows that anisotropic dilations improve the optimal fit for indeed a large set of distributions. Mixtures of Gaussians have been shown to be useful models for multi-modal, complex data distributions, as they can approximate smooth distributions \cite{dalal1983approximating} and are useful models for natural image distributions \cite{chen2021learning, deledalle2018image}. Our result shows that any centered Gaussian mixture whose covariance matrices are not all proportional has a nonconstant diagnostic ratio, ensuring a provable benefit from anisotropic dilation. Geometrically, proportional covariances describe a common ellipsoidal shape at different scales; non-proportional covariances introduce competing directional scales.

For geometric intuition, Figure~\ref{fig:gaussian_samples} illustrates the two-component mixture in $\mathbb R^2$ defined below, whose components concentrate near orthogonal coordinate axes as $\varepsilon$ decreases to zero. While Theorem~\ref{thm:gaussianmixture} establishes when $\Delta_{\mathrm{dist}}(P)>0$, the following corollary gives a quantitative lower bound for this example using Corollary~\ref{cor:quant-lower-bound-iso-cost}.
\begin{figure}[h!]
        \centering
        \includegraphics[width=0.9\linewidth]{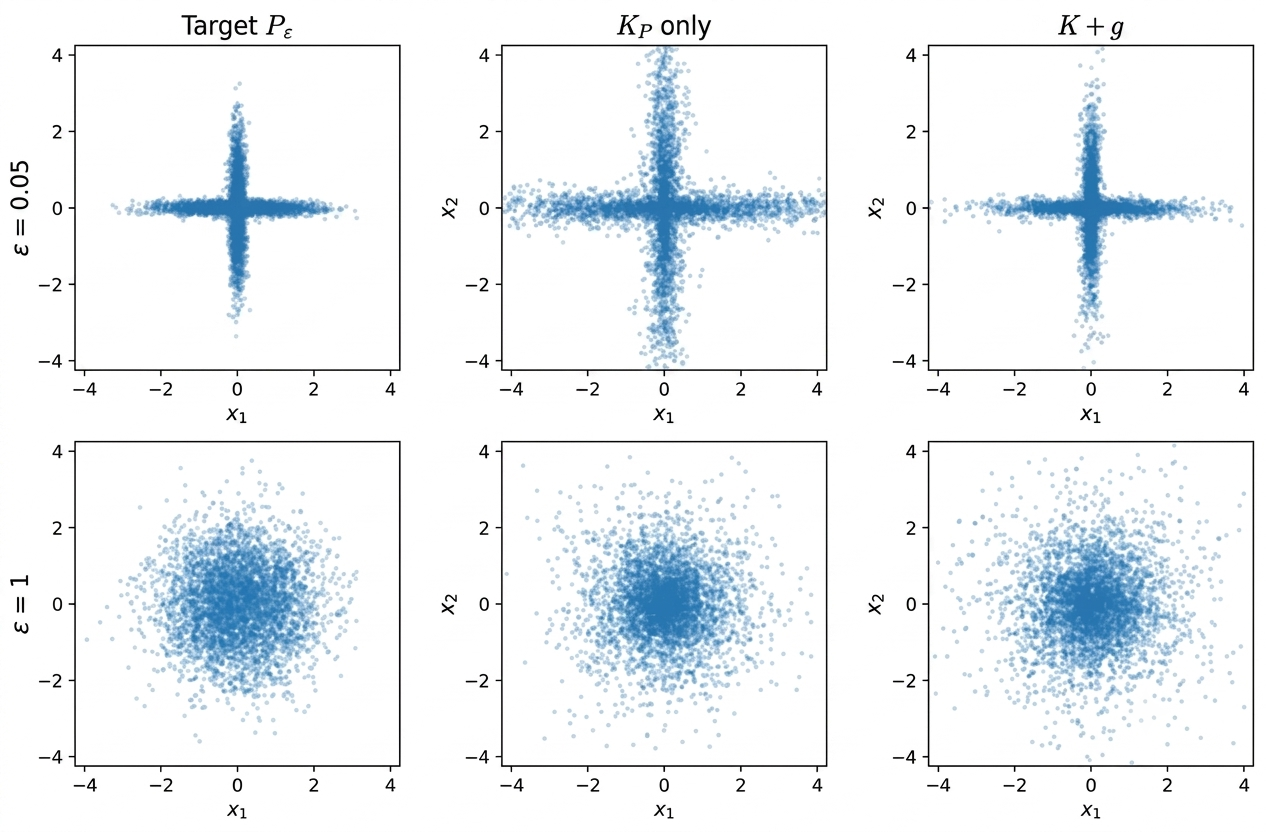}
        \caption{Approximations of a centered Gaussian mixture with varying anisotropy $\varepsilon$. Bottom ($\varepsilon = 1$): Proportional covariances yield a constant $\alpha_P(u)$, rendering the isotropic $K_P$ approximation sufficient. Top ($\varepsilon = 0.05$): Non-proportional covariances cause $\alpha_P(u)$ to fluctuate. The superior performance of the $(K,g)$ sampling highlights the necessity of anisotropic correction.}
        \label{fig:gaussian_samples}
    \end{figure}
\begin{corollary}\label{prop:examplegaussmix}
    Let $P_\varepsilon$ be a two-component mixture in $\mathbb{R}^2$ given by
    $P_\varepsilon = \frac12 \mathcal N(0,\Sigma_{\varepsilon,1}) + \frac12 \mathcal N(0,\Sigma_{\varepsilon,2}),$
    where $\Sigma_{\varepsilon,1} = \mathrm{diag}(1, \varepsilon)$ and $\Sigma_{\varepsilon,2} = \mathrm{diag}(\varepsilon, 1)$ for $\varepsilon > 0$. Then $\Delta_{\mathrm{dist}}(P_{\varepsilon}) =0$  if and only if $\varepsilon=1$. Moreover, there exists a numerical constant $c > 0$ such that  the following lower bound holds for every $0 < \varepsilon \leq 1$: $$\Delta_{\mathrm{dist}}(P_{\varepsilon}) \geq c \frac{\sqrt{\varepsilon}(1-\varepsilon)^4}{(1+\varepsilon)^5}.$$
\end{corollary}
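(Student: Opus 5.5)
The plan is to make the diagnostic ratio of Corollary~\ref{cor:gmm-diagnostic} explicit for this example and then feed it into Corollary~\ref{cor:quant-lower-bound-iso-cost}.

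\emph{Zero-cost equivalence.} By Theorem~\ref{thm:gaussianmixture} together with Corollary~\ref{cor:nullcostequiv}, $\Delta_{\mathrm{dist}}(P_\varepsilon)=0$ holds iff $\Sigma_{\varepsilon,1}$ and $\Sigma_{\varepsilon,2}$ are proportional. Suppose $\mathrm{diag}(1,\varepsilon)=c\,\mathrm{diag}(\varepsilon,1)$. Then $1=c\varepsilon$ and $\varepsilon=c$, so $\varepsilon^2=1$, which forces $\varepsilon=1$. Conversely, $\varepsilon=1$ gives equal covariances.

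\emph{Explicit form of $\alpha_{P_\varepsilon}$.} Write $u=(\cos\theta,\sin\theta)$ with $d=2$. Then
\[
a_1=\cos^2\theta+\varepsilon^{-1}\sin^2\theta,\qquad a_2=\varepsilon^{-1}\cos^2\theta+\sin^2\theta,\qquad |\Sigma_j|=\varepsilon .
\]
Set $b_j:=\varepsilon a_j$, so that
\[
b_1=\varepsilon\cos^2\theta+\sin^2\theta,\qquad b_2=\cos^2\theta+\varepsilon\sin^2\theta .
\]
Because the $\varepsilon$ powers cancel by homogeneity, \eqref{eq: GMM-diagnostic-ratio} becomes
\[
\alpha(\theta)=C\,\frac{(b_1^{-3/2}+b_2^{-3/2})^{2/3}}{b_1^{-1}+b_2^{-1}} .
\]
By the power-mean inequality, $(x^{3/2}+y^{3/2})^{2/3}\le x+y$, so $\alpha\le C$ uniformly. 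Hence $\|\alpha\|_\infty\le C$, and it suffices to lower-bound $\mathrm{Var}_{P_U}(\alpha(U))$.

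\emph{Variance lower bound.} I would use the elementary two-set bound: if $\alpha\ge A$ on $S_1$ and $\alpha\le B<A$ on $S_2$, then
\[
\mathrm{Var}(\alpha)\ge \frac{P_U(S_1)P_U(S_2)}{P_U(S_1)+P_U(S_2)}\,(A-B)^2\ \gtrsim\ \min\{P_U(S_1),P_U(S_2)\}\,(A-B)^2 .
\]
I would take $S_2$ to be a neighbourhood of the diagonal $\theta=\pi/4$ (and its symmetric copies), and $S_1$ a neighbourhood of the axes $\theta\in\{0,\pi/2,\dots\}$.

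\begin{itemize}
\item At $\theta=\pi/4$ we have $b_1=b_2$, which gives $\alpha=2^{-1/3}C$.
\item At $\theta=0$ we have $b_1=\varepsilon$ and $b_2=1$, which gives $\alpha=C(\varepsilon^{-3/2}+1)^{2/3}/(\varepsilon^{-1}+1)$.
\item Setting $s=\sqrt{\varepsilon}$, the gap $\alpha(0)-\alpha(\pi/4)$ should be of order $(1-\varepsilon)^2/(1+\varepsilon)^2$ after a Taylor/convexity argument in the ratio $b_1/b_2$. This is because $\alpha$ is a symmetric function of $\log(b_1/b_2)$, so its deviation from the diagonal value is quadratic in that log-ratio.
\item Squaring this gap gives the factor $(1-\varepsilon)^4$.
\end{itemize}

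For the masses, use $h_P(u)\propto b_1^{-1}+b_2^{-1}$ (the $\varepsilon$ factors are absorbed into normalization). The normalizing integral is
\[
\int_0^{2\pi}\bigl(b_1^{-1}+b_2^{-1}\bigr)\,\mathrm d\theta=\frac{4\pi}{\sqrt{\varepsilon}} .
\]
\begin{itemize}
\item On a fixed-width arc around $\pi/4$, the integrand $b_1^{-1}+b_2^{-1}$ is of order $1/(1+\varepsilon)$, so $P_U(S_2)\gtrsim\sqrt{\varepsilon}/(1+\varepsilon)$.
\item Near the axes, mass of order one sits in arcs of width about $\sqrt{\varepsilon}$, so $P_U(S_1)\gtrsim 1$.
\end{itemize}
Combining these gives the claimed $\sqrt{\varepsilon}(1-\varepsilon)^4/(1+\varepsilon)^5$, up to tracking $(1+\varepsilon)$ factors.

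\emph{Main obstacle.} The main obstacle is getting the uniform quantitative gap, with the right $(1-\varepsilon)$ and $(1+\varepsilon)$ powers, between $\alpha$ on $S_1$ and on $S_2$ with arcs chosen independently of $\varepsilon$. I would first try parametrizing by $t=\tanh\bigl(\tfrac12\log(b_1/b_2)\bigr)$ and proving that $\alpha/C$ is a monotone function of $t^2$ with second-order lower bound $\ge c\,t^2$. Then on $S_1$ one has $t^2\gtrsim(1-\varepsilon)^2/(1+\varepsilon)^2$, while on $S_2$ one has $t^2$ small, for instance by taking the arc around $\pi/4$ narrow enough that $t^2$ there is at most a quarter of its value on $S_1$.
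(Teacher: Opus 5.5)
Your proposal is correct and shares the paper's skeleton. Theorem~\ref{thm:gaussianmixture} handles the equality case. Corollary~\ref{cor:quant-lower-bound-iso-cost} then applies once $\alpha_{P_\varepsilon}$ is reduced to a one-variable function. Your $t=\tanh(\tfrac12\log(b_1/b_2))=(b_1-b_2)/(b_1+b_2)$ is exactly the paper's $\delta_\varepsilon\cos 2\theta$, and $\alpha/C=\tfrac12\phi(t^2)$ with the paper's $\phi$. Your power-mean bound $\alpha\le C$ is the paper's $\phi\le 2$.

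The real difference is the variance lemma. The paper writes $\mathrm{Var}(Y)=\tfrac12\mathbb E[(Y-Y')^2]$ and uses the lower-Lipschitz bound $\phi'\ge 2^{-4/3}$ to reduce to $\delta_\varepsilon^4\,\mathrm{Var}(\cos^2 2\Theta)$. It then needs only the density floor $h_\varepsilon\ge\sqrt{1-\delta_\varepsilon^2}/(2\pi)$ to compare with the uniform law, so no arcs have to be chosen and only one mass estimate is needed. Your two-arc bound gives the same rate: your $P_U(S_2)\asymp\sqrt\varepsilon/(1+\varepsilon)$ is the same density floor at the diagonals. However, it needs two checks you left implicit.

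\emph{First check (the gap between the arcs).} Monotonicity of $\phi$ together with $\phi(s)-\phi(0)\ge cs$ does not by itself separate the values of $\alpha$ on $S_1$ from those on a fixed-width arc $S_2$. You also need an upper bound $\phi(s)-\phi(0)\le C's$ with $C'<4c$ for your ``quarter'' choice. More simply, the uniform bound $\phi'\ge 2^{-4/3}$ on $(0,1]$ gives $\phi(s_1)-\phi(s_2)\ge 2^{-4/3}(s_1-s_2)$ directly.

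\emph{Second check (mass of $S_1$).} The claim $P_U(S_1)\gtrsim 1$ uniformly in $\varepsilon\in(0,1]$ does not follow from the small-$\varepsilon$ heuristic about arcs of width $\sqrt\varepsilon$. It does follow because $h_\varepsilon(\theta)=\sqrt\varepsilon/\bigl(\pi(1+\varepsilon)(1-t^2)\bigr)$ is increasing in $\cos^2 2\theta$. Hence the superlevel set $S_1$ carries at least its uniform share $|S_1|/(2\pi)$.

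Finally, your $C$ is not $\varepsilon$-free, since the determinants leave a factor $\varepsilon^{1/6}$. This is harmless, because only the scale-invariant ratio $\mathrm{Var}(\alpha)/\|\alpha\|_\infty^2$ enters.
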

    
\begin{proof}
    By Theorem~\ref{thm:gaussianmixture}, the diagnostic ratio is constant on $\mathbb{S}^1$ if and only if the covariance matrices are proportional. The relation $\Sigma_{\varepsilon,1}=\lambda\Sigma_{\varepsilon,2}$ for some $\lambda>0$ requires $1=\lambda\varepsilon$ and $\varepsilon=\lambda$. For $\varepsilon>0$, these identities hold if and only if $\varepsilon=1$.

    For the quantitative lower bound, fix $0<\varepsilon\leq1$. Corollary~\ref{cor:quant-lower-bound-iso-cost} gives
     \begin{align*}
        \Delta_{\mathrm{dist}}(P_{\varepsilon}) & = 3 (\log \mathbb{E}_{P_{\varepsilon,U}} [\alpha_{P_{\varepsilon}}(U)] - \mathbb{E}_{P_{\varepsilon,U}}[\log \alpha_{P_{\varepsilon}}(U)]) \geq \frac{3}{2}\cdot\frac{\mathrm{Var}_{P_{\varepsilon,U}}(\alpha_{P_{\varepsilon}}(U))}{\|\alpha_{P_{\varepsilon}}\|_{\infty}^2}.
    \end{align*}
    where $P_{\varepsilon,U}$ is the directional marginal of $P_\varepsilon$. Thus it suffices to control the variability of the diagnostic ratio relative to its maximum.

    Parameterize $u_\theta=(\cos\theta,\sin\theta)^\top$ for $\theta\in[0,2\pi)$, and put $a_1(\theta):= u_{\theta}^\top\Sigma_{\varepsilon,1}^{-1}u_{\theta} = \cos^2 \theta + \sin^2 \theta / \varepsilon$ and $a_2(\theta) := u_{\theta}^\top\Sigma_{\varepsilon,2}^{-1}u_{\theta} = \cos^2 \theta / \varepsilon + \sin^2 \theta$. Using the identities $\cos^2 \theta = \frac{1+\cos 2\theta}{2} $ and $\sin^2 \theta = \frac{1-\cos 2\theta}{2}$, note that we can simplify $$a_1(\theta) = \frac{1+\varepsilon}{2\varepsilon}(1 + \delta_{\varepsilon}\cos 2\theta)\ \text{and}\ a_2(\theta) = \frac{1+\varepsilon}{2\varepsilon}(1-\delta_{\varepsilon}\cos 2\theta)$$ where we set $\delta_{\varepsilon} := (\varepsilon-1)/(\varepsilon+1).$ Then we can use \eqref{eq: GMM-diagnostic-ratio} to show that $$\alpha_{P_{\varepsilon}}(u_{\theta}) \propto \left((1+\delta_{\varepsilon}\cos2\theta)^{3/2} + (1-\delta_{\varepsilon}\cos2\theta)^{3/2}\right)^{2/3} =: \phi\left((\delta_{\varepsilon}\cos 2\theta)^2\right)$$ where $\phi(t):= \left((1+\sqrt{t})^{3/2} + (1-\sqrt{t})^{3/2}\right)^{2/3}.$ Note that the multiplicative constant does not appear in the Jensen gap since $\log \mathbb{E}[cY] - \mathbb{E}[\log cY] = \log \mathbb{E}[Y] - \mathbb{E}[\log Y]$ for all $c > 0$. Elementary calculus reveals \begin{align*}
        & 2^{-4/3} \leq \phi'(t) \leq 1/2,\ \quad t \in (0,1], \\
        & 2^{2/3} \leq \phi(t) \leq 2,\ \quad t \in [0,1].
    \end{align*} As a consequence, $\phi$ satisfies the following lower-Lipschitz bound $|\phi(t_1) - \phi(t_2)| \geq 2^{-4/3}|t_1 - t_2|$ for $t_1,t_2\in [0,1]$. Since the ratio $\mathrm{Var}_{P_{\varepsilon,U}}(\alpha_{P_{\varepsilon}}(U)) / \|\alpha_{P_{\varepsilon}}\|_{\infty}^2$ is invariant under multiplication of $\alpha_{P_{\varepsilon}}$, it suffices to analyze the variance of the random variable $Y_{\varepsilon} := \phi(\delta_{\varepsilon}^2 \cos^2 (2\Theta))/ \phi(\delta_{\varepsilon}^2)$. Here, $\Theta$ is drawn from the angular marginal, denoted by $P_{\varepsilon,\Theta}$, so that $u_{\Theta} \sim P_{\varepsilon,U}$ when $\Theta \sim P_{\varepsilon,\Theta}$. Indeed, since $\phi$ is increasing and $0 \leq \cos^2 2\theta \leq 1 $, $\|\alpha_{P_{\varepsilon}}\|_{\infty} = C_{\varepsilon}\phi(\delta_{\varepsilon}^2)$. Hence $Y_{\varepsilon} = \alpha_{P_{\varepsilon}}(u_{\Theta}) / \|\alpha_{P_{\varepsilon}}\|_{\infty}$ so that \begin{align*}
    \frac{\mathrm{Var}_{P_{\varepsilon,U}}(\alpha_{P_{\varepsilon}}(U))}{\|\alpha_{P_{\varepsilon}}\|_{\infty}^2} = \mathrm{Var}_{P_{\varepsilon,U}}\left(\frac{\alpha_{P_{\varepsilon}}(U) }{\|\alpha_{P_{\varepsilon}}\|_{\infty}}\right)= \mathrm{Var}_{P_{\varepsilon,\Theta}}(Y_{\varepsilon}).
    \end{align*} Note that the random variable $\Theta \sim P_{\varepsilon,\Theta}$ has density $h_{\varepsilon}(\theta) = \frac{\sqrt{1-\delta_{\varepsilon}^2}}{2\pi(1-\delta_{\varepsilon}^2\cos^2(2\theta))}$. Drawing an independent copy $\Theta' \sim h_{\varepsilon}(\theta)$ yields \begin{align*}
        \mathrm{Var}_{P_{\varepsilon,\Theta}}(Y_{\varepsilon}) & = \frac{1}{2\phi(\delta_{\varepsilon}^2)^2} \mathbb{E}_{\Theta,\Theta' \sim P_{\varepsilon,\Theta}}\left[\left(\phi(\delta_{\varepsilon}^2 \cos^2(2\Theta)) - \phi(\delta_{\varepsilon}^2 \cos^2(2\Theta'))\right)^2\right] \\
        & \geq \frac{2^{-8/3}}{2\phi(\delta_{\varepsilon}^2)^2}\mathbb{E}_{\Theta,\Theta'\sim P_{\varepsilon,\Theta}}\left[\left(\delta_{\varepsilon}^2 \cos^2(2\Theta) - \delta_{\varepsilon}^2 \cos^2(2\Theta')\right)^2\right] \\ 
        & \geq 2^{-17/3}\delta_{\varepsilon}^4\mathrm{Var}_{P_{\varepsilon,\Theta}}(\cos^2(2\Theta))
    \end{align*} where in the first inequality we used the lower-Lipschitz bound and in the last inequality, we used $\phi \leq 2.$ Finally, since the density $h_{\varepsilon}(\theta) \geq \sqrt{1-\delta_{\varepsilon}^2}/(2\pi)$ for all $\theta$, note that we have \begin{align*}
        \mathrm{Var}_{P_{\varepsilon,\Theta}}(\cos^2(2\Theta)) \geq \sqrt{1-\delta_{\varepsilon}^2}/(2\pi)\mathrm{Var}_{\mathrm{Unif}[0,2\pi]}(\cos^2(2\Theta)).
    \end{align*} An elementary calculation gives $\mathrm{Var}_{\mathrm{Unif}[0,2\pi]}(\cos^2(2\Theta)) = \frac{1}{2\pi}\int_0^{2\pi}(\cos^2(2\theta) - 1/2)^2\mathrm{d}\theta= 1/8$. Finally, since $\sqrt{1-\delta_{\varepsilon}^2} = 2\sqrt{\varepsilon}/(\varepsilon+1)$, we have shown that there exists a universal constant $c > 0$ such that \begin{align*}
        \mathrm{Var}_{P_{\varepsilon,\Theta}}(Y_{\varepsilon}) \geq c \delta_{\varepsilon}^4 \sqrt{1- \delta_{\varepsilon}^2} \ge c\frac{\sqrt{\varepsilon}(\varepsilon-1)^4}{(\varepsilon + 1)^5}.
    \end{align*}
\end{proof}

\subsection{Generalized Gibbs Orbits}
We now extend the previous analysis to generalized Gibbs densities associated with regularizers that are positively homogeneous of degree $\tau>0$. For a star body $K\in\mathcal S^d$, the regularizer $x\mapsto\|x\|_K^\tau$ satisfies $\|ru\|_K^\tau=r^\tau\|u\|_K^\tau$ for $r>0$ and $u\in\mathbb S^{d-1}$. This generalization introduces an additional degree of freedom in how radial magnitude is penalized, while preserving the underlying directional geometry induced by $K$. For fixed $\tau>0$ and $K\in\mathcal S^d$, define
$$
    p_{K,\tau}(x)
    :=
    \frac{e^{-\|x\|_K^\tau}}{Z_{K,\tau}},
    \qquad
    Z_{K,\tau}
    :=
    \int_{\mathbb R^d}e^{-\|x\|_K^\tau}\,\mathrm dx.
$$
For $g\in G$, write $Q_{K,g,\tau}:=(\mathcal T_g)_\#p_{K,\tau}$. The corresponding family of generalized Gibbs orbits is
\begin{equation}
    \mathcal D_{\mathrm{orb},\tau}
    :=
    \bigcup_{K\in\mathcal S^d}[\,p_{K,\tau}\,]_{\mathcal T}
    =
    \left\{
    Q_{K,g,\tau}
    \mid K\in\mathcal S^d,\; g\in G
    \right\}.
    \label{eq:generalized-gibbs-orbit}
\end{equation}
As in the $\tau=1$ case, the unit-volume constraint in the projection below removes the scaling ambiguity between $K$ and $g$ without changing this family.

For regularizers that are positively homogeneous of degree $\tau$, prior work \cite{leong2025optimal} characterizes the optimal regularizer, under suitable regularity assumptions, through a generalization of the radial summary statistic. For a target distribution $P\in\mathcal P$ with density $p$, we define the \emph{generalized radial summary statistic} (GRSS) by
\begin{equation}
    \rho_{P,\tau}(u)
    :=
    \left(
    \int_0^\infty r^{d+\tau-1}p(ru)\,\mathrm dr
    \right)^{1/(d+\tau)},
    \qquad u\in\mathbb S^{d-1}.
    \label{eq:generalized-RSS}
\end{equation}
The directional density $h_P(u)=\int_0^\infty r^{d-1}p(ru)\,\mathrm dr$ does not involve $\tau$ and, as before, is invariant under anisotropic dilation. Because the radial penalty is now governed by $r^\tau$, the relevant conditional statistic becomes the conditional $\tau$-moment of the radius. Writing $X=RU\sim P$ in polar coordinates, define
\begin{equation}
    m_{P,\tau}(u)
    :=
    \mathbb E_P[R^\tau\mid U=u].
    \label{eq:expected-alpha-moment}
\end{equation}
Throughout this subsection, we additionally assume $m_{P,\tau}\in G$. Using the conditional radial density, we obtain an analogue of~\eqref{eq:rho-decomposition} for degree $\tau$:
$$
    m_{P,\tau}(u)
    =
    \frac{1}{h_P(u)}
    \int_0^\infty r^{d+\tau-1}p(ru)\,\mathrm dr
    =
    \frac{\rho_{P,\tau}(u)^{d+\tau}}{h_P(u)}.
$$
This identity connects the conditional radial moment, the directional marginal, and the GRSS. Since $h_P\in G$, the assumption $m_{P,\tau}\in G$ is equivalent to $\rho_{P,\tau}\in G$.

The following theorem shows that projecting $P$ onto the generalized Gibbs orbit family yields a decoupled solution analogous to the $\tau=1$ case: the inner optimization matches the conditional radial $\tau$-moments, while the outer optimization matches the directional marginal. In particular, the optimal outer star body is independent of $\tau$.

\begin{theorem}[Solution to Generalized-Orbit Projection]
\label{thm:generalized-moment-projection}
Fix $P\in\mathcal P$ and $\tau>0$. Suppose $m_{P,\tau}\in G$ and consider the KL projection problem
\begin{equation}
    \inf_{\substack{K\in\mathcal S^d\\ \mathrm{vol}_d(K)=1}}
    \left(
    \inf_{g\in G}
    D_{\mathrm{KL}}(P\|Q_{K,g,\tau})
    \right).
    \label{eq:alpha-inner-outer}
\end{equation}
For fixed $K\in\mathcal S^d$, the unique optimal inner profile is
\begin{equation}
    g^*_{K,\tau}(u)
    =
    \left(
    \frac{\tau}{d}m_{P,\tau}(u)
    \right)^{1/\tau}
    \|u\|_K,
    \qquad u\in\mathbb S^{d-1},
    \label{eq:alpha-optimal-g}
\end{equation}
which matches the conditional radial $\tau$-moment of the transformed base distribution to that of $P$. The unique optimal outer star body is the same $K^*$ as in Theorem~\ref{thm:optimal-starbody}, defined by
$$
\rho_{K^*}(u) = (d\,h_P(u))^{1/d}, \qquad u\in\mathbb S^{d-1}.
$$
\end{theorem}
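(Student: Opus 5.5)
The plan is to replicate the $\tau=1$ argument of Lemma~\ref{lem:KL-objective}, Theorem~\ref{thm:optimal-g-K}, and Theorem~\ref{thm:optimal-starbody} with the radial penalty $r$ replaced by $r^\tau$. First I will compute the normalizing constant. In polar coordinates,
\[
Z_{K,\tau}=\int_{\mathbb S^{d-1}}\int_0^\infty r^{d-1}e^{-r^\tau\|u\|_K^\tau}\,\mathrm dr\,\mathrm d\sigma(u)=\frac{\Gamma(d/\tau)}{\tau}\int_{\mathbb S^{d-1}}\rho_K(u)^d\,\mathrm d\sigma(u)=\Gamma(1+d/\tau)\,\mathrm{vol}_d(K).
\]
Hence $h_{p_{K,\tau}}(u)=\rho_K(u)^d/(d\,\mathrm{vol}_d(K))$, which is the same directional marginal as in Lemma~\ref{lem:gibbs-polar-densities}. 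By Theorem~\ref{thm:structure-of-orbit}, the pushforward has density
\[
q_{K,g,\tau}(x)=Z_{K,\tau}^{-1}\,g(u)^{-d}\exp\bigl(-\|x\|_K^\tau g(u)^{-\tau}\bigr),\qquad u=x/\|x\|_2 .
\]
The cross-entropy then equals $\log Z_{K,\tau}+\mathbb E_P\bigl[d\log g(U)+R^\tau\|U\|_K^\tau g(U)^{-\tau}\bigr]$. Conditioning on $U$ as in Lemma~\ref{lem:KL-objective}, the objective up to constants is
\[
\mathcal J_\tau(K,g)=\mathbb E_{P_U}\bigl[\phi^\tau_{U,K}(g(U))\bigr]+\log\mathrm{vol}_d(K),\qquad \phi^\tau_{u,K}(t)=d\log t+\|u\|_K^\tau m_{P,\tau}(u)\,t^{-\tau}.
\]
Finiteness of the KL divergence follows from $m_{P,\tau}\in G$, which gives $\mathbb E_P[\|X\|_2^\tau]<\infty$, together with the standing integrability assumption on $\log p$.

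For the inner problem I will minimize pointwise in $u$. The derivative is $\phi^{\tau\,\prime}(t)=d/t-\tau\|u\|_K^\tau m_{P,\tau}(u)t^{-\tau-1}$. It is negative before and positive after the single critical point $t^\tau=(\tau/d)\|u\|_K^\tau m_{P,\tau}(u)$, which is exactly \eqref{eq:alpha-optimal-g}. This profile is positive and continuous because $m_{P,\tau}\in G$, so it lies in $G$. Uniqueness in $G$ follows from the same argument as in Theorem~\ref{thm:optimal-g-K}: two distinct continuous profiles differ on an open set, and such a set has positive $P_U$-measure since $h_P>0$. I will also check the moment-matching interpretation. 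Under $p_{K,\tau}$ one has $\mathbb E[R^\tau\mid U=u]=(d/\tau)\|u\|_K^{-\tau}$ by a gamma integral. After dilation this moment is multiplied by $g^\tau$, so the choice \eqref{eq:alpha-optimal-g} gives conditional $\tau$-moment $m_{P,\tau}(u)$.

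For the outer problem I will substitute $g^*_{K,\tau}$ back into the objective. The second term of $\phi^\tau_{u,K}$ becomes the constant $d/\tau$, and $d\log g^*_{K,\tau}(u)=d\log\|u\|_K+(d/\tau)\log\bigl(\tfrac{\tau}{d}m_{P,\tau}(u)\bigr)$. With $\mathrm{vol}_d(K)=1$, the part depending on $K$ is again $\mathbb E_{P_U}[d\log\|U\|_K]$, which is exactly \eqref{eq:outer-obj}. The rest of Theorem~\ref{thm:optimal-starbody} then applies verbatim: the outer objective is the cross-entropy of $h_{p_K}$ relative to $h_P$, it is minimized by and only by $\rho_{K^*}=(d\,h_P)^{1/d}$, and continuity upgrades almost-everywhere equality to equality everywhere.

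The main obstacle is bookkeeping rather than ideas. The key point is that the $\tau$-dependent factors $Z_{K,\tau}/\mathrm{vol}_d(K)=\Gamma(1+d/\tau)$ and the $m_{P,\tau}$ terms must separate from $K$ exactly, so that the outer objective is independent of $\tau$. I will verify this cancellation carefully.
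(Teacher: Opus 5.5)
Your proposal is correct and matches the paper's proof essentially step for step. It uses the same pushforward density and conditioning on $U$, the same pointwise minimization of $d\log t+\|u\|_K^\tau m_{P,\tau}(u)t^{-\tau}$ with the gamma-integral moment check, and the same reduction of the outer problem, via $Z_{K,\tau}=(d/\tau)\Gamma(d/\tau)\mathrm{vol}_d(K)$, to a directional cross-entropy uniquely minimized by $\rho_{K^*}=(d\,h_P)^{1/d}$. The only cosmetic difference is that you land on \eqref{eq:outer-obj} and cite Theorem~\ref{thm:optimal-starbody} directly rather than rederiving $h_{p_{K,\tau}}=\rho_K^d/d$; this is equivalent, since for unit-volume $K$ that marginal coincides with $h_{p_K}$.
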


\begin{proof}[Proof of Theorem~\ref{thm:generalized-moment-projection}]
The proof follows the inner--outer structure of Theorems~\ref{thm:optimal-g-K} and \ref{thm:optimal-starbody}. We begin with the inner optimization over $g\in G$. By the pushforward density formula, the density $q_{K,g,\tau}$ of $Q_{K,g,\tau}$ satisfies
$$
    q_{K,g,\tau}(ru)
    =
    \frac{1}{Z_{K,\tau}}g(u)^{-d}
    \exp\left(
    -\frac{r^\tau\|u\|_K^\tau}{g(u)^\tau}
    \right),
    \qquad r>0,\; u\in\mathbb S^{d-1}.
$$
Since $m_{P,\tau}\in G$, we have $\mathbb E_P[R^\tau]=\mathbb E_{P_U}[m_{P,\tau}(U)]<\infty$. The boundedness of $g$, $g^{-1}$, and the gauge on the sphere therefore gives $\mathbb E_P[|\log q_{K,g,\tau}(X)|]<\infty$. Together with the standing integrability assumption on $p$, this ensures that the KL divergence is finite and that minimizing it is equivalent to minimizing the cross-entropy. Conditioning on $U$ gives
$$
    \mathbb E_P[-\log q_{K,g,\tau}(X)]
    =
    \log Z_{K,\tau}
    +
    \mathbb E_{P_U}
    \left[
    d\log g(U)
    +
    \frac{\|U\|_K^\tau m_{P,\tau}(U)}
    {g(U)^\tau}
    \right].
$$
For fixed $K$, the normalizing term is independent of $g$, so we minimize the integrand pointwise. For fixed $u\in\mathbb S^{d-1}$, the derivative of $d\log t+\|u\|_K^\tau m_{P,\tau}(u)t^{-\tau}$ with respect to $t>0$ has the same sign as $d\,t^\tau-\tau\|u\|_K^\tau m_{P,\tau}(u)$. It is therefore negative for $t<g^*_{K,\tau}(u)$ and positive for $t>g^*_{K,\tau}(u)$, so \eqref{eq:alpha-optimal-g} gives the unique pointwise minimizer. Since $m_{P,\tau}\in G$ and $u\mapsto\|u\|_K$ is continuous and strictly positive, $g^*_{K,\tau}\in G$. Integrating the pointwise inequality establishes optimality. Any other continuous profile differs from $g^*_{K,\tau}$ on a nonempty open set, which has positive $P_U$-measure because $h_P>0$. The integrated inequality is then strict, proving uniqueness.

To verify the moment-matching interpretation, Gamma integration gives $\mathbb E_{p_{K,\tau}}[R^\tau\mid U=u]=(d/\tau)\rho_K(u)^\tau$. Under $\mathcal T_g$, this conditional moment is multiplied by $g(u)^\tau$. Substituting \eqref{eq:alpha-optimal-g} therefore yields the target moment $m_{P,\tau}(u)$ in every direction.

We now turn to the outer optimization over $K$. At the optimal profile, the ratio term in the integrand equals $d/\tau$, while $\log g^*_{K,\tau}(u)=\log\|u\|_K+\tau^{-1}\log((\tau/d)m_{P,\tau}(u))$. Discarding terms independent of $K$ and using $\log\|u\|_K=-\log\rho_K(u)$ reduces the outer problem to
\begin{equation}
    \inf_{\substack{K\in\mathcal S^d\\ \mathrm{vol}_d(K)=1}}
    \left\{
    \log Z_{K,\tau}
    -
    d\,\mathbb E_{P_U}[\log\rho_K(U)]
    \right\}.
    \label{eq:outer-alpha-obj}
\end{equation}
To evaluate the normalizer and identify the directional marginal, use the substitution $t=r^\tau\|u\|_K^\tau$ in the Gamma integral:
\begin{align*}
    h_{p_{K,\tau}}(u)
    &=
    \frac{1}{Z_{K,\tau}}
    \int_0^\infty
    r^{d-1}e^{-r^\tau\|u\|_K^\tau}\,\mathrm dr =
    \frac{\Gamma(d/\tau)}
    {\tau Z_{K,\tau}}\rho_K(u)^d,
\end{align*}
where $\Gamma$ denotes the Gamma function. Integrating over the sphere and using the polar volume formula yields $Z_{K,\tau}=(d/\tau)\Gamma(d/\tau)\mathrm{vol}_d(K)$. Consequently, $h_{p_{K,\tau}}(u)=\rho_K(u)^d/(d\,\mathrm{vol}_d(K))$. For unit-volume star bodies, $Z_{K,\tau}$ is independent of $K$, and the directional density simplifies to $h_{p_{K,\tau}}(u)=\rho_K(u)^d/d$, independently of $\tau$.

For unit-volume $K$, the identity $-d\log\rho_K(u)=-\log h_{p_{K,\tau}}(u)-\log d$ shows that minimizing \eqref{eq:outer-alpha-obj} is equivalent to minimizing the directional cross-entropy. This cross-entropy differs from $D_{\mathrm{KL}}(h_P\|h_{p_{K,\tau}})$ by the finite term $-\mathbb E_{P_U}[\log h_P(U)]$, which is independent of $K$. Thus, the two objectives have the same minimizers.

Because $h_P\in G$, the proposed radial function $\rho_{K^*}=(d\,h_P)^{1/d}$ defines a star body. The polar volume formula gives $\mathrm{vol}_d(K^*)=\int_{\mathbb S^{d-1}}h_P(u)\,\mathrm d\sigma(u)=1$, so $K^*$ is admissible. Its directional density equals $h_P$, and hence it attains the minimum directional KL divergence of zero. Conversely, any minimizer must satisfy $h_{p_{K,\tau}}=h_P$ $\sigma$-almost everywhere. By continuity, this equality holds everywhere on the sphere, giving $\rho_K(u)=(d\,h_P(u))^{1/d}$ and therefore $K=K^*$. This proves uniqueness, and the formula for $K^*$ is independent of $\tau$.
\end{proof}
\subsection{Orbits of Radial Exponential Families}\label{sec:exponential-family-model}
We have established that for both Gibbs and generalized Gibbs orbits, the inner KL projection matches a specific conditional radial $\tau$-moment, while the outer KL projection matches the directional marginal. In particular, both KL projection problems \eqref{eq:inner-and-outer-problems} and \eqref{eq:alpha-inner-outer} have the same outer solution $K^*$, with radial function $\rho_{K^*}(u)=(d\,h_P(u))^{1/d}$. By Theorem~\ref{thm:optimal-starbody}, this star body is the unique solution to the directional marginal problem \eqref{eq:simplified-outer-opt-problem}. This suggests that differences in the expressivity of these models lie entirely in their conditional radial laws. The following proposition shows that, for fixed $\tau$, the conditional radial laws in generalized Gibbs orbits belong to a generalized Gamma family with fixed shape. 

\begin{proposition}[Polar Decomposition of Generalized Gibbs Orbits]
\label{prop:polardecomp-gibbs}
Fix $K\in\mathcal S^d$, $\tau>0$, and $g\in G$. For a random vector $X=RU\sim Q_{K,g,\tau}$, the directional marginal is that of the base generalized Gibbs density:
$$
    h_{Q_{K,g,\tau}}(u)
    =
    h_{p_{K,\tau}}(u)
    =
    \frac{\rho_K(u)^d}{d\,\mathrm{vol}_d(K)},
    \qquad u\in\mathbb S^{d-1}.
$$
Furthermore, the conditional radial law satisfies
$$
    \left.
    \left(\frac{R}{g(u)\rho_K(u)}\right)^\tau
    \,\right|\, U=u
    \sim
    \operatorname{Gamma}\left(\frac d\tau,1\right),
$$
where the Gamma parameters are shape and scale, respectively.
\end{proposition}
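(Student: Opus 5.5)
The plan is to reduce everything to the base generalized Gibbs density $p_{K,\tau}$ via Theorem~\ref{thm:structure-of-orbit} and then do a one-dimensional Gamma computation along each ray.

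\textbf{Directional marginal.} First, Theorem~\ref{thm:structure-of-orbit} gives $U_Q=U_P$ almost surely. The change-of-variables computation in the proof of Corollary~\ref{cor:pushforward-rho} uses only that $p$ has a density, not polar regularity, so it applies verbatim to $p_{K,\tau}$. It yields $h_{Q_{K,g,\tau}}=h_{p_{K,\tau}}$. The closed form $h_{p_{K,\tau}}(u)=\rho_K(u)^d/(d\,\mathrm{vol}_d(K))$ was already derived in the proof of Theorem~\ref{thm:generalized-moment-projection}. There the substitution $t=r^\tau\|u\|_K^\tau$ gives $h_{p_{K,\tau}}(u)=\Gamma(d/\tau)\rho_K(u)^d/(\tau Z_{K,\tau})$, and $Z_{K,\tau}=(d/\tau)\Gamma(d/\tau)\mathrm{vol}_d(K)$. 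I would simply cite that computation.

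\textbf{Conditional radial law in the base model.} For $X_0=R_0U_0\sim p_{K,\tau}$, the conditional radial density is
\[
r^{d-1}p_{K,\tau}(ru)/h_{p_{K,\tau}}(u)\;\propto\; r^{d-1}\exp\bigl(-(r/\rho_K(u))^\tau\bigr).
\]
Now set $S=(R_0/\rho_K(u))^\tau$, so that $r=\rho_K(u)s^{1/\tau}$ and $\mathrm dr=\rho_K(u)\tau^{-1}s^{1/\tau-1}\mathrm ds$. The density of $S$ becomes proportional to $s^{(d-1)/\tau}s^{1/\tau-1}e^{-s}=s^{d/\tau-1}e^{-s}$, which is $\operatorname{Gamma}(d/\tau,1)$. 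Normalization is automatic since both sides are probability densities, so no constants need tracking.

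\textbf{Transfer to the orbit.} By Theorem~\ref{thm:structure-of-orbit}, $R=g(U_0)R_0$ and $U=U_0$ almost surely. Conditionally on $U=u$, this gives $R/(g(u)\rho_K(u))=R_0/\rho_K(u)$, which transfers the Gamma law. Equivalently, one can work directly with the pushforward density
\[
q(ru)=Z_{K,\tau}^{-1}g(u)^{-d}\exp\bigl(-r^\tau\|u\|_K^\tau/g(u)^\tau\bigr),
\]
from the proof of Theorem~\ref{thm:generalized-moment-projection}, and repeat the substitution with $\rho_K$ replaced by $g\rho_K$.

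\textbf{Main obstacle.} The only delicate point is making the conditioning statement rigorous for every $u$ rather than $P_U$-a.e. This holds because $h$ is strictly positive and continuous, so the conditional density $r^{d-1}q(ru)/h_Q(u)$ is defined pointwise for all $u$, exactly as in the paper's setup for $\mathcal P$.
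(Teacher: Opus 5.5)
Your proposal is correct and follows essentially the paper's argument: invariance of the directional marginal under $\mathcal T_g$, together with the normalizing constant $Z_{K,\tau}=(d/\tau)\Gamma(d/\tau)\mathrm{vol}_d(K)$ from the preceding subsection, gives $h_{Q_{K,g,\tau}}$, and the substitution $t=(r/(g(u)\rho_K(u)))^\tau$ in the conditional radial density gives the $\operatorname{Gamma}(d/\tau,1)$ law. The only difference is cosmetic: the paper applies the substitution directly to $r^{d-1}q_{K,g,\tau}(ru)/h_{Q_{K,g,\tau}}(u)$, which is your ``equivalently'' route, and this also sidesteps the every-$u$ versus $P_U$-a.e.\ conditioning issue you flag for the transfer via $R=g(U)R_0$.
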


\begin{proof}
The directional marginal follows from its invariance under anisotropic dilation and the normalizing constant computed in the preceding subsection. Writing $x=ru$, the joint polar density is $f_{R,U}(r,u)=r^{d-1}q_{K,g,\tau}(ru)$. Dividing by the directional marginal gives
$$
    f_{R\mid U}(r\mid u)
    =
    \frac{\tau r^{d-1}}
    {\Gamma(d/\tau)(g(u)\rho_K(u))^d}
    \exp\left(
    -\left(\frac{r}{g(u)\rho_K(u)}\right)^\tau
    \right),
    \qquad r>0.
$$
For fixed $u$, the substitution $t=(r/(g(u)\rho_K(u)))^\tau$ yields the density $t^{d/\tau-1}e^{-t}/\Gamma(d/\tau)$ for $t>0$, proving the claimed Gamma law.
\end{proof}
For fixed $\tau$, this proposition shows that $K$ and $g$
affect the conditional radial law only through its scale
$g(u)\rho_K(u)$. This limitation motivates our construction
of a conditional radial exponential family that offers
greater flexibility in radial shape while preserving our
decoupled geometric framework. Choosing powers of the radius as sufficient statistics allows us to incorporate higher-order radial moment information while retaining a convex log-partition function whose derivatives encode these moments \cite{wainwright2008graphical}. Furthermore, recent work has used neural networks to learn sufficient statistics for expressive energy-based generative models
\cite{abdulsalam2025exponential} and natural-gradient updates on exponential-family manifolds to guide the training of time-varying generative models \cite{liu2025guiding}. These developments further motivate our choice of exponential families as a basis for modeling more expressive conditional radial laws.

\paragraph{Radial Exponential Base Densities and Their Orbits} For each unit-volume star body $K\in\mathcal S^d$, we retain the directional density $h_K(u):=\rho_K(u)^d/d$ and choose the conditional radial law from an exponential family. Fix an integer $L\geq1$ and exponents $0<\tau_1<\cdots<\tau_L$. Consider the coefficient vector $\eta=(\eta_1,\ldots,\eta_L)\in\Theta:=\mathbb R^{L-1}\times(0,\infty)$, and define the radial density 
$$
    f_\eta(r)
    :=
    \frac{r^{d-1}}{A(\eta)}
    \exp\left(-\sum_{\ell=1}^L\eta_\ell r^{\tau_\ell}\right),
    \qquad r>0,
$$
where $A(\eta):=\int_0^\infty r^{d-1}\exp(-\sum_{\ell=1}^L\eta_\ell r^{\tau_\ell})\,\mathrm dr$ is the normalizing constant. We write $\mathbb E_\eta$ for expectation under $R\sim f_\eta$.
For any moment order $s>0$, the corresponding radial moment is
$$
    \mathbb E_\eta[R^s]
    =
    \frac{1}{A(\eta)}
    \int_0^\infty
    r^{d+s-1}
    \exp\left(-\sum_{\ell=1}^L\eta_\ell r^{\tau_\ell}\right)
    \,\mathrm dr.
$$
On every compact subset of $\Theta$, the leading coefficient
$\eta_L$ is bounded away from zero and the remaining coefficients
are bounded. For both the normalizing integral and the moment
numerator, the highest-power term provides an integrable tail
bound for each parameter derivative, while near zero these
derivatives are bounded by a constant multiple of $r^{d-1}$.
Differentiation under the integral therefore shows that $A$ and
$\eta\mapsto\mathbb E_\eta[R^s]$ are smooth on $\Theta$.

To allow the radial law to vary with direction, take a continuous coefficient field $\eta:\mathbb S^{d-1}\to\Theta$ and let $Q_{K,\eta}^{\mathrm{base}}$ be the distribution of $X=RU$ whose joint polar density, with respect to $\mathrm dr\,\mathrm d\sigma(u)$, is
$$
    q_{K,\eta}^{\mathrm{base}}(r,u)
    :=
    h_K(u)f_{\eta(u)}(r),
    \qquad r>0,\;u\in\mathbb S^{d-1}.
$$
We generate its orbit by setting $Q_{K,\eta,g}:=(\mathcal T_g)_\#Q_{K,\eta}^{\mathrm{base}}$ for $g\in G$. Since $\mathcal T_g$ preserves direction and maps $r$ to $g(u)r$, the scaled conditional density is $g(u)^{-1}f_{\eta(u)}(r/g(u))$ by Theorem~\ref{thm:structure-of-orbit}, or explicitly,
\begin{equation}
    f_{\eta(u),g(u)}(r\mid u)
    =
    \frac{r^{d-1}}{g(u)^dA(\eta(u))}
    \exp\left(
    -\sum_{\ell=1}^L
    \eta_\ell(u)
    \left(\frac{r}{g(u)}\right)^{\tau_\ell}
    \right),
    \qquad r>0.
    \label{eq:separated-orbit-density}
\end{equation}
The resulting joint polar density is $q_{K,\eta,g}(r,u)=h_K(u)f_{\eta(u),g(u)}(r\mid u)$. In the following analysis, we study KL projection onto the union of the orbits $[\,Q_{K,\eta}^{\mathrm{base}}\,]_{\mathcal T}$ as $K$ and $\eta$ vary. Before doing so, we address the scaling ambiguity between $\eta$ and the external profile $g$. 

\begin{remark}[Identifiability via Baseline Moment Constraint]
\label{rem:identifiability}
Fix $K$. Without constraint, the coefficient field $\eta$ and an external scaling profile $g\in G$ are not jointly identifiable. Indeed, for any $\widetilde g\in G$, define $\widetilde\eta_\ell(u):=\eta_\ell(u)\widetilde g(u)^{-\tau_\ell}$ for $\ell=1,\ldots,L$. This again defines a continuous coefficient field $\widetilde\eta:\mathbb S^{d-1}\to\Theta$. At each fixed $u$, the substitution $r=\widetilde g(u)s$ in the normalizing integral gives $A(\widetilde\eta(u))=\widetilde g(u)^dA(\eta(u))$, and hence
$$
f_{\widetilde\eta(u)}(r) = \frac{1}{\widetilde g(u)} f_{\eta(u)}\left(\frac{r}{\widetilde g(u)}\right).
$$
This is the density of $\widetilde g(u)R$ when $R\sim f_{\eta(u)}$, so the parameter mapping $\eta\mapsto\widetilde\eta$ scales the conditional radius by $\widetilde g(u)$. Simultaneously mapping the external profile $g\mapsto g/\widetilde g$ compensates for this scaling and leaves the model distribution unchanged:
$$    (\mathcal T_{g/\widetilde g})_\#
    Q_{K,\widetilde\eta}^{\mathrm{base}}
    =
    (\mathcal T_g)_\#
    Q_{K,\eta}^{\mathrm{base}}.
$$
When $\widetilde g\not\equiv1$, the distinct parameter pairs $(\eta,g)$ and $(\widetilde\eta,g/\widetilde g)$ therefore produce the same model. Thus, $\eta$ can absorb radial scaling that would otherwise be represented by the external profile $g$. Conversely, if $(\eta,g)$ and $(\widehat\eta,\widehat g)$ define the same model, their base laws differ by the scale ratio $\widetilde g:=g/\widehat g$. Linear independence of $1,r^{\tau_1},\ldots,r^{\tau_L}$, together with positivity of $h_K$ and continuity of the coefficient fields, then gives $\widehat\eta_\ell(u)=\eta_\ell(u)\widetilde g(u)^{-\tau_\ell}$ for every direction $u$ and $\ell=1,\ldots,L$.

We separate the roles of $\eta$ and $g$ by imposing the baseline moment constraint
\begin{equation}
    \mathbb E_{\eta(u)}[R^{\tau_1}]
    =
    \frac{d}{\tau_1}\rho_K(u)^{\tau_1},
    \qquad u\in\mathbb S^{d-1},
    \label{eq:baseline-constraint}
\end{equation}
which matches the conditional $\tau_1$-moment of the base radial family to that of the generalized Gibbs distribution with exponent $\tau_1$. If both $\eta$ and $\widetilde\eta$ satisfy \eqref{eq:baseline-constraint}, then the relationship $\mathbb E_{\widetilde\eta(u)}[R^{\tau_1}]=  \widetilde g(u)^{\tau_1}\mathbb E_{\eta(u)}[R^{\tau_1}]$, obtained by taking the $\tau_1$-moment of $\widetilde g(u)R$ with $R\sim f_{\eta(u)}$, forces $\widetilde g(u)=1$. Consequently, the two parameter pairs coincide, establishing joint identifiability for fixed $K$.
\end{remark}
When $L=1$, we have $\mathbb E_{\eta(u)}[R^{\tau_1}]=d/(\tau_1\eta_1(u))$, so the baseline constraint forces $\eta_1(u)=\rho_K(u)^{-\tau_1}$. Thus, ordinary and generalized Gibbs orbits are recovered by taking $\tau_1=1$ and $\tau_1=\tau$, respectively. In this case, the baseline constraint fixes $\eta_1$ through $K$, while $g$ remains the external radial scaling profile. For $L>1$, the radial energy can combine distinct powers, such as $r$ and $r^2$, to modify the conditional radial shape.

\paragraph{Projection onto the Separated Radial Orbit}
We now study KL projection onto this orbit family. Let $P\in\mathcal P$ and assume that its conditional radial moments $m_{P,\tau_\ell}(u):=\mathbb E_P[R^{\tau_\ell}\mid U=u]$ belong to $G$ for $\ell=1,\ldots,L$. For each unit-volume star body $K\in\mathcal S^d$, admissible pairs consist of $g\in G$ and continuous fields $\eta:\mathbb S^{d-1}\to\Theta$ satisfying \eqref{eq:baseline-constraint}.

\begin{proposition}[KL Decomposition]
\label{prop:KL-decomposition}
Let $P\in\mathcal P$ satisfy the moment assumptions above, and write $P_{R,U}=P_U P_{R\mid U}$ for its polar decomposition. For every unit-volume star body $K\in\mathcal S^d$ and admissible pair $(\eta,g)$, the KL divergence to the pushforward model is finite and decomposes as
$$
    D_{\mathrm{KL}}(P\|Q_{K,\eta,g})
    =
    D_{\mathrm{KL}}(h_P\|h_K)
    +
    \mathbb E_{P_U}\left[
    D_{\mathrm{KL}}\left(
    P_{R\mid U}
    \,\middle\|\,
    f_{\eta(U),g(U)}(\cdot\mid U)
    \right)
    \right],
$$
where densities are identified with their associated probability measures.
\end{proposition}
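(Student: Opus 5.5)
This is the chain rule for KL divergence applied in polar coordinates, so I would first move everything into the product space $(0,\infty)\times\mathbb S^{d-1}$ with reference measure $\mathrm dr\,\mathrm d\sigma(u)$. The polar map is a bijection from $\mathbb R^d\setminus\{0\}$ onto this space, and both measures give zero mass to the origin. KL divergence is invariant under measurable bijections with measurable inverse, so $D_{\mathrm{KL}}(P\|Q_{K,\eta,g})$ equals the KL divergence between the joint polar laws. For $P$ the joint polar density is $p_{R,U}(r,u)=r^{d-1}p(ru)=h_P(u)\,p_{R\mid U}(r\mid u)$. Here $h_P>0$ everywhere because $P\in\mathcal P$, so the conditional density is defined for every $u$. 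For the model, the joint density is $q_{K,\eta,g}(r,u)=h_K(u)f_{\eta(u),g(u)}(r\mid u)$ by construction, using \eqref{eq:separated-orbit-density} and the fact from Theorem~\ref{thm:structure-of-orbit} that $\mathcal T_g$ preserves the directional marginal.

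Next I would take logarithms and split the log-likelihood ratio pointwise:
\begin{align*}
\log\frac{p_{R,U}(r,u)}{q_{K,\eta,g}(r,u)}=\log\frac{h_P(u)}{h_K(u)}+\log\frac{p_{R\mid U}(r\mid u)}{f_{\eta(u),g(u)}(r\mid u)}.
\end{align*}
Integrating against $P_{R,U}$ and applying Tonelli/Fubini in the order $\mathrm dr$ then $P_U(\mathrm du)$ gives the first term $\mathbb E_{P_U}[\log(h_P/h_K)]=D_{\mathrm{KL}}(h_P\|h_K)$. The second term becomes $\mathbb E_{P_U}[D_{\mathrm{KL}}(P_{R\mid U}\|f_{\eta(U),g(U)}(\cdot\mid U))]$. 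The first term is finite because $h_P,h_K\in G$ are bounded above and away from zero on the compact sphere.

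The main obstacle is justifying the split and the interchange of integrals, since a difference of two possibly infinite integrals is not automatically well defined. I would show that each piece of $\log p_{R,U}$ and $\log q_{K,\eta,g}$ is $P$-integrable separately. For $P$, the identity $\log p_{R,U}=\log p(ru)+(d-1)\log r$ reduces this to the standing assumption $\mathbb E_P|\log p(X)|<\infty$ plus integrability of $|\log R|$. For the model, write
\begin{align*}
-\log q_{K,\eta,g}(r,u)=-\log h_K(u)+d\log g(u)+\log A(\eta(u))-(d-1)\log r+\sum_{\ell=1}^L\eta_\ell(u)\,g(u)^{-\tau_\ell}r^{\tau_\ell}.
\end{align*}
The coefficients $\eta_\ell$, $g^{\pm1}$ and $A\circ\eta$ are continuous on the compact sphere, using the smoothness of $A$ on $\Theta$, so they are bounded. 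The polynomial-type terms are then controlled by $\mathbb E_P[R^{\tau_\ell}]=\mathbb E_{P_U}[m_{P,\tau_\ell}(U)]<\infty$, which holds by the moment assumptions.

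The $(d-1)\log r$ terms appear in both log-densities and cancel in the ratio, so I would organize the computation through $\log p(ru)$ itself to avoid needing $\mathbb E|\log R|<\infty$ separately. Once every summand is integrable, the full KL divergence is finite. Fubini then justifies computing the conditional expectation given $U=u$ and integrating over $P_U$. Nonnegativity of each conditional KL ensures the second term is well defined, and it is finite as the difference of finite quantities, which completes the decomposition.
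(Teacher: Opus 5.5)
Your proposal is correct and follows essentially the same route as the paper: finiteness comes from $\mathbb E_P[|\log p(X)|]<\infty$ together with a bound on $|\log \bar q(ru)|$ in terms of bounded coefficients and powers $r^{\tau_\ell}$. Working with Lebesgue densities makes the $(d-1)\log r$ terms cancel, and the decomposition is then invariance of KL under the polar map plus the chain rule, which you verify directly via Fubini where the paper cites it (the paper also collapses the powers via $r^{\tau_\ell}\le 1+r^{\tau_L}$ rather than using each $\mathbb E_P[R^{\tau_\ell}]$ separately, a purely cosmetic difference).
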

\begin{proof}
We first show that this KL divergence is finite. Let $\bar q$ denote the Lebesgue density of $Q_{K,\eta,g}$. Dividing the joint polar density by $r^{d-1}$ and taking logarithms gives
\begin{equation}
        \log\bar q(ru)
    =
    \log h_K(u)
    -
    d\log g(u)
    -
    \log A(\eta(u))
    -
    \sum_{\ell=1}^L
    \eta_\ell(u)
    \left(\frac{r}{g(u)}\right)^{\tau_\ell}. \label{eq:log-expfam-density}
\end{equation}
Since $A$ is positive and continuous on $\Theta$, the functions $h_K$, $g$, and $A\circ\eta$ are positive and continuous on the compact sphere, so their logarithms are bounded. The coefficients $\eta_\ell(u)/g(u)^{\tau_\ell}$ are also bounded. Using $r^{\tau_\ell}\leq1+r^{\tau_L}$ for $r>0$, we therefore obtain $|\log\bar q(ru)|\leq C(1+r^{\tau_L})$ for some finite constant $C$ depending on $K$, $\eta$, and $g$. Since $m_{P,\tau_L}\in G$ is bounded, we have $\mathbb E_P[R^{\tau_L}]=\mathbb E_{P_U}[m_{P,\tau_L}(U)]<\infty$, and hence $\mathbb E_P[|\log\bar q(X)|]<\infty$. Together with the standing assumption $\mathbb E_P[|\log p(X)|]<\infty$, this shows that $D_{\mathrm{KL}}(P\|Q_{K,\eta,g})=\mathbb E_P[\log p(X)-\log\bar q(X)]$ is finite.

Finally, the polar-coordinate map is a measurable bijection away from the origin, which has probability zero under both distributions. The decomposition follows from invariance of KL divergence and the chain rule for relative entropy applied to the joint polar distributions \cite{polyanskiy_wu_info_theory}.
\end{proof}
\noindent
For fixed $K$, the directional term $D_{\mathrm{KL}}(h_P\|h_K)$ is constant with respect to $(\eta,g)$, leaving the residual radial problem
\begin{equation}
    \inf_{g,\eta}
    \mathbb E_{P_U}\left[
    D_{\mathrm{KL}}\left(
    P_{R\mid U}
    \,\middle\|\,
    f_{\eta(U),g(U)}(\cdot\mid U)
    \right)
    \right],
    \label{eq:separated-KL}
\end{equation}
where the infimum is over the admissible pairs defined above. The following theorem characterizes its solution when a member of the radial exponential family can match all prescribed moments in each direction.

\begin{theorem}[Optimal Radial Parameters]
\label{thm:optimal-eta-g}
Fix a unit-volume star body $K\in\mathcal S^d$ and exponents $0<\tau_1<\cdots<\tau_L$. Let $P\in\mathcal P$ satisfy $m_{P,\tau_\ell}\in G$ for $\ell=1,\ldots,L$. Suppose that for every $u\in\mathbb S^{d-1}$ there exists $a\in\Theta$ satisfying $\mathbb E_a[R^{\tau_\ell}]=m_{P,\tau_\ell}(u)$ for $\ell=1,\ldots,L$. Then \eqref{eq:separated-KL}, subject to the baseline constraint \eqref{eq:baseline-constraint}, has a unique admissible minimizer $(\eta^*,g^*)$. The optimal scale profile is
$$
    g^*(u)
    =
    \left(
    \frac{\tau_1m_{P,\tau_1}(u)}{d}
    \right)^{1/\tau_1}
    \|u\|_K,
    \qquad u\in\mathbb S^{d-1},
$$
and the optimal shape parameter is uniquely determined by the
scaled moment-matching equations
$$
    \mathbb E_{\eta^*(u)}[R^{\tau_\ell}]
    =
    g^*(u)^{-\tau_\ell}m_{P,\tau_\ell}(u),
    \qquad \ell=1,\ldots,L.
$$
\end{theorem}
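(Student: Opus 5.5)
The plan is to reduce the residual problem \eqref{eq:separated-KL} to a pointwise problem in each direction $u$, solve that problem by exponential-family moment matching, and then use the baseline constraint \eqref{eq:baseline-constraint} to split the matched law into the pair $(\eta^*,g^*)$.

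\textbf{Step 1: reparameterize by the scaled law.} Fix $u$. By Remark~\ref{rem:identifiability}, the conditional density $f_{\eta(u),g(u)}(\cdot\mid u)$ equals $f_{b(u)}$ with $b_\ell(u):=\eta_\ell(u)g(u)^{-\tau_\ell}$. This is the density of $g(u)R$ with $R\sim f_{\eta(u)}$. So the integrand in \eqref{eq:separated-KL} depends on $(\eta,g)$ only through the combined coefficient $b(u)\in\Theta$.

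\textbf{Step 2: pointwise minimization over $b$.} Expanding the logarithm as in \eqref{eq:log-expfam-density} gives
\[
D_{\mathrm{KL}}(P_{R\mid U=u}\|f_b)=\mathrm{const}(u)+\log A(b)+\sum_{\ell}b_\ell\, m_{P,\tau_\ell}(u).
\]
The log-partition $\log A$ is strictly convex on $\Theta$. Strictness holds because $r^{\tau_1},\dots,r^{\tau_L}$ are linearly independent, so the covariance of the sufficient statistics is positive definite. Its gradient is $-(\mathbb E_b[R^{\tau_\ell}])_\ell$. Hence a point $b$ is a minimizer exactly when it is a stationary point, which is the moment-matching condition $\mathbb E_b[R^{\tau_\ell}]=m_{P,\tau_\ell}(u)$ for all $\ell$. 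By hypothesis such a $b^*(u)$ exists, and by strict convexity it is the unique global minimizer in $\Theta$. This covers minimization over the open set $\Theta$ only, since the objective is convex there.

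\textbf{Step 3: continuity of $b^*$.} I expect this to be the main obstacle. I would apply the implicit function theorem to the smooth map $b\mapsto\nabla\log A(b)$, whose Jacobian is the positive definite covariance matrix. This shows the moment map is a diffeomorphism onto its open image. Since $u\mapsto(m_{P,\tau_\ell}(u))_\ell$ is continuous and lies in that image, $b^*=(\nabla\log A)^{-1}\circ m_P$ is continuous.

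\textbf{Step 4: splitting $b^*$ into $(\eta^*,g^*)$.} Scaling gives $\mathbb E_{\eta}[R^{\tau_1}]=g^{-\tau_1}\mathbb E_{b}[R^{\tau_1}]$. The constraint \eqref{eq:baseline-constraint} therefore forces
\[
g^*(u)^{\tau_1}\tfrac{d}{\tau_1}\rho_K(u)^{\tau_1}=m_{P,\tau_1}(u).
\]
Solving for $g^*$ gives the stated formula, using $\|u\|_K=\rho_K(u)^{-1}$; this $g^*$ is continuous and positive. We then set $\eta^*_\ell:=b^*_\ell (g^*)^{\tau_\ell}$. This field is continuous and lies in $\Theta$, since $\eta_L^*>0$. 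By the same scaling identity it satisfies $\mathbb E_{\eta^*}[R^{\tau_\ell}]=(g^*)^{-\tau_\ell}m_{P,\tau_\ell}$.

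\textbf{Step 5: optimality and uniqueness.} Integrating the pointwise inequality gives optimality. For uniqueness, let $(\eta,g)$ be any other admissible pair. If its combined coefficient $b$ differs from $b^*$ at some point, then by continuity it differs on an open set, which has positive $P_U$-measure because $h_P>0$. The objective is then strictly larger. If instead $b=b^*$ everywhere, the baseline constraint determines $g=g^*$, as computed in Step 4, and hence $\eta=\eta^*$.
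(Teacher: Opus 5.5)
Your proposal is correct and follows essentially the same route as the paper's proof. You reparameterize by the combined coefficient $\widehat\eta_\ell=\eta_\ell g^{-\tau_\ell}$, minimize the strictly convex function $\log A(a)+\sum_\ell a_\ell m_{P,\tau_\ell}(u)$ pointwise by moment matching, get continuity from the implicit function theorem plus uniqueness, and let the baseline constraint \eqref{eq:baseline-constraint} split off $g^*$ and $\eta^*$. Two small precisions: positive definiteness of the covariance requires linear independence of $1,r^{\tau_1},\ldots,r^{\tau_L}$, not just of the powers; and in Step 3 the inverse of $\nabla\log A$ should be applied to $-(m_{P,\tau_\ell}(u))_\ell$, since $\nabla\log A(a)=-(\mathbb E_a[R^{\tau_\ell}])_\ell$.
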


\begin{proof}
Let $\widehat\eta_\ell(u):=\eta_\ell(u)g(u)^{-\tau_\ell}$. By Remark~\ref{rem:identifiability}, the conditional density is $f_{\widehat\eta(u)}$, and $A(\widehat\eta(u))=g(u)^dA(\eta(u))$. Using \eqref{eq:log-expfam-density}, the part of the objective depending on $\widehat\eta$ is $\mathbb E_{P_U}[F_U(\widehat\eta(U))]$, where
$$
F_u(a):=\log A(a)+\sum_{\ell=1}^L a_\ell m_{P,\tau_\ell}(u)
$$ 
for $a\in\Theta$. By the smoothness established above, the derivative identities for exponential families \cite[Proposition 3.1]{wainwright2008graphical} give
\begin{align*}
    \frac{\partial F_u}{\partial a_\ell}(a)
    &=
    m_{P,\tau_\ell}(u)-\mathbb E_a[R^{\tau_\ell}],\\
    \frac{\partial^2F_u}{\partial a_j\partial a_\ell}(a)
    &=
    \operatorname{Cov}_a(R^{\tau_j},R^{\tau_\ell}),
    \qquad j,\ell=1,\ldots,L.
\end{align*}
The covariance matrix is positive definite because $f_a$ is strictly positive on $(0,\infty)$ and $1,r^{\tau_1},\ldots,r^{\tau_L}$ are linearly independent, confirming that $F_u$ is strictly convex on $\Theta$. By assumption, there exists a parameter $\widehat\eta^*(u)\in\Theta$ satisfying $\mathbb E_{\widehat\eta^*(u)}[R^{\tau_\ell}]=m_{P,\tau_\ell}(u)$ for $\ell=1,\ldots,L$. Hence $\nabla F_u(\widehat\eta^*(u))=0$, and strict convexity makes $\widehat\eta^*(u)$ the unique global minimizer. The Jacobian of the moment map $a\mapsto (\mathbb E_a[R^{\tau_\ell}])_{\ell=1}^L$ is the negative of this covariance matrix and is therefore invertible. The implicit function theorem gives locally smooth matching coefficients as functions of the target moments, and uniqueness makes these local solutions agree. Since the target moments are continuous in $u$, the field $u\mapsto\widehat\eta^*(u)$ is continuous.

To represent $\widehat\eta^*$ by a pair satisfying \eqref{eq:baseline-constraint}, the $\tau_1$-moment must obey
\begin{align*}
    m_{P,\tau_1}(u)
    &=
    \mathbb E_{\widehat\eta^*(u)}[R^{\tau_1}]
    =g(u)^{\tau_1}\mathbb E_{\eta(u)}[R^{\tau_1}] =g(u)^{\tau_1}\frac{d}{\tau_1}\rho_K(u)^{\tau_1}.
\end{align*}
Its unique positive solution is the stated profile $g^*$. Define $\eta_\ell^*(u):=\widehat\eta_\ell^*(u)g^*(u)^{\tau_\ell}$. Then $g^*\in G$, $\eta^*$ is a continuous field taking values in $\Theta$, and
$$
    \mathbb E_{\eta^*(u)}[R^{\tau_\ell}]
    =
    g^*(u)^{-\tau_\ell}
    \mathbb E_{\widehat\eta^*(u)}[R^{\tau_\ell}]
    =
    g^*(u)^{-\tau_\ell}m_{P,\tau_\ell}(u).
$$
For $\ell=1$, the formula for $g^*$ makes this expectation equal to $(d/\tau_1)\rho_K(u)^{\tau_1}$, verifying the baseline constraint.

For any admissible pair with coefficient field $\widehat\eta_\ell=\eta_\ell g^{-\tau_\ell}$, pointwise optimality gives $\mathbb E_{P_U}[F_U(\widehat\eta(U))-F_U(\widehat\eta^*(U))]\geq0$. If $\widehat\eta$ differs from $\widehat\eta^*$ anywhere, continuity makes them differ on a nonempty open set. Since $h_P>0$ and the pointwise minimizer is unique, the inequality is then strict. Finally, Remark~\ref{rem:identifiability} makes the representation by $(\eta,g)$ unique, proving uniqueness of the admissible minimizer.
\end{proof}
This solution highlights the decoupled structure of the orbit family.
Under the hypotheses of Theorem~\ref{thm:optimal-eta-g}, the optimal
transformed conditional density $f_{\widehat\eta^*(u)}$ depends only
on the target moments and the chosen exponents. Thus, the minimized
radial term in Proposition~\ref{prop:KL-decomposition} is independent
of $K$, and the remaining outer optimization reduces to minimizing
$D_{\mathrm{KL}}(h_P\|h_K)$ over unit-volume star bodies.
By Theorem~\ref{thm:optimal-starbody}, its unique minimizer is $K^*$,
with radial function $\rho_{K^*}(u)=(d\,h_P(u))^{1/d}$.

At each direction $u$, Theorem~\ref{thm:optimal-eta-g} requires a single coefficient vector $a\in\Theta$ satisfying $\mathbb E_a[R^{\tau_\ell}]=m_{P,\tau_\ell}(u)$ for all $\ell=1,\ldots,L$. For $L=1$, this holds automatically for positive target moments, since $\mathbb E_{a_1}[R^{\tau_1}]=d/(\tau_1a_1)$ ranges over $(0,\infty)$. For $L>1$, finiteness of the target moments alone does not ensure that such a vector exists. Without this moment-matching hypothesis, the radial infimum need not be attained by an admissible pair, and exact matching of all prescribed moments is not guaranteed.

\subsection{A Transport Perspective on Anisotropic Dilation}
So far, we have mainly focused on analyzing the optimal dilation with respect to the KL divergence. We show in this section that it is also possible to interpret the map $\Tcal_g$ through the lens of optimal transport under ray-preserving couplings, which will allow us to analyze the optimal dilation with respect to a raywise Wasserstein distance. Classical work established optimality results for radial transformations in multivariate coupling problems \cite{cuestaalbertos1993optimal}, while fibered and conditional Wasserstein formulations optimize over couplings constrained to remain within fibers of a prescribed map \cite{kloeckner2021extensions, peszek2023heterogeneous, chemseddine2025conditional}. In this spirit, we show that $\Tcal_g$ is optimal among transports constrained to preserve Euclidean rays when it maps $P$ to $Q$.

Recall that for two distributions $P,Q\in\mathcal P$ with finite second moments, their squared $2$-Wasserstein distance is defined by
\begin{align*}
    W_2^2(P,Q) := \inf_{\gamma \in \Gamma(P,Q)} \int_{\R^d \times \R^d} \|x-y\|_2^2 \,\mathrm{d}\gamma(x,y)
\end{align*} where $\Gamma(P,Q)$ is the set of all couplings between $P$ and $Q$. Define the class of ray-preserving couplings
\begin{align*}
    \Gamma_{\mathrm{ray}}(P,Q)
    :=\left\{\gamma\in\Gamma(P,Q):
    \frac{x}{\|x\|_2}=\frac{y}{\|y\|_2},\ \gamma\text{-a.s.}\right\}.
\end{align*}
The directions are well-defined almost surely because $P$ and $Q$ assign zero mass to the origin. Such couplings can exist only when $P_U=Q_U$. For distributions with this common directional marginal, define the squared raywise Wasserstein distance by
\begin{align*}
    W_{2,\mathrm{ray}}^2(P,Q) :=\inf_{\gamma\in\Gamma_{\mathrm{ray}}(P,Q)}
    \int_{\R^d\times\R^d}\|x-y\|_2^2\,\mathrm d\gamma(x,y).
\end{align*}
For a scalar $c>0$, define $\omega_c:\R_+\to\R_+$ by $\omega_c(r):=cr$; in particular, $\omega_{g(u)}(r)=g(u)r$ for $g\in G$ and $u\in\mathbb S^{d-1}$.

\begin{proposition} \label{prop:OT-interpretation}
Consider two distributions $P,Q\in\mathcal P$ with finite second moments. Assume their directional distributions agree, and write $P_U=Q_U=:\Psi$. Consider their polar disintegrations
\begin{align*}
    P_{R,U}(\mathrm{d}r, \mathrm{d}u) = \Psi(\mathrm{d}u)P_{R\mid U=u}(\mathrm{d}r)\quad\text{and}\quad Q_{R,U}(\mathrm{d}r, \mathrm{d}u) = \Psi(\mathrm{d}u)Q_{R\mid U=u}(\mathrm{d}r).
\end{align*} Then \begin{align*}
    \inf_{\gamma \in \Gamma_{\mathrm{ray}}(P,Q)} \int \|x-y\|_2^2 \mathrm{d}\gamma(x,y) = \int_{\mathbb{S}^{d-1}} W_2^2(P_{R\mid U=u}, Q_{R\mid U=u})\Psi(\mathrm{d}u).
\end{align*} 
Moreover, if $g\in G$ satisfies
$Q_{R\mid U=u}=(\omega_{g(u)})_{\#}P_{R\mid U=u}$ for $\Psi$-a.e. $u$, then the coupling $\gamma_g:=(\mathrm{Id},\Tcal_g)_{\#}P$ attains the infimum defining $W_{2,\mathrm{ray}}^2(P,Q)$. Thus $\Tcal_g$ is a quadratic-cost optimal transport map from $P$ to $Q$ among all Borel measurable ray-preserving transports
$T:\R^d\to\R^d$ in the class $\mathcal{T}_{\mathrm{ray}}(P,Q) :=\{T : T_{\#}P=Q\ \text{and}\ T(x)/\|T(x)\|_2 = x/\|x\|_2\ P\text{-a.s.}\}$, i.e., \begin{align*}
\Tcal_g\in\argmin_{T\in\mathcal T_{\mathrm{ray}}(P,Q)}
    \int_{\R^d}\|x-T(x)\|_2^2\,\mathrm dP(x).
\end{align*}
\end{proposition}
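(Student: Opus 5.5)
The plan is to reduce the ray-constrained transport problem to a family of one-dimensional problems, one per direction, by passing to polar coordinates. For a ray-preserving coupling $\gamma$, the identity $x/\|x\|_2=y/\|y\|_2$ $\gamma$-a.s. lets us write $x=ru$ and $y=su$ with a common $u$. The cost is then $\|x-y\|_2^2=(r-s)^2$. Consequently $\gamma$ is the image, under $(r,s,u)\mapsto(ru,su)$, of a measure $\pi$ on $(0,\infty)^2\times\mathbb S^{d-1}$ whose $u$-marginal is $\Psi$. Disintegrating $\pi$ with respect to $u$ via the Disintegration Theorem gives kernels $\pi_u$ on $(0,\infty)^2$. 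Matching the marginals of $\gamma$ with the polar disintegrations of $P$ and $Q$ shows that $\pi_u\in\Gamma(P_{R\mid U=u},Q_{R\mid U=u})$ for $\Psi$-a.e. $u$. This yields the lower bound
\[
\int\|x-y\|_2^2\,\mathrm d\gamma=\int_{\mathbb S^{d-1}}\int (r-s)^2\,\mathrm d\pi_u\,\Psi(\mathrm du)\ \ge\ \int_{\mathbb S^{d-1}} W_2^2(P_{R\mid U=u},Q_{R\mid U=u})\,\Psi(\mathrm du).
\]

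For the reverse inequality I will glue optimal one-dimensional couplings. Take the monotone (quantile) coupling $\pi_u^*:=(F_{P,u}^{-1},F_{Q,u}^{-1})_\#\mathrm{Leb}[0,1]$, which is optimal in one dimension. Measurability of $u\mapsto\pi_u^*$ follows because the conditional quantile functions are jointly measurable in $(t,u)$ whenever the conditional CDFs are measurable kernels. Define $\gamma^*$ as the pushforward of $\Psi(\mathrm du)\pi_u^*(\mathrm dr,\mathrm ds)$ under $(ru,su)$. By construction $\gamma^*$ is ray-preserving, has marginals $P$ and $Q$, and its cost equals the right-hand side. Finiteness of all quantities comes from the finite second moments. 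This measurable-selection step is the main technical obstacle. I would use the explicit quantile construction rather than an abstract selection theorem to avoid it.

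For the second claim, assume $Q_{R\mid U=u}=(\omega_{g(u)})_\#P_{R\mid U=u}$. The map $r\mapsto g(u)r$ is increasing, and in one dimension a monotone transport map is optimal for quadratic cost. Hence the coupling $(\mathrm{id},\omega_{g(u)})_\#P_{R\mid U=u}$ attains $W_2^2(P_{R\mid U=u},Q_{R\mid U=u})$ for $\Psi$-a.e. $u$. The coupling $\gamma_g=(\mathrm{Id},\mathcal T_g)_\#P$ is ray-preserving by Theorem~\ref{thm:structure-of-orbit}, since $U_Q=U_P$. Its disintegration over $u$ is exactly these monotone couplings, so its cost equals the infimum. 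Its second marginal is $Q$ because both measures have polar decomposition $\Psi\otimes(\omega_{g(u)})_\#P_{R\mid U=u}$.

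Finally, any $T\in\mathcal T_{\mathrm{ray}}(P,Q)$ induces $(\mathrm{Id},T)_\#P\in\Gamma_{\mathrm{ray}}(P,Q)$, whose cost is at least the infimum. That infimum is attained by $\gamma_g$, and $\mathcal T_g$ itself lies in $\mathcal T_{\mathrm{ray}}(P,Q)$. Therefore $\mathcal T_g$ is an argmin.
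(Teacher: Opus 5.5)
Your proposal is correct and follows the paper's proof essentially step for step. You obtain the lower bound by disintegrating a ray-preserving coupling along directions into couplings of the conditional radial laws, and the matching upper bound by gluing the jointly measurable one-dimensional quantile couplings. You then get optimality of $\mathcal{T}_g$ by identifying $(\mathrm{Id},\omega_{g(u)})_{\#}P_{R\mid U=u}$ with the co-monotone coupling on $\Psi$-a.e.\ ray, and comparing with any $T\in\mathcal{T}_{\mathrm{ray}}(P,Q)$ through $(\mathrm{Id},T)_{\#}P\in\Gamma_{\mathrm{ray}}(P,Q)$, exactly as the paper does.
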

\begin{proof}[Proof of Proposition \ref{prop:OT-interpretation}] Observe that for $\gamma\in\Gamma_{\mathrm{ray}}(P,Q)$, disintegration gives a measurable family $\{\gamma_u\}_{u\in\mathbb S^{d-1}}$ with $\gamma_u\in\Gamma(P_{R\mid U=u},Q_{R\mid U=u})$ for $\Psi$-a.e. $u$ such that $$\gamma=H_{\#}\bigl(\Psi(\mathrm du)\gamma_u(\mathrm dr,\mathrm ds)\bigr),$$ where $H:\mathbb S^{d-1}\times\R_+\times\R_+\to\R^d\times\R^d$ is defined by $H(u,r,s):=(ru,su)$. For $\gamma$-a.e. $(x,y)$, we can write $x=ru$ and $y=su$. By integration against the pushforward measure and Tonelli's theorem,
\begin{align*}
    \int_{\R^d\times\R^d}\|x-y\|_2^2\,\mathrm d\gamma(x,y)
    =\int_{\mathbb S^{d-1}}\int_{\R_+\times\R_+}
    (r-s)^2\,\mathrm d\gamma_u(r,s)\,\Psi(\mathrm du).
\end{align*} 
For $\Psi$-a.e. $u$, the inclusion $\gamma_u\in\Gamma(P_{R\mid U=u},Q_{R\mid U=u})$ gives the lower bound
\begin{align*}
    \int_{\R_+\times\R_+}(r-s)^2\,\mathrm d\gamma_u(r,s)
    \geq W_2^2(P_{R\mid U=u},Q_{R\mid U=u}).
\end{align*}
Integrating over the sphere with respect to $\Psi$ and taking the infimum over $\gamma\in\Gamma_{\mathrm{ray}}(P,Q)$ yields the lower bound
\begin{align*}
    W_{2,\mathrm{ray}}^2(P,Q)
    \geq\int_{\mathbb S^{d-1}}
    W_2^2(P_{R\mid U=u},Q_{R\mid U=u})\,\Psi(\mathrm du).
\end{align*} 
To show the inequality in the other direction, recall that the squared $2$-Wasserstein distance in one dimension admits a quantile representation \cite{santambrogio2015optimal}. For a probability measure $\eta$ on $\R_+$, let $F_\eta(r):=\eta([0,r])$ for $r\geq0$ and define its quantile function by
$$F_\eta^{-1}(t):=\inf\{r\geq0:F_\eta(r)\geq t\}, \qquad t\in(0,1).
$$ 
Since $P$ and $Q$ have finite second moments, their conditional radial distributions have finite second moments for $\Psi$-a.e. $u$, and
\begin{align*}
    W_2^2(P_{R\mid U=u},Q_{R\mid U=u})
    =\int_0^1
    \left|F^{-1}_{P_{R\mid U=u}}(t)-F^{-1}_{Q_{R\mid U=u}}(t)\right|^2\,\mathrm dt.
\end{align*} 
As noted in \cite[Proposition 2.2 and Definition 2.3]{santambrogio2015optimal}, the coupling
$$
\pi_u^* :=\bigl(F^{-1}_{P_{R\mid U=u}}(\cdot),F^{-1}_{Q_{R\mid U=u}}(\cdot)\bigr)_{\#} \bigl(\mathcal L^1\rvert_{(0,1)}\bigr)
$$
is the co-monotone transport plan between $P_{R\mid U=u}$ and $Q_{R\mid U=u}$. Here, $\mathcal L^1\rvert_{(0,1)}$ is one-dimensional Lebesgue measure restricted to $(0,1)$. For $\Psi$-a.e. $u$, this coupling satisfies
\begin{align*}
    \int_{\R_+\times\R_+}(r-s)^2\,\mathrm d\pi_u^*(r,s)
    &=\int_0^1
    \left|F^{-1}_{P_{R\mid U=u}}(t)-F^{-1}_{Q_{R\mid U=u}}(t)\right|^2\,\mathrm dt =W_2^2(P_{R\mid U=u},Q_{R\mid U=u}).
\end{align*}
Since the conditional distributions form measurable probability kernels, their quantile functions are jointly measurable in $(u,t)$; hence $u\mapsto\pi_u^*$ is a measurable family. Consequently,
 $\gamma^* := H_{\#}(\Psi(\mathrm{d}u)\pi^*_u(\mathrm{d}r,\mathrm{d}s)) \in \Gamma_{\mathrm{ray}}(P,Q)$ and \begin{align*}
    \int_{\R^d \times \R^d} \|x-y\|^2_2 \mathrm{d}\gamma^*(x,y) = \int_{\mathbb{S}^{d-1}}\int_{\R_+ \times \R_+} (r-s)^2 \mathrm{d}\pi^*_u(r,s)\Psi(\mathrm{d}u)=\int_{\mathbb{S}^{d-1}}W_2^2(P_{R\mid U=u},Q_{R\mid U=u})\Psi(\mathrm{d}u).
\end{align*} This shows that the inequality is indeed an equality.

It remains to prove the claim concerning $\Tcal_g$. Let $\widetilde Q:=(\Tcal_g)_{\#}P$. Recall that Theorem~\ref{thm:structure-of-orbit} establishes that if $X=RU\sim P$, then $\Tcal_g(X)=g(U)RU$. Thus $\widetilde Q$ has directional distribution $\Psi$, with
 $$
    \widetilde Q_{R\mid U=u}
    =(\omega_{g(u)})_{\#}P_{R\mid U=u}
    =Q_{R\mid U=u}
    \qquad\text{for $\Psi$-a.e. $u$},
$$
where the last equality is by assumption. Hence $\widetilde Q$ and $Q$ have the same polar disintegration, so $(\Tcal_g)_{\#}P=Q$ and $\gamma_g=(\mathrm{Id},\Tcal_g)_{\#}P\in\Gamma_{\mathrm{ray}}(P,Q)$. For $\Psi$-almost every $u$, the relation $Q_{R\mid U=u} = (\omega_{g(u)})_{\#}P_{R\mid U=u}$ gives $F_{Q_{R\mid U=u}}(s)=F_{P_{R\mid U=u}}(s/g(u))$ which can be used to show $(\mathrm{Id},\omega_{g(u)})_{\#}P_{R\mid U=u} = \pi^*_u$. This ensures that the coupling induced by $r \mapsto g(u)r$ is the one-dimensional co-monotone optimal coupling on
$\Psi$-almost every ray. It follows that \begin{align*}
    \int_{\R^d}\|x - \Tcal_g(x)\|_2^2\mathrm{d}P(x) & = \int_{\mathbb{S}^{d-1}} \int_0^{\infty} |r - g(u)r|^2 P_{R\mid U=u}(\mathrm{d}r)\Psi(\mathrm{d}u) = \int_{\mathbb{S}^{d-1}} W_2^2(P_{R\mid U=u},Q_{R\mid U=u})\Psi(\mathrm{d}u).
\end{align*}
Finally, let $T \in \mathcal{T}_{\mathrm{ray}}(P,Q)$. Then $\gamma_T:= (\mathrm{Id},T)_{\#}P \in \Gamma_{\mathrm{ray}}(P,Q)$, and hence \begin{align*}
    \int_{\R^d}\|x - T(x)\|_2^2 \mathrm{d}P(x) = \int_{\R^d \times \R^d} \|x-y\|_2^2 \mathrm{d}\gamma_T(x,y) &\geq \inf_{\gamma\in\Gamma_{\mathrm{ray}}(P,Q)}\int_{\R^d \times \R^d} \|x-y\|_2^2 \mathrm{d}\gamma(x,y) \\
    & = \int_{\R^d} \|x-\mathcal{T}_g(x)\|^2_2\mathrm{d}P(x).
\end{align*}
\end{proof}

\begin{remark}[Optimal Dilations]
    The previous result shows that when $P$ and $Q$ have the same directional marginal and their conditional radial distributions are related by a profile $g\in G$, then  $\Tcal_g$ is optimal for quadratic cost among ray-preserving transports from $P$ to $Q$. A natural question is whether one can characterize the optimal dilation $g^*$ when the conditional radial distributions are not related in this way. For general $P,Q\in\mathcal P$ with finite second moments and common directional marginal $\Psi$, consider the minimization problem
    \begin{align*}
        \inf_{g \in G} W^2_{2,\mathrm{ray}}((\Tcal_g)_{\#}P, Q)
    \end{align*} 
    where $W_{2,\mathrm{ray}}^2((\Tcal_g)_{\#}P,Q)
    =\int_{\mathbb S^{d-1}} W_2^2\bigl((\omega_{g(u)})_{\#}P_{R\mid U=u},Q_{R\mid U=u}\bigr)\,\Psi(\mathrm du)$ by Proposition~\ref{prop:OT-interpretation}. This raywise Wasserstein distance is a special case of the conditional Wasserstein distance defined in \cite{chemseddine2025conditional}, with $U$ as the conditioning variable. 
    
    To find the pointwise minimizing scale, first note that for a random variable $X\sim\eta$ with $\eta$ a probability measure on $\R_+$, and for any $c>0$, the CDF satisfies 
    $F_{(\omega_c)_{\#}\eta}(s)
    = \mathbb P(cX\leq s)
    = \mathbb P(X\leq s/c)
    = F_\eta(s/c).$
     Thus, for $t\in(0,1)$,
     $$F^{-1}_{(\omega_c)_{\#}\eta}(t) = \inf\{s \geq 0 : F_{\eta}(s/c) \geq t\} = c\inf\{h \geq 0 : F_{\eta}(h) \geq t\}=cF^{-1}_{\eta}(t).$$ Hence for fixed $u \in \mathbb{S}^{d-1}$, we have $F^{-1}_{(\omega_{g(u)})_{\#}P_{R\mid U=u}}(t) = g(u)F^{-1}_{P_{R\mid U=u}}(t)$. Using the one-dimensional quantile formula for $W_2^2(P_{R\mid U=u},Q_{R\mid U=u})$ and expanding the square gives \begin{align*}
        \int_{\mathbb{S}^{d-1}}W_2^2((\omega_{g(u)})_{\#}P_{R\mid U=u},Q_{R\mid U=u})\Psi(\mathrm{d}u) & = \int_{\mathbb{S}^{d-1}}\int_0^1(g(u)F_{P_{R\mid U=u}}^{-1}(t) - F_{Q_{R\mid U=u}}^{-1}(t))^2\mathrm{d}t \Psi(\mathrm{d}u) \\
        & = \int_{\mathbb{S}^{d-1}}\left(g(u)^2 A_P(u) - 2g(u) B_{P,Q}(u) + C_Q(u)\right) \Psi(\mathrm{d}u)
    \end{align*} where we defined the quantities 
    \begin{align*}
    A_P(u)
    &:=\int_0^1\bigl(F^{-1}_{P_{R\mid U=u}}(t)\bigr)^2\,\mathrm dt
     = m_{P,2}(u),\\
    B_{P,Q}(u)
    &:=\int_0^1F^{-1}_{P_{R\mid U=u}}(t)F^{-1}_{Q_{R\mid U=u}}(t)\,\mathrm dt,\\
    C_Q(u) &:=\int_0^1\bigl(F^{-1}_{Q_{R\mid U=u}}(t)\bigr)^2\,\mathrm dt = m_{Q,2}(u)
    .
\end{align*}
Thus $A_P$ and $C_Q$ are the conditional radial second moments. 
Furthermore, these quantities are finite and strictly positive for $\Psi$-almost every $u$, with $B_{P,Q}(u)^2\leq A_P(u)C_Q(u)$ by Cauchy--Schwarz. One can minimize the integrand pointwise over positive scales to obtain
 \begin{align*}
        g^*(u) = \frac{B_{P,Q}(u)}{A_{P}(u)} = \frac{\int_0^1F_{P_{R\mid U=u}}^{-1}(t)F_{Q_{R\mid U=u}}^{-1}(t)\mathrm{d}t}{\int_0^1 (F_{P_{R\mid U=u}}^{-1}(t))^2 \mathrm{d}t}
    \end{align*}
    for $\Psi$-almost every $u$. On each such ray, $g^*(u)$ is the least-squares slope that best aligns the radial quantile function of $P_{R\mid U=u}$ with that of $Q_{R\mid U=u}$. Interchanging $P$ and $Q$ gives the analogous pointwise scale
$B_{P,Q}(u)/C_Q(u)$ for minimizing the distance between
$(\Tcal_g)_{\#}Q$ and $P$. For either problem, any profile in $G$
that agrees $\Psi$-almost everywhere with the corresponding
pointwise scale minimizes the objective over $G$. A sufficient
condition is that the numerator and denominator of the
corresponding ratio each agree $\Psi$-almost everywhere with
a profile in $G$. An interesting avenue of future work would be to analyze the use of such optimal dilations in the context of regularization and compare the properties of profiles learned via the raywise Wasserstein distance and the KL divergence.
\end{remark}

\section{Computational Illustrations}\label{sec:denoising-experiments}
This section illustrates our theoretical framework through denoising
experiments on a two-dimensional synthetic distribution and MNIST.
In the synthetic setting, we generate clean training samples from a prescribed distribution and substitute kernel estimates of its directional density and conditional radial moments into the closed-form characterizations derived above. For MNIST,
we learn parameterized regularizers and scaling profiles by minimizing
empirical objectives. We first describe the learning procedure used
in the MNIST experiments (Section~\ref{sec:comp-learning-procedure}),
then present the synthetic and MNIST denoising results
(Sections~\ref{sec:two-d-denoising} and~\ref{sec:mnist-denoising},
respectively).

\subsection{Learning Procedure} \label{sec:comp-learning-procedure} 

When the population quantities required by our theoretical characterizations are unavailable, we estimate the base regularizer and scaling profile within parameterized function classes. Specifically, we represent the star body through its gauge $\|\cdot\|_{K_\phi}$ and the scaling profile by a positive function $g_\theta$, and optimize an empirical counterpart of the functional in Lemma~\ref{lem:KL-objective}. The resulting pair defines a learned regularizer for a downstream inverse problem. We describe the training objective and parameterizations below. 

\paragraph{Training and Loss Function} Suppose we have nonzero training samples $x_1,\ldots,x_n\sim P$, and
write $u_i=x_i/\|x_i\|_2$. We consider the anisotropically dilated Gibbs
model \begin{align*}
    Q_{K_\phi,g_\theta} = (\Tcal_{g_\theta})_\#p_{K_\phi}, \qquad p_{K_\phi}(x) = Z_{K_\phi}^{-1}\exp(-\|x\|_{K_\phi}).
\end{align*}
By Theorem~\ref{thm:structure-of-orbit}, the density $q_{\phi,\theta}$
of the pushforward measure $Q_{K_\phi,g_\theta}$ can be written as
$$
q_{\phi,\theta}(x) = p_{K_\phi}(\mathcal T_{1/g_{\theta}}(x))g_{\theta}(x/\|x\|_2)^{-d}.
$$ 
Consequently, the empirical negative log-likelihood, up to constants independent of $(\phi,\theta)$, is given by
\begin{align}
    \widehat{\mathcal L}(\phi,\theta)
    =\frac1n\sum_{i=1}^n
      \left[
          \frac{\|x_i\|_{K_\phi}}{g_\theta(u_i)}
          +d\log g_\theta(u_i)
      \right]
      +\log Z_{K_\phi}.
    \label{eq:experiments-NLL}
\end{align}
When $K_\phi$ is volume-normalized, the partition function $Z_{K_\phi}$
is constant and can be omitted from the optimization. Otherwise, we
approximate $Z_{K_\phi}$ by Monte Carlo integration over
$\mathbb S^{d-1}$, using the relationship
$Z_K\propto\mathrm{vol}_d(K)$ and the polar volume formula
$\mathrm{vol}_d(K)=d^{-1}\int_{\mathbb S^{d-1}}\rho_K(u)^d\,
\mathrm d\sigma(u)$. In practice, we observe that alternating
minimization over $\phi$ and $\theta$ yields stable convergence.
Once the parameters are learned, the resulting regularizer is the
negative log-density, up to an additive constant independent of $x$:
\begin{align*}
    E_{(K_{\phi},g_{\theta})}(x) := \frac{\|x\|_{K_{\phi}}}{g_{\theta}(x/\|x\|_2)} + d \log g_{\theta}(x/\|x\|_2).
\end{align*} 

\paragraph{Parameterization}
Parameterizing $K_\phi$ is equivalent to parameterizing its gauge
$\|\cdot\|_{K_\phi}$. The gauge must be positively homogeneous, positive
except at the origin, and continuous. In the MNIST experiments, we set
$$
\|x\|_{K_\phi} :=E(x)+\lambda_{\phi_1}\|x\|_2+f_{\phi_2}(x),
$$ 
where $E$ is a continuous, nonnegative, positively homogeneous base
regularizer, $\lambda_{\phi_1}>0$ is learned, and $f_{\phi_2}$ is a
neural network with linear convolutional layers, no biases, and
positively homogeneous activation functions, such as ReLU. We also incorporate a non-negative activation in the final output layer. This ensures that $f_{\phi_2}$ is continuous,
nonnegative, and positively homogeneous, while the Euclidean norm
term ensures positivity away from the origin. For $g_\theta$, we
require continuity and positivity on the sphere. We therefore
parameterize it as $g_\theta(u)=\exp(a_\theta(u))$ for a continuous
map $a_\theta$.

\begin{remark}[Improving hand-crafted regularizers]
    Another way to exploit $\Tcal_g$ is to improve a handcrafted or fixed base regularizer by learning a direction-dependent correction. For a continuous, nonnegative, positively homogeneous base regularizer $E$, such as total variation \cite{Rudinetal92}, we learn this correction by solving \begin{align}
        \min_\theta \sum_{i=1}^n \left[\frac{E(x_i)}{g_{\theta}(u_i)} + d \log g_{\theta}(u_i)\right]. \label{eq:experiments-improving-HC-loss}
    \end{align} 
    Provided $Z_E:=\int_{\mathbb R^d}\exp(-E(x))\,\mathrm dx<\infty$,
this is the fixed-base specialization of
\eqref{eq:experiments-NLL} for anisotropic dilations of the density
$p_E(x)=Z_E^{-1}\exp(-E(x))$, with the constant $\log Z_E$ omitted. The correction also defines
an energy when the base regularizer does not induce a normalizable
density. For example, for regularizers such as TV, $Z_E$ may be infinite. Nevertheless, \eqref{eq:experiments-improving-HC-loss} remains meaningful as an empirical learning objective and is the pointwise limit, up to constants independent of $\theta$, of the negative log-likelihood objectives obtained from the nomralizable Gibbs energies $E_{\varepsilon}(x) = E(x) + \varepsilon\|x\|_2$ as $\varepsilon\downarrow 0$. For a fitted profile $g_\theta$, the resulting regularizer is
\begin{align}
    \widehat E_{g_\theta}(x)
    :=\frac{E(x)}{g_\theta(x/\|x\|_2)}
      +d\log g_\theta(x/\|x\|_2),
    \qquad x\ne0.
    \label{eq:learned-Ehat-gtheta}
\end{align}
We explore this approach in the MNIST experiments.
\end{remark}

\begin{remark}[Existence of minimizers] Regarding the well-posedness of optimization using a regularizer of the form \eqref{eq:learned-Ehat-gtheta}, the existence of minimizers follows under natural assumptions. Suppose $g:\mathbb S^{d-1}\to(0,\infty)$ is continuous and $E$ is
continuous, nonnegative, and positively homogeneous. Let $\widehat E_g$
be defined as in \eqref{eq:learned-Ehat-gtheta}, extending its definition
to the origin by $\widehat E_g(0):=d\log g_{\min}$, where
$g_{\min}:=\min_{u\in\mathbb S^{d-1}}g(u)>0$. For any
$y\in\mathbb R^d$ and $\lambda\ge0$, the functional
\begin{align*}
    \mathcal{L}(x) := \|x-y\|_2^2 + \lambda \widehat{E}_g(x)
\end{align*} attains its minimum. Indeed, positive homogeneity and continuity of $E$ imply
$$0 \leq \frac{E(x)}{g(x/\|x\|_2)} \leq \frac{\max_{u\in\mathbb{S}^{d-1}}E(u)}{g_{\min}}\|x\|_2 \longrightarrow 0\ \quad\ \text{as}\ \quad x \longrightarrow 0.$$ Also, $\liminf_{x\rightarrow 0} d \log g(x/\|x\|_2) = d\log g_{\min}.$ This implies that $\widehat{E}_g$, and hence $\mathcal{L}$, is lower semicontinuous. Finally, $$\mathcal{L}(x) \geq (\|x\|_2 - \|y\|_2)^2 + \lambda d \log g_{\min} \longrightarrow \infty\ \quad \text{as}\ \quad \|x\|_2 \longrightarrow \infty.$$ Thus, $\mathcal{L}$ is proper, lower semicontinuous, and
coercive, and existence of a minimizer follows from the direct method.
\end{remark}

\subsection{Two-Dimensional Denoising Experiment} \label{sec:two-d-denoising}

We first investigate whether learning directional geometry, radial scale,
and higher-order radial shape can improve regularization in a controlled
two-dimensional denoising problem. We use $5000$ clean training samples
to train our models. At test time, samples are corrupted by additive
Gaussian noise,
\begin{align*}
    y=x+\sigma\xi,
    \qquad \xi\sim\mathcal N(0,I_2),
    \qquad \sigma>0,
\end{align*}
and are reconstructed by posterior averaging with the regularizer $\widehat E$:
\begin{align*}
    \widehat x_\gamma(y)
    =\mathbb E_{\pi_\gamma(\cdot\mid y)}[X], \qquad  \pi_\gamma(x\mid y)
\propto
    \exp\left(
        -\frac{\|y-x\|_2^2}{2\sigma^2}-\gamma \widehat E(x)
    \right).
\end{align*}
The regularization parameter $\gamma>0$ is selected separately for each model and noise level using validation mean squared error (MSE) over $320$ samples. To compute the posterior mean, we use numerical quadrature on a polar grid.

The clean distribution follows the radial construction of
Section~\ref{sec:exponential-family-model}. In particular, we generate
samples from a numerical approximation to the law
\begin{align*}
    X=\rho_0(\Theta)g_0(\Theta)S\,u(\Theta),
    \qquad
    u(\vartheta)=(\cos\vartheta,\sin\vartheta),
\end{align*}
where $\vartheta$ denotes the polar angle, $\rho_0$ is the radial
function of a baseline star body $K_0$, $g_0$ is the generating scaling
profile, and $\Theta$ has density
$h_0(\vartheta)\propto\rho_0(\vartheta)^2$. Conditional on
$\Theta=\vartheta$, the standardized radius $S$ follows a radial
exponential family with exponents $1,2,3$ and mean $2$. This construction
allows the directional marginal, radial scale, and higher-order radial
shape to vary separately. 

\paragraph{Models} 
In this experiment, we compare a fixed circular regularizer with data-driven
regularizers that incorporate directional geometry, radial scale, and
radial shape. In particular, the four Gibbs models we compare are
1) the circular baseline $\widehat E(x)=\|x\|_2$,
2) a $g$-only model $\widehat E=E_g$ with an estimated scaling profile and
$\rho_K\equiv1$,
3) a $K_P$-only model $\widehat E=E_{K_P}$ with an estimated star body and
$g\equiv1$,
and 4) a $(K,g)$ model $\widehat E=E_{(K,g)}$ with both components
estimated. In polar coordinates, the $(K,g)$ model has energy
\begin{align*}
    E_{(K,g)}(r,\vartheta)
    =2\log g(\vartheta)
     +\frac{r}{\rho_K(\vartheta)g(\vartheta)}.
\end{align*}
Note that these four models have fixed conditional radial shape and are
therefore misspecified for the clean distribution.

We also consider two models from the radial exponential families of
Section~\ref{sec:exponential-family-model}: a $(K,\eta)$ model that
learns a star body and radial shape with $g\equiv1$, and a full
$(K,g,\eta)$ model that also learns the radial scale. Writing the
coefficients $\eta$ in the standardized radius
$s=r/(\rho_K(\vartheta)g(\vartheta))$, the full model has energy
\begin{align*}
    E_{(K,g,\eta)}(r,\vartheta)
    =2\log g(\vartheta)+\log A(\eta(\vartheta))
      +\sum_{\ell=1}^3\eta_\ell(\vartheta)s^\ell.
\end{align*}
Here, $A(\eta)$ is the normalizing constant of the standardized radial
density proportional to $s\exp(-\sum_{\ell=1}^3\eta_\ell s^\ell)$ on $(0,\infty)$.
Setting $g\equiv1$ gives the energy of the $(K,\eta)$ model.

We find the parameter values of our models by computing empirical estimations of the closed-form
population characterizations. Specifically,
we use circular kernel smoothing of the clean training samples. Let $\widehat h_P$ denote the estimate of the ground-truth directional density 
$h_P$, and let $\widehat m_{P,\ell}$
denote the estimates of the conditional radial moments $m_{P,\ell}$ of
orders $\ell=1,2,3$. For the $(K,g)$ and $(K,g,\eta)$ models,
Theorems~\ref{thm:optimal-starbody} and~\ref{thm:optimal-g-K} give the
empirical profiles
$$
\widehat\rho_K(u)\propto\widehat h_P(u)^{1/2},
    \qquad
    \widehat g(u)=\frac{\widehat m_{P,1}(u)}{2\widehat\rho_K(u)},
    \qquad u\in\mathbb S^1.
$$
For the $K_P$-only model, \eqref{eq:rho-decomposition} gives
$\widehat\rho_K\propto(\widehat h_P\widehat m_{P,1})^{1/3}$,
which we also use for the $(K,\eta)$ comparison. For the $g$-only model,
we fix $\widehat\rho_K\equiv1$ and compute
$\widehat g=\widehat m_{P,1}/2$. These profiles are used directly on the
angular quadrature grid. For the radial-shape models, we fit $\eta$
using a loss that penalizes discrepancies in the standardized second
and third moments and encourages the standardized first moment to
equal $2$ by \eqref{eq:baseline-constraint} with $d=2$ and
$\tau_1=1$. Please see Appendix~\ref{sec:appendix-2d} for the specific clean
distribution, kernel estimation, moment fitting, and quadrature details.

\begin{table}[t]
\centering
\begin{tabular}{lccc}
\toprule
Model & $\sigma=0.25$ & $\sigma=0.50$ & $\sigma=0.80$ \\
\midrule
Noisy observation & $0.3532$ & $0.7033$ & $1.1514$ \\
Circular baseline & $0.3511$ & $0.6816$ & $1.0404$ \\
$g$-only & $0.3494$ & $0.6668$ & $0.9997$ \\
$K_P$-only & $0.3475$ & $0.6478$ & $0.9663$ \\
$(K,g)$ & $\mathbf{0.3449}$ & $\mathbf{0.6456}$ & $\mathbf{0.9614}$ \\
\bottomrule
\end{tabular}
\caption{Two-dimensional denoising performance over $650$ test samples
for the Gibbs models. Entries report test RMSE, with the regularization
parameter selected separately for each model and noise level using
validation MSE. The lowest RMSE at each noise level among these models
is shown in bold.}
\label{tab:toy-base-rmse}
\end{table}

\begin{table}[t]
\centering
\begin{tabular}{lccc}
\toprule
Model & $\sigma=0.25$ & $\sigma=0.50$ & $\sigma=0.80$ \\
\midrule
$(K,\eta)$ & $0.3499$ & $0.6475$ & $0.9826$ \\
$(K,g)$ & $0.3449$ & $0.6456$ & $0.9614$ \\
$(K,g,\eta)$ & $\mathbf{0.3428}$ & $\mathbf{0.6381}$ & $\mathbf{0.9506}$ \\
\bottomrule
\end{tabular}
\caption{Two-dimensional denoising performance over $650$ test samples
for models incorporating radial scale and higher-order shape. Entries
report test RMSE, with the regularization parameter selected separately
for each model and noise level using validation MSE. The lowest RMSE at
each noise level is shown in bold.}
\label{tab:toy-eta-rmse}
\end{table}

\paragraph{Results}
We evaluate the six regularization models in reconstructing $650$ test
samples. Tables~\ref{tab:toy-base-rmse} and~\ref{tab:toy-eta-rmse} report
test root mean squared Euclidean error (RMSE) for Gaussian noise levels
$\sigma\in\{0.25,0.50,0.80\}$. Adding the anisotropic profile $g$
consistently improves the circular baseline. Relative to that baseline,
the $g$-only model reduces RMSE by approximately $0.0017$, $0.0148$, and
$0.0407$ as the noise level increases. These results show that a
direction-dependent radial correction can improve denoising performance
while keeping the base star body fixed. The $K_P$-only model also improves
upon the circular baseline at every noise level. The $(K,g)$ model gives
the lowest RMSE among the Gibbs models, although the additional gains
over $K_P$-only are modest.

Table~\ref{tab:toy-eta-rmse} compares the models incorporating radial
scale and higher-order shape. The $(K,g)$ model consistently outperforms
the $(K,\eta)$ comparison, which uses the $K_P$ geometry estimate and fixes
$g\equiv1$. Adding radial shape flexibility to the $(K,g)$ model
further reduces RMSE by approximately $0.0020$, $0.0075$, and $0.0108$
across the three noise levels. The full $(K,g,\eta)$ model attains the
lowest test RMSE across all regimes.

\subsection{Denoising on MNIST} \label{sec:mnist-denoising}

We next investigate whether anisotropic dilation can improve both a classical handcrafted regularizer and a learned regularizer for a denoising problem on $14 \times 14$ MNIST images \cite{MNIST}. We use $20000$ clean training images to train our models. At test time, images are corrupted by additive Gaussian noise, \begin{align*}
    y = x + \sigma \xi,\ \qquad \xi \sim \mathcal{N}(0,I_d),\ \qquad \sigma >0,
\end{align*}and are reconstructed by solving the following maximum a posteriori
(MAP) estimation problem with the regularizer $\widehat E$: \begin{align*}
    \min_{x \in \R^d} \frac{1}{2}\|y - x\|_2^2 + \lambda \widehat{E}(x).
\end{align*} For the Gaussian likelihood, this normalization corresponds to
$\lambda=\sigma^2\gamma$ in the posterior formulation of
Section~\ref{sec:two-d-denoising}. The regularization parameter
$\lambda$ is selected separately for each model and noise level
using validation peak signal-to-noise ratio (PSNR). To minimize the
MAP objective, we use the nonmonotone accelerated-gradient method
of \cite{li2015accelerated}.

\paragraph{Models} In this experiment, we compare a handcrafted regularizer with a learned
one and examine the effect of learning an additional scaling profile.
In particular, the four models we compare are
1) TV, $\widehat E(x)=E(x)=\mathrm{TV}(x)$,
2) TV with a learned scaling profile,
$\widehat E=\widehat E_{g_\theta}$, obtained by solving
\eqref{eq:experiments-improving-HC-loss},
3) a base learned regularizer
$\widehat E(\cdot)=\|\cdot\|_{K_\phi}$ with $g_\theta\equiv1$, and
4) a jointly learned regularizer
$\widehat E=E_{(K_\phi,g_\theta)}$ obtained by solving
\eqref{eq:experiments-NLL}. For our parameterizations, we consider
a base regularizer of the form \begin{align*}
    \|x \|_{K_{\phi}} = \mathrm{TV}(x) + \lambda_{\phi_1}\|x\|_2 + f_{\phi_2}(x),\ \qquad \lambda_{\phi_1}>0,
\end{align*} where $f_{\phi_2}$ is a neural network with linear convolutional layers
without biases, LeakyReLU activations, and an absolute value output
activation to ensure nonnegativity. We include the TV term to retain
the structure of the fixed TV baseline. The resulting regularizer
$\|\cdot\|_{K_\phi}$ is positively homogeneous, positive except at the
origin, and continuous, and therefore satisfies the requirements to
be a gauge. As in previous work on adversarial regularization
\cite{Lunzetal18}, we also employ a gradient penalty to encourage
$\|\cdot\|_{K_\phi}$ to be Lipschitz.

For the anisotropic dilation profile $g_\theta$, we found that a simple learned linear model of image statistics was sufficient for good performance. Specifically, we use the exponential parameterization: \begin{align*}
    g_\theta(u)
    :=\exp\left(-\theta^\top\varphi(u)\right),
    \qquad \theta\in\mathbb R^3,
    \qquad \varphi(u)\in\mathbb R^3.
\end{align*} We note that such exponential models and Gibbs densities have a rich history for prior learning \cite{zhu1997prior, zhu1998filters, roth2009fields}. Here, $\varphi:\mathbb S^{d-1}\to\mathbb R^3$ consists of a constant
feature and two image statistics, spatial support and spread,
standardized using the clean training data. Please see the Appendix
for further details.

\begin{table}[t]
    \centering
    
    \setlength{\tabcolsep}{5.5pt}
    \small
    \begin{tabular}{lcccc}
        \toprule
        Model
        & $\sigma=0.10$
        & $\sigma=0.15$
        & $\sigma=0.20$
        & $\sigma=0.25$ \\
        \midrule
        TV
        & $24.245\;(0.003856)$
        & $20.995\;(0.008109)$
        & $18.485\;(0.014417)$
        & $16.514\;(0.022678)$ \\

        TV with $g_{\theta}$
        & $\underline{24.634\;(0.003515)}$
        & $\underline{21.779\;(0.006872)}$
        & $\underline{19.156\;(0.012775)}$
        & $\underline{18.318\;(0.015506)}$ \\

        Learned $\|\cdot\|_{K_{\phi}}$
        & $23.759\;(0.004307)$
        & $20.756\;(0.008563)$
        & $18.891\;(0.013883)$
        & $17.986\;(0.016850)$ \\

        Learned $\|\cdot\|_{K_{\phi}}$ with $g_{\theta}$
        & $\mathbf{25.010\;(0.003298)}$
        & $\mathbf{22.057\;(0.006633)}$
        & $\mathbf{20.158\;(0.010231)}$
        & $\mathbf{18.728\;(0.014336)}$ \\
        \bottomrule
    \end{tabular}
     \caption{
        MNIST denoising performance over $100$ test images using fixed TV and a learned nonconvex
        regularizer, with and without the two-feature anisotropic profile
        $g_{\theta}$. Entries report test PSNR in dB, with test MSE
        in parentheses. The regularization parameter is selected separately
        for each model and noise level using validation PSNR. The best result at each noise level is
        shown in bold and the second best result is underlined.
    }\label{tab:mnist-results}
\end{table}
\paragraph{Results}
We evaluate the four regularization models in reconstructing $100$
test images. Table~\ref{tab:mnist-results} reports test PSNR and MSE
for Gaussian noise levels $\sigma\in\{0.10,0.15,0.20,0.25\}$.
Adding the anisotropic profile $g_\theta$ consistently improves the
fixed TV regularizer. Relative to TV, TV with $g_\theta$ improves
PSNR by approximately $0.389$, $0.784$, $0.671$, and $1.804$ dB
as the noise level increases. These results show that a low-capacity
direction-dependent radial correction can improve performance even
when the underlying regularizer remains fixed. The anisotropic
correction also improves the learned nonconvex regularizer
$\|\cdot\|_{K_\phi}$. Learned $\|\cdot\|_{K_\phi}$ with $g_\theta$
improves over learned $\|\cdot\|_{K_\phi}$ by approximately
$1.251$, $1.301$, $1.267$, and $0.742$ dB across the four noise
levels. The combined model also attains the best test PSNR values
across all regimes. 

\section{Statistical Learning Guarantees}\label{sec:statistical-learning}

To complete our analysis, we establish sample complexity guarantees for learning a star body
$K$ and a scaling profile $g$ from data. We relate the complexity of
these hypothesis classes to the required sample size by establishing
uniform convergence of the empirical risk to its population counterpart. Let $P$ be a probability distribution on $\mathbb R^d$ supported on the
Euclidean unit ball $B^d$ with $P(\{0\})=0$, so that $\|X\|_2\le1$
almost surely for $X\sim P$. For a star body $K\in\mathcal S^d$ with
Minkowski functional $\|\cdot\|_K$ and a continuous scaling profile
$g:\mathbb S^{d-1}\to(0,\infty)$, the loss under the anisotropic radial
transform is defined for $x\in B^d\setminus\{0\}$ by
$$
\ell_{K,g}(x)
:=d\log g\left(\frac{x}{\|x\|_2}\right)
+\frac{\|x\|_K}{g\left(\frac{x}{\|x\|_2}\right)}.
$$
Given $m$ training samples
$X_1,\ldots,X_m\overset{\mathrm{i.i.d.}}{\sim}P$, let $P_m$ denote their
empirical distribution. The empirical and population anisotropic orbit
risks are respectively defined as
$$
F(K,g;P_m):=\frac1m\sum_{i=1}^m\ell_{K,g}(X_i),
\qquad
F(K,g;P):=\mathbb E_P[\ell_{K,g}(X)].
$$
To obtain uniform bounds on the losses, we consider classes of
non-degenerate star bodies and bounded, continuous scaling profiles.
Fix $r>0$ and $0<a\le b$, and let $\mathfrak C$ be a nonempty class
satisfying
$$
\mathfrak C\subseteq\{K\in\mathcal S^d:rB^d\subseteq K\},
\qquad
G_a^b:=\{g\in G:a\le g(u)\le b
\text{ for every }u\in\mathbb S^{d-1}\}.
$$
We choose a nonempty profile class $\mathcal G\subseteq G_a^b$ and use
$\mathfrak C\times\mathcal G$ as the product hypothesis class. The
associated anisotropic orbit loss class is generated by varying $(K, g)$ over this hypothesis class:
$$
\mathcal L_{\mathrm{orb}}
:=\{\ell_{K,g}:K\in\mathfrak C,\ g\in\mathcal G\}.
$$
For a bounded function $f$ on a set $\Omega$, write
$\|f\|_{\infty,\Omega}:=\sup_{x\in\Omega}|f(x)|$.
We omit the domain when it is clear from context. We equip
$\mathfrak C\times\mathcal G$ with the metric
$D_{\mathrm{orb}}((K,g),(L,h)):=\delta(K,L)+\|g-h\|_\infty$,
where $\delta(K,L):=\|\rho_K-\rho_L\|_\infty$ is the radial Hausdorff
metric on $\mathcal S^d$ and these supremum norms are taken over
$\mathbb S^{d-1}$.

Our goal is to identify conditions on $\mathfrak C$ and $\mathcal G$
under which, for each confidence parameter $\gamma\in(0,1)$, the bound
$$
\sup_{(K,g)\in\mathfrak C\times\mathcal G}
\left|F(K,g;P_m)-F(K,g;P)\right|
\le\eta(m,\mathfrak C\times\mathcal G,P,\gamma)
$$
holds with probability at least $1-\gamma$, where
$\eta(m,\mathfrak C\times\mathcal G,P,\gamma)\to0$ as $m\to\infty$
for fixed classes, $P$, and $\gamma$. We first show that the loss map
$(K,g)\mapsto\ell_{K,g}$ is Lipschitz continuous, allowing us to
transfer covering-number bounds from $\mathfrak C\times\mathcal G$
to $\mathcal L_{\mathrm{orb}}$.

\begin{lemma}[Lipschitz properties of the loss maps]
\label{lem:lipschitz-loss}
Let $\mathfrak C$ and $\mathcal G$ be defined as above. The isotropic
loss map $K\mapsto\ell_K$, where $\ell_K(x)=\|x\|_K$, is Lipschitz
continuous with respect to the radial metric $\delta$ over $\mathfrak C$,
satisfying $\|\ell_K-\ell_L\|_{\infty, B^d}\le r^{-2}\delta(K,L)$ for all
$K,L\in\mathfrak C$. Furthermore, the anisotropic
orbit loss map $(K,g)\mapsto\ell_{K,g}$ is Lipschitz continuous with
respect to $D_{\mathrm{orb}}$ over $\mathfrak C\times\mathcal G$,
satisfying
$$
\|\ell_{K,g}-\ell_{L,h}\|_{\infty, B^d\setminus\{0\}}
\le C_{\mathrm{orb}}D_{\mathrm{orb}}\big((K,g),(L,h)\big)
$$
for all $(K,g),(L,h)\in\mathfrak C\times\mathcal G$ and
$C_{\mathrm{orb}}:=\max\{(ar^2)^{-1},\,d/a+(ra^2)^{-1}\}$.
\end{lemma}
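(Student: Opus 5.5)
First I prove the isotropic bound by homogeneity. For $x\in B^d\setminus\{0\}$ write $x=tu$ with $t=\|x\|_2\in(0,1]$ and $u\in\mathbb S^{d-1}$. Then $\ell_K(x)=t/\rho_K(u)$, and therefore
$$|\ell_K(x)-\ell_L(x)|=t\,\frac{|\rho_L(u)-\rho_K(u)|}{\rho_K(u)\rho_L(u)}\le \frac{\delta(K,L)}{r^2}.$$
This uses $t\le1$, together with $\rho_K,\rho_L\ge r$ on the sphere, which holds because $rB^d\subseteq K,L$. At the origin both losses vanish, so the bound $\|\ell_K-\ell_L\|_{\infty,B^d}\le r^{-2}\delta(K,L)$ holds on all of $B^d$.

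For the anisotropic loss I use the triangle inequality through the intermediate pair $(L,g)$:
$$|\ell_{K,g}(x)-\ell_{L,h}(x)|\le|\ell_{K,g}(x)-\ell_{L,g}(x)|+|\ell_{L,g}(x)-\ell_{L,h}(x)|.$$
In the first term the $\log g$ parts cancel. What remains is $|\ell_K(x)-\ell_L(x)|/g(u)$, which is at most $(ar^2)^{-1}\delta(K,L)$ by the isotropic step and $g\ge a$.

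The second term splits into a logarithmic piece and a gauge piece. For the logarithmic piece I use the mean value theorem for $\log$ on $[a,b]$, which gives $d|\log g(u)-\log h(u)|\le (d/a)\|g-h\|_\infty$. For the gauge piece I write
$$\|x\|_L\left|\frac1{g(u)}-\frac1{h(u)}\right|=\|x\|_L\,\frac{|h(u)-g(u)|}{g(u)h(u)}.$$
Since $\|x\|_L\le \|x\|_2/r\le 1/r$ and $g,h\ge a$, this is at most $(ra^2)^{-1}\|g-h\|_\infty$. Together, the second term is at most $\bigl(d/a+(ra^2)^{-1}\bigr)\|g-h\|_\infty$.

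Adding the two terms gives
$$|\ell_{K,g}(x)-\ell_{L,h}(x)|\le (ar^2)^{-1}\delta(K,L)+\bigl(d/a+(ra^2)^{-1}\bigr)\|g-h\|_\infty\le C_{\mathrm{orb}}\,D_{\mathrm{orb}}\bigl((K,g),(L,h)\bigr),$$
with $C_{\mathrm{orb}}$ the maximum of the two coefficients. Taking the supremum over $x\in B^d\setminus\{0\}$ finishes the proof.

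There is no serious obstacle. The only care needed is to use the containment $rB^d\subseteq K$ for the lower bound on radial functions, and $\|x\|_2\le1$ for the upper bound on the gauge.
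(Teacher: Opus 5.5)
Your proof is correct and takes essentially the same route as the paper. The paper adds and subtracts $\|x\|_L/g(u)$, which is exactly your intermediate pair $(L,g)$, and uses the same bounds: $g\ge a$, $\|x\|_L\le r^{-1}$, $|g^{-1}-h^{-1}|\le a^{-2}\|g-h\|_\infty$, and the $(1/a)$-Lipschitz logarithm. The only difference is that you verify the isotropic bound directly via $\ell_K(tu)=t/\rho_K(u)$ and $\rho_K,\rho_L\ge r$, whereas the paper cites it from prior work.
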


\begin{proof}
The Lipschitz continuity of the isotropic loss map is established in
\cite[Section 4, Proposition 2]{leong2025optimal}. We therefore focus
on the anisotropic claim. Let $(K,g),(L,h)\in\mathfrak C\times\mathcal G$
and fix $x\in B^d\setminus\{0\}$. Writing $u=x/\|x\|_2$, the triangle
inequality gives
$$
|\ell_{K,g}(x)-\ell_{L,h}(x)|
\le d|\log g(u)-\log h(u)|
+\left|\frac{\|x\|_K}{g(u)}-\frac{\|x\|_L}{h(u)}\right|.
$$
Since $g(u),h(u)\in[a,b]$, the logarithm is $(1/a)$-Lipschitz on this
interval, so the first term is at most $(d/a)\|g-h\|_\infty$.
For the second term, adding and subtracting $\|x\|_L/g(u)$ gives
$$
\left|\frac{\|x\|_K}{g(u)}-\frac{\|x\|_L}{h(u)}\right|
\le\frac{1}{g(u)}\left|\|x\|_K-\|x\|_L\right|
+\|x\|_L\left|\frac{1}{g(u)}-\frac{1}{h(u)}\right|.
$$
Using the isotropic bound, $g(u)\ge a$, $\|x\|_L\le r^{-1}$, and
$|g(u)^{-1}-h(u)^{-1}|\le a^{-2}\|g-h\|_\infty$, we obtain
$$
|\ell_{K,g}(x)-\ell_{L,h}(x)|
\le\frac{1}{ar^2}\delta(K,L)
+\left(\frac da+\frac{1}{ra^2}\right)\|g-h\|_\infty.
$$
Taking the supremum over $x\in B^d\setminus\{0\}$ proves the claim.
\end{proof}

Throughout this section, $N(\mathcal H,\mathsf d,\eta)$ denotes the
proper covering number of $\mathcal H$ at radius $\eta>0$ with respect
to a metric or pseudometric $\mathsf d$, where the centers belong to
$\mathcal H$. Let $\mathbf X_m:=(X_1,\ldots,X_m)$ denote the training
sample and define the empirical pseudo-norms
$$
\|f\|_{q,\mathbf X_m}
:=\left(\frac1m\sum_{i=1}^m|f(X_i)|^q\right)^{1/q},
\qquad q\in\{1,2\}.
$$

\begin{theorem}[Uniform convergence for anisotropic orbit regularizers] \label{thm:uniform-convergence}
Let $P$ be a distribution supported on $B^d$ with $P(\{0\})=0$, and let
$\mathfrak C$ and $\mathcal G\subseteq G_a^b$ be the hypothesis classes
defined above. Assume that, for some $\widetilde\varepsilon>0$,
$N(\mathfrak C\times\mathcal G,D_{\mathrm{orb}},\varepsilon)$ is finite
for every $\varepsilon\in(0,\widetilde\varepsilon]$.
Let $C_{\mathrm{orb}}$ be the Lipschitz constant from
Lemma~\ref{lem:lipschitz-loss}, and set
\[
\begin{aligned}
B_{\mathrm{orb}}
&:=d\max\{|\log a|,|\log b|\}+(ar)^{-1},
\qquad c_{\mathrm{orb}}:=4B_{\mathrm{orb}}, \qquad c_{P,m}^{\mathrm{orb}}:=\sup_{\ell\in\mathcal L_{\mathrm{orb}}}
\|\ell\|_{2,\mathbf X_m}.
\end{aligned}
\]
Then, for any $\gamma\in(0,1)$, with probability at least $1-\gamma$
over $m$ i.i.d.\ samples from $P$, the bound
\[
\begin{aligned}
|F(K,g;P)-F(K,g;P_m)|
&\le 2\inf_{\varepsilon\in(0,\widetilde\varepsilon]}
\left\{\varepsilon+c_{P,m}^{\mathrm{orb}}
\sqrt{\frac2m\log N\left(
\mathfrak C\times\mathcal G,D_{\mathrm{orb}},
\frac{\varepsilon}{C_{\mathrm{orb}}}\right)}\right\} +c_{\mathrm{orb}}\sqrt{\frac{2\log(8/\gamma)}m}
\end{aligned}
\]
holds simultaneously for all $(K,g)\in\mathfrak C\times\mathcal G$.
\end{theorem}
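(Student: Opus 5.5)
The plan is the standard Rademacher-complexity route: a symmetrization bound, then Dudley-type single-scale discretization, then transfer of covers through Lemma~\ref{lem:lipschitz-loss}.

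\textbf{Step 1: uniform boundedness of the loss.} For $x\in B^d\setminus\{0\}$, $K\in\mathfrak C$ and $g\in\mathcal G\subseteq G_a^b$, we have $|d\log g(u)|\le d\max\{|\log a|,|\log b|\}$. Since $rB^d\subseteq K$, we also have $0\le\|x\|_K/g(u)\le \|x\|_2/(ra)\le (ar)^{-1}$. Hence every $\ell\in\mathcal L_{\mathrm{orb}}$ satisfies $\|\ell\|_{\infty,B^d\setminus\{0\}}\le B_{\mathrm{orb}}$.

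\textbf{Step 2: symmetrization and concentration.} Apply the usual argument: symmetrize, then use McDiarmid's inequality twice. The first application concentrates the uniform deviation around its mean. The second replaces the expected Rademacher complexity by the empirical one $\widehat{\mathfrak R}_m(\mathcal L_{\mathrm{orb}})$. Each changes by at most $2B_{\mathrm{orb}}/m$ under a single-sample swap. With a union bound (the confidence budget split to give $\log(8/\gamma)$, absorbing both tails and both directions), we get with probability at least $1-\gamma$
\[
\sup_{(K,g)}|F(K,g;P)-F(K,g;P_m)|\le 2\widehat{\mathfrak R}_m(\mathcal L_{\mathrm{orb}})+4B_{\mathrm{orb}}\sqrt{\frac{2\log(8/\gamma)}{m}}.
\]
This matches the constant $c_{\mathrm{orb}}=4B_{\mathrm{orb}}$.

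\textbf{Step 3: one-step discretization of $\widehat{\mathfrak R}_m$.} Fix $\varepsilon\in(0,\widetilde\varepsilon]$ and a proper $\varepsilon/C_{\mathrm{orb}}$-cover $\{(K_j,g_j)\}_{j\le N}$ of $\mathfrak C\times\mathcal G$ in $D_{\mathrm{orb}}$. By Lemma~\ref{lem:lipschitz-loss}, the functions $\ell_j:=\ell_{K_j,g_j}$ form a proper $\varepsilon$-cover of $\mathcal L_{\mathrm{orb}}$ in $\|\cdot\|_{\infty,B^d\setminus\{0\}}$. This also bounds the empirical $\|\cdot\|_{1,\mathbf X_m}$ distance, because $P(\{0\})=0$ makes all samples nonzero almost surely. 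Writing $\ell=\ell_j+(\ell-\ell_j)$ gives
\[
\widehat{\mathfrak R}_m(\mathcal L_{\mathrm{orb}})\le \varepsilon+\mathbb E_\sigma\max_j\frac1m\sum_i\sigma_i\ell_j(X_i).
\]
Massart's finite class lemma bounds the second term by $\max_j\|\ell_j\|_{2,\mathbf X_m}\sqrt{2\log N/m}$. The centers are proper, so this maximum is at most $c^{\mathrm{orb}}_{P,m}$. Taking the infimum over $\varepsilon$ and multiplying by $2$ yields the stated bound.

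\textbf{Main obstacle.} The delicate part is Step 2: matching the exact constants $4B_{\mathrm{orb}}$ and $\log(8/\gamma)$, and handling the randomness of the data-dependent $c^{\mathrm{orb}}_{P,m}$. The infimum over $\varepsilon$ is harmless because the bound on $\widehat{\mathfrak R}_m$ holds deterministically for every $\varepsilon$. Beyond that, I would follow the proof of the isotropic analogue in \cite{leong2025optimal}, replacing their Lipschitz constant and sup-bound by $C_{\mathrm{orb}}$ and $B_{\mathrm{orb}}$. A minor check is measurability of the supremum, which holds because the class is separable: covers are finite at every scale, and the loss is continuous in $(K,g)$ uniformly in $x$.
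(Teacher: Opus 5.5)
Your proposal is correct and follows the paper's proof. The paper gets your Step 2 by citing Shalev-Shwartz and Ben-David, Theorem 26.5(2), applied to $\mathcal L_{\mathrm{orb}}$ and $-\mathcal L_{\mathrm{orb}}$ with failure probability $\gamma/2$ each, which is where $4B_{\mathrm{orb}}$ and $\log(8/\gamma)$ come from; a direct McDiarmid computation as you propose gives constants no worse than these. It gets your Step 3 by citing Rebeschini's one-step discretization bound (Proposition 5.2), combined with the same sup-norm-to-empirical-$L^1$ comparison and the transfer of covers through Lemma~\ref{lem:lipschitz-loss}.
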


\begin{proof}
Since $rB^d\subseteq K$ for all $K\in\mathfrak C$, we have
$\|x\|_K\le r^{-1}$ for every $x\in B^d$. Together with
$a\le g\le b$, this gives
\[
|\ell_{K,g}(x)|
\le d\max\{|\log a|,|\log b|\}+\frac1{ar}
=:B_{\mathrm{orb}},
\]
for every $x\in B^d\setminus\{0\}$. Let
\[
\mathcal R(\mathcal L_{\mathrm{orb}};\mathbf X_m)
:=\mathbb E_\sigma\left[
\sup_{\ell\in\mathcal L_{\mathrm{orb}}}
\frac1m\sum_{i=1}^m\sigma_i\ell(X_i)\right]
\]
denote the empirical Rademacher complexity \cite{bartlett2005local},
where the $\sigma_i$ are mutually independent, independent of the
sample, and take values $\pm1$ with equal probability. The classes
$\mathcal L_{\mathrm{orb}}$ and $-\mathcal L_{\mathrm{orb}}$ have the
same empirical Rademacher complexity by symmetry of the $\sigma_i$.
Applying \cite[Theorem 26.5, part 2]{shalev2014understanding} to both
classes, each with failure probability $\gamma/2$, and taking a union
bound gives, with probability at least $1-\gamma$ over the choice of
$\mathbf X_m$,
\[
|F(K,g;P)-F(K,g;P_m)|
\le 2\mathcal R(\mathcal L_{\mathrm{orb}};\mathbf X_m)
+c_{\mathrm{orb}}\sqrt{\frac{2\log(8/\gamma)}m}
\]
simultaneously for all $\ell_{K,g}\in\mathcal L_{\mathrm{orb}}$,
where $c_{\mathrm{orb}}:=4B_{\mathrm{orb}}$. Furthermore, for every $\varepsilon>0$,
\cite[Proposition 5.2]{rebeschini2021covering} gives
\[
\mathcal R(\mathcal L_{\mathrm{orb}};\mathbf X_m)
\le\varepsilon+c_{P,m}^{\mathrm{orb}}
\sqrt{\frac2m\log N\left(
\mathcal L_{\mathrm{orb}},\|\cdot\|_{1,\mathbf X_m},
\varepsilon\right)},
\]
where $c_{P,m}^{\mathrm{orb}}:=
\sup_{\ell\in\mathcal L_{\mathrm{orb}}}
\|\ell\|_{2,\mathbf X_m}\le B_{\mathrm{orb}}$.

To evaluate this metric entropy, we map the covering numbers of
$\mathcal L_{\mathrm{orb}}$ back to the parameter space
$\mathfrak C\times\mathcal G$. Since
$X_i\in B^d\setminus\{0\}$ almost surely for every $i=1,\ldots,m$,
the empirical $L^1$ distance between any two losses
$\ell,\widetilde\ell\in\mathcal L_{\mathrm{orb}}$ is bounded by
their uniform distance on this domain:
\[
\begin{aligned}
\|\ell-\widetilde\ell\|_{1,\mathbf X_m}
&=\frac1m\sum_{i=1}^m|\ell(X_i)-\widetilde\ell(X_i)| \le\max_{1\le i\le m}|\ell(X_i)-\widetilde\ell(X_i)|
\le\|\ell-\widetilde\ell\|_{\infty,B^d\setminus\{0\}}.
\end{aligned}
\]
Hence, $
N(\mathcal L_{\mathrm{orb}},\|\cdot\|_{1,\mathbf X_m},\varepsilon)
\le N(\mathcal L_{\mathrm{orb}},
\|\cdot\|_{\infty,B^d\setminus\{0\}},\varepsilon).
$

Now fix $\varepsilon>0$ and set
$\eta:=\varepsilon/C_{\mathrm{orb}}$.
Let $\mathcal H_\eta\subseteq\mathfrak C\times\mathcal G$ be a proper
$\eta$-cover of minimum cardinality with respect to $D_{\mathrm{orb}}$,
so that $|\mathcal H_\eta|=
N(\mathfrak C\times\mathcal G,D_{\mathrm{orb}},\eta)$.
We construct a corresponding set of loss functions
$\mathcal L_{\mathcal H_\eta}:=
\{\ell_{K,g}:(K,g)\in\mathcal H_\eta\}$.
For any pair $(K,g)\in\mathfrak C\times\mathcal G$, there exists an
element $(K_\eta,g_\eta)\in\mathcal H_\eta$ such that
$D_{\mathrm{orb}}((K,g),(K_\eta,g_\eta))\le\eta$.
By Lemma~\ref{lem:lipschitz-loss},
\[
\|\ell_{K,g}-\ell_{K_\eta,g_\eta}\|_{\infty,B^d\setminus\{0\}}
\le C_{\mathrm{orb}}D_{\mathrm{orb}}((K,g),(K_\eta,g_\eta))
\le C_{\mathrm{orb}}\eta=\varepsilon.
\]
Consequently, $\mathcal L_{\mathcal H_\eta}$ forms a valid
$\varepsilon$-covering of $\mathcal L_{\mathrm{orb}}$ in the supremum
norm. We therefore conclude that
\[
\begin{aligned}
N(\mathcal L_{\mathrm{orb}},
\|\cdot\|_{\infty,B^d\setminus\{0\}},\varepsilon)
&\le |\mathcal L_{\mathcal H_\eta}| \le N\left(\mathfrak C\times\mathcal G,D_{\mathrm{orb}},
\frac{\varepsilon}{C_{\mathrm{orb}}}\right).
\end{aligned}
\]
Combining these covering estimates with the Rademacher and uniform
deviation bounds, and taking the infimum over
$\varepsilon\in(0,\widetilde\varepsilon]$, proves the theorem.
\end{proof}

\subsection{Application: Exponentially Parameterized Families} 

We illustrate the preceding uniform convergence bound for anisotropic profiles parameterized as $g=e^f$, where $f$ belongs to a structured class of bounded continuous functions. This parameterization allows for expressive families of functions while ensuring positivity of the anisotropic scaling profile. Fix $M\ge0$ and let
$\mathcal F_M$ be a nonempty class satisfying
\[
\mathcal F_M\subseteq
\{f\in C(\mathbb S^{d-1},\mathbb R):\|f\|_\infty\le M\}.
\]
We define the induced exponential class of scaling profiles by
\[
G_M:=\{e^f:f\in\mathcal F_M\},
\]
which satisfies $G_M\subseteq G_a^b$ with $a=e^{-M}$ and $b=e^M$.

The following result transfers metric-entropy bounds for $\mathcal F_M$ and $\mathfrak C$ to the product class $\mathfrak C\times G_M$, allowing them to be inserted directly into Theorem~\ref{thm:uniform-convergence}. We then apply this result to two examples: linear feature models and sparse
neural networks.

\begin{lemma}[Entropy transfer under exponential parameterization]\label{lem:exp-entropy-transfer}
Let $\mathcal{F}_M$ and $G_M$ be defined as above. Then for every $\eta > 0$,
\begin{align}
\log N\left(\mathfrak{C} \times G_M, D_\mathrm{orb}, \eta \right) \le \log N\left(\mathfrak{C}, \delta, \frac{\eta}{2} \right) + \log N\left(\mathcal{F}_M, \|\cdot\|_\infty, \frac{\eta}{2b} \right). \label{eq:exp-metric-entropy}
\end{align}
\end{lemma}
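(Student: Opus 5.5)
The plan is a product-cover construction. I will cover each factor separately, combine the two covers, and then use the Lipschitz property of the exponential map on $[-M,M]$ to pass from a cover of $\mathcal F_M$ to a cover of $G_M$.

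\textbf{Step 1: the two factor covers.} Fix $\eta>0$.
\begin{itemize}
\item Let $\mathcal N_1\subseteq\mathfrak C$ be a proper $(\eta/2)$-cover of $\mathfrak C$ in the radial metric $\delta$. Its cardinality is $N(\mathfrak C,\delta,\eta/2)$.
\item Let $\mathcal N_2\subseteq\mathcal F_M$ be a proper $(\eta/(2b))$-cover of $\mathcal F_M$ in $\|\cdot\|_\infty$, with $b=e^M$. Its cardinality is $N(\mathcal F_M,\|\cdot\|_\infty,\eta/(2b))$.
\end{itemize}
If either covering number is infinite, the inequality \eqref{eq:exp-metric-entropy} is trivial, so I assume both are finite.

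\textbf{Step 2: the candidate cover.} I take
\[
\mathcal H:=\{(L,e^{f'}):L\in\mathcal N_1,\ f'\in\mathcal N_2\}\subseteq\mathfrak C\times G_M .
\]
Its centers lie in $\mathfrak C\times G_M$, so the cover is proper. Its cardinality is at most $|\mathcal N_1|\,|\mathcal N_2|$, and taking logarithms gives the right-hand side of \eqref{eq:exp-metric-entropy}.

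\textbf{Step 3: the covering property.} Take any element $(K,g)$ of $\mathfrak C\times G_M$ and write $g=e^f$ with $f\in\mathcal F_M$. Choose $L\in\mathcal N_1$ with $\delta(K,L)\le\eta/2$, and $f'\in\mathcal N_2$ with $\|f-f'\|_\infty\le\eta/(2b)$.

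For each $u$, both $f(u)$ and $f'(u)$ lie in $[-M,M]$. The mean value theorem gives
\[
|e^{f(u)}-e^{f'(u)}|\le e^{M}|f(u)-f'(u)|=b\,|f(u)-f'(u)|.
\]
Hence $\|e^f-e^{f'}\|_\infty\le b\cdot\eta/(2b)=\eta/2$. It follows that
\[
D_{\mathrm{orb}}\big((K,g),(L,e^{f'})\big)\le \eta/2+\eta/2=\eta.
\]
So $\mathcal H$ is a proper $\eta$-cover of $\mathfrak C\times G_M$, and the minimal covering number is bounded by $|\mathcal H|$.

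\textbf{Main obstacle.} The only point needing care is the uniform Lipschitz constant of the exponential. It must hold on all of $[-M,M]$, which is guaranteed because every $f\in\mathcal F_M$, and in particular every center in the proper cover $\mathcal N_2$, satisfies $\|f\|_\infty\le M$. This is exactly why proper covers are used. Everything else is bookkeeping of cardinalities.
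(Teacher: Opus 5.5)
Your proposal is correct and follows essentially the same argument as the paper: a product of a proper $\eta/2$-cover of $\mathfrak C$ with the profiles obtained by exponentiating a proper $\eta/(2b)$-cover of $\mathcal F_M$, using the $b$-Lipschitz bound for $e^x$ on $[-M,M]$. The paper only packages the exponentiation step separately as $N(G_M,\|\cdot\|_\infty,\eta)\le N(\mathcal F_M,\|\cdot\|_\infty,\eta/b)$ before forming the product cover.
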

\begin{proof}
If either covering number on the right-hand side of
\eqref{eq:exp-metric-entropy} is infinite, the claim is immediate.
Assume therefore that both are finite.
Since the derivative of $x\mapsto e^x$ is bounded above by $b$ on
$[-M,M]$, the mean value theorem gives
$\|e^f-e^{\widetilde f}\|_\infty
\le b\|f-\widetilde f\|_\infty$
for all $f,\widetilde f\in\mathcal F_M$.
Thus, any proper $\eta/b$-cover of
$(\mathcal F_M,\|\cdot\|_\infty)$ induces a proper $\eta$-cover
of $(G_M,\|\cdot\|_\infty)$, so
\[
N(G_M,\|\cdot\|_\infty,\eta)
\le N(\mathcal F_M,\|\cdot\|_\infty,\eta/b).
\]
Choose proper $\eta/2$-covers
$\mathcal H_{\eta/2}^{\mathfrak C}$ and
$\mathcal H_{\eta/2}^{G}$ of $(\mathfrak C,\delta)$ and
$(G_M,\|\cdot\|_\infty)$ both of minimum cardinality.
For any $(K,g)\in\mathfrak C\times G_M$, there exists
$(L,h)\in\mathcal H_{\eta/2}^{\mathfrak C}
\times\mathcal H_{\eta/2}^{G}$ such that
$\delta(K,L)\le\eta/2$ and $\|g-h\|_\infty\le\eta/2$.
By definition of the product metric,
$D_{\mathrm{orb}}((K,g),(L,h))
=\delta(K,L)+\|g-h\|_\infty\le\eta/2+\eta/2=\eta$.
Consequently, this Cartesian product is a proper $\eta$-cover of
$(\mathfrak C\times G_M,D_{\mathrm{orb}})$, giving
$$
N(\mathfrak C \times G_M, D_\mathrm{orb}, \eta) \le N\left(\mathfrak C , \delta, \frac \eta 2\right) N\left(G_M , \| \cdot \|_\infty, \frac \eta 2\right) \le N\left(\mathfrak C , \delta, \frac \eta 2\right) N\left(\mathcal F_M , \| \cdot \|_\infty, \frac{\eta}{2b} \right).
$$
Taking logarithms yields \eqref{eq:exp-metric-entropy}.
\end{proof}

\paragraph{Linear Feature Models}

We consider the profile parameterization used in
Section~\ref{sec:mnist-denoising}, treating the feature map as fixed.
Fix an integer $p\ge1$ and $R\ge0$. Let
$\varphi:\mathbb S^{d-1}\to\mathbb R^p$ be a continuous feature map
with $B_\varphi:=\sup_{u\in\mathbb S^{d-1}}\|\varphi(u)\|_2<\infty$,
and restrict parameters $\theta\in\mathbb R^p$ to the ball
$\Theta_R:=\{\theta\in\mathbb R^p:\|\theta\|_2\le R\}$.
Define the base linear function class as
$$
\mathcal F_{\mathrm{lin}}(R) := \{f_\theta: u \mapsto -\theta^\top \varphi(u) : \theta \in \Theta_R \}.
$$
By Cauchy--Schwarz,
$|f_\theta(u)|\le\|\theta\|_2\|\varphi(u)\|_2\le RB_\varphi$
uniformly. Thus, taking
$\mathcal F_M=\mathcal F_{\mathrm{lin}}(R)$ and $M:=RB_\varphi$
satisfies the required uniform bound. The corresponding exponential
profile class
$$
G_{\mathrm{lin}}(R) := \{e^{f_\theta} : f_\theta \in \mathcal F_{\mathrm{lin}}(R)\}
$$ satisfies $G_{\mathrm{lin}}(R) \subseteq G_a^b$ with $a = e^{-M}$ and $b = e^{M}$. 

For the star-body hypothesis class, we consider finite unions of
convex bodies with fixed volume and a common inscribed ball.
For $d\ge2$, $L\in\mathbb N$ with $L\ge1$, and $r>0$, let
$$
    \mathcal C^d(r,1)
    :=\{K\in\mathcal C^d:rB^d\subseteq K,\ 
       \operatorname{vol}_d(K)=1\},
    \qquad
    \mathcal S_L^d(r)
    :=\left\{\bigcup_{i=1}^L K_i:
       K_i\in\mathcal C^d(r,1)\right\},
$$
where $\mathcal C^d$ denotes the class of convex bodies in
$\mathbb R^d$. We assume $\mathcal C^d(r,1)$ is nonempty.
\begin{corollary}[Uniform convergence for linear feature models]
\label{cor:linear-features}
Let $P$ be a probability distribution supported on $B^d$ with
$P(\{0\})=0$. Consider the product hypothesis class
$\mathcal S_L^d(r)\times G_{\mathrm{lin}}(R)$, where
$\mathcal S_L^d(r)$ is the class of unions of $L$ volume-one convex
bodies containing $rB^d$. Fix $d,L,r,R$ and the feature map $\varphi$
as above. For each sufficiently large $m$ and every $\gamma\in(0,1)$,
with probability at least $1-\gamma$ over $m$ i.i.d.\ samples from
$P$, the bound
$$
|F(K,g;P)-F(K,g;P_m)|
\le\mathcal O\left(
\sqrt{\frac{p\log m+\log(8/\gamma)}m}
+\left(\frac Lm\right)^{2/(d+3)}\right)
$$
holds simultaneously for all $(K,g) \in \mathcal S^d_L(r) \times G_\mathrm{lin}(R)$, where the implicit constant is independent of $m$ and $\gamma$.
\end{corollary}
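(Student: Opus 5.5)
The plan is to apply Theorem~\ref{thm:uniform-convergence} with $\mathfrak C=\mathcal S_L^d(r)$ and $\mathcal G=G_{\mathrm{lin}}(R)\subseteq G_a^b$, where $a=e^{-M}$, $b=e^M$ and $M=RB_\varphi$. Since all constants $B_{\mathrm{orb}}, C_{\mathrm{orb}}$ depend only on $d,r,M$, the task reduces to bounding the metric entropy $\log N(\mathfrak C\times G_{\mathrm{lin}}(R),D_{\mathrm{orb}},\varepsilon/C_{\mathrm{orb}})$ and then optimizing over $\varepsilon$. By Lemma~\ref{lem:exp-entropy-transfer}, this entropy splits into $\log N(\mathcal S_L^d(r),\delta,\varepsilon/(2C_{\mathrm{orb}}))$ plus $\log N(\mathcal F_{\mathrm{lin}}(R),\|\cdot\|_\infty,\varepsilon/(2bC_{\mathrm{orb}}))$. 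We also use $c^{\mathrm{orb}}_{P,m}\le B_{\mathrm{orb}}$.

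\textbf{Linear term.} The map $\theta\mapsto f_\theta$ satisfies $\|f_\theta-f_{\theta'}\|_\infty\le B_\varphi\|\theta-\theta'\|_2$. Hence a Euclidean cover of $\Theta_R$ transfers directly, and the standard volumetric bound gives
\[
\log N(\mathcal F_{\mathrm{lin}}(R),\|\cdot\|_\infty,t)\le p\log(1+2RB_\varphi/t).
\]
The cover must be proper; a proper cover at radius $t$ is obtained from an arbitrary cover at radius $t/2$, which changes only constants.

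\textbf{Star-body term.} We need a bound of the form $\log N(\mathcal S_L^d(r),\delta,t)\lesssim L\,t^{-(d-1)/2}$, up to logarithmic factors. First, on convex bodies containing $rB^d$ and contained in a fixed ball, the radial metric is comparable to the Hausdorff metric (up to constants depending on $r$ and the outer radius). Unit volume together with $rB^d\subseteq K$ forces a uniform circumradius bound via the cone-volume argument. Bronshtein's theorem then gives $\log N(\mathcal C^d(r,1),\delta,t)\lesssim t^{-(d-1)/2}$. Next, for unions, $\rho_{\cup K_i}=\max_i\rho_{K_i}$, and the max is $1$-Lipschitz in sup norm. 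So an $L$-fold product of covers yields a cover of $\mathcal S_L^d(r)$, with log-cardinality multiplied by $L$. Properness is again handled by halving the radius. This step is essentially the content of the analogous result in \cite{leong2025optimal}, which I would cite rather than redo; the Bronshtein/radial-metric comparison is the only delicate ingredient.

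\textbf{Optimizing $\varepsilon$.} The bound becomes, up to constants,
\[
\varepsilon+\sqrt{\frac{p\log(1/\varepsilon)}{m}}+\sqrt{\frac{L\,\varepsilon^{-(d-1)/2}}{m}}+\sqrt{\frac{\log(8/\gamma)}{m}}.
\]
Balancing $\varepsilon$ against $\sqrt{L\varepsilon^{-(d-1)/2}/m}$ gives $\varepsilon^{(d+3)/2}\asymp L/m$, that is, $\varepsilon=(L/m)^{2/(d+3)}$. This value yields the stated rate, and with it $\log(1/\varepsilon)\lesssim\log m$. The restriction to sufficiently large $m$ ensures $\varepsilon\le\widetilde\varepsilon$. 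Finally, the terms $\sqrt{p\log m/m}$ and $\sqrt{\log(8/\gamma)/m}$ are combined via $\sqrt{x}+\sqrt{y}\le\sqrt{2(x+y)}$.
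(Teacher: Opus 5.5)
Your proposal is correct and follows the paper's route. You split the entropy via Lemma~\ref{lem:exp-entropy-transfer}, use the volumetric bound for the linear profile class, and bound the star-body class via the circumradius bound, Bronshtein's theorem, the radial--Hausdorff comparison, and the max-of-radial-functions argument for unions; the choice $\varepsilon=(L/m)^{2/(d+3)}$ then matches the paper. One point deserves to be explicit: the comparison $\delta\le (R_r/r)\,d_H$ requires both bodies to contain $rB^d$. So the proper Hausdorff cover (representatives chosen in $\mathcal C^d(r,1)$ at doubled radius) must be formed before converting to $\delta$, which is exactly the order the paper uses.
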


\begin{proof}
For $\theta,\widetilde\theta\in\Theta_R$, we have
$\|f_\theta-f_{\widetilde\theta}\|_\infty
\le B_\varphi\|\theta-\widetilde\theta\|_2$, so a proper
$\rho$-cover of $\Theta_R$ in the Euclidean norm induces a proper
$B_\varphi\rho$-cover of $\mathcal F_{\mathrm{lin}}(R)$ in the
uniform norm. By a standard volumetric argument,
$N(\Theta_R,\|\cdot\|_2,\rho)\le(1+2R/\rho)^p$
\cite[Corollary 4.2.13]{vershynin2018high}.
If $B_\varphi=0$, the profile class is a singleton and its entropy
is zero. Otherwise, setting
$\rho=\varepsilon/(2bB_\varphi C_{\mathrm{orb}})$ and applying
Lemma~\ref{lem:exp-entropy-transfer} yields the product entropy bound
for general $\mathfrak C$:
$$
\log N\left(\mathfrak{C} \times G_\mathrm{lin}(R), D_\mathrm{orb}, \frac{\varepsilon}{C_\mathrm{orb}} \right) \le \log N\left(\mathfrak{C}, \delta, \frac{\varepsilon}{2C_\mathrm{orb}} \right) + p\log \left( 1 + \frac{4 R B_\varphi b C_\mathrm{orb}}{\varepsilon}\right).
$$
We now specialize this estimate to $\mathfrak C=\mathcal S_L^d(r)$. One can show that convexity, the inclusion $rB^d\subseteq K$, and the volume constraint imply that every $K\in\mathcal C^d(r,1)$ is contained in $R_rB^d$ for a constant $R_r$ depending only on $d$ and $r$. Let $d_H$ denote the usual Hausdorff metric. Bronshtein's entropy bound, as stated in \cite{guntuboyina2012optimal}, gives
$$
\log N\bigl(\{K\in\mathcal C^d:K\subseteq R_rB^d\},d_H,\eta\bigr)
\lesssim_d \left(\frac{R_r}{\eta}\right)^{(d-1)/2}
$$
for sufficiently small $\eta>0$. For $K,K'\in\mathcal C^d(r,1)$, writing $t=d_H(K,K')$, the inclusions
$$
K\subseteq K'+tB^d\subseteq (1+t/r)K',
\qquad
K'\subseteq (1+t/r)K
$$
imply $\delta(K,K')\le (R_r/r)d_H(K,K')$. Take a Hausdorff cover of radius $s>0$ of the ambient convex-body class, with centers $A_j$. Discard balls that do not intersect
$\mathcal C^d(r,1)$, and choose a representative
$K_j\in\mathcal C^d(r,1)$ from each remaining ball.
For any $K\in\mathcal C^d(r,1)$ in the same ball,
\[
d_H(K,K_j)
\le d_H(K,A_j)+d_H(A_j,K_j)
\le 2s.
\]
Thus, the representatives form a proper Hausdorff $2s$-cover of
$\mathcal C^d(r,1)$ with no more centers than the original cover. Combining this observation with the metric comparison yields
$$
\log N(\mathcal C^d(r,1),\delta,\eta)
\lesssim_{d,r}\eta^{-(d-1)/2}.
$$
Since $\rho_{\cup_{i=1}^L K_i}=\max_{1\le i\le L}\rho_{K_i}$,
taking unions of elements of a proper cover of $\mathcal C^d(r,1)$
gives a proper cover of $\mathcal S_L^d(r)$. Consequently,
\begin{equation}
\log N(\mathcal S_L^d(r),\delta,\eta)
\le L\log N(\mathcal C^d(r,1),\delta,\eta)
\lesssim_{d,r}L\eta^{-(d-1)/2}
\label{eq:union-radial-entropy}
\end{equation}
for sufficiently small $\eta>0$. Thus, for sufficiently small $\varepsilon>0$,
$$
\log N\left(\mathcal S_L^d(r)\times G_{\mathrm{lin}}(R),D_{\mathrm{orb}},\frac{\varepsilon}{C_{\mathrm{orb}}}\right)
\lesssim L\varepsilon^{-(d-1)/2}
+p\log\left(1+\frac{4RB_\varphi bC_{\mathrm{orb}}}{\varepsilon}\right),
$$
where the implicit constant depends only on the fixed structural parameters. For $m$ sufficiently large, choose $\varepsilon=\left(L/m\right)^{2/(d+3)}\le\min\{\widetilde\varepsilon,1\}.$ Then 
$$
\sqrt{\frac{L}{m}\varepsilon^{-(d-1)/2}}
=\left(\frac{L}{m}\right)^{2/(d+3)}=\varepsilon,
\quad \text{and} \quad 
p\log\left(1+\frac{4RB_\varphi bC_{\mathrm{orb}}}{\varepsilon}\right)
\lesssim p\log m.
$$
To complete the proof, we substitute these estimates into
Theorem~\ref{thm:uniform-convergence}. Note that
$c_{P,m}^{\mathrm{orb}}\le B_{\mathrm{orb}}$ and
$c_{\mathrm{orb}}=4B_{\mathrm{orb}}$ are uniformly bounded
independently of $m$. The additive $2\varepsilon$ term has the same
order as the geometric contribution. This yields the stated rate,
with the profile class contributing an additional
$\mathcal O(\sqrt{p\log m/m})$ term to the uniform convergence bound.
\end{proof}
\paragraph{Sparse ReLU Networks with Bounded Outputs}
Let $\mathcal F(L_\mathrm{NN},\mathbf p,s)$ denote the class of ReLU networks on $[0,1]^d$ with depth $L_\mathrm{NN}$, layer-width vector $\mathbf p=(p_0,p_1,\ldots,p_{L_\mathrm{NN}+1})$ with $p_0=d$ and $p_{L_\mathrm{NN}+1}=1$, at most $s$ nonzero parameters, and all weights and biases bounded in absolute value by one. Define the architectural constant $V:=\prod_{\ell=0}^{L_\mathrm{NN}+1}(p_\ell+1)$. To obtain functions on $\mathbb S^{d-1}$, let $T:\mathbb S^{d-1}\to[0,1]^d$ be the affine map $T(u):=(u+\mathbf 1)/2$, where $\mathbf 1=(1,\ldots,1)^\top\in\mathbb R^d$. We define the corresponding bounded network class on the sphere by
$$
\mathcal F_{\mathrm{NN}}(L_{\mathrm{NN}},\mathbf p,s,M) := \left\{ F\circ T: F\in\mathcal F(L_{\mathrm{NN}},\mathbf p,s), \ \|F\circ T\|_\infty\le M \right\},
$$
and the associated exponential profile class by $G_{\mathrm{NN}} := \left\{ e^f: f\in \mathcal F_{\mathrm{NN}}(L_\mathrm{NN},\mathbf p,s,M) \right\} \subseteq G_a^b$, with $a = e^{-M}$ and $b = e^M$.

\begin{corollary}[Uniform convergence for sparse ReLU networks]
\label{cor:relu-networks}
Let $P$ be a probability distribution supported on $B^d$ with
$P(\{0\})=0$. Consider the product hypothesis class
$\mathcal S^d_L(r)\times G_{\mathrm{NN}}$, where $G_{\mathrm{NN}}$
is the sparse ReLU profile class defined above. Under the assumptions
of Theorem~\ref{thm:uniform-convergence}, fix structural parameters
$d,L,r,L_{\mathrm{NN}},\mathbf p,M$ and let $b=e^M$. For each
sufficiently large $m$ and every $\gamma\in(0,1)$, with probability
at least $1-\gamma$ over $m$ i.i.d.\ samples from $P$, the bound
$$
|F(K,g;P)-F(K,g;P_m)| \le \mathcal{O}\left(  \sqrt{\frac{(s+1)\log m+\log(8/\gamma)}{m}}  + \left(\frac{L}{m}\right)^{2/(d+3)} \right)
$$
holds simultaneously for all $(K,g) \in \mathcal S^d_L(r) \times G_{\mathrm{NN}}$, where the implicit constant is independent of $m$ and $\gamma$.
\end{corollary}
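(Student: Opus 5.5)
The argument will follow the proof of Corollary~\ref{cor:linear-features}, changing only the entropy estimate for the profile class. The star-body part carries over unchanged: by \eqref{eq:union-radial-entropy}, $\log N(\mathcal S_L^d(r),\delta,\eta)\lesssim_{d,r}L\eta^{-(d-1)/2}$ for small $\eta$. Lemma~\ref{lem:exp-entropy-transfer} with $\mathcal F_M=\mathcal F_{\mathrm{NN}}(L_{\mathrm{NN}},\mathbf p,s,M)$ then reduces the task to bounding $\log N(\mathcal F_{\mathrm{NN}},\|\cdot\|_\infty,\varepsilon/(2bC_{\mathrm{orb}}))$, where $C_{\mathrm{orb}}$ is computed with $a=e^{-M}$ and $b=e^M$.

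For the network class, I will use the standard sparse ReLU covering bound (Schmidt-Hieber, Lemma 5). On $[0,1]^d$ it gives
\[
\log N\bigl(\mathcal F(L_{\mathrm{NN}},\mathbf p,s),\|\cdot\|_{\infty,[0,1]^d},\eta\bigr)\le (s+1)\log\bigl(2\eta^{-1}(L_{\mathrm{NN}}+1)V^2\bigr).
\]
To move this to the sphere, I use that $T$ maps $\mathbb S^{d-1}$ into $[0,1]^d$. Hence $\|F\circ T-F'\circ T\|_{\infty,\mathbb S^{d-1}}\le\|F-F'\|_{\infty,[0,1]^d}$, and any $\eta$-cover of $\mathcal F(L_{\mathrm{NN}},\mathbf p,s)$ pulls back to an $\eta$-cover of $\{F\circ T\}$.

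The main obstacle is that Lemma~\ref{lem:exp-entropy-transfer} and Theorem~\ref{thm:uniform-convergence} use \emph{proper} covers, with centers inside the class. The truncated class with $\|F\circ T\|_\infty\le M$ is only a subset, so the pulled-back centers may violate the bound $M$. I will fix this with the same device used for $\mathcal C^d(r,1)$. Discard the balls that miss $\mathcal F_{\mathrm{NN}}$ and pick a representative of the class in each remaining ball. This produces a proper $2\eta$-cover of no larger cardinality, at the cost of a factor $2$ in the radius, which only affects constants inside the logarithm.

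Combining the two estimates with $\eta=\varepsilon/(4bC_{\mathrm{orb}})$ gives, for small $\varepsilon$,
\[
\log N\Bigl(\mathcal S_L^d(r)\times G_{\mathrm{NN}},D_{\mathrm{orb}},\tfrac{\varepsilon}{C_{\mathrm{orb}}}\Bigr)\lesssim L\varepsilon^{-(d-1)/2}+(s+1)\log\bigl(C'/\varepsilon\bigr),
\]
where $C'$ depends only on $L_{\mathrm{NN}},V,b,C_{\mathrm{orb}}$.

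I then set $\varepsilon=(L/m)^{2/(d+3)}$, which is at most $\widetilde\varepsilon$ for large $m$, exactly as in the linear case. This balances $\sqrt{(L/m)\varepsilon^{-(d-1)/2}}$ against $\varepsilon$ and makes $\log(C'/\varepsilon)\lesssim\log m$. Substituting into Theorem~\ref{thm:uniform-convergence}, and using $c_{P,m}^{\mathrm{orb}}\le B_{\mathrm{orb}}$ and $\sqrt{x+y}\le\sqrt x+\sqrt y$, yields the stated bound $\mathcal O\bigl(\sqrt{((s+1)\log m+\log(8/\gamma))/m}+(L/m)^{2/(d+3)}\bigr)$.
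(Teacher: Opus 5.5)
Your proposal is correct and follows essentially the same route as the paper. Both arguments transfer the Schmidt-Hieber Lemma~5 covering bound to the sphere via $T(\mathbb S^{d-1})\subseteq[0,1]^d$, then select representatives to make the cover proper for the truncated class, then apply Lemma~\ref{lem:exp-entropy-transfer} together with \eqref{eq:union-radial-entropy}, and finally substitute $\varepsilon=(L/m)^{2/(d+3)}$ into Theorem~\ref{thm:uniform-convergence}; the only difference is that the paper selects representatives on $[0,1]^d$ from an external $\eta/2$-cover, whereas you select them on the sphere and absorb the factor $2$ into the radius, which changes only constants inside the logarithm.
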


\begin{proof}
To construct a proper cover of
$\mathcal F_{\mathrm{NN}}(L_{\mathrm{NN}},\mathbf p,s,M)$, consider
the subclass
\[
\mathcal F_{[0,1]^d,M}
:=
\left\{
F\in\mathcal F(L_{\mathrm{NN}},\mathbf p,s):
\|F\circ T\|_{\infty,\mathbb S^{d-1}}\le M
\right\}.
\]
For $0<\eta\le1$, let $\{Z_j\}$ be an external $\eta/2$-cover
of minimum cardinality of $\mathcal F(L_{\mathrm{NN}},\mathbf p,s)$
in $\|\cdot\|_{\infty,[0,1]^d}$. By
\cite[Lemma 5]{schmidthieber2020nonparametric}, the number of
covering balls is bounded by
$\left(4\eta^{-1}(L_{\mathrm{NN}}+1)V^2\right)^{s+1}$. For each ball $B(Z_j,\eta/2)$ that intersects $\mathcal F_{[0,1]^d,M}$, select a representative
$F_j\in B(Z_j,\eta/2)\cap\mathcal F_{[0,1]^d,M}$ and set
$f_j:=F_j\circ T\in
\mathcal F_{\mathrm{NN}}(L_{\mathrm{NN}},\mathbf p,s,M)$.
For any $f=F\circ T\in
\mathcal F_{\mathrm{NN}}(L_{\mathrm{NN}},\mathbf p,s,M)$,
choose a covering ball containing $F$. Since
$T(\mathbb S^{d-1})\subseteq[0,1]^d$, the triangle inequality gives
\[
\begin{aligned}
\|f-f_j\|_{\infty,\mathbb S^{d-1}}
&\le \|F-F_j\|_{\infty,[0,1]^d} \le \|F-Z_j\|_{\infty,[0,1]^d}
    +\|Z_j-F_j\|_{\infty,[0,1]^d}
\le\eta.
\end{aligned}
\]
Thus, $\{f_j\}$ forms a proper $\eta$-cover of
$\mathcal F_{\mathrm{NN}}(L_{\mathrm{NN}},\mathbf p,s,M)$
on $\mathbb S^{d-1}$, and
\[
\log N\left(
\mathcal F_{\mathrm{NN}}(L_{\mathrm{NN}},\mathbf p,s,M),
\|\cdot\|_\infty,\eta
\right)
\le
(s+1)\log\left(4\eta^{-1}(L_{\mathrm{NN}}+1)V^2\right).
\]
For $0<\varepsilon\le2bC_{\mathrm{orb}}$, applying
Lemma~\ref{lem:exp-entropy-transfer} with
$\eta=\varepsilon/(2bC_{\mathrm{orb}})$ gives
\[
\begin{aligned}
\log N\left(
\mathfrak C\times G_{\mathrm{NN}},D_{\mathrm{orb}},
\frac{\varepsilon}{C_{\mathrm{orb}}}
\right)
&\le
\log N\left(
\mathfrak C,\delta,\frac{\varepsilon}{2C_{\mathrm{orb}}}
\right) + (s+1)\log\left(
\frac{8bC_{\mathrm{orb}}(L_{\mathrm{NN}}+1)V^2}{\varepsilon}
\right).
\end{aligned}
\]
Now set $\mathfrak C=\mathcal S_L^d(r)$. Using the geometric
entropy bound \eqref{eq:union-radial-entropy} gives, for sufficiently
small $\varepsilon>0$,
\[
\begin{aligned}
\log N\left(
\mathcal S_L^d(r)\times G_{\mathrm{NN}},D_{\mathrm{orb}},
\frac{\varepsilon}{C_{\mathrm{orb}}}
\right)
&\lesssim
L\varepsilon^{-(d-1)/2} +(s+1)\log\left(
\frac{8bC_{\mathrm{orb}}(L_{\mathrm{NN}}+1)V^2}{\varepsilon}
\right).
\end{aligned}
\]
For $m$ sufficiently large that the preceding entropy estimates
apply, choose
$\varepsilon=(L/m)^{2/(d+3)}\le\widetilde\varepsilon$. Then
\[
\sqrt{\frac{L}{m}\varepsilon^{-(d-1)/2}}
=\left(\frac{L}{m}\right)^{2/(d+3)}
=\varepsilon,
\quad \text{and} \quad
(s+1)\log\left(
\frac{8bC_{\mathrm{orb}}(L_{\mathrm{NN}}+1)V^2}{\varepsilon}
\right)
\lesssim(s+1)\log m,
\]
since the architectural parameters are fixed as $m\to\infty$.
Substituting these estimates into
Theorem~\ref{thm:uniform-convergence}, using
$c_{P,m}^{\mathrm{orb}}\le B_{\mathrm{orb}}$ and
$c_{\mathrm{orb}}=4B_{\mathrm{orb}}$, gives the stated bound.
The additive term $2\varepsilon$ has the same order as the
geometric contribution.
\end{proof}

\subsection{Estimation Error Bounds}

The preceding results bound the deviation between empirical and
population risks. We now use these bounds to control the estimation
error of the population-optimal scaling profile for a fixed star body
$K\in\mathcal S^d$. Retain the standing assumptions that $P$ is
supported on $B^d$ with $P(\{0\})=0$, and suppose in addition that
$P\in\mathcal P$. By Theorem~\ref{thm:optimal-g-K}, the optimal
population scaling profile is
$$
g_K^*(u)=\frac{m_P(u)}{d\rho_K(u)}.
$$
We first show that the population excess risk controls the squared
$L^2(P_U)$ distance to $g_K^*$, where $P_U$ is the directional
marginal of $P$.
 \begin{lemma} \label{lem:g-gK-bound}
    Fix $P \in \mathcal{P}$ and suppose $g^*_K \in G_a^b$. Then there exists a constant $\kappa_{a,b} \in (0,\infty)$ such that \begin{align*}
        \|g - g^*_K\|^2_{L^2(P_U)}:= \mathbb{E}_{P_U}[(g(u)-g^*_K(u))^2]\leq \frac{b^2}{d\kappa_{a,b}}(F(K,g;P)-F(K,g^*_K;P)),\ \quad \forall g \in G_a^b.
    \end{align*}
\end{lemma}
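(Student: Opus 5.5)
The plan is to reduce the excess risk to a pointwise strong-convexity estimate for the scalar integrand $\phi_{u,K}$ from Lemma~\ref{lem:KL-objective}. Conditioning on $U$, exactly as in the proof of Lemma~\ref{lem:KL-objective}, gives
$$F(K,g;P)=\mathbb E_{P_U}\bigl[d\log g(U)+\|U\|_K m_P(U)/g(U)\bigr]=\mathbb E_{P_U}[\phi_{U,K}(g(U))].$$
Write $c_u:=\|u\|_K m_P(u)=d\,g_K^*(u)$. The excess risk is then $\mathbb E_{P_U}[\phi_{U,K}(g(U))-\phi_{U,K}(g_K^*(U))]$. It therefore suffices to show the pointwise bound
$$\phi_{u,K}(t)-\phi_{u,K}(t^*)\ \ge\ \frac{d\kappa_{a,b}}{b^2}(t-t^*)^2, \qquad t,t^*=g_K^*(u)\in[a,b].$$
Integrating this bound against $P_U$ gives the claim.

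To prove the pointwise bound, I would use a change of variables that removes the dependence on $c_u$. Set $s:=t/t^*$, so $s\in[a/b,\,b/a]$. Then
$$\phi_{u,K}(t)-\phi_{u,K}(t^*)=d\log s+\frac{c_u}{t^*s}-\frac{c_u}{t^*}=d\bigl(\log s+1/s-1\bigr)=:d\,\psi(s).$$
The function $\psi$ is smooth and convex near $1$, with $\psi(1)=\psi'(1)=0$ and $\psi''(s)=2/s^3-1/s^2$. This second derivative is positive only for $s<2$, so $\psi$ is not uniformly convex on all of $[a/b,b/a]$ when $b/a>2$. For this reason I would not use a second-derivative bound. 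Instead, define
$$\kappa_{a,b}:=\min_{s\in[a/b,\,b/a]}\frac{\psi(s)}{(s-1)^2},$$
where the ratio is extended by continuity to the value $\psi''(1)/2=1/2$ at $s=1$. This minimum is finite and positive: $\psi>0$ for $s\ne1$, because $s=1$ is the unique minimizer of $\psi$ on $(0,\infty)$ (as shown in Theorem~\ref{thm:optimal-g-K}), and the ratio is continuous on a compact interval.

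With this constant, $d\,\psi(s)\ge d\kappa_{a,b}(s-1)^2=d\kappa_{a,b}(t-t^*)^2/(t^*)^2\ge \frac{d\kappa_{a,b}}{b^2}(t-t^*)^2$, using $t^*\le b$. This is exactly the stated constant.

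The only delicate point is the positivity and finiteness of $\kappa_{a,b}$, that is, handling the failure of global strong convexity of $\psi$ away from $s=1$. The compactness-and-continuity argument above settles it. If an explicit constant is wanted, note that $\psi(s)/(s-1)^2$ is decreasing for $s>1$, so the minimum could be evaluated at the endpoints, but this is not needed. One should also check measurability and integrability, which are immediate since $g$, $g_K^*$ and $m_P$ are continuous and bounded on the sphere.
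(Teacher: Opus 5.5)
Your proposal is correct and follows essentially the same route as the paper. It uses the same reduction of the excess risk to $d\,\mathbb{E}_{P_U}[\psi(g(U)/g_K^*(U))]$ with $\psi(s)=\log s+s^{-1}-1$, the same ratio $\psi(s)/(s-1)^2$ extended by $1/2$ at $s=1$, and the same final step using $g_K^*\le b$. The only difference is that the paper proves this ratio is strictly decreasing on $(0,\infty)$ to get the explicit value $\kappa_{a,b}=\psi(b/a)/(b/a-1)^2$, whereas you get positivity from continuity and compactness, which also covers the case $a=b$ without separate treatment.
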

\begin{proof}
    Observe that we may write \begin{align*}
        F(K,g;P) = \mathbb{E}_{u \sim P_U}\left[d \log g(u) + \frac{m_P(u)}{\rho_K(u)g(u)}\right] = d\mathbb{E}_{u\sim P_U}\left[\log g(u) + \frac{g_K^*(u)}{g(u)}\right].
    \end{align*} Set $t(u):=g(u)/g_K^*(u)$ and
$\psi(t):=\log t+t^{-1}-1$ for $t>0$. Then \begin{align*}
        F(K,g;P) - F(K,g^*_K;P) = d\mathbb{E}_{P_U}[\psi(t(u))]
    \end{align*} 
    Since $a\le g,g_K^*\le b$, we have $t(u)\in[a/b,b/a]$.
If $a=b$, every profile in $G_a^b$ equals $a$, and the conclusion
holds with $\kappa_{a,a}:=1/2$. Assume henceforth that $a<b$ and define
\[
\Psi(t):=
\begin{cases}
\displaystyle\frac{\psi(t)}{(t-1)^2},&t\ne1,\\
\displaystyle 1/2,&t=1.
\end{cases}
\]
We show that $\Psi$ is strictly decreasing on $(0,\infty)$.
Let $\widetilde\psi(t):=(t-1)^2/t^2-2\psi(t)$. Since
$\psi'(t)=(t-1)/t^2$, for $t\ne1$,
\[
\Psi'(t)
=
\frac{(t-1)\psi'(t)-2\psi(t)}{(t-1)^3}
=
\frac{\widetilde\psi(t)}{(t-1)^3}.
\]
The function $\widetilde\psi$ is strictly decreasing, since
$\widetilde\psi'(t)=-2(t-1)^2/t^3\le0$, with equality only at
$t=1$. As $\widetilde\psi(1)=0$, its value is positive for
$t<1$ and negative for $t>1$. Thus $\Psi'(t)<0$ on both
$(0,1)$ and $(1,\infty)$. Moreover,
$\psi(1)=\psi'(1)=0$ and $\psi''(1)=1$, so Taylor's theorem
gives $\lim_{t\to1}\Psi(t)=1/2$. The extension is therefore
continuous and strictly decreasing on $(0,\infty)$. Consequently,
\[
\kappa_{a,b}
:=
\inf_{t\in[a/b,b/a]}\Psi(t)
=
\Psi(b/a)
=
\frac{\log(b/a)+a/b-1}{(b/a-1)^2}
\in(0,\infty).
\]
Positivity follows from $\psi(t)>0$ for $t\ne1$, which is
immediate from $\psi'(t)=(t-1)/t^2$ and $\psi(1)=0$.
Using this scalar lower bound and $g_K^*\le b$, we obtain
\[
\begin{aligned}
F(K,g;P)-F(K,g^*_K;P) \ge d\kappa_{a,b}\,
\mathbb E_{P_U}\!\left[(t(U)-1)^2\right]
&=d\kappa_{a,b}\,
\mathbb E_{P_U}\!\left[
\left(\frac{g(U)-g_K^*(U)}{g_K^*(U)}\right)^2
\right]\\
&\ge \frac{d\kappa_{a,b}}{b^2}
\|g-g_K^*\|_{L^2(P_U)}^2.
\end{aligned}
\]
Rearranging proves the claim.
\end{proof}

The following theorem combines this result with a uniform deviation
bound to control the estimation error of approximate empirical risk
minimizers. A uniform Lipschitz bound on the profiles also yields a
sup-norm estimate under the directional mass condition below, which
holds for $P\in\mathcal P$ when $d\ge2$.

\begin{theorem}
Under the assumptions of Lemma~\ref{lem:g-gK-bound}, let
$\mathcal G\subseteq G_a^b$ be a nonempty profile class containing
$g_K^*$. Define the uniform deviation
$\Delta_m:=\sup_{h\in\mathcal G}|F(K,h;P_m)-F(K,h;P)|$ and the
empirical optimization error
$\varepsilon_{\mathrm{opt}}(g):=
F(K,g;P_m)-\inf_{h\in\mathcal G}F(K,h;P_m)$ for $g\in\mathcal G$.
Then
\[
\|g-g_K^*\|_{L^2(P_U)}^2
\le
\frac{b^2}{d\kappa_{a,b}}
\bigl(2\Delta_m+\varepsilon_{\mathrm{opt}}(g)\bigr),
\qquad g\in\mathcal G.
\]
For $\Lambda>0$, let $\mathcal G(\Lambda)$ consist of the
$\Lambda$-Lipschitz profiles in $\mathcal G$, with respect to
Euclidean distance on $\mathbb S^{d-1}$. Write $B(u,\varepsilon)$
for the closed Euclidean ball with center $u$ and radius
$\varepsilon$. Suppose $g_K^*\in\mathcal G(\Lambda)$ and, for
some $c,\nu,\varepsilon_0>0$,
$P_U(B(u,\varepsilon)\cap\mathbb S^{d-1})\ge c\varepsilon^\nu$
for every $u\in\mathbb S^{d-1}$ and $0<\varepsilon\le\varepsilon_0$.
Then
\[
\|g-g_K^*\|_\infty^{2+\nu}
\le
\frac{4b^2}{
cd\kappa_{a,b}
\min\{(4\Lambda)^{-\nu},\varepsilon_0^\nu b^{-\nu}\}
}
\bigl(2\Delta_m+\varepsilon_{\mathrm{opt}}(g)\bigr),
\qquad g\in\mathcal G(\Lambda).
\]
\end{theorem}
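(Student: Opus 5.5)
The plan has two parts: an $L^2$ estimate and a sup-norm estimate. The $L^2$ estimate combines Lemma~\ref{lem:g-gK-bound} with a standard excess-risk decomposition. The sup-norm estimate converts the $L^2$ control into $L^\infty$ control using the Lipschitz property and the directional mass condition.

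\emph{Step 1: $L^2$ bound.} Fix $g\in\mathcal G$. Since $g_K^*\in\mathcal G$, we have $\inf_{h\in\mathcal G}F(K,h;P_m)\le F(K,g_K^*;P_m)$. We then decompose
\[
F(K,g;P)-F(K,g_K^*;P)=\bigl[F(K,g;P)-F(K,g;P_m)\bigr]+\bigl[F(K,g;P_m)-F(K,g_K^*;P_m)\bigr]+\bigl[F(K,g_K^*;P_m)-F(K,g_K^*;P)\bigr].
\]
The first and third brackets are each at most $\Delta_m$. The middle bracket is at most $\varepsilon_{\mathrm{opt}}(g)$, by the infimum inequality just noted. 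Inserting the resulting bound $2\Delta_m+\varepsilon_{\mathrm{opt}}(g)$ into Lemma~\ref{lem:g-gK-bound}, which applies because $\mathcal G\subseteq G_a^b$, gives the first claim.

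\emph{Step 2: from sup norm to $L^2$.} Let $g\in\mathcal G(\Lambda)$ and set $\delta:=\|g-g_K^*\|_\infty$. By continuity and compactness, $\delta$ is attained at some $u_0\in\mathbb S^{d-1}$. The difference $g-g_K^*$ is $2\Lambda$-Lipschitz. Hence on $B(u_0,\varepsilon)\cap\mathbb S^{d-1}$ we have $|g-g_K^*|\ge\delta-2\Lambda\varepsilon$.

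Choose $\varepsilon:=\min\{\delta/(4\Lambda),\varepsilon_0\}$. Then $|g-g_K^*|\ge\delta/2$ on this ball, and the mass condition gives
\[
\|g-g_K^*\|_{L^2(P_U)}^2\ge \frac{\delta^2}{4}\,c\,\varepsilon^\nu.
\]

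\emph{Step 3: bounding $\varepsilon^\nu$ from below.} This is the main point needing care, since the bound must be uniform in $\delta$. Because $a\le g,g_K^*\le b$, we have $\delta\le b$ (indeed $\delta\le b-a$). Therefore $\varepsilon_0\ge\varepsilon_0\delta/b$, which gives
\[
\varepsilon\ge\delta\min\{(4\Lambda)^{-1},\varepsilon_0/b\},\qquad\text{so}\qquad \varepsilon^\nu\ge\delta^\nu\min\{(4\Lambda)^{-\nu},\varepsilon_0^\nu b^{-\nu}\}.
\]
Consequently
\[
\|g-g_K^*\|_{L^2(P_U)}^2\ge\frac{c}{4}\min\{(4\Lambda)^{-\nu},\varepsilon_0^\nu b^{-\nu}\}\,\delta^{2+\nu}.
\]

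Combining this with Step~1 and rearranging yields the stated sup-norm bound. The case $\delta=0$ is trivial.
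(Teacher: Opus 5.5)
Your proposal is correct and follows the paper's proof essentially step for step. It uses the same three-term decomposition of the population excess risk bounded by $2\Delta_m+\varepsilon_{\mathrm{opt}}(g)$ and fed into Lemma~\ref{lem:g-gK-bound}, and the same spherical-cap argument around the maximizer $u_0$ with $\varepsilon=\min\{\varepsilon_0,\|g-g_K^*\|_\infty/(4\Lambda)\}$, where $\|g-g_K^*\|_\infty\le b$ makes the lower bound on $\varepsilon^\nu$ uniform.
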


\begin{proof}
For any $g\in\mathcal G$, we have
\[
\begin{aligned}
F(K,g;P)-F(K,g^*_K;P)= \bigl[F(K,g;P)-F(K,g;P_m)\bigr] &+
\bigl[F(K,g;P_m)-F(K,g_K^*;P_m)\bigr]\\
&+ \bigl[F(K,g_K^*;P_m)-F(K,g_K^*;P)\bigr]\\
&\le 2\Delta_m+\varepsilon_{\mathrm{opt}}(g).
\end{aligned}
\]
Here, the first and third terms are bounded above by $\Delta_m$,
and the middle term is bounded above by
$\varepsilon_{\mathrm{opt}}(g)$ because $g_K^*\in\mathcal G$.
Applying Lemma~\ref{lem:g-gK-bound} gives the first bound.

For the second bound, fix $g\in\mathcal G(\Lambda)$ and set
$h:=g-g_K^*$. Then $h$ is $2\Lambda$-Lipschitz and
$\|h\|_\infty\le b-a\le b$. By continuity and compactness, there
exists $u_0\in\mathbb S^{d-1}$ such that
$|h(u_0)|=\|h\|_\infty$. The case $h\equiv0$ is immediate, so
assume $\|h\|_\infty>0$. Set
$\varepsilon:=\min\{\varepsilon_0,\|h\|_\infty/(4\Lambda)\}$
and $S_\varepsilon:=B(u_0,\varepsilon)\cap\mathbb S^{d-1}$.
For every $u\in S_\varepsilon$,
\[
|h(u)|
\ge |h(u_0)|-2\Lambda\|u-u_0\|_2
\ge \tfrac12\|h\|_\infty.
\]
Thus the error remains at least half its maximum throughout this cap.
Using the directional mass condition gives
\[
\begin{aligned}
\|h\|_{L^2(P_U)}^2
=\int_{\mathbb S^{d-1}}|h(u)|^2\,\mathrm dP_U(u)
\ge \int_{S_\varepsilon}|h(u)|^2\,\mathrm dP_U(u)
&\ge \frac14\|h\|_\infty^2P_U(S_\varepsilon)\\
&\ge \frac{c}{4}\varepsilon^\nu\|h\|_\infty^2\\
&\ge \frac{c}{4}
\min\{(4\Lambda)^{-\nu},\varepsilon_0^\nu b^{-\nu}\}
\|h\|_\infty^{2+\nu},
\end{aligned}
\]
where the final inequality uses the definition of $\varepsilon$
and $\|h\|_\infty\le b$. Consequently,
\[
\|g-g_K^*\|_\infty^{2+\nu}
\le
\frac{4}{
c\min\{(4\Lambda)^{-\nu},\varepsilon_0^\nu b^{-\nu}\}
}
\|g-g_K^*\|_{L^2(P_U)}^2,
\qquad g\in\mathcal G(\Lambda).
\]
Applying the first bound completes the proof.
\end{proof}

Whenever the hypotheses of Theorem~\ref{thm:uniform-convergence}
hold for the class $\{K\}\times\mathcal G$, its bound on $\Delta_m$
yields estimation error bounds on the same high-probability event.

\begin{remark}
For $d\ge2$, the directional mass condition holds whenever $P_U$
has a density with respect to surface measure that is uniformly
bounded below. In particular, if
$\mathrm dP_U(u)=h_P(u)\,\mathrm d\sigma(u)$ and
$h_{\min}:=\inf_{u\in\mathbb S^{d-1}}h_P(u)>0$, then the
spherical-cap estimate gives
\[
P_U(B(u,\varepsilon)\cap\mathbb S^{d-1})
\ge h_{\min}c_d\varepsilon^{d-1},
\qquad u\in\mathbb S^{d-1},\quad 0<\varepsilon\le1,
\]
for some dimension-dependent constant $c_d>0$.
For every $P\in\mathcal P$, the density $h_P$ is continuous and
strictly positive on the compact sphere, so $h_{\min}>0$.
Thus the condition holds with $\nu=d-1$, $c=h_{\min}c_d$,
and $\varepsilon_0=1$.
\end{remark}

\section{Conclusion}

In this work, we developed a geometric framework for adapting regularizers to data through anisotropic dilation. We showed that these direction-preserving transformations act explicitly on the radial summary statistic that determines optimal regularization, allowing any admissible target statistic to be reached through a unique scaling profile. At the distributional level, their expressivity is characterized by the preservation of directional marginals and conditional radial shape up to scaling. This combination of flexibility and structure makes anisotropic dilation a tractable mechanism for modifying the geometry relevant to regularization. More concretely, we established that the central benefit of anisotropic dilation at the level of distributional fitting is that it separates directional mass from conditional radial scale. This allows us to establish explicit population-optimal models and an exact characterization of the approximation error incurred by restricting to isotropic scaling. We further developed extensions to homogeneous Gibbs energies and richer conditional radial families, established a raywise transport interpretation, and obtained finite-sample learning guarantees. Experiments demonstrate that learning the scaling profile can improve both handcrafted and learned regularizers in simple settings.

There are several interesting avenues of future research. A central direction is to connect improvements in distributional approximation and profile estimation to quantitative reconstruction guarantees for inverse problems when reconstructing signals from the underlying data distribution. Empirically showcasing that learning anisotropic dilation maps helps for more high-dimensional inverse problems would also be important. Moreover, while we established favorable geometry of the underlying variational problem in a simple setting (Theorem \ref{thm:better-landscape}), extending this theory more broadly to different choices of target regularizers along with establishing corresponding reconstruction guarantees would strengthen the results. Finally, a natural next step would be to investigate other families of maps that continue to be interpretable in the context of optimal regularization while potentially increasing flexibility of anisotropic dilation.

\section*{Acknowledgements}

We thank Venkat Chandrasekaran for helpful discussions. ChatGPT was used to proofread and polish the manuscript. Certain ideas were also suggested by ChatGPT, such as using the family of ray-wise couplings in Proposition \ref{prop:OT-interpretation} and the analytic continuation argument in Theorem \ref{thm:gaussianmixture}. Part of this work was conducted while OL was with the Department of Statistics and Data Science at UCLA.

\bibliographystyle{abbrv}
\bibliography{neurips_2026}

\section{Appendix}

\subsection{Experimental Details: Two-Dimensional Denoising}

\label{sec:appendix-2d}

\paragraph{Clean Data Distribution}
We use a uniform angular grid $\vartheta_j=-\pi+2\pi j/J$,
$j=0,\ldots,J-1$, with $J=192$. Writing $\langle a\rangle_J$ for the
grid average of a function $a$, define
$$
    \rho_0(\vartheta)
    =0.90\frac{\widetilde\rho_0(\vartheta)}
                   {\langle\widetilde\rho_0\rangle_J},
    \qquad
    g_0(\vartheta)
    =\frac{\widetilde g_0(\vartheta)}{\langle\widetilde g_0\rangle_J},
$$
where
\begin{align*}
    \log\widetilde\rho_0(\vartheta)
    &=0.70\cos(2\vartheta-0.25)+0.28\sin(3\vartheta+0.80)\\
    &\quad-0.18\cos(5\vartheta-0.30)+0.10\sin(7\vartheta+0.40),\\
    \log\widetilde g_0(\vartheta)
    &=0.55\sin(4\vartheta+0.35)-0.38\cos(2\vartheta-0.70)
      +0.18\sin(8\vartheta-0.20).
\end{align*}
The target angle has density $h_0(\vartheta)\propto\rho_0(\vartheta)^2$.
Conditional on $\Theta=\vartheta$, the standardized radius $S$ has density
$$
    f_{\eta_0(\vartheta)}(s)
    =\frac{s}{A(\eta_0(\vartheta))}
      \exp\left(-\sum_{\ell=1}^3\eta_{0,\ell}(\vartheta)s^\ell\right),
    \qquad s>0,
$$
where $A$ is the normalizing constant. We set
\begin{align*}
    \eta_{0,2}(\vartheta)
    &=0.04\sin(3\vartheta-0.40)-0.03\cos(5\vartheta+0.30),\\
    \eta_{0,3}(\vartheta)
    &=0.030+0.018\bigl(1+\sin(2\vartheta+0.55)\bigr),
\end{align*}
and determine $\eta_{0,1}$ numerically by bisection so that
$\mathbb E[S\mid\Theta=\vartheta]=2$. Thus, the base radius
$R_0=\rho_0(\Theta)S$ satisfies the normalization
\eqref{eq:baseline-constraint}, and the clean sample is
$X=g_0(\Theta)R_0u(\Theta)$.

Numerically, we select an angular bin with probability proportional to
$\rho_0(\vartheta_j)^2$ and sample the angle uniformly within that bin.
We draw $S$ from the radial law at the bin center by numerical inverse
CDF sampling, using trapezoidal quadrature on $1200$ radial points in
$[10^{-4},16]$. The profiles $\rho_0$ and $g_0$ are evaluated at the
sampled angle.

\paragraph{Model Fitting}
The directional density and conditional radial moments are estimated
from the clean training samples using the circular kernel
$k(\delta)=\exp(36\cos\delta)$, where $\delta$ is the angular separation.
The directional estimate is normalized to have grid mean one, and the
conditional moments are kernel-weighted averages of the first three
powers of the observed radii. We compute the radial profiles using the
expressions in Section~\ref{sec:two-d-denoising} and normalize the
estimated profiles to have grid mean $0.90$. The resulting profile
values are used directly on the angular quadrature grid. Scaling
profiles are clipped to $[0.08,12]$; no values required clipping in this
experiment.

For the shape models, the standardized second- and third-moment targets
are $\widehat m_{P,\ell}/(\widehat\rho_K\widehat g)^\ell$ for
$\ell=2,3$, clipped to $[4.2,80]$ and $[8.4,500]$, respectively.
Here, $\widehat g\equiv1$ for $(K,\eta)$. We minimize squared differences
between the logarithms of model and target moments, averaged over the
angular grid, using weights $4$, $1$, and $1/4$ for the first three
moments and first-moment target $2$. A penalty on squared periodic
second differences of $(\log\eta_1,\eta_2,\log\eta_3)$ is included with
weight $0.002$. The numerical parameterization requires $\eta_1,\eta_3>0$
and permits signed $\eta_2$. Moments are evaluated by trapezoidal
quadrature on $520$ points in $[10^{-4},12]$. We use $1400$ AdamW steps
with learning rate $0.035$ and weight decay $10^{-5}$.
The first-moment normalization is encouraged through a penalty and is
therefore approximate.

\paragraph{Denoising and Evaluation}
We use random seed $28$, with the same data splits and noisy observations
for every model. Posterior means are approximated on the $192$-point
angular grid, using $64$ radial points for validation and $88$ for
testing. The radial grid spans $[10^{-3},R_{\max}]$, where
$R_{\max}=3+1.15\max_x\|x\|_2\approx15.76$ and the maximum is taken over
all clean training, validation, and test samples. Thus, clean test radii
are used to set the numerical integration domain. Quadrature weights
include the polar Jacobian factor $r$.

For each model and noise level, $\gamma$ is selected by minimizing
validation MSE over
$$
    \Gamma_{\mathrm{grid}}
    =\{0.20,0.35,0.55,0.80,1.10,1.50,2.00,2.70,3.60,4.80\}.
$$
For the $N=650$ test pairs $(x_i,y_i)$, we report
$\operatorname{RMSE}
=[N^{-1}\sum_{i=1}^N
\|\widehat x_\gamma(y_i)-x_i\|_2^2]^{1/2}$.

\subsection{Experimental Details: MNIST} \label{sec:appendix-mnist}

\paragraph{Explicit details on scaling profile parameterization} For the anisotropic profile, recall that $g_{\theta}$ was of the form \begin{align*}
    g_{\theta}(u) := \exp\left(- \theta^{\top}\varphi(u)\right),\ \qquad \varphi(u) \in \R^{3}.
\end{align*} The feature vector we use is of the following form: \begin{align*}
    \varphi(u) = \left(\begin{array}{c}
         1  \\
         \varphi_1(u)  \\
         \varphi_2(u)
    \end{array}\right),\ \quad \varphi_1(u) := \frac{s(u) - m_{s}}{\sigma_{s}},\ \quad \varphi_2(u):= \frac{v(u) - m_{v}}{\sigma_v},
\end{align*} where $s(u)$ and $v(u)$ compute a proxy for a given input's spatial support and spread, respectively $$s(u) = \frac{\|u\|_1^2}{d}\ \text{and}\ v(u) = \sum_{ij} p_{ij}(u)\left\|z_{ij} - \sum_{ij}p_{ij}(u)z_{ij}\right\|_2^2.$$ Here, $z_{ij}$ denotes the spatial coordinate of the pixel $(i,j)$ and $p_{ij}(u) = \frac{u_{ij}}{\sum_{k\ell}u_{k\ell}}$. In our case, the pixel values are non-negative and $u \neq 0$. The means $m_s,m_v$ and standard deviations $\sigma_s,\sigma_v$ are estimated from the training data. In particular, for each image in the training distribution $x_i$, we compute $u_i = x_i/\|x_i\|_2$ for $i \in [N]$ and set \begin{align*}
    m_f := \frac{1}{N}\sum_{i=1}^N f(u_i)\ \quad \text{and} \quad \sigma_f := \sqrt{\frac{1}{N}\sum_{i=1}^N(f(u_i) - m_f)^2},\ \quad\ f \in \{s,v\}.
\end{align*}

\subsection{Additional Proofs}
\begin{lemma} \label{lem:technical-bound-convexity}
    Suppose $\Phi(z):= \psi(z/\|z\|_2) z$ where $\psi \in C^2(\mathbb{S}^{d-1})$, $\psi > 0$, and $\Phi(0)=0$. Then there exists a constant $C_{\psi} < \infty$ such that $$\|\Phi(z_1) + \Phi(z_2) - \Phi(z_1+z_2)\|_2 \leq C_{\psi}(\|z_1\|_2 + \|z_2\|_2 - \|z_1+z_2\|_2),\ \qquad \forall z_1,z_2\in\R^d.$$
\end{lemma}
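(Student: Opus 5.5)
The plan is to reduce everything to the sphere using positive homogeneity. Write $F(u):=\psi(u)u$ and extend it to $\R^d\setminus\{0\}$ as a zero-homogeneous map $F(x):=F(x/\|x\|_2)$, so that $\Phi(x)=\|x\|_2F(x)$. Since $\psi\in C^2(\mathbb S^{d-1})$, $F$ is $C^2$ on $\R^d\setminus\{0\}$, and its first and second derivatives are bounded on the annulus $\{1/2\le\|x\|_2\le 2\}$. Zero-homogeneity also gives $DF(w)w=0$ for every $w\neq0$.

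If $z_1=0$ or $z_2=0$ the inequality is trivial. Otherwise write $z_i=r_iu_i$ and $s:=z_1+z_2$, and set $\delta:=r_1+r_2-\|s\|_2\ge0$. The elementary identity
\[
\delta=\frac{(r_1+r_2)^2-\|s\|_2^2}{r_1+r_2+\|s\|_2}=\frac{r_1r_2\|u_1-u_2\|_2^2}{r_1+r_2+\|s\|_2}\ \ge\ \frac{r_1r_2\|u_1-u_2\|_2^2}{2(r_1+r_2)}
\]
is the basic tool. I then split into two cases according to the angle between $u_1$ and $u_2$, for instance whether $\|u_1-u_2\|_2\le 1$ or not.

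\emph{Large angle} ($\|u_1-u_2\|_2>1$). Without loss of generality $r_2\le r_1$, so the identity gives $\delta\ge r_2/4$. The map $\Phi$ is globally Lipschitz with some constant $L$: its Jacobian is zero-homogeneous and bounded away from the origin, and $\Phi$ is continuous at $0$, so one can integrate along segments and perturb those passing through the origin. Hence
\[
\|\Phi(z_1)-\Phi(s)\|_2\le Lr_2,\qquad \|\Phi(z_2)\|_2\le Lr_2,
\]
and the left-hand side is at most $2Lr_2\le 8L\delta$.

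\emph{Small angle} ($\|u_1-u_2\|_2\le1$). Here $\|s\|_2\ge c(r_1+r_2)$ for an absolute constant $c>0$. Put $w:=s/\|s\|_2$ and decompose
\[
\Phi(z_1)+\Phi(z_2)-\Phi(s)=\sum_{i=1}^2 r_i\bigl(F(u_i)-F(w)\bigr)+\delta\,F(w).
\]
The last term is at most $\|\psi\|_\infty\delta$. For the sum, Taylor expand $F$ at $w$:
\[
F(u_i)-F(w)=DF(w)(u_i-w)+O\bigl(\|u_i-w\|_2^2\bigr).
\]
The segment $[u_i,w]$ stays in the annulus because the angle is small, so the remainder constant depends only on $\psi$. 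The linear terms cancel: $\sum_i r_i(u_i-w)=s-(r_1+r_2)w=-\delta w$, and $DF(w)w=0$. For the quadratic terms, $u_1-w$ is controlled by $r_2\|u_1-u_2\|_2/\|s\|_2$ up to a constant, and symmetrically for $u_2-w$. Therefore
\[
\sum_i r_i\|u_i-w\|_2^2\ \lesssim\ \frac{r_1r_2(r_1+r_2)\|u_1-u_2\|_2^2}{\|s\|_2^2}\ \lesssim\ \frac{r_1r_2\|u_1-u_2\|_2^2}{r_1+r_2}\ \lesssim\ \delta,
\]
where the last step is the identity above.

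The main obstacle is this small-angle case. A naive Taylor expansion of $\Phi$ at $s$ yields remainders of order $r_2^2/r_1$, which do not vanish as $\delta\to0$. The cancellation of the linear term through $DF(w)w=0$, together with the quadratic-in-angle lower bound on $\delta$, is what makes the argument work, and I would verify those estimates carefully. Taking $C_\psi$ as the larger of the two case constants concludes the proof.
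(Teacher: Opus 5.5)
Your proof is correct, and its skeleton matches the paper's. Both use the same case split on $\|u_1-u_2\|_2$ and the same global Lipschitz bound for wide angles. Both also use the same key identity $(r_1+r_2)^2-\|s\|_2^2=r_1r_2\|u_1-u_2\|_2^2$, which is the paper's $\|m\|_2^2=1-\theta(1-\theta)\|u-v\|_2^2$ after normalizing $r_1+r_2=1$.

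The difference is in the narrow-angle case. The paper uses homogeneity to normalize $\|z_1\|_2+\|z_2\|_2=1$. The left-hand side then becomes the chord-interpolation defect $\theta\Phi(u)+(1-\theta)\Phi(v)-\Phi(\theta u+(1-\theta)v)$ of $\Phi$ between two unit vectors. This defect is bounded directly by $\sup_{[u,v]}\|D^2\Phi\|_{\mathrm{op}}\,\theta(1-\theta)\|u-v\|_2^2$. Affine maps have zero interpolation defect, so the linear terms cancel automatically, and $D^2\Phi$ is controlled because the chord stays near the sphere. In that formulation the obstacle you flag (remainders of order $r_2^2/r_1$ from expanding at $s$) never arises, and neither the Euler relation nor an estimate on $\|u_i-w\|_2$ is needed.

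Your route is more computational but valid: you factor $\Phi=\|x\|_2F(x)$, expand $F$ at $w$, and kill the linear term with $DF(w)w=0$. It has the merit of making the role of homogeneity explicit. The bound $\|u_1-w\|_2\lesssim r_2\|u_1-u_2\|_2/\|s\|_2$ that you flagged for checking does hold. It follows from $\|s\|_2u_1-s=-\delta u_1+r_2(u_1-u_2)$ together with $\delta\le r_2\|u_1-u_2\|_2^2\le r_2\|u_1-u_2\|_2$ in this regime.

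One small point: perturbing segments that pass through the origin is impossible when $d=1$. The direct estimate $\|\Phi(x)-\Phi(y)\|_2\le\|\psi\|_\infty(\|x\|_2+\|y\|_2)=\|\psi\|_\infty\|x-y\|_2$ for $0\in[x,y]$, which is what the paper uses, works in every dimension.
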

\begin{proof}[Proof of Lemma~\ref{lem:technical-bound-convexity}]
We first collect several useful facts about the map $\Phi$. First, note that by a straightforward extension of \cite[Lemma D.1]{ji2020directional}, we have that since $\Phi$ is $1$-homogeneous and $C^2$ on $\R^d \setminus\{0\},$ its Jacobian and second-order derivative are $0$- and $-1$-homogeneous respectively, i.e., $D \Phi(tz) = D \Phi(z)$ and $D^2\Phi(tz) = t^{-1}D^2\Phi(z)$ for all $t > 0$ and $z \neq 0$. By continuity, we have that $$L_{\psi,1} := \sup_{\|u\|_2 = 1} \|D \Phi(u)\|_{\mathrm{op}} < \infty\ \text{and}\ L_{\psi,2} := \sup_{\|u\|_2 = 1} \|D^2\Phi(u)\|_{\mathrm{op}}:=\sup_{\|u\|_2=\|v\|_2=\|z\|_2=1}\|D^2\Phi(u)[v,z]\|_{2} < \infty.$$ Since $D\Phi(z) = D\Phi(z/\|z\|_2)$, we have that for all $z \neq 0$, $\|D\Phi(z)\|_{\mathrm{op}} \leq L_{\psi,1}$. Also, $\|D^2\Phi(z)\|_{\mathrm{op}} =\|D^2\Phi(z/\|z\|_2)\|_{\mathrm{op}} / \|z\|_2 \leq L_{\psi,2}/\|z\|_2.$ The former point also implies that $\Phi$ is Lipschitz continuous. Indeed, suppose $x,y \neq 0$ and consider the line segment $\gamma(t):= y + t(x-y)$ for $t \in [0,1]$. Suppose the line segment does not pass through the origin. Then by the Fundamental Theorem of Calculus, we have \begin{align*}
    \|\Phi(x) - \Phi(y)\|_2 \leq \int_0^1 \|D\Phi(\gamma(t))\|_{\mathrm{op}}\|x-y\|_2 \mathrm{d}t \leq L_{\psi,1}\|x-y\|_2.
\end{align*} When the line segment does pass through the origin, we have that there exists a $\lambda > 0$ such that $y = -\lambda x$ and $\|y-x\|_2 = (\lambda+1)\|x\|_2.$ In this case, set $M_{\psi}:= \sup_{\|u\|_2=1} \psi(u) < \infty$. Then note that $\|\Phi(x)\|_2 = \|x\|_2\psi(x/\|x\|_2)\leq M_{\psi}\|x\|_2$ while $\|\Phi(y)\|_2=\|\lambda\Phi(-x)\|_2  \leq \lambda M_{\psi}\|x\|_2$. This gives \begin{align*}
    \|\Phi(x)-\Phi(y)\|_2 \leq \|\Phi(x)\|_2 + \|\Phi(y)\|_2 \leq M_{\psi}(1 + \lambda)\|x\|_2 = M_{\psi}\|x-y\|_2
\end{align*} as desired. The case when either $x=0$ or $y=0$ follows similarly. Hence $\Phi$ is Lipschitz continuous with Lipschitz constant $L:= \max\left\{L_{\psi,1}, M_{\psi}\right\}$.

We now move on to the proof of the desired inequality. Suppose $z_1,z_2 \neq 0$ since otherwise the result is trivial. By homogeneity, it suffices to consider the case  when $\|z_1\|_2 + \|z_2\|_2 = 1$. Letting $\theta = \|z_1\|_2$, write $z_1 = \theta u$ and $z_2 = (1-\theta)v$ for $u,v \in \mathbb{S}^{d-1}$ and $m := z_1 + z_2 = \theta u + (1-\theta) v$.  We consider two cases:
\begin{itemize}
    \item Suppose $\delta := \|u-v\|_2 \leq 1/2.$ Then note that the line segment $\gamma(t) = (1-t)u + tv$ satisfies \begin{align*}
        \|\gamma(t)\|^2_2 = 1 - t(1-t)\delta^2 \geq 1 - \frac{\delta^2}{4} \geq \frac{15}{16}.
    \end{align*} 
 Hence all points along $\gamma(t)$ are uniformly bounded away from the origin for all $t \in [0,1]$. This along with our bound $\|D^2\Phi(z)\|_{\mathrm{op}} \leq L_{\psi,2}/\|z\|_2$ implies \begin{align*}
    \sup_{t \in [0,1]}\|D^2 \Phi(\gamma(t))\|_{\mathrm{op}} \leq \frac{16}{15}L_{\psi,2} =: C_{\psi,2}.
\end{align*}  Via a second-order Taylor expansion, we have \begin{align*}
    \|\theta \Phi(u) + (1-\theta)\Phi(v) - \Phi(m)\|_2 \leq C_{\psi,2} \theta(1-\theta)\|u-v\|_2^2 =C_{\psi,2} \theta(1-\theta)\delta^2.
\end{align*} But note that $\|m\|_2^2 = 1 - \theta(1-\theta)\delta^2$ which implies \begin{align}
    \theta(1-\theta)\delta^2 = 1 - \|m\|_2^2 = (1-\|m\|_2)(1+\|m\|_2) \leq 2(1-\|m\|_2). \label{eq:theta-delta-bound-to-m}
\end{align} Hence we attain \begin{align*}
    \|\theta \Phi(u) + (1-\theta)\Phi(v) - \Phi(m)\|_2 \leq C_{\psi,2} \theta(1-\theta)\delta^2 \leq 2C_{\psi,2}(1-\|m\|_2)
\end{align*} as desired. 
    \item Now assume $\delta > 1/2.$ Since $\Phi$ is $L$-Lipschitz, we have \begin{align*}
        \|\theta \Phi(u) + (1-\theta)\Phi(v) - \Phi(m)\|_2 & = \|\theta \Phi(u) - \theta \Phi(m)  + (1-\theta)\Phi(v) -(1-\theta) \Phi(m)\|_2 \\
        & \leq L(\theta\|u-m\|_2 + (1-\theta)\|v-m\|_2) \\
        & = 2L\theta(1-\theta)\delta
    \end{align*} where the last equality follows from the fact that $u - m = (1-\theta)(u-v)$ and $v - m = -\theta(u-v)$. Now, since $u,v \in \mathbb{S}^{d-1}$, we have $1/2 < \delta \leq 2$ so that $\delta \leq 4\delta^2$ which implies $$\|\theta \Phi(u) + (1-\theta)\Phi(v) - \Phi(m)\|_2  \leq 8L\theta(1-\theta)\delta^2 \leq 16L(1-\|m\|_2)$$ where we used \eqref{eq:theta-delta-bound-to-m} in the last inequality. 
\end{itemize}
Combining our two cases, we see that the desired inequality holds for all $z_1,z_2 \in \R^d$ if we choose $$C_{\psi} := \max\left\{16L, 2C_{\psi,2}\right\} = \max\left\{16\max\left\{\sup_{\|u\|_2 = 1} \|D \Phi(u)\|_{\mathrm{op}}, \sup_{\|u\|_2=1} \psi(u)\right\},\frac{32}{15}\sup_{\|u\|_2 = 1} \|D^2\Phi(u)\|_{\mathrm{op}}\right\}.$$
\end{proof}

\begin{proof}[Proof of Theorem~\ref{thm:gaussianmixture}]\label{proof:gaussianmixture}
Our goal is to prove that the diagnostic ratio in \eqref{eq: GMM-diagnostic-ratio} is constant if and only if the covariance matrices of the Gaussian mixture model are all proportional. We establish this equivalence by first showing that proportional covariance matrices yield a constant ratio, and then proving the converse. 
\paragraph{Proportional covariance matrices $\implies $ $\alpha_P(u)$ is constant}
Assuming $\Sigma_j=c_j\Sigma_0$, we have $a_j(u) = c_j^{-1} a_0(u)$ where $a_0(u):=u^\top \Sigma_0^{-1}u$. Furthermore, the determinants scale as $|\Sigma_j|^{-1/2} = |c_j \Sigma_0|^{-1/2} = c_j^{-d/2} |\Sigma_0|^{-1/2}$. Therefore, the denominator of the diagnostic ratio simplifies to
\begin{align*}
\sum_{j=1}^k w_j |\Sigma_j|^{-1/2} a_j(u)^{-d/2}
&= \sum_{j=1}^k w_j\,c_j^{-d/2}|\Sigma_0|^{-1/2}\,
(c_j^{-1}a_0(u))^{-d/2} \\
&= \sum_{j=1}^k w_j\,c_j^{-d/2}|\Sigma_0|^{-1/2}\, c_j^{d/2}a_0(u)^{-d/2} \\
&= |\Sigma_0|^{-1/2}a_0(u)^{-d/2}\sum_{j=1}^k w_j \\
&= |\Sigma_0|^{-1/2}a_0(u)^{-d/2}.
\end{align*}
Similarly, the term inside the numerator can be evaluated as
\begin{align*}
\sum_{j=1}^k w_j |\Sigma_j|^{-1/2} a_j(u)^{-(d+1)/2}
&= \sum_{j=1}^k w_j\,c_j^{-d/2}|\Sigma_0|^{-1/2}\, (c_j^{-1}a_0(u))^{-(d+1)/2} \\
&=
\sum_{j=1}^k w_j\,c_j^{-d/2}|\Sigma_0|^{-1/2}\, c_j^{(d+1)/2}a_0(u)^{-(d+1)/2} \\
&= |\Sigma_0|^{-1/2}a_0(u)^{-(d+1)/2} \sum_{j=1}^k w_jc_j^{1/2}.
\end{align*}
Plugging these back into the diagnostic ratio, we obtain
$$
\alpha_P(u)\propto \frac{ \left( |\Sigma_0|^{-1/2}a_0(u)^{-(d+1)/2} \sum_{j=1}^k w_jc_j^{1/2}\right)^{d/(d+1)}}{|\Sigma_0|^{-1/2}a_0(u)^{-d/2}}.
$$
We see that the $u$-dependency in the numerator, given by $\left(a_0(u)^{-(d+1)/2}\right)^{d/(d+1)}$, is the same as the $u$-dependency in the denominator $a_0(u)^{-d/2}$. Therefore, they cancel exactly in the ratio, meaning $\alpha_P(u)$ is constant on $\mathbb S^{d-1}$.
\paragraph{$\alpha_P(u)$ is constant $\implies$ Proportional covariance matrices}
When $d=1$, every covariance matrix is a positive scalar and is thus trivially proportional to any other. Hence, we may assume $d \ge 2$. Our general strategy is to define an algebraic function representing the constancy of $\alpha_P(u)$, analytically extend it to the complex space, and show that the non-proportionality of any two covariance matrices yields a contradiction. 

The proof of this converse involves five main steps: (1) assuming the covariances group into $m \ge 2$ distinct proportionality classes, we formulate the constancy of $\alpha_P(u)$ as an algebraic function on $\mathbb{R}^d \setminus \{0\}$ involving fractional powers of real quadratic forms; (2) we extend these quadratic forms to the complex domain $\mathbb{C}^d$ and isolate a singularity unique to one form; (3) we construct a local biholomorphism in a neighborhood of this singularity and define a complex covering space; (4) we analytically continue our constancy function along a path—lifted into the covering space—from $\mathbb{R}^d \setminus \{0\}$ to a neighborhood of this singularity; and (5) we expand the analytically continued function into a power series to reveal a contradictory  mismatch in the exponents, thereby forcing $m=1$.

\paragraph{Step 1: Constancy function and proportionality classes.}

We assume $d \ge 2$. Let $x = ru \in \mathbb{R}^d \setminus \{0\}$, and define the unnormalized numerator and denominator functions on the Euclidean space:
\begin{align}
    N(x) = \sum_{j=1}^k w_j | \Sigma_j |^{-1/2} a_j(x)^{-(d+1)/2}, \quad
    D(x) = \sum_{j=1}^k w_j | \Sigma_j |^{-1/2} a_j(x)^{-d/2}. \label{eq: num-denom}
\end{align}
The assumption that the diagnostic ratio $\alpha_P(u)$ is constant implies there exists a $K > 0$ such that $N(u)^d = KD(u)^{d+1}$ for all $u \in \mathbb{S}^{d-1}$. By the homogeneity of the quadratic forms, $a_j(x) = r^2 a_j(u)$, and thus this constancy relation holds for all non-zero real vectors
\begin{align}
    N(x)^d = KD(x)^{d+1}, \qquad \text{for all } x \in \mathbb{R}^{d} \setminus \{0\}.\label{eq: constancy-extended}
\end{align}
To frame this algebraically, we define the constancy function
\begin{align}
F(x) := N(x)^d - KD(x)^{d+1}, \qquad x \in \mathbb{R}^{d} \setminus \{0\}, \label{eq: constancy-function}
\end{align}
so that $F \equiv 0$ on $\mathbb{R}^{d} \setminus \{0\}$. For the sake of contradiction, we assume that the covariance matrices belong to $m \ge 2$ distinct proportionality classes represented by $\{ \bar \Sigma_1, \ldots, \bar \Sigma_m \}$. Let $I_\ell \subset \{1, \ldots, k\}$ denote the indices of the components in the $\ell$-th class, thus there exists some $c_j$ such that $\Sigma_j = c_j \bar{\Sigma}_\ell$ for all $j \in I_\ell$. We define the corresponding quadratic forms $q_\ell(x) = x^\top \bar{\Sigma}_\ell^{-1} x$. By aggregating the terms in \eqref{eq: num-denom} within each proportionality class, we obtain
\begin{align}
N(x) = \sum_{\ell = 1}^m n_\ell q_\ell(x)^{-(d+1)/2} \quad \text{and} \quad D(x) = \sum_{\ell = 1}^m \delta_\ell q_\ell(x)^{-d/2}, \label{eq: aggregated-ND}
\end{align}
where the constants are given by $n_\ell = |\bar \Sigma_\ell |^{-1/2} \sum_{j \in I_\ell} w_j c_j^{1/2} > 0$ and $\delta_\ell = |\bar \Sigma_\ell|^{-1/2} \sum_{j \in I_\ell} w_j > 0$. Our strategy is to analytically continue identity \eqref{eq: constancy-function} to a complex covering space $\Omega$ and analyze its behavior near an isolated singularity, showing that the power mismatch of $N$ and $D$ yields a fatal misalignment in the power series terms.
\paragraph{Step 2: Isolating a singularity of complexified quadratic forms.}
We first extend each quadratic form $q_\ell$ to a complex homogeneous polynomial $q_\ell(z) = z^\top \bar \Sigma_\ell^{-1} z \in \mathbb{C}[z_1, \dots, z_d]$. In this domain, the singularities of \eqref{eq: constancy-extended} correspond to the zero loci of these polynomials, which carve out complex quadratic cones
$$
C_\ell = \{z \in \mathbb{C}^d : q_\ell (z) = 0 \} \qquad \text{for } \ell = 1, \ldots, m. 
$$
Since each $\bar{\Sigma}_\ell$ is positive definite, $q_\ell(x) > 0$ for all real $x \neq 0$, meaning the non-trivial points of $C_\ell$ lie strictly in the complex domain $\mathbb{C}^d \setminus \mathbb{R}^d$. Two cones $C_1$ and $C_\ell$ coincide if and only if $q_1$ and $q_\ell$ are scalar multiples. We claim that $C_1 \not\subset \bigcup_{\ell=2}^m C_\ell$, which allows us to isolate a singularity in $C_1$.

\begin{enumerate}
    \item Case $d=2$: Every positive definite binary quadratic form $q_\ell(z)$ factors over $\mathbb{C}$ into two complex-conjugate linear forms, meaning each cone $C_\ell \subset \mathbb{C}^2$ consists of a pair of complex-conjugate lines intersecting at the origin. If two such cones $C_1$ and $C_\ell$ (for some $\ell \ge 2$) share a non-zero point $z$, they must share the complex line spanned by $z$. By conjugation symmetry, they must also share the conjugate line spanned by $\bar{z}$. Thus, the zero sets $C_1$ and $C_\ell$ coincide. Since both quadratic polynomials are square-free and homogeneous of degree two, the equality of their zero sets forces $q_1$ and $q_\ell$ to be proportional, contradicting the assumption of distinct proportionality classes. Therefore $C_1 \cap C_\ell = \{0\}$ for all $\ell \ge 2$.
     
     \item Case $d \ge 3$: Since each $\bar{\Sigma}_\ell^{-1}$ is non-degenerate, $q_1$ is an irreducible polynomial of degree two, making $C_1$ an irreducible affine variety. If $C_1 \subset \bigcup_{\ell=2}^m C_\ell$, then irreducibility forces $C_1 \subset C_\ell$ for some $\ell \ge 2$. By Study's Lemma \cite{study1923einleitung}, $q_1$ must divide $q_\ell$. Since both are homogeneous of degree two, they must be proportional, which is a contradiction. Thus, $C_1 \not\subset \bigcup_{\ell=2}^m C_\ell$.
\end{enumerate}
 Consequently, the intersection $C_1 \cap \left(\bigcup_{\ell=2}^m C_\ell\right)$ is a proper subvariety of $C_1$ of strictly lower dimension. It is therefore nowhere dense in $C_1$ under the standard Euclidean topology. Removing this intersection leaves an open and dense subset of $C_1$, from which we select a non-zero singularity $z_0 \in C_1 \setminus \bigcup_{\ell=2}^m C_\ell$ satisfying $q_1(z_0) = 0$ and $q_\ell(z_0) \neq 0$ for all $\ell = 2, \ldots, m$.

 \paragraph{Step 3: Local biholomorphism and covering space.}
To analyze the singularity locally, we construct a biholomorphism near $z_0$. Since $\bar{\Sigma}_1^{-1}$ is non-degenerate and $z_0 \neq 0$, the gradient $\nabla q_1(z_0) = 2 \bar{\Sigma}_1^{-1} z_0$ is non-vanishing. Permuting coordinates if necessary, we assume $\frac{\partial q_1}{\partial z_1}(z_0) \neq 0$. Define the local coordinate change
$$
\Upsilon: \mathbb{C}^d \to \mathbb{C}^d, \qquad \Upsilon(z_1, \dots, z_d) = (q_1(z), z_2, \ldots, z_d).
$$
One can verify that the Jacobian determinant $\det J_\Upsilon(z_0) = \frac{\partial q_1}{\partial z_1}(z_0)$ is non-zero. By the Holomorphic Inverse Function Theorem \cite[Ch 1, Sec B]{gunning1965analytic}, there exist an open neighborhood $U \subset \mathbb{C}^d$ containing $z_0$ and an open neighborhood $W \subset \mathbb{C}^d$ containing $(0, \xi_0)$—where $\xi_0 := (z_{0,2}, \dots, z_{0,d}) \in \mathbb{C}^{d-1}$—such that $\Upsilon\vert_U: U \to W$ is a biholomorphism with holomorphic inverse $\Phi := \Upsilon^{-1}: W \to U$.

Writing coordinates in $W$ as $(t, \xi) \in \mathbb{C} \times \mathbb{C}^{d-1}$, the construction yields $q_1(\Phi(t, \xi)) = t$. Shrinking \(W\) and \(U\) if necessary, we may write \(W=\mathbb D_r\times V\), where \(\mathbb D_r\subset\mathbb C\) is an open disc of radius \(r>0\) centered at \(0\), and \(V\) is a sufficiently small connected polydisc around \(\xi_0\). Since $W$ is a contractible polydisc, every nonvanishing holomorphic function on $W$ admits a holomorphic logarithm, and hence a holomorphic square root. We take $W$ small enough such that $q_\ell(\Phi(t, \xi)) \neq 0$ for all $(t, \xi) \in W$ and all $\ell = 2, \ldots, m$.

On the punctured neighborhood $W^\times = (\mathbb{D}_r \setminus \{0\}) \times V$, the local pullbacks of $N$ and $D$ take the form
\begin{align*}
N(\Phi(t, \xi)) &= n_1 t^{-(d+1)/2} + \sum_{\ell = 2}^m n_\ell q_\ell(\Phi(t, \xi))^{-(d+1)/2}, \\
D(\Phi(t, \xi)) &= \delta_1 t^{-d/2} + \sum_{\ell = 2}^m \delta_\ell q_\ell(\Phi(t, \xi))^{-d/2}.
\end{align*}
Because the fractional powers $t^{-(d+1)/2}$ and $t^{-d/2}$ are multi-valued on $W^\times$, defining them directly on $U$ would require branch cuts that disrupt the global analytic structure. We circumvent this by lifting the relation to a covering space.

Let the base space be the complex domain with the singular cones removed 
$\Omega_0 = \mathbb C^d \setminus \bigcup_{\ell = 1}^m C_\ell.$ A point $z$ lies in $\Omega_0$ if and only if $q_\ell(z) \neq 0$ for all $\ell = 1, \dots, m$. We define the covering space $\Omega$ by
\begin{align}
\Omega = \left\{ (z, \eta) \in \Omega_0 \times \mathbb C^m : \eta_\ell^2 = q_\ell(z) \text{ for } \ell = 1, \ldots, m \right\}.
\label{eq: covering-space}
\end{align}
The projection $\pi: \Omega \to \Omega_0$ defined by $\pi(z, \eta) = z$ is a $2^m$-sheeted covering map. Locally, the holomorphic implicit function theorem guarantees that each coordinate root $\eta_\ell$ is a holomorphic function of $z$ because the derivative of $\eta_\ell^2 - q_\ell(z)$ with respect to $\eta_\ell$ is $2 \eta_\ell \neq 0$ on $\Omega$. We can directly define the globally single-valued, holomorphic functions on $\Omega$:
\begin{align}
N(z, \eta) := \sum_{\ell = 1}^m n_\ell \eta_\ell^{-(d+1)}, \qquad D(z, \eta) := \sum_{\ell = 1}^m \delta_\ell \eta_\ell^{-d}. \label{eq: N-D-lifted}
\end{align}
These functions are holomorphic because each coordinate projection $\eta_\ell \mapsto \eta_\ell^{-k}$ is holomorphic on $\mathbb{C} \setminus \{0\}$. We lift the algebraic relation \eqref{eq: constancy-function} to the function
\begin{align}
F( z, \eta) = N(z, \eta)^d - KD(z, \eta)^{d+1}, \qquad (z, \eta) \in \Omega. \label{eq: constancy-lifted}
\end{align}
By construction, $F$ is a single-valued, holomorphic function on $\Omega$.
\paragraph{Step 4: Analytic continuation of $F(z, \eta)$ along a lifted path.}
For any real point $x \in \mathbb R^d \setminus \{0\}$, we lift it to $\Omega$ using positive real roots:
$$
x_\Omega = \left( x, \sqrt{q_1(x)}, \ldots, \sqrt{q_m(x)} \right) \in \Omega.
$$ 
Since $\pi$ is a local biholomorphism, there exists a small complex neighborhood $U_x \subset \Omega_0$ of $x$ and a unique local holomorphic section $\sigma_+: U_x \to \Omega$ of $\pi$ satisfying $\sigma_+(x) = x_\Omega$. Explicitly,
$$
\sigma_+(z) = \left(z, r_1(z), \ldots, r_m(z)\right),
$$
where each $r_\ell$ is the local holomorphic square root satisfying $r_\ell(x) = \sqrt{q_\ell(x)} > 0$. The composition $F\circ \sigma_+$ is holomorphic on $U_x$. Since $F \circ \sigma_+$ vanishes on the real-open set $U_x \cap \mathbb{R}^d$, and the restriction of a holomorphic function to a real slice is real-analytic, the vanishing of $F \circ \sigma_+$ on this real-open set implies it vanishes identically on the complex-open neighborhood $U_x$. Consequently, $F$ vanishes on the open set $\sigma_+(U_x) \subset \Omega$. By the Identity Theorem for connected complex manifolds \cite[Ch 1, Sec A]{gunning1965analytic}, $F$ vanishes identically on the entire connected component $\Omega_x \subset \Omega$ containing $x_\Omega$.

We now connect $x$ to a point near our chosen singularity $z_0$. For a small positive parameter $\varepsilon < r$, we set $y := \Phi( \varepsilon, \xi_0) \in \Omega_0$. To connect $x$ to $y$ within $\Omega_0$, we define the complex affine line $L(\lambda) := x + \lambda(y - x)$ for $\lambda \in \mathbb{C}$. For each $\ell$, the polynomial $q_\ell(L(\lambda))$ has degree at most two and is not identically zero since $q_\ell(L(0)) = q_\ell(x) > 0$. The union of the zeros of these $m$ polynomials is a finite set of points in $\mathbb{C}$. Choosing a continuous path from $0$ to $1$ in the $\lambda$-plane that avoids these finite zeros yields a continuous path from $x$ to $y$ contained entirely in $\Omega_0$. By the path-lifting property \cite[Proposition 1.30]{hatcher2002algebraic}, there exists a unique lifted path starting at $x_\Omega$ and ending at a point $y_\Omega = (y, \eta_1^\varepsilon, \ldots, \eta_m^\varepsilon) \in \Omega$. Since path-connected points lie in the same connected component, it follows that $y_\Omega \in \Omega_x$.

For each $\ell \ge 2$, the map $(t, \xi) \mapsto q_\ell(\Phi(t, \xi))$ is holomorphic and non-vanishing on the simply connected domain $W$, and thus admits local holomorphic square roots $\eta_\ell(t, \xi)$ on $W$ \cite[Ch 1]{gunning1965analytic}. The uniquely lifted path selects the branch of each square root function satisfying $\eta_\ell(\varepsilon, \xi_0) = \eta_\ell^\varepsilon$ for all $\ell \ge 2$.

To construct a local lifted parametrization over $W$ that respects these branch choices, let $s_0 := \eta_1^\varepsilon$ (so that $s_0^2 = q_1(y) = \varepsilon$). Because $\eta_1(t, \xi)^2 = q_1(\Phi(t, \xi)) = t$, we parameterize $t = s^2$ to render $\eta_1$ globally single-valued. This change of variables defines the punctured product neighborhood
$$
\mathcal{D}^* \times V = \{s \in \mathbb{C} : 0 < |s| < \sqrt{r}\} \times V,
$$
which forms a two-sheeted covering of the punctured domain $W^\times = (\mathbb{D}_r \setminus \{0\}) \times V$. This punctured neighborhood lifts to $\Omega$ via the holomorphic map:
$$
\Psi(s, \xi) = \left(\Phi(s^2, \xi), s, \eta_2(s^2, \xi), \ldots, \eta_m(s^2, \xi)\right).
$$
By construction, $\Psi$ is a holomorphic lift of the map $(s, \xi) \mapsto \Phi(s^2, \xi)$ under $\pi$ (satisfying $\pi \circ \Psi(s, \xi) = \Phi(s^2, \xi)$). The image $\mathcal{U} := \Psi(\mathcal{D}^* \times V)$ is a connected subset of $\Omega$ containing $y_\Omega = \Psi(s_0, \xi_0)$. Furthermore, because $s \neq 0$, the map $s \mapsto s^2$ is locally biholomorphic, the coordinate change $\Phi$ is a biholomorphism, and the chosen square-root branches define a local section of the covering. Consequently, $\Psi$ is locally biholomorphic, which directly implies that its image $\mathcal{U}$ is an open subset of $\Omega$. Since $\mathcal{U}$ contains $y_\Omega \in \Omega_x$, we have $\mathcal{U} \subset \Omega_x$, which immediately implies that the lifted algebraic relation vanishes identically on this local parametrization, i.e.,
$$
F \circ \Psi \equiv 0 \qquad \text{on } \mathcal{D}^* \times V.
$$
\paragraph{Step 5: Power series expansion and parity contradiction.}
To derive the final contradiction, we decompose $N \circ \Psi$ and $D \circ \Psi$ along this local lift:
\begin{align}
N \circ \Psi(s, \xi) &= n_1 s^{-(d+1)} + A(s^2, \xi), \label{eq: N-decomp} \\
D \circ \Psi(s, \xi) &= \delta_1 s^{-d} + B(s^2, \xi), \label{eq: D-decomp}
\end{align}
where the remainder terms are defined by
$$
A(s^2, \xi) = \sum_{\ell = 2}^m n_\ell \eta_\ell(s^2, \xi)^{-(d+1)} \quad \text{and} \quad B(s^2, \xi) = \sum_{\ell = 2}^m \delta_\ell \eta_\ell(s^2, \xi)^{-d}.
$$
Because each $\eta_\ell(t, \xi) \neq 0$ on the unpunctured product neighborhood $W = \mathbb{D}_r \times V$, the functions $A(s^2, \xi)$ and $B(s^2, \xi)$ are holomorphic on the unpunctured domain $\mathcal{D} \times V = \{s \in \mathbb{C} : |s| < \sqrt{r}\} \times V$, which includes the fiber $s = 0$.

Furthermore, these local remainder functions do not vanish identically on $\mathcal{D} \times V$. To see this, we define the global holomorphic remainder functions on $\Omega_x$:
$$
R_N(z, \eta) = \sum_{\ell = 2}^m n_\ell \eta_\ell^{-(d+1)} \quad \text{and} \quad R_D(z, \eta) = \sum_{\ell = 2}^m \delta_\ell \eta_\ell^{-d}.
$$
The functions $A$ and $B$ are exactly the restrictions of $R_N$ and $R_D$ to the local parameterization image $\mathcal{U} = \Psi(\mathcal{D}^* \times V)$. Evaluating the global remainders at our real lifted point $x_\Omega \in \Omega_x$ yields strictly positive real numbers, as all coefficients $n_\ell, \delta_\ell$ and roots $\sqrt{q_\ell(x)}$ are strictly positive. Since $R_N(x_\Omega) > 0$ and $R_D(x_\Omega) > 0$, neither global remainder vanishes identically on the connected component $\Omega_x$. By the Identity Theorem, their restrictions $A$ and $B$ cannot vanish identically on the open subset $\mathcal{U} \subset \Omega_x$.

Substituting the decompositions \eqref{eq: N-decomp} and \eqref{eq: D-decomp} into the identity $F \circ \Psi \equiv 0$ yields
\begin{align}
\left(n_1 s^{-(d+1)} + A(s^2, \xi)\right)^d = K \left(\delta_1 s^{-d} + B(s^2, \xi)\right)^{d+1} \qquad \text{on } \mathcal{D}^* \times V. \label{eq: functional-equality}
\end{align}
Factoring out the singular power $s^{-d(d+1)}$ from both sides, we obtain
\begin{align}
\left(n_1 + s^{d+1}A(s^2, \xi)\right)^d = K\left(\delta_1 + s^dB(s^2, \xi)\right)^{d+1}. \label{eq: constancy-factored}
\end{align}
Since $A$ and $B$ are holomorphic on $\mathcal{D} \times V$, they are continuous at $s=0$. Taking the limit as $s \to 0$ in \eqref{eq: constancy-factored} causes the terms $s^{d+1}A$ and $s^dB$ to vanish (since $d \ge 1$), which establishes the leading coefficient identity
\begin{align}
n_1^d = K\delta_1^{d+1}. \label{eq: leading-identity}
\end{align}
To isolate the contradiction, we choose a generic slice. Because $A(t, \xi)$ and $B(t, \xi)$ are holomorphic on $W$, we can expand them as power series in $t$ with coefficients that are holomorphic on $V$
$$
A(t, \xi) = \sum_{j=0}^\infty a_j(\xi) t^j \quad \text{and} \quad B(t, \xi) = \sum_{j=0}^\infty b_j(\xi) t^j.
$$
Let $p$ and $q$ be the smallest non-negative integers such that the coefficient functions $a_p(\xi)$ and $b_q(\xi)$ are not identically zero on $V$. Since $V \subset \mathbb{C}^{d-1}$ is a connected open domain, the zero sets of the non-trivial holomorphic functions $a_p$ and $b_q$ are proper analytic subvarieties, and are thus closed and nowhere dense in $V$. Consequently, we can select a point $\xi \in V$ such that both $a := a_p(\xi) \neq 0$ and $b := b_q(\xi) \neq 0$. Fixing this choice of $\xi$ and substituting $t = s^2$ yields the single-variable Taylor expansions
$$
A(s^2, \xi) = a s^{2p} + \mathcal{O}(s^{2p+2}) \quad \text{and} \quad B(s^2, \xi) = b s^{2q} + \mathcal{O}(s^{2q+2}).
$$
Inserting these expansions into \eqref{eq: constancy-factored} and applying the binomial theorem to both sides, we find
\begin{align}
\text{LHS} &= \left(n_1 + a s^{d+1+2p} + \mathcal{O}(s^{d+2p+3})\right)^d = n_1^d + d n_1^{d-1} a s^{d+1+2p} + \mathcal{O}(s^{d+2p+3}), \\
\text{RHS} &= K\left(\delta_1 + b s^{d+2q} + \mathcal{O}(s^{d+2q+2})\right)^{d+1} = K\delta_1^{d+1} + K(d+1)\delta_1^d b s^{d+2q} + \mathcal{O}(s^{d+2q+2}).
\end{align}
Subtracting the leading coefficient identity \eqref{eq: leading-identity} from both sides, we obtain the equality of holomorphic series in $s$:
\begin{align}
d n_1^{d-1} a s^{d+1+2p} + \mathcal{O}(s^{d+2p+3}) = K(d+1)\delta_1^d b s^{d+2q} + \mathcal{O}(s^{d+2q+2}). \label{eq: final-comparison}
\end{align}
We now analyze the orders of vanishing of both sides of \eqref{eq: final-comparison}. The leading exponents on the left- and right-hand sides are $d+1+2p$ and $d+2q$, respectively. Because $p$ and $q$ are integers, the difference between these exponents is $(d+1+2p) - (d+2q) = 1 + 2(p-q),$ which is strictly odd. Consequently, the two exponents cannot be equal, and one must be strictly smaller than the other.

Suppose $d+1+2p < d+2q$. Dividing both sides of \eqref{eq: final-comparison} by $s^{d+1+2p}$ and taking the limit as $s \to 0$, the left-hand side converges to the non-zero constant $d n_1^{d-1} a \neq 0$, whereas the right-hand side converges to $0$, a contradiction. By a similar argument, the symmetric inequality $d+2q < d+1+2p$ evaluates the right-hand side to the non-zero constant $K(d+1)\delta_1^d b \neq 0$ and the left-hand side to $0$, yielding an identical contradiction. 

Thus, the assumption that $m \ge 2$ must be false. It follows that $m=1$, meaning all covariance matrices lie in the same proportionality class. This proves the converse and completes the proof.
\end{proof}
\end{document}